\title{
Berndtsson-Lempert-Sz\H{o}ke Fields \\associated to \\Proper Holomorphic Families of Vector Bundles}
\author{Dror Varolin} 
\thanks{Mathematics Subject Classification (2020 Database): 32L05}
\newcommand{\noi}{\noindent}
\newcommand{\cO}{{\mathcal O}}
\newcommand{\cP}{{\mathcal P}}
\newcommand{\cQ}{{\mathcal Q}}
\newcommand{\cS}{{\mathcal S}}
\newcommand{\sC}{{\mathscr C}}
\newcommand{\sH}{{\mathscr H}}
\newcommand{\sL}{{\mathscr L}}
\newcommand{\fH}{{\mathfrak H}}
\newcommand{\fI}{{\mathfrak I}}
\newcommand{\fL}{{\mathfrak L}}
\newcommand{\fa}{{\mathfrak a}}
\newcommand{\fb}{{\mathfrak b}}
\newcommand{\fd}{{\mathfrak d}}
\newcommand{\ff}{{\mathfrak f}}
\newcommand{\fg}{{\mathfrak g}}
\newcommand{\fh}{{\mathfrak h}}
\newcommand{\fri}{{\mathfrak i}}
\newcommand{\fl}{{\mathfrak l}}
\newcommand{\fs}{{\mathfrak s}}
\newcommand{\vp}{\varphi}
\newcommand{\bbC}{{\mathbb C}}
\newcommand{\bbD}{{\mathbb D}}
\newcommand{\bbG}{{\mathbb G}}
\newcommand{\bbN}{{\mathbb N}}
\newcommand{\bbP}{{\mathbb P}}
\newcommand{\bbZ}{{\mathbb Z}}
\newcommand{\red}{\hfill $\diamond$}
\newcommand{\di}{\partial}
\newcommand{\dbar}{\bar \partial}
\newcommand{\re}{{\rm Re\ }}
\newcommand{\im}{{\rm Im\ }}
\newcommand{\ii}{\sqrt{\text{\rm -1}}}
\newcommand{\two}{\mathbf{I\!I}}
\newcommand{\emb}{\hookrightarrow}
\newcommand{\tensor}{\otimes}
\def\XXint#1#2#3{{\setbox0=\hbox{$#1{#2#3}{\int}$} 
\vcenter{\hbox{$#2#3$}}\kern-.5\wd0}}
                    \setlist[enumerate, 1]{a.}
                    \setlist[enumerate, 2]{i.}
\begin{document}

\theoremstyle{plain}
\newtheorem{m-thm}{\sc Theorem}
\newtheorem*{s-thm}{\sc Theorem}
\newtheorem{lem}{\sc Lemma}[section]
\newtheorem{thm}[lem]{\sc Theorem}
\newtheorem{prop}[lem]{\sc Proposition}
\newtheorem{cor}[lem]{\sc Corollary}

\theoremstyle{definition}
\newtheorem{conj}[lem]{\sc Conjecture}
\newtheorem{prob}[lem]{\sc Open Problem}
\newtheorem{defn}[lem]{\sc Definition}
\newtheorem{def-prop}[lem]{\sc Definition/Proposition}
\newtheorem*{s-defn}{\sc Definition}
\newtheorem{qn}{\sc Question}
\newtheorem*{s-qn}{\sc Question}
\newtheorem{ex}[lem]{\sc Example}
\newtheorem{rmk}[lem]{\sc Remark}
\newtheorem*{s-rmk}{\sc Remark}
\newtheorem*{s-rmks}{\sc Remarks}
\newtheorem{rmks}[lem]{\sc Remarks}
\newtheorem*{ack}{\sc Acknowledgment}
\newtheorem{exe}{\sc Exercise}


\maketitle

\setcounter{tocdepth}1

\vskip .3in



\begin{center}
{\it Dedicated to Bo Berndtsson and Laszlo Lempert}
\end{center}

\begin{abstract}
Drawing on work of Berndtsson and of Lempert and Sz\H{o}ke, we define a complex analytic structure for families of (possibly finite-dimensional) Hilbert spaces that might not fit together to form a holomorphic vector bundle but nevertheless have a reasonable definition of curvature that agrees with the curvature of the Chern connection when the family of Hilbert spaces is locally trivial.  As one consequence, we obtain a new proof of a theorem of Berndtsson on the curvature of direct images of semi-positively twisted relative canonical bundles (i.e., adjoint bundles), and of its higher-rank generalization by Liu-Yang.
\end{abstract}


In the present article a \emph{holomorphic family} of complex manifolds is a surjective holomorphic submersion $p:X \to B$.  The holomorphic family is said to be \emph{proper} if $p:X \to B$ is proper.   The holomorphic family is said to be \emph{K\"ahler} if it admits a \emph{relative K\"ahler form}, i.e., a closed real $(1,1)$-form $\omega$ on the total space $X$, whose pullback to every fiber is K\"ahler.

\section{Introduction}

Let $p: X \to B$ be a holomorphic family, and denote by $m$ and $n$ the dimension of $B$ and the fiber dimension of $p$ respectively.  Let $E \to X$ be a holomorphic vector bundle with smooth Hermitian metric $h$.  We study the family of Hilbert spaces $\sH \to B$ whose fiber over $t \in B$ is 
\[
\sH _t := \left \{ f \in H^0(X_t, \cO _{X_t} (K_{X_t}\tensor E|_{X_t}))\right \},
\]
where $X_t := p^{-1} (t)$, and whose Hermitian metric on the fiber $\sH_t$ is 
\begin{equation}\label{bls-inner-product}
(f_1,f_2)_t := \int _{X_t}\ii^{n^2} \left < f_1 \wedge \bar f_2 , h \right >.
\end{equation}
The study of such families of Hilbert spaces was initiated  in \cite{bo-annals} by Berndtsson, who considered the case in which $E$ is a line bundle.  Berndtsson gives such families of Hilbert spaces a complex geometric structure by defining notions of smooth sections of $\sH \to B$ and a $\dbar$ operator for such sections.  He notes that in general such structures do not yield vector bundles, even when the family $p:X \to B$ is proper.  (Some natural examples are discussed in Paragraph \ref{non-loc-triv-examples}.)  However, even when not locally trivial, $\sH \to B$ resembles a holomorphic Hermitian vector bundle in certain ways.  

In the present article, we mostly study proper holomorphic families of complex manifolds.  We propose a definition of the curvature endomorphism of $\sH \to B$ (Definition \ref{H-curvature-definition}).  Our definition agrees with the usual definition if $\sH \to B$ is a Hermitian holomorphic vector bundle, i.e., it is locally trivial.  Our first result is the following theorem.  

\begin{m-thm}\label{bo-2nd-thm-cor}
Let $(E,h) \to X \stackrel{p}{\to} B$ be a proper K\"ahler family of Hermitian  holomorphic vector bundles and let $k \ge 1$ be an integer.  If the metric $h$ is $k$-non-negatively curved (resp. $k$-positively curved) then $\sH \to B$ is $k$-non-negatively curved (resp. $k$-positively curved).
\end{m-thm}

\noi The notion of $k$-positivity was introduced by Demailly in \cite[Définition 2.2]{dem-82}.  Metrics are $1$-positively curved if and only if they are positively curved in the sense of Griffiths, while metrics are $k$-positively curved for every $k$ if and only if they are positively curved in the sense of Nakano.  If $E$ is a line bundle then all of these notions are equivalent.  Notions of positivity are recalled in Paragraph \ref{positivity-par}.

\medskip

By taking $k \ge \min (n, r)$ where $r = {\rm Rank} (E)$ and $n$ is the fiber dimension of $p$, one recovers a result first proved by Mourougane and Takayama \cite[Theorem 1.1, $q=0$ case]{mt}.

Except in the case $k \ge \min (n, r)$, we do not know if the hypotheses of Theorem \ref{bo-2nd-thm-cor} imply that $\sH \to B$ is a holomorphic vector bundle.  At present the main tool for local triviality is the $L^2$ Extension package of Ohsawa-Takegoshi type.  (See Paragraph \ref{OT-par}.)  To apply the $L^2$ Extension Theorem, the vector bundle $E \to X$ must admit a metric with non-negative curvature in the sense of Nakano.  The hypotheses in the next two corollaries yield such metrics.


\begin{cor}[\text{\cite[Theorem 1.2]{bo-annals}}]\label{bo-B2-thm}
Let $(E, e^{-\vp}) \to X \stackrel{p}{\to} B$ be a proper K\"ahler family of holomorphic Hermitian line bundles.  If the curvature of $(E, e^{-\vp}) \to X$ is non-negative (resp. positive) then $\sH \to B$ is a holomorphic vector bundle and the curvature of the Chern connection of the $L^2$ metric \eqref{bls-inner-product} is non-negative (resp. positive) in the sense of Nakano.
\end{cor}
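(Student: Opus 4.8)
The plan is to read this off from Theorem \ref{bo-2nd-thm-cor} together with the $L^2$-extension package, the only input special to line bundles being that all the positivity notions collapse. Indeed, if $\operatorname{rank} E = 1$ then for each $x$ the fibre $E_x$ is one-dimensional, so \emph{every} element of $(T_X \tensor E)_x$ is decomposable; hence the metric $h = e^{-\vp}$ on $E$ is $k$-non-negatively (resp. $k$-positively) curved in the sense of Demailly for \emph{every} $k \ge 1$ if and only if its Chern curvature form $\ii\di\dbar\vp$ is non-negative (resp. positive). Thus the hypothesis says precisely that $h$ is $k$-non-negatively (resp. $k$-positively) curved for all $k$, equivalently (Paragraph \ref{positivity-par}) that $h$ is non-negatively (resp. positively) curved in the sense of Nakano.

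First I would use this to produce the holomorphic vector bundle structure. Since a non-negatively curved line bundle is automatically Nakano non-negatively curved, the $L^2$-extension theorem of Ohsawa-Takegoshi type (Paragraph \ref{OT-par}) applies: fixing $t_0 \in B$ together with a relatively compact K\"ahler neighborhood, one extends a basis of $\sH_{t_0}$ to holomorphic sections of $\sH$ defined near $t_0$ with controlled $L^2$ norms; these stay linearly independent near $t_0$, so $t \mapsto \dim \sH_t$ is locally constant (lower semicontinuity from the extension, upper semicontinuity from the semicontinuity theorem for the proper map $p$), and the extended sections furnish a local holomorphic frame. Hence $\sH \to B$ is a holomorphic vector bundle, its Hermitian metric is the $L^2$ metric \eqref{bls-inner-product}, and --- being locally trivial --- its curvature endomorphism in the sense of Definition \ref{H-curvature-definition} agrees with the curvature of the Chern connection.

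It then remains only to invoke Theorem \ref{bo-2nd-thm-cor} for every $k \ge 1$: since $h$ is $k$-non-negatively (resp. $k$-positively) curved for all $k$, the theorem gives that $\sH \to B$ is $k$-non-negatively (resp. $k$-positively) curved for all $k$, i.e. (Paragraph \ref{positivity-par} again) that the Chern curvature of the $L^2$ metric is non-negative (resp. positive) in the sense of Nakano, which is the claim. The one genuinely nontrivial point is the second step, the holomorphic vector bundle structure, and this is exactly where properness, the K\"ahler hypothesis, and semipositivity of $\vp$ are indispensable --- without them $\dim \sH_t$ may jump (Paragraph \ref{non-loc-triv-examples}), and the purpose of the $L^2$-extension theorem with estimates is to exclude this in the semipositive case. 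The passage from ``non-negatively curved line bundle'' to ``Nakano non-negative'' to ``$k$-non-negative for all $k$,'' and the final appeal to Theorem \ref{bo-2nd-thm-cor}, are then purely formal.
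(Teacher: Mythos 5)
Your proposal is correct and follows exactly the route the paper intends: for a line bundle all the Demailly/Griffiths/Nakano positivity notions collapse to positivity of the curvature $(1,1)$-form, so the hypothesis gives Nakano non-negativity, whence local triviality via the $L^2$-extension package (this is precisely Proposition \ref{ot-triv}, with upper semicontinuity of $\dim\sH_t$ from Montel and lower semicontinuity from Ohsawa--Takegoshi), and then Theorem \ref{bo-2nd-thm-cor} applied for every $k$ yields Nakano positivity of the $L^2$ metric. The only cosmetic difference is that the paper attributes upper semicontinuity to Montel's Theorem rather than to a general semicontinuity theorem for proper maps, which is immaterial.
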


The next corollary is a direct consequence of Theorem \ref{bo-2nd-thm-cor}, and local triviality holds because Nakano positivity of the metric $h$ on fibers implies that for a sufficiently plurisubharmonic function $\psi$ on an open subset $U \subset B$ the metric $e^{-p^* \psi} h$ for $E|_U$ is Nakano positive.

\begin{cor}\label{xu-wang-cor}
Let $(E,h) \to X \stackrel{p}{\to} B$ be a proper K\"ahler family of holomorphic Hermitian vector bundles and let $k \ge 1$ be an integer.  Assume that for all $t \in B$ the metrics $h_t = h|_{X_t}$ are Nakano-positive, and that the metric $h$ is $k$-non-negatively (resp. $k$-positively) curved.  Then the holomorphic vector bundle $\sH \to B$ is $k$-non-negatively (resp. $k$-positively) curved. 
\end{cor}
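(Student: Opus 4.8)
The plan is to deduce the statement from Theorem \ref{bo-2nd-thm-cor} once we know that $\sH \to B$ is an honest holomorphic vector bundle: for a locally trivial $\sH \to B$ the curvature endomorphism of Definition \ref{H-curvature-definition} agrees, by its defining property, with the curvature of the Chern connection of the $L^2$ metric \eqref{bls-inner-product}, so the conclusion of Theorem \ref{bo-2nd-thm-cor} becomes exactly the assertion of the Corollary. Hence the only point to prove is local triviality of $\sH \to B$, and it is here that the hypothesis of fibrewise Nakano positivity enters.

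To establish local triviality, fix $t_0 \in B$, a coordinate ball $U \ni t_0$ with coordinates $z = (z^1,\dots,z^n)$, and write $X_U := p^{-1}(U)$. First I would upgrade the pointwise hypothesis ``$h_{t_0}$ is Nakano-positive on the compact fiber $X_{t_0}$'' to a quantitative open statement: after shrinking $U$, there are $\ve > 0$ and a Hermitian metric $\omega$ on $X_U$ so that for every $t \in U$ the curvature of $(E|_{X_t}, h_t)$ --- which is the restriction of $\ii\Theta(h)$ to vectors tangent to $X_t$ --- dominates $\ve\,(\omega|_{X_t}) \tensor \mathrm{Id}_E$ in the Nakano sense, while the mixed and horizontal blocks of $\ii\Theta(h)$ are bounded on the relatively compact set $X_U$. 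Setting $\psi := C|z|^2$, one has
\[
\ii\Theta(e^{-p^*\psi}h) \;=\; \ii\Theta(h) \;+\; C\, p^*\!\Bigl(\ii\sum\nolimits_{j} dz^j \wedge d\bar z^j\Bigr)\tensor \mathrm{Id}_E ,
\]
and the added term is semipositive, strictly positive in horizontal directions; a standard absorbing (Cauchy--Schwarz) argument then shows that for $C$ large enough this curvature is Nakano-positive on all of $X_U$.

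Now $(E, e^{-p^*\psi}h) \to X_U$ is Nakano-positive and $X_U \stackrel{p}{\to} U$ is again a proper K\"ahler family, so the $L^2$-extension package of Ohsawa--Takegoshi type (Paragraph \ref{OT-par}), as already recorded for Nakano-non-negatively curved metrics in the discussion preceding Corollary \ref{bo-B2-thm}, gives the family of Hilbert spaces attached to $(E, e^{-p^*\psi}h) \to X_U \to U$ the structure of a holomorphic vector bundle. Since $p^*\psi$ is constant along the fibers of $p$, replacing $h$ by $e^{-p^*\psi}h$ only rescales the inner product \eqref{bls-inner-product} on each $\sH_t$ by the positive constant $e^{-\psi(t)}$, and in particular it leaves unchanged the class of holomorphic sections of $\sH \to B$. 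Hence $\sH|_U \to U$ is a holomorphic vector bundle; as $t_0 \in B$ was arbitrary, $\sH \to B$ is a holomorphic vector bundle. Feeding the original metric $h$ into Theorem \ref{bo-2nd-thm-cor} now yields that $\sH \to B$ is $k$-non-negatively (resp. $k$-positively) curved in the sense of Demailly, and by the first paragraph this is the curvature of the Chern connection of \eqref{bls-inner-product}.

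The step I expect to be the real obstacle is the absorbing argument in the second paragraph: one must verify both that Nakano positivity of a curvature form is preserved under adding a large semipositive $(1,1)$-form tensored with $\mathrm{Id}_E$, and that the mixed blocks of $\ii\Theta(h)$ on $X_U$ can genuinely be dominated using only the horizontal positivity of $C\,\ii\di\dbar\psi$ together with the uniform vertical positivity $\ve\,\omega$ on the fibers. Everything past local triviality is a direct appeal to Theorem \ref{bo-2nd-thm-cor} and Definition \ref{H-curvature-definition}.
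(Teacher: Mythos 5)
Your proposal is correct and follows essentially the same route as the paper: local triviality is obtained exactly as in Corollary \ref{nak-pos-fam} by twisting $h$ with $e^{-M|t|^2}$ over a coordinate ball (the absorbing step you flag is the standard Cauchy--Schwarz estimate and does go through, since the added term is strictly positive on the horizontal block while the fibrewise Nakano positivity is uniform on the relatively compact $X_U$), after which the curvature statement is read off from Theorem \ref{bo-2nd-thm-cor}. The paper also records a second, independent proof in Paragraph \ref{direct-cor-pf-par} via Theorems \ref{B2-thm} and \ref{prim-lift-thm}, but the argument you give is the one the paper indicates for this corollary.
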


\begin{rmk}\label{better-pos}
One can weaken the hypotheses of Corollary \ref{xu-wang-cor}: instead of assuming that the metric $h$ is fiberwise-Nakano positive, one can assume that 
\[
\Theta (h) \ge - C p^* \omega \tensor {\rm Id}_E
\]
for some positive $(1,1)$-form $\omega$ on $B$ and some continuous $C :X \to [0,\infty)$.  If the fibers are projective (or more generally essentially Stein) then one can even just assume that $h$ is fiberwise-nonnegative, though in that case the proof requires a bit more work.  These hypotheses also imply that $\sH \to B$ is locally trivial, and then the result follows from Theorem \ref{bo-2nd-thm-cor}.  However, we have chosen to state Corollary \ref{xu-wang-cor} with the stronger hypotheses because in Section \ref{sff-section} we give a different, more direct proof.  Our proof uses H\"ormander's $L^2$ estimate and twisted Lefschetz theory (both of which are recalled in Section \ref{Hodge-thy-par}), as well as Theorem \ref{B2-thm}, which we formulate next.
\red
\end{rmk}

Theorem \ref{bo-2nd-thm-cor} can be strengthened.  Indeed, there is always a non-negative contribution to the curvature of $\sH \to B$ from the deformation-theoretic properties of the proper holomorphic family $p :X \to B$, and this contribution can be positive when the family is not locally trivial.  That this is so was already observed by Berndtsson in \cite{bo-annals} and then quantified in \cite{bo-sns} and later by Liu and Yang in \cite{ly}, who were the first to consider the case of higher rank $E$.    In all cases the extra positivity is a consequence of formulas for the curvature of $\sH \to B$.  (Liu and Yang also obtained a number of other equivalent curvature formulas and discussed several interesting applications of these formulas.)  

In \cite{bo-sns} and \cite{ly} the curvature formula is obtained under the assumption of local triviality of $\sH \to B$, which makes the computation more manageable.  In the non-locally trivial case the derivation of the curvature formula is necessarily more complicated.  To make its meaning maximally clear, we obtain the curvature formula in a pair of theorems, whose statements make use of the following notation.
\begin{enumerate}
\item[i.]  A \emph{horizontal distribution} for $p:X \to B$ is a (not necessarily integrable) subbundle $\theta \subset T_X$ such that $dp |_{\theta} : \theta \to T_B$ is a vector bundle isomorphism.  Such distributions $\theta$ are in one-to-one correspondence with \emph{horizontal lifts}, i.e., linear maps that assign to a $(1,0)$-vector $\tau \in  T^{1,0}_{B,t}$ a $(1,0)$-vector field $\xi ^{\theta}_{\tau} \in \Gamma (X_t, T^{1,0} _X|_{X_t})$ such that $dp \xi ^{\theta} _{\tau} = \tau$.  (See Paragraph \ref{lifts-paragraph}.)
\item[ii.] Let $t \in B$ be fixed.  For $\tau \in T^{1,0} _{B,t}$ we define 
\[
\kappa ^{\theta} _{\tau} : \sH _t \to H^0(X_t, \sC^{\infty} (\Lambda ^{n-1,1}_{X_t} \tensor E|_{X_t}))\quad \text{and} \quad \two ^{\theta} _{\tau} : \sH _t \to H^0(X_t, \sC^{\infty} (\Lambda ^{n,0}_{X_t} \tensor E|_{X_t})) 
\]
by 
\[
\kappa ^{\theta} _{\tau} f := \iota _{X_t} ^* (\dbar \xi ^{\theta} _{\tau} \lrcorner u )\quad \text{and} \quad \two ^{\theta} _{\tau} f := P_t ^{\perp} \iota _{X_t} ^* (L^{1,0} _{\xi ^{\theta} _{\tau}} u),
\]
where $u \in H^0 (X, \sC^{\infty} (\Lambda ^{n,0}_{X} \tensor E))$ is any section such that $\iota _{X_t} ^* u = f$ and $\iota _{X_t} ^* L^{1,0} _{\xi ^{\theta} _{\tau}} \dbar u \equiv 0$. Here $P_t ^{\perp} \!: H^0(X_t, L^2_{\ell oc} (K_{X_t} \tensor E|_{X_t})) \to \sH _t ^{\perp}$ is the orthogonal projection and 
\[
L^{1,0} _{\xi ^{\theta} _{\sigma}} \alpha := \nabla ^{1,0} ( \xi^{\theta} _{\sigma} \lrcorner \alpha )+ \xi^{\theta} _{\sigma} \lrcorner \nabla ^{1,0} \alpha,
\] 
is the \emph{twisted Lie derivative} of an $E$-valued differential form $\alpha$ associated to the Chern connection $\nabla = \nabla ^{1,0} + \dbar$ for $(E,h) \to X$.  (See paragraph \ref{Lie-deriv-paragraph}.)  As we will see later, these definitions are independent of the choice of $u$.
\end{enumerate}

\begin{m-thm}\label{B2-thm}
Let $(E,h) \to X \stackrel{p}{\to} B$ be a proper K\"ahler family holomorphic Hermitian vector bundles. Then the curvature $\Theta ^{\sH}$ of $\sH \to B$ is given by 
\begin{equation}\label{H-curv-formula-in-mthm}
( \Theta ^{\sH} _{\sigma \bar \tau} f_1, f _2) =  \left ( \Theta (h) _{\xi^{\theta} _{\sigma} \bar \xi^{\theta} _{\tau}} f _1, f_2\right )  - \int _{X_t}  \!\!\!\! \ii^{n^2} \!\! \left < \kappa ^{\theta} _{\sigma} f_1 \wedge \overline{ \kappa ^{\theta} _{\tau}f_2}, h \right > - \int _{X_t} \!\!\!\!  \ii^{n^2} \!\! \left < \two^{\theta} _{\sigma} f_1 \wedge \overline{\two^{\theta} _{\tau}f_2}, h \right >, 
\end{equation}
where $\theta$ is any horizontal distribution, $t \in B$, $\sigma, \tau \in T^{1,0} _{B,t}$, $f_1, f_2 \in \sH _t$, $\Theta (h)$ is the curvature of $(E,h) \to X$ and $( \cdot , \cdot )$ is the inner product \eqref{bls-inner-product}.
\end{m-thm}

We emphasize that the curvature $\Theta ^{\sH}$ is independent of the choice of $\theta$.

\begin{rmk}\label{hol-ext-convention-rmk}
Each of the three terms on the right hand side of Formula \eqref{H-curv-formula-in-mthm}, while dependent on the choice of horizontal distribution $\theta$, is independent of the way in which the $(1,0)$-vectors $\sigma$ and $\tau$ are extended to $(1,0)$-vector fields.  We therefore adopt the inessential but simplifying convention that these vectors are extended locally holomorphically, i.e., $\sigma, \tau \in \cO (T^{1,0}_B)_t$.
\red
\end{rmk}

The precise definition of the curvature $\Theta ^{\sH}$ of $\sH \to B$ requires some technology that we collect or develop in Sections \ref{bls-section}, \ref{lie-deriv-section} and \ref{BLS-section-fields-section}.  But with Theorem \ref{B2-thm} now stated, we can explain the basic idea behind the definition of $\Theta ^{\sH}$.  Consider the family of Hilbert spaces $\sL \to B$ whose fiber over $t \in B$ is 
\[
\sL _t := \left \{ f \in H^0(X_t, L^2_{\ell oc}(K_{X_t}\tensor E|_{X_t}))\right \},
\]
equipped with the same inner product \eqref{bls-inner-product} used to define $\sH$.  The curvature of $\sL$ can be defined because $\sL$ is reasonably well-behaved: it is a smooth Hilbert field in the sense of Lempert-Sz\H{o}ke.  

Evidently the fibers of $\sH \to B$ are subspaces of the fibers of $\sL \to B$.  Each horizontal distribution $\theta$ defines an operator $\dbar ^{\theta}$ on smooth sections of $\sL$ (Definition \ref{dbar-theta-defn}).  Remarkably, for every $\theta$ the operator $\dbar^{\theta}$, when restricted to sections of $\sH \to B$, agrees with the $\dbar$-operator of $\sH \to B$ introduced by Berndtsson (Definition \ref{holo-dbar-defn}).  Therefore $(\sH, \dbar )$ is a `holomorphic sub-field' of $(\sL , \dbar ^{\theta})$.  

If $\sH \to B$ is locally trivial then a Gauss-Griffiths-type formula links the curvature of $\sH$, the curvature of $(\sL, \dbar ^{\theta})$ restricted to $\sH$, and the second fundamental form of $\sH \subset (\sL, \dbar ^{\theta})$ (Proposition \ref{gg-formula}).

One can also define the second fundamental form in $\sH$ even when $\sH$ is not locally trivial.  This definition yields the curvature as an endomorphism of $\sH$ viewed as a $\sC^{\infty}_B$-module.  Unlike the finite-rank case, such an endomorphism is not automatically represented by a map of Hilbert fields.  To have such a map one must be able to extend a dense subset of every fiber $\sH$ to sections of $\sL$ with particular special properties.  The details are described abstractly 
in Paragraph \ref{iBLS-paragraph}, and then more concretely in Theorem \ref{iBLS-structures-for-H} for $\sH \subset \sL$.

From the Gauss Formula perspective the sum of the first two terms on the right hand side of the identity \eqref{H-curv-formula-in-mthm} is precisely the restriction to $\sH$ of the curvature of $(\sL , \dbar ^{\theta})$ (Theorem \ref{L2-curvature-prop}), while the last term on the right hand side of \eqref{H-curv-formula-in-mthm} is the negative of the `square of the second fundamental form'.   


\begin{rmk}\label{ehresmann-rmk}
By Ehresmann's Theorem the properness of the holomorphic family $p:X \to B$ implies that the field of Hilbert spaces $\sL \to B$ has a natural structure of a smooth Hilbert bundle, i.e., it is locally trivial.

Here we are more concerned with the complex structure, which is required for any notion of Chern connection.  From this perspective, we have not addressed here the interesting question of whether for \emph{some} horizontal distribution $\theta$ the field of Hilbert spaces $\sL \to B$ has the structure of an almost holomorphic vector bundle whose $\dbar$-operator is $\dbar ^{\theta}$.  We suspect that this is rarely the case.

Instead we circumvent this question by introducing the notion of Berndtsson-Lempert-Sz\H{o}ke (BLS) fields.  This notion combines Berndtsson's complex structures with the notion of smooth structure introduced by Lempert and Sz\H{o}ke.  The notion is general beyond the case of proper holomorphic families.  One can give meaning to curvature in the setting of families of complex manifolds-with-boundary, as has recently been done by Upadrashta \cite{pranav}, whose work significantly generalizes some of the elegant results of X. Wang \cite{wang}.
\red
\end{rmk} 

As stated in Remark \ref{better-pos}, Theorem \ref{B2-thm} yields a proof of Corollary \ref{xu-wang-cor} that does not pass through Theorem \ref{bo-2nd-thm-cor}.  To complete such a proof one must estimate the second and third terms on the right hand side of the curvature formula \eqref{H-curv-formula-in-mthm}.  The third term can be estimated using basic inequalities and a direct computation that is independent of the horizontal distribution $\theta$.  To handle the second term, one wants to have the equality 
\begin{equation}\label{hrbr-lift}
- \int _{X_t} (\text{-}1)^{\tfrac{n^2}{2}} \left < \kappa ^{\theta} _{\sigma} f_1 \wedge \overline{ \kappa ^{\theta} _{\tau}f_2}, h \right > = \left ( \kappa ^{\theta} _{\sigma} f_1 , \kappa ^{\theta} _{\tau} f_2 \right ),
\end{equation}
where the right hand side is the inner product induced on $E$-valued $(n-1,1)$-forms on $X_t$ by the metric $h$ and any K\"ahler metric on the fiber $X_t$.  The equality \eqref{hrbr-lift} does not hold for all horizontal lifts $\theta$, but such a $\theta$ exists on any K\"ahler family.


\begin{s-rmk}
A previous draft of the article had a more complicated approach to finding $\theta$.  We are grateful to the anonymous referee for pointing out that horizontal lift $\theta$ defined by $\xi ^{\theta} _{\tau} \lrcorner \omega = 0$, where $\omega$ is the relative K\"ahler form, has the required property.
\red
\end{s-rmk}

\medskip

The identity \eqref{hrbr-lift}, while handy for a proof of Corollary \ref{xu-wang-cor} (see Paragraph \ref{direct-cor-pf-par}), is not required in our proof of Theorem \ref{bo-2nd-thm-cor}.  Instead one uses the Hodge Theorem in a different way to compute an exact formula for the square of the second fundamental form, i.e., the third term on the right hand side of \eqref{H-curv-formula-in-mthm}.    The statement is as follows.

\begin{m-thm}\label{berndtsson-reps-thm}
Let $(E,h) \to X \stackrel{p}{\to} B$ be a K\"ahler family of holomorphic Hermitian vector bundles, let $\theta$ be a smooth horizontal distribution, let $t \in B$ and let $\omega _t$ be a K\"ahler form on $X_t$.  Then for every $f_1,f_2 \in \sH _t$ there exist smooth section $u _1, u_2 \in H^0(X, \sC^{\infty}(\Lambda ^{n,0}_X \tensor E))$, depending on $\theta$ and $\omega _t$ and satisfying $\iota _{X_t} ^* u_i  = f_i$ and $\iota _{X_t} ^*L^{1,0} _{\xi ^{\theta}_{\tau}}\dbar u_i = 0$, $i=1,2$, such that
\begin{eqnarray}\label{sff-rep-formula}
\nonumber  && - \int _{X_t} \!\!\!\!  \ii^{n^2} \!\! \left < \two^{\theta} _{\sigma} f_1 \wedge \overline{\two^{\theta} _{\tau}f_2}, h \right >  =  \int _{X_t}  \!\!\!\! \ii^{n^2} \!\! \left < \kappa ^{\theta} _{\sigma} f_1 \wedge \overline{ \kappa ^{\theta} _{\tau}f_2}, h \right >  + \left ( \xi ^{\theta} _{\sigma} \lrcorner \dbar u_1, \xi ^{\theta} _{\tau} \lrcorner \dbar u_2 \right) \\
&& \qquad  + \left (\Lambda _{\omega _t}\iota _{X_t}^*\left ( \xi ^{\theta}_{\sigma} \lrcorner \ii \Theta (h) \right ) f_1, \xi ^{\theta}_{\tau}\lrcorner u_2\right ) + \left ( \xi ^{\theta}_{\sigma}\lrcorner u_1, \Lambda _{\omega _t} \iota _{X_t}^*\left (\xi ^{\theta}_{\tau} \lrcorner \ii \Theta (h)   \right ) f_2 \right ) \\
\nonumber && \qquad \qquad - \left (\left [\ii \Theta (h_t), \Lambda_{\omega_t}\right ] (\xi ^{\theta} _{\sigma}\lrcorner  u_1), (\xi ^{\theta} _{\tau}\lrcorner u_2)\right )
\end{eqnarray}
for all  $\sigma, \tau \in T^{1,0} _{B,t}$.  The pairings $( \cdot , \cdot )$ in the last four terms on the right hand side of \eqref{sff-rep-formula} are the $L^2$ inner products induced on $E|_{X_t}$-valued forms by the Hermitian metric $h_t$ and the K\"ahler metric $\omega _t$.
\end{m-thm}

The aforementioned use of the Hodge Theorem in the proof of Theorem \ref{berndtsson-reps-thm} is equivalent to a construction of Berndtsson, formalized below as Theorem \ref{bo-rep-method}.  With Theorem \ref{bo-rep-method} in hand, the rest of the proof of Theorem \ref{berndtsson-reps-thm} is a somewhat involved integration-by-parts argument.  

Finally, Theorem \ref{bo-2nd-thm-cor} is follows from Theorems \ref{B2-thm} and \ref{berndtsson-reps-thm} and the formula 
\begin{eqnarray*}
&& \left < p _* \left ( \ii ^{n^2} \left < (\Theta (h) u _1 \wedge \overline{u _2}, h\right > \right ) , \sigma \wedge \bar \tau \right > \\
&&\quad =  \left ( \Theta (h) _{\xi^{\theta} _{\sigma} \bar \xi^{\theta} _{\tau}} f _1, f_2\right ) + \left (\Lambda _{\omega _t}\iota _{X_t}^*\left ( \xi ^{\theta}_{\sigma} \lrcorner \ii \Theta (h) \right ) f_1, \xi ^{\theta}_{\tau}\lrcorner u_2\right ) + \left ( \xi ^{\theta}_{\sigma}\lrcorner u_1, \Lambda _{\omega _t} \iota _{X_t}^*\left (\xi ^{\theta}_{\tau} \lrcorner \ii \Theta (h)   \right ) f_2 \right ) \\
\nonumber && \qquad \qquad - \left (\left [\ii \Theta (h_t), \Lambda_{\omega_t}\right ] (\xi ^{\theta} _{\sigma}\lrcorner  u_1), (\xi ^{\theta} _{\tau}\lrcorner u_2)\right ),
\end{eqnarray*}
which we establish in Section \ref{sff-section} (Proposition \ref{expand-the-current-prop}).  Indeed, together with Theorems \ref{B2-thm} and \ref{berndtsson-reps-thm}, the latter identity yields 
\begin{equation}\label{bly-formula}
( \Theta ^{\sH} _{\sigma \bar \tau} f_1, f _2) = \left < p _* \left ( \ii ^{n^2} \left < (\Theta (h)\wedge u _1 )\wedge \overline{u _2}, h\right > \right ), \sigma \wedge \bar \tau \right > + \left ( \xi ^{\theta} _{\sigma} \lrcorner \dbar u_1, \xi ^{\theta} _{\tau} \lrcorner \dbar u_2 \right).
\end{equation}
If $\sH$ is locally trivial Formula \eqref{bly-formula} is due to Berndtsson \cite[Section 2]{bo-sns} if ${\rm Rank}(E) = 1$  and to Liu-Yang \cite[Theorem 1.6]{ly} in the higher rank case.  The term 
\[
\left ( \xi ^{\theta} _{\sigma} \lrcorner \dbar u_1, \xi ^{\theta} _{\tau} \lrcorner \dbar u_2 \right)
\]
on the right hand side of \eqref{bly-formula} gives the extra positivity for the improvement of Theorem \ref{bo-2nd-thm-cor} mentioned after the statement of Corollary \ref{xu-wang-cor}.  This term and the  term $\ii^{n^2} \!\! \left < \kappa ^{\theta} _{\sigma} f_1 \wedge \overline{ \kappa ^{\theta} _{\tau}f_2}, h \right >$ are related to deformation theory of compact complex manifolds, and more specifically to the Kodaira-Spencer map, which captures the first obstruction to local triviality of holomorphic families.  (See Section \ref{KS-section} for more detail.)

We end this introduction with an explanation of one of our initial motivations.  If the family $X \to B$ is trivial then Corollary \ref{bo-B2-thm} has a considerably simpler proof.  In fact, this same simplified proof yields a significantly more general result, stated as \cite[Theorem 1.1]{bo-annals}.  By our reading of \cite{bo-annals}, there is a problem implicitly stated in the second-to-last paragraph of the introduction to \cite{bo-annals}:  whether one can prove \cite[Theorem 1.2]{bo-annals} (i.e. Corollary \ref{bo-B2-thm} above) in a manner similar to the proof of \cite[Theorem 1.1]{bo-annals}.  Berndtsson states there that he was not ``... able to find a natural complex structure on the space of all (not necessarily holomorphic) $(n, 0)$ forms, extending the complex structure on [$\sH$].''  In fact, there are infinitely many consistent ways to extend the complex structure of $\sH$ to $\sL$, each one corresponding to a choice of horizontal lift $\theta$.  The catch is that the resulting object is not a holomorphic vector bundle, and a central point of the present article is to deal with the lack of local triviality.

Aside from a solution of Berndtsson's implicit problem, we obtain a somewhat different geometric perspective.  In BLS geometry the Kodaira-Spencer class affects both the curvature of $\sL$ and the second fundamental form of $\sH$ in $\sL$.  In the curvature of $\sL$ one sees the Hodge-Riemann `square' 
\[
- \int _{X_t}\!\!\!\! \ii^{n^2} \!\! \left < \kappa ^{\theta} _{\sigma} f_1 \wedge \overline{ \kappa ^{\theta} _{\tau}f_2}, h \right >
\]
of a certain representative $\kappa ^{\theta} _{\sigma}f$ of the action on $E$-valued $(n,0)$-forms of  of the Kodaira-Spencer map of $p:X \to B$; a representative that depends only on the choice of complex structure $\dbar ^{\theta}$ for $\sL$, while in the second fundamental form of $\sH$ in $\sL$ one sees the ``difference of squares'' 
\[
- \int _{X_t}\!\!\!\!\ii^{n^2} \!\! \left < \kappa ^{\theta} _{\sigma} f_1 \wedge \overline{ \kappa ^{\theta} _{\tau}f_2}, h \right > - \left ( \xi ^{\theta} _{\sigma} \lrcorner \dbar u_1, \xi ^{\theta} _{\tau} \lrcorner \dbar u_2 \right)
\]
of two representatives of the Kodaira-Spencer map.  The first of these is the term appearing in the curvature of $\sL$, and as already mentioned it depends only on the choice of complex structure for $\sL$.  On the other hand, the other representative 
\[
\iota _{X_t}^*\xi ^{\theta} _{\sigma} \lrcorner \dbar u = \kappa ^{\theta} _{\sigma}f - \dbar \left ( \iota _{X_t}^* ( \xi ^{\theta} _{\sigma} \lrcorner u)\right )
\]
of the action of the Kodaira-Spencer map depends on how elements $f \in \sH_t$ are extended to sections $u$ of $\sL$.  Only the term $\left ( \xi ^{\theta} _{\sigma} \lrcorner \dbar u_1, \xi ^{\theta} _{\tau} \lrcorner \dbar u_2 \right)$ survives in the formula \eqref{bly-formula} for $\Theta ^{\sH}$.
 
In the same penultimate paragraph of the introduction of \cite{bo-annals} it is mentioned that the Kodaira-Spencer class of $p:X \to B$ ``plays somewhat the role of another second fundamental form, but this time of a quotient bundle, ...''.  It is certainly intriguing to try to realize $\sH$ is quotient bundle instead of a subbundle.  We have thus far been unable to find such a construction.

\subsection*{Outline of the article}

To help the reader navigate our rather long article, we summarize the content of the paper as follows.

\begin{enumerate}[{\textsection 1.}]

\setcounter{enumi}{1}

\item We present the abstract theory of BLS fields.  

\item We discuss the relative canonical bundle, and then we carry out some explicit calculations that will be useful in various proofs.  We also discuss horizontal distributions.  

\item We begin by recalling Berndtsson's construction of what we have called Berndtsson's Hilbert field.  We then introduce the $L^2$ BLS fields, and the iBLS structure of the Berndtsson Hilbert field, and we state the result on lift-independence of the relative curvatures of the Berndtsson Hilbert field with respect to the $L^2$ BLS fields (Theorem \ref{indep}).  After that, we define positivity for Hilbert fields; the definition is the obvious adaptation of the notions of positivity for holomorphic vector bundles to the setting of Hilbert fields.  Then we establish the Schur-complement-type result (Proposition \ref{positivity-of-dets}) needed in the proof of Corollary \ref{xu-wang-cor}.  We conclude with a discussion of semi-local descriptions of representatives of of sections of twisted relative canonical bundles.

\item We discuss the question of local triviality of the Berndtsson Hilbert field.

\item We collect well-known results from Hodge Theory that are needed in the proofs of Theorem \ref{bo-2nd-thm-cor} and Corollary \ref{xu-wang-cor}. 

\item We discuss the Kodaira-Spencer map and its action on elements of the Berndtsson Hilbert field.

\item We compute (the $(1,1)$-component of) the curvatures of the Ambient Hilbert fields, from which Theorem \ref{B2-thm} follows.  We then use the computation to finally prove the theorem on lift-independence of the relative curvatures (Theorem \ref{indep}).

\item We begin by proving Corollary \ref{xu-wang-cor}.  We then prove Theorem \ref{berndtsson-reps-thm} using a slight generalization of a result of Berndtsson concerning the existence of good representatives of twisted relative canonical bundles (Theorem \ref{bo-rep-method}).  We conclude with an integration-by-parts computation (Proposition \ref{expand-the-current-prop}) which, when combined with Theorems \ref{B2-thm} and \ref{berndtsson-reps-thm}, yields Berndtsson-Liu-Yang's curvature formula \eqref{bly-formula}, from which Theorem \ref{bo-2nd-thm-cor} follows.
\end{enumerate}

\begin{s-rmk}
It is important to mention that many people have studied the direct image sheaf of the relative canonical bundle twisted by a holomorphic line bundle or vector bundle.  Among these, most closely related to the present article are the results of Mourougane-Takayama \cite{mt-0,mt}, Berndtsson-P\u{a}un \cite{bp}, Berndtsson-P\u{a}un-Wang \cite{bpw}, and Liu-Yang \cite{ly}.  

On the other hand, it might also be worth pointing out that the present article concerns a rather different situation; when not local trivial, the family $\sH \to B$ is not so closely related to the direct image of the twisted relative canonical bundle.
\red
\end{s-rmk}

\subsection*{Acknowledgements}  
This article could not have been written without the discussions and conversations I've had with Roberto Albesiano and Pranav Upadrashta, and I thank them both.  Seb Boucksom has given me invaluable advice about the content and presentation of the article, and I happily thank him for it, as should the reader.  I'm grateful to Mihnea Popa and Mihai P\u aun for stimulating feedback.  I am grateful to the anonymous referees, whose comments and suggestions have been very useful in improving the article.  

Finally, Bo Berndtsson and Laszlo Lempert have provided important comments and suggestions, but most of all their work has inspired and directly affected my research, not only in this article, but throughout my career.  This article is dedicated to them.

\section{Abstract Theory of Berndtsson-Lempert-Sz\H{o}ke Fields}\label{bls-section}

In this section we discuss the general geometric structure with which $\sH$ and $\sL$ are eventually endowed.  We start with a slight reorganization and minor refinement of ideas introduced by Lempert and Sz\H{o}ke in \cite{ls}, and then we introduce definitions that are already suggested or implied in \cite{ls}.  For example, in \cite{ls} the introduction of a $\dbar$-operator to define the holomorphic structure of a smooth (quasi)-Hilbert field is not abstracted, but introduced in the `direct image' constructions of \cite[Part II]{ls} for the bundle of holomorphic sections.  And the discussion of subfields is not as explicit, but many of the elements are hinted at in both \cite{ls} and \cite{bo-annals}.  In fact, \cite{bo-annals} considers subfields in the case of trivial families of complex manifolds.  

\subsection{Quasi-Hilbert fields}

\begin{defn}
A quasi-Hilbert field is a map $\pi :\fH \to B$ whose fibers are topological vector spaces each of which is isomorphic to a Hilbert space.
\red
\end{defn}

That is to say, for each $t \in B$ there is a Hilbert space $H_t$ such that the fiber $\fH_t := \pi ^{-1}(t)$ is isomorphic, as a topological vector space, to $H_t$, but $\fH _t$ does not have a canonical inner product.  The name is inspired by the fact that a linear isomorphism of two Hilbert spaces is a quasi-isometry.  We shall say that the Hilbert space $H_t$ is quasi-isometric to $\fH _t$, even though the latter does not have a canonical inner product.

\begin{rmk}
Even if the Hilbert space fibers of a quasi-Hilbert field are finite-dimensional, one does not want all of them to be isomorphic to a single vector space.  Indeed, in our setting of a  holomorphic family $E \to X \stackrel{p}{\to} B$ in which $p$ is proper, the field of Hilbert spaces $\sH \to B$ has constant fiber dimension if and only if it is locally trivial.
\red
\end{rmk}

\begin{defn}
The dual of a quasi-Hilbert field $\fH \to B$ is the quasi-Hilbert field $\fH ^* \to B$ such that for each $t \in B$ the fiber $\fH _t ^*$ is the topological dual of $\fH _t$, i.e., the set of bounded linear functionals on $\fH _t$.
\red
\end{defn}

By the Riesz Representation Theorem each fiber $\fH _t ^*$ of the dual quasi-Hilbert field $\fH ^*$ is isometric to the dual fiber $\fH _t$.  Therefore the Hilbert fields $\fH$ and $\fH ^*$ are isomorphic  quasi-Hilbert fields.  

\subsection{Smooth structures on quasi-Hilbert fields}

We denote by $\Gamma (U , \fH)$ the set of sections (i.e. right inverses) of $\pi : \fH \to B$ over $U \subset B$.  

\begin{defn}\label{smooth-structure-defn}
Let $B$ be a smooth manifold and let $\fH \to B$ be a quasi-Hilbert field.
\begin{enumerate}
\item[i.]  A smooth structure for a quasi-Hilbert field $\fH \to B$ is a sheaf $\sC^{\infty} (\fH) \to B$ of $\sC^{\infty}$-modules on $B$, whose elements are germs of sections of $\pi :\fH \to B$, such that for each $t \in B$ the stalk $\sC ^{\infty} (\fH) _t$ is mapped by the evaluation-at-$t$ map to a dense subspace of the Hilbert space $\fH _t$.  The associated presheaf assigns to an open set $U \subset B$ a set of sections of $\pi : \pi ^{-1} (U) \to U$ denoted $\Gamma(U, \sC^{\infty} (\fH))$, whose members are called \emph{smooth sections over $U$}.
\item[ii.] The smooth structure for $\fH ^* \to B$ dual to the smooth structure of $\fH \to B$ is the sheaf $\sC^{\infty} (\fH ^*) \to B$ associated to the presheaf whose groups $\Gamma (U, \sC^{\infty}(\fH ^*))$ are defined as follows: $\phi \in \Gamma (U, \sC^{\infty}(\fH ^*))$ if and only if for every $f \in \Gamma (U, \sC^{\infty}(\fH))$ the function $\left < \phi , f\right > \in \sC^{\infty}(U)$.
\red
\end{enumerate}
\end{defn}

\subsection{Maps}

We shall often encounter maps between (smooth) quasi-Hilbert fields.

\begin{defn}\label{maps-defn}
Let $\fH^1, \fH^2 \to B$ be quasi-Hilbert fields.
\begin{enumerate}
\item[a.]  A map $F : \fH^1 \to \fH^2$ is said to be a \emph{map of quasi-Hilbert fields} if it is fiber-preserving, i.e., 
\[
\begin{tikzcd} \fH^1 \ar[r ,"F"] \arrow[d]  &\fH^2 \arrow[d] \\
B \arrow[r,"{\rm Id}"]  &B,
\end{tikzcd}
\]
and linear on fibers.  We write $F \in {\rm Lin} (\fH^1, \fH^2)$.

(As is common in the study of linear transformations of Hilbert spaces, the fiber restrictions need not be continuous or even everywhere defined.)  If the fiber restrictions are defined on dense subsets of the fibers we will say that $F$ is \emph{densely-defined}.
\item[b.] A map of quasi-Hilbert fields $F :\fH^1 \to \fH^2$ is called a \emph{morphism} if the restrictions to fibers are continuous.  In this case we write $F \in {\rm Hom} (\fH^1, \fH^2)$.
\item[c.] Suppose moreover that $\sC^{\infty} (\fH^1)$ and $\sC^{\infty} (\fH^2)$ are smooth structures for $\fH ^1$ and $ \fH ^2$ respectively.   A map $F \in {\rm Lin}(\fH^1,\fH^2)$ is said to be \emph{smooth} if it maps $\sC^{\infty}(\fH^1)$ to $\sC^{\infty}(\fH^2)$, i.e., if for each $t \in B$ one has $F\ff\in \sC^{\infty}(\fH^2)_t$ for every $\ff \in \sC^{\infty}(\fH^1)_t$. 
\end{enumerate}
\end{defn}

Thus a smooth map is densely-defined.  However, a smooth map need not be a morphism, and a morphism need not be smooth.

One often also encounters maps of smooth sections.  

\begin{defn}
Let $\fH^1, \fH^2 \to B$ be quasi-Hilbert fields with smooth structures $\sC^{\infty} (\fH^1)$ and $\sC^{\infty} (\fH^2)$.  
\begin{enumerate}
\item A map of sheaves $\mathbf{L} : \sC^{\infty} (\fH^1) \to \sC^{\infty} (\fH^2)$  is said to be \emph{$\sC^{\infty}$-linear} if 
\[
\mathbf{L} (f \ff) = f \mathbf{L}\ff
\]
for each $t \in B$, all $\ff \in \sC^{\infty} (\fH ^1) _t$ and all $f \in \sC^{\infty} _{B,t}$.
\item A $\sC^{\infty}$-linear $\mathbf{L}$ is said to be \emph{tensorial at $t \in B$} if 
\[
\ff \in \sC^{\infty} (\fH ^1) _t \ \text{ and } \  \ff (t) = 0 \quad \Rightarrow \quad (\mathbf{L}\ff )(t) = 0.
\]
We say $\mathbf{L}$ is \emph{tensorial} if $\mathbf{L}$ is tensorial at every point of $B$.
\end{enumerate}
\end{defn}

Note that a tensorial $\sC^{\infty}$-linear map determines a smooth map of quasi-Hilbert fields in the sense of Definition \ref{maps-defn}.a.  (The converse is obvious.)  Indeed, if $\mathbf{L}$ is a tensorial $\sC^{\infty}$-linear map then one can define the map of quasi-Hilbert fields $L$ by 
\[
L \ff(t) := (\mathbf{L} \ff)(t), \quad \ff \in \sC^{\infty} (\fH)_t,
\]
and the tensorial condition means $L$ is well-defined.

\begin{rmk}
If $\sC^{\infty} (\fH ^1) _t$ is generated, as a $\sC^{\infty} _{B,t}$-module, by germs that do not vanish at $p$ then every $\sC^{\infty}$-linear map is tensorial at $t$.  This is the situation for smooth finite-rank vector bundles with their usual smooth structure consisting of the sheaf of germs of smooth sections.  
\red
\end{rmk}

\subsection{Connections and Curvature}

\begin{defn}\label{connection-defn}
Let $\pi :\fH \to B$ be a quasi-Hilbert field with a smooth structure.
\begin{enumerate}
\item[i.] A \emph{connection} for $\fH \to B$ is a map 
\[
\nabla : \Gamma (B,\sC^{\infty}(T_B))\times \Gamma (B, \sC^{\infty}(\fH)) \in (\xi, \ff) \mapsto \nabla _{\xi} \ff \in \Gamma (B, \sC^{\infty}(\fH))
\]
such that  
\[
\nabla _{f\xi +\eta} = f \nabla _{\xi} + \nabla _{\eta} \quad \text{ and }\nabla _{\xi} (f \ff) = (\xi f)\ff + f \nabla _{\xi}\ff
\]
for all $f \in \sC^{\infty}(B)$, $\xi \in \Gamma (B,\sC^{\infty}(T_B))$ and $\ff \in \Gamma (B,\sC^{\infty} (\fH))$.  
\item[ii.]  The map $\Theta (\nabla): \Gamma (B, \sC^{\infty}(\Lambda ^2 (T_B)))\times  \Gamma (B,\sC^{\infty} (\fH)) \to \Gamma (B,\sC^{\infty} (\fH))$ defined by 
\[
\left < \Theta (\nabla), (\xi \wedge \eta , \ff) \right > := \nabla _{\xi}\nabla _{\eta}\ff - \nabla_{\eta}\nabla_{\xi} \ff- \nabla_{[\xi,\eta]}\ff
\]
is called the \emph{curvature} of $\nabla$.
\red
\end{enumerate}
\end{defn}

\begin{defn}
Let $\fH \to B$ be a quasi-Hilbert field with smooth structure, and let $\fh$ be a smooth metric for $\fH$.  A connection $\nabla$ for $\fH$ is said to be \emph{compatible with the metric $\fh$} if 
\[
\left < d \fh (\ff_1, \ff_2), \xi \right > =  \fh (\nabla _{\xi} \ff_1, \ff_2) +  \fh (\ff_1, \nabla _{\xi}\ff_2)
\]
for all $t\in B$, all $\ff _1, \ff _2 \in \sC^{\infty} (\fH)_t$ and all $\xi \in T_{B,t}$.
\red
\end{defn}

One defines the covariant derivative of a smooth metric $\fh$ by 
\begin{equation}\label{nabla-h}
\nabla \fh (\ff_1,\ff_2) := d (\fh (\ff_1,\ff_2)) - \fh (\nabla \ff_1, \ff_2) - \fh (\ff_1, \nabla \ff_2).
\end{equation}
In terms of this derivative, a connection $\nabla$ is compatible with a smooth metric $\fh$ if and only if $\nabla \fh = 0$.

The difference of two connections and the curvature of a connection are $\sC^{\infty}$-linear.  If the connections are metric compatible then one can say more.

\begin{prop}\label{metric=>tensorial}
Let $\fH \to B$ be a quasi-Hilbert field with smooth structure, let $\fh$ be a smooth metric for $\fH$ and let $\nabla, \nabla _1, \nabla _2$ be $\fh$-compatible connections.  Then 
\[
\nabla _2 - \nabla _1 \quad \text{and} \quad \Theta (\nabla)
\]
are tensorial.
\end{prop}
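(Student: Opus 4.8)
The plan is to isolate the following elementary principle and apply it twice. Suppose $\mathbf{L}\colon \sC^{\infty}(\fH) \to \sC^{\infty}(\fH)$ is $\sC^{\infty}$-linear and \emph{skew-Hermitian} with respect to $\fh$, in the sense that $\fh(\mathbf{L}\ff_1,\ff_2) + \fh(\ff_1,\mathbf{L}\ff_2) = 0$ for all $t \in B$ and all $\ff_1,\ff_2 \in \sC^{\infty}(\fH)_t$. Then $\mathbf{L}$ is tensorial. Indeed, if $\ff_1(t) = 0$, then evaluating the skew-Hermitian identity at $t$ gives $\fh_t\big((\mathbf{L}\ff_1)(t),\ff_2(t)\big) = -\fh_t\big(\ff_1(t),(\mathbf{L}\ff_2)(t)\big) = 0$ for every $\ff_2 \in \sC^{\infty}(\fH)_t$; since the smooth sections restrict to a dense subspace of the fiber $\fH_t$ (Definition \ref{smooth-structure-defn}) and $\fh_t$ is a genuine inner product on $\fH_t$, the vector $(\mathbf{L}\ff_1)(t)$ pairs to zero against a dense subspace and hence, by continuity of $\fh_t(\cdot\,,\cdot)$ and positive-definiteness, vanishes. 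This is precisely where metric compatibility enters: a bare $\sC^{\infty}$-linear map need not be tensorial once $\sC^{\infty}(\fH)_t$ fails to be finitely generated.

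It therefore suffices to check that, for fixed vector fields $\xi,\eta$, the operators $\ff \mapsto (\nabla_2 - \nabla_1)_{\xi}\ff$ and $\ff \mapsto \langle \Theta(\nabla),(\xi\wedge\eta,\ff)\rangle$ are $\sC^{\infty}$-linear and skew-Hermitian. Their $\sC^{\infty}$-linearity is the cancellation of Leibniz terms noted just before the proposition. For $T := \nabla_2 - \nabla_1$ the skew-Hermitian property is immediate: subtracting the two compatibility identities $\langle d\fh(\ff_1,\ff_2),\xi\rangle = \fh(\nabla_{i,\xi}\ff_1,\ff_2) + \fh(\ff_1,\nabla_{i,\xi}\ff_2)$, $i=1,2$, gives $0 = \fh(T_\xi\ff_1,\ff_2) + \fh(\ff_1,T_\xi\ff_2)$, so $T$ is tensorial.

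For the curvature one applies metric compatibility of $\nabla$ twice. With $g := \fh(\ff_1,\ff_2)$, expanding $\xi(\eta g)$ and $\eta(\xi g)$ by the Leibniz rule for $\fh$ produces in each case the same ``cross terms'' $\fh(\nabla_\xi\ff_1,\nabla_\eta\ff_2) + \fh(\nabla_\eta\ff_1,\nabla_\xi\ff_2)$, which cancel in the combination $\xi\eta g - \eta\xi g - [\xi,\eta]g$; this combination vanishes because $g$ is a smooth function, and what remains is exactly $\fh\big(\Theta_{\xi\eta}\ff_1,\ff_2\big) + \fh\big(\ff_1,\Theta_{\xi\eta}\ff_2\big) = 0$, where $\Theta_{\xi\eta} := \nabla_\xi\nabla_\eta - \nabla_\eta\nabla_\xi - \nabla_{[\xi,\eta]}$. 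By the principle of the first paragraph, $\Theta(\nabla)$ is tensorial.

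The main obstacle is not conceptual but bookkeeping: carrying out the two applications of the Leibniz rule for $\fh$ in the curvature computation without sign errors and verifying the cross-term cancellation. The only other point to be careful about is the (routine) fact that $\fh_t$ is a bona fide continuous inner product on $\fH_t$, so that a vector pairing to zero against a dense subspace must vanish — this is what makes the density argument of the first paragraph go through.
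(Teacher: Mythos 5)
Your proof is correct and follows essentially the same route as the paper: the paper likewise observes that both $\nabla_2-\nabla_1$ and $\Theta(\nabla)$ are $\fh$-anti-Hermitian and then concludes tensoriality from the density of $\sC^{\infty}(\fH)_t(t)$ in $\fH_t$. The only difference is that you spell out the Leibniz-rule verification of anti-Hermitianness (which the paper asserts without computation, and essentially re-derives later in Proposition \ref{chern-curvature-is-11} via $dd\,\fh(\fb_1,\fb_2)=0$).
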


\begin{proof}
Denoting either $\nabla _2 - \nabla _1$ or $\Theta (\nabla)$ by $\mathbf{L}$, one has $\fh (\mathbf{L} \ff, \fg) = - \fh (\ff, \mathbf{L} \fg)$.  Therefore if $\ff \in \sC^{\infty} (\fH)_t$ and $\ff (t) = 0$ then $\fh ( (\mathbf{L}\ff)(t), \fg(t)) = 0$ for all $\fg\in \sC^{\infty} (\fH)_t$.  Since $\sC^{\infty} (\fH)_t(t)$ is dense in $\fH _t$,  $(\mathbf{L} \ff) (t) = 0$, i.e., $\mathbf{L}$ is tensorial.
\end{proof}

\subsection{Formulation with differential forms}
Though it's largely a matter of taste when the base manifold $B$ is finite-dimensional (which is a standing assumption in the present article), one might sometimes prefer to think of the connection as a total differential, rather than talk about directional derivatives.  To do so it is convenient to introduce differential forms with values in $\fH$.  

\begin{defn}\label{forms-defn}
Let $\fH \to B$ be a smooth quasi-Hilbert field.  An $\fH$-valued differential $k$-form is a section $\fb$ of $\Lambda ^k _B \tensor \fH \to B$.  If $\fH \to B$ is a smooth quasi-Hilbert field then we say that  $\fb$ is \emph{smooth}  if for each $k$-tuple of vector fields $\tau _1,...,\tau _k\in \Gamma (B,\sC^{\infty}(T_B))$ the section $\left < \alpha, \tau _1 \wedge \cdots \wedge \tau _k \right >$ of $\fH \to B$ obtained by contracting in the factor $\Lambda ^k _B$ lies in $\Gamma (B,\sC^{\infty}(\fH))$.  We denote by $\Gamma (B, \sC^{\infty}(\Lambda ^k _B \tensor \fH))$ the collection of smooth $\fH$-valued $k$-forms.
\end{defn}

\begin{rmk}
It may very well be that the fibers of $\fH \to B$ are infinite-dimensional.  In this case, one often sees the analytic completion $\hat \tensor$.  Here we assume our $\fH$-valued $k$-forms can be written locally as the form $\alpha ^1 \tensor \ff _1 + \cdots + \alpha ^{m_k} \tensor \ff _{m_k}$, where $m = \dim _{\bbC} B$ and $m_k = \binom{m}{k}$.  The $\fH$-valued $k$-form is smooth precisely when the local forms $\alpha ^i$ and the local sections $\ff _i$ are smooth.  
\red
\end{rmk}

\begin{rmk}
Note that since the set of smooth sections restricts to dense subsets of the fibers, the same is automatically true of the set of smooth forms.
\red
\end{rmk}

From the `forms' point of view a connection $\nabla$ can be seen as a map 
\[
\nabla : \Gamma (B,\sC^{\infty}(\fH)) \to \Gamma(B, \sC^{\infty}(\Lambda ^1_B \tensor \fH)),
\]
and we can extend $\nabla$ to higher twisted exterior derivatives 
\[
\nabla : \Gamma(B,\sC^{\infty}(\Lambda ^k _B \tensor \fH)) \to \Gamma(B,\sC^{\infty}(\Lambda ^{k+1}_B \tensor \fH))
\]
in the usual way.  A connection is not $\sC^{\infty}(B)$-linear, but the difference of two connections is.  And if the difference $\nabla _2 - \nabla _1$ is also tensorial then 
\[
\nabla _2 - \nabla _1 \in \Gamma (B, \sC^{\infty} ( \Lambda ^1_B \tensor {\rm Lin}(\fH,\fH))).
\]
Similarly, if the curvature $\nabla \nabla$ of a connection $\nabla$ is tensorial then it can be identified with a $2$-form with values in ${\rm Lin}(\fH, \fH)$:
\[
\Theta (\nabla) \in \Gamma (B , \sC^{\infty} ( \Lambda ^{2}_B \tensor {\rm Lin}(\fH,\fH))).
\]
As noted in Proposition \ref{metric=>tensorial}, this is the case for metric connections.

\subsection{Smooth metrics} 

\begin{defn}
Let $\fH \to B$ be a quasi-Hilbert field.
\begin{enumerate}
\item[i.] A metric $\fh$ for $\fH \to B$ is an assignment to each fiber $\fH _t$ of a complete inner product $\fh _t$ so that $(\fH_t, \fh_t)$ and $\fH _t$ are quasi-isometric for all $t \in B$.
\item[ii.] If $\sC^{\infty} (\fH ) \to B$ is a smooth structure for $\fH \to B$ then the metric $\fh$ is \emph{smooth} if 
\[
\fh (\ff_1, \ff_2 )  \in \sC^{\infty} (U)
\]
for any smooth sections $\ff_1, \ff_2 \in \Gamma (U, \sC^{\infty} (\fH))$.
\red
\end{enumerate}
\end{defn}

\subsection{Smooth (quasi-)Hilbert fields}
\begin{defn}\label{smooth-(q)h-fld-defn}
\begin{enumerate}
\item A \emph{smooth quasi-Hilbert field} is a quasi-Hilbert field with a smooth structure that admits a connection.  
\item A smooth quasi-Hilbert field $\fH \to B, \sC^{\infty} (\fH)$ over $B$ is said to be a \emph{smooth Hilbert field} if it has a connection $\mathring{\nabla}$, called a \emph{reference connection}, and a smooth metric $\fh$ with the following property.  For each $t \in B$, $\xi \in T_{B,t}$ and $\ff \in \sC^{\infty} (\fH)_t$ there is a constant $C=C(t, \xi, \ff)$ such that 
\begin{equation}\label{rrt-cty}
\left |(\mathring{\nabla} _{\xi} \fh) (\ff, \fg)) (t) \right |^2  \le C  \fh (\fg, \fg)(t).
\end{equation}
for each $\fg \in \sC^{\infty} (\fH )_t$.
\red
\end{enumerate}
\end{defn}

\begin{prop}\label{metric-compatible-connection-prop}
A smooth Hilbert field $(\fH, \fh)$ has a metric compatible connection.
\end{prop}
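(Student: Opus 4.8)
The plan is to keep the reference connection $\mathring{\nabla}$ supplied by the smooth Hilbert field structure and to look for the $\fh$-compatible connection in the form $\nabla = \mathring{\nabla} - \mathbf{A}$, where $\mathbf{A}$ is a tensorial, $\sC^{\infty}(B)$-linear correction with values in densely-defined linear operators on the fibers, manufactured from the covariant derivative $\mathring{\nabla}\fh$ of \eqref{nabla-h}. First I would record two elementary properties of $\mathring{\nabla}\fh$. A direct computation from \eqref{nabla-h} and the Leibniz rule shows that $\mathring{\nabla}\fh$ is tensorial in each of its two section slots, so that for every $t \in B$ and $\xi \in T_{B,t}$ the expression $(\mathring{\nabla}_\xi\fh)(\cdot,\cdot)$ descends to a sesquilinear form on the Hilbert space $\fH_t$, depending $\sC^{\infty}(B)$-linearly on $\xi$. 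And since $\fh(\ff_1,\ff_2) = \overline{\fh(\ff_2,\ff_1)}$ and exterior differentiation commutes with complex conjugation of functions on the (real) manifold $B$, this form is Hermitian symmetric: $(\mathring{\nabla}_\xi\fh)(\ff_1,\ff_2) = \overline{(\mathring{\nabla}_\xi\fh)(\ff_2,\ff_1)}$.

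Next I would invoke the Riesz Representation Theorem. Fix $t$, $\xi \in T_{B,t}$, and a smooth section $\ff$ defined near $t$. The inequality \eqref{rrt-cty} says precisely that $\fg \mapsto (\mathring{\nabla}_\xi\fh)(\ff,\fg)(t)$ is bounded on the dense subspace $\sC^{\infty}(\fH)_t(t) \subset \fH_t$ relative to the Hilbert norm of $\fh_t$; extending it by continuity and applying Riesz produces a unique vector $\mathbf{A}_\xi\ff(t) \in \fH_t$ with
\[
\fh_t(\mathbf{A}_\xi\ff(t),\fg) = -\tfrac12\,(\mathring{\nabla}_\xi\fh)(\ff,\fg)(t)\qquad\text{for all }\fg\in\fH_t.
\]
Carrying this out at all base points near $t$ defines a section $\mathbf{A}_\xi\ff$, and uniqueness in Riesz together with the two properties above shows that $\mathbf{A}$ is linear in $\ff$, is $\sC^{\infty}(B)$-linear in $\xi$, and is tensorial, i.e.\ $\mathbf{A}_\xi(f\ff) = f\mathbf{A}_\xi\ff$.

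The step I expect to be the real obstacle is showing that $\mathbf{A}_\xi\ff$ is a \emph{smooth} section of $\fH$ whenever $\ff$ is, which is what is needed for $\nabla := \mathring{\nabla} - \mathbf{A}$ to map $\sC^{\infty}(\fH)$ into itself and thus to be a connection in the sense of Definition \ref{connection-defn}. Using the Hermitian symmetry, the defining relation for $\mathbf{A}$ rewrites as $\fh(\fg,\mathbf{A}_\xi\ff) = -\tfrac12(\mathring{\nabla}_\xi\fh)(\fg,\ff)$, whose right-hand side is a smooth function of the base point for every smooth $\fg$; equivalently, the section of the dual field $\fH^*$ that $\mathbf{A}_\xi\ff$ represents through $\fh$ is smooth for the dual smooth structure of Definition \ref{smooth-structure-defn}.ii. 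Thus the assertion reduces to the statement that the Riesz isomorphism $\fH \cong \fH^*$ carries the smooth structure of a smooth Hilbert field onto the dual smooth structure — in other words, that on a smooth Hilbert field a section which $\fh$-pairs smoothly against every smooth section is itself smooth. I would isolate this as a separate lemma; its proof is where the reference connection $\mathring{\nabla}$ and the estimate \eqref{rrt-cty} are used essentially (beyond the mere well-definedness of $\mathbf{A}_\xi\ff$ as a section), and where the technical weight of the argument lies.

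Granting that lemma, the rest is routine. Since $\mathbf{A}$ is tensorial and $\sC^{\infty}(B)$-linear in the vector-field slot, $\nabla := \mathring{\nabla} - \mathbf{A}$ inherits the $\sC^{\infty}(B)$-linearity and the Leibniz rule from $\mathring{\nabla}$, so $\nabla$ is a connection. Finally, for smooth sections $\ff_1,\ff_2$ one expands
\[
(\nabla_\xi\fh)(\ff_1,\ff_2) = (\mathring{\nabla}_\xi\fh)(\ff_1,\ff_2) + \fh(\mathbf{A}_\xi\ff_1,\ff_2) + \fh(\ff_1,\mathbf{A}_\xi\ff_2),
\]
and by the defining relation for $\mathbf{A}$ and the Hermitian symmetry of $\mathring{\nabla}\fh$ each of the last two terms equals $-\tfrac12(\mathring{\nabla}_\xi\fh)(\ff_1,\ff_2)$, so the right-hand side vanishes. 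Hence $\nabla\fh = 0$, i.e.\ $\nabla$ is compatible with $\fh$.
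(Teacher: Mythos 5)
Your proof is correct and follows essentially the same route as the paper's: use the estimate \eqref{rrt-cty} and the Riesz Representation Theorem to produce a tensorial correction $\mathbf{A}$ with $\fh(\mathbf{A}_\xi\ff,\fg) = -\tfrac12(\mathring{\nabla}_\xi\fh)(\ff,\fg)$, then use the Hermitian symmetry of $\mathring{\nabla}\fh$ to check that $\mathring{\nabla}-\mathbf{A}$ annihilates $\fh$ (the paper writes $\mathring{\nabla}+\mathbf{A}$ with the opposite sign convention for $\mathbf{A}$, which is the same thing). The one point you defer to an unproved lemma --- that the Riesz representative $\mathbf{A}_\xi\ff$ is a genuinely \emph{smooth} section of $\fH$, not merely a fiberwise-defined vector that $\fh$-pairs smoothly against every smooth section --- is indeed the delicate spot, but the paper's own proof simply asserts $\mathbf{A}\ff\in\sC^{\infty}(T^*_B\tensor\fH)_t$ without further argument, so you have honestly flagged a step the paper glosses over rather than introduced a new gap.
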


\begin{proof}
Let $\mathring{\nabla}$ be a reference connection.  Any other connection is related to $\mathring{\nabla}$ by some $\sC^{\infty}$-linear map of sheaves $\mathbf{A}: \sC^{\infty} (\fH) \to \sC^{\infty} (T^*_B \tensor \fH)$, i.e., it is of the form $\nabla^{\mathbf{A}} := \mathring{\nabla} + \mathbf{A}$.   To find a metric compatible connection, one needs to find $\mathbf{A}$ satisfying $\nabla ^{\mathbf{A}} \fh = 0$ (c.f. definition \eqref{nabla-h}), or equivalently 
\[
\mathring{\nabla} \fh (\ff, \fg) = \fh (\mathbf{A}\ff, \fg ) + \fh (\ff, \mathbf{A}\fg).
\]
First, by \eqref{rrt-cty} we see that for each $\ff \in \sC^{\infty}(\fH)_t$ there exists $\mathbf{A}\ff \in \sC^{\infty}(T^*_B\tensor \fH)_t$ such that 
\[
\tfrac{1}{2}\mathring{\nabla}\fh (\ff, \fg) = \fh (\mathbf{A}\ff, \fg).
\]
Since $\overline{\mathring{\nabla} \fh (\ff, \fg)} = \mathring{\nabla} \fh (\fg, \ff)$, we see that if $\fg \in \sC^{\infty}(\fH)_t$ then 
\[
\left |(\mathring{\nabla} _{\xi} \fh) (\ff, \fg)) (t) \right |^2  \lesssim  \fh (\ff, \ff)(t)
\]
for every $\ff \in {\rm Domain}(\mathbf{A})$.  Thus $\fg \in {\rm Domain} (\mathbf{A} ^{\dagger})$, the Hilbert space adjoint of $\mathbf{A}$, and moreover 
\[
\fh (\mathbf{A} \ff, \fg) = \fh (\ff , \mathbf{A}^{\dagger}\fg) 
\]
for all $\ff, \fg \in \sC^{\infty} (\fH) _t$.  And we also have 
\[
\fh (\mathbf{A}\ff, \fg) = \mathring{\nabla} \fh (\ff, \fg) = \tfrac{1}{2} \overline{\mathring{\nabla} \fh (\fg, \ff)} = \tfrac{1}{2} \overline{\fh (\mathbf{A}\fg, \ff)} = \fh (\ff, \mathbf{A}\fg),
\]
so that in fact $\mathbf{A}$ is self-adjoint on its domain $\sC^{\infty} (\fH)_t$.  Therefore 
\[
 \fh (\mathbf{A}\ff, \fg) +  \fh (\ff, \mathbf{A}\fg) =  \fh (2\mathbf{A}\ff, \fg) = \mathring{\nabla}\fh (\ff, \fg),
\]
and the proof is complete.
\end{proof}

\begin{rmk}
Note that, conversely, the estimates \eqref{rrt-cty} on $\mathring{\nabla}$ and $\fh$ are necessary for the existence of $\mathbf{A}$.  Moreover, the $\fh$-Hermitian operator $\mathbf{A}$ must be tensorial.  There may be other solutions; but all of them must be of the form $\mathbf{A} + \mathbf{B}$ such that $\mathbf{B} ^{\dagger} = - \mathbf{B}$, with appropriately defined domains which we will not discuss here.
\red
\end{rmk}
 


\subsection{Holomorphic Quasi-Hilbert fields and BLS fields}

Since we are interested in the situation in which the base is a complex manifold, we consider the obvious complexification of the notion of connection: In Definition \ref{connection-defn} one allows the functions $f \in \sC^{\infty} (B)$ to be complex valued, and for a complex vector field $\xi \in \Gamma (B, \sC^{\infty} (T_B \tensor \bbC))$ one defines 
\[
\nabla _{\xi} := \nabla _{\re \xi} + \ii \nabla _{\im \xi}.
\]
These notions, which just use the natural complex structures of the targets, only require $B$ to be a smooth manifold.  But if $B$ is a complex manifold (or more generally an almost complex manifold) then there is a splitting $T_B \tensor \bbC = T^{1,0} _B \oplus T^{0,1}_B$.  Thus one has a decomposition 
\[
\nabla = \nabla ^{1,0} + \nabla ^{0,1}
\]
induced by the splitting 
\[
\Gamma (B, \sC^{\infty}(\Lambda ^{k+1} _B \tensor \fH))=\Gamma (B, \sC^{\infty}(\Lambda ^{1,0}_B \wedge \Lambda ^k_B \tensor \fH))\oplus \Gamma (B, \sC^{\infty}(\Lambda ^{0,1}_B \wedge \Lambda ^k _B \tensor \fH)),
\]
and one can talk about $\fH$-valued $(p,q)$-forms as sections of $\Lambda ^{p,q}_B \tensor \fH \to B$.  Such forms are smooth if they are smooth as $\fH$-valued $(p+q)$-forms.  For $0\le p,q\le \dim_{\bbC} B$ the set of smooth $\fH$-valued $(p,q)$-forms $\Gamma (B, \Lambda ^{p,q}_B \tensor \fH) \cap \Gamma (B, \sC^{\infty}(\Lambda ^{p+q} _B \tensor \fH))$ shall be denoted $\Gamma (B, \sC^{\infty}(\Lambda ^{p,q} _B \tensor \fH))$.


\begin{defn}\label{general-dbar-and-holo-defn}
Let $B$ be an almost complex manifold and let $\fH \to B$ be a smooth quasi-Hilbert field.  
\begin{enumerate}
\item[{\rm i.}] An almost complex structure for $\fH \to B$ is a family of linear maps of sheaves 
\[
\dbar : \sC^{\infty} (\Lambda ^{p,q} _B \tensor \fH) \to  \sC^{\infty} (\Lambda ^{p,q+1}_B \tensor \fH), \quad 0 \le p,q \le n, 
\]
compatible with the $\dbar$-operator on $B$ in the sense that 
\[
\dbar (f \wedge \fb) = \dbar f \wedge \fb +(-1) ^{p'+q'} f \wedge \dbar \fb, \quad f \in \sC^{\infty} (\Lambda ^{p',q'}_B).
\]
The almost complex structure is \emph{involutive} or \emph{complex} or \emph{integrable} if $\dbar \dbar = 0$.
\item[{\rm ii.}]  A smooth section $\ff$ of the smooth quasi-Hilbert field $\fH \to B$ with almost complex structure $\dbar$ is said to be \emph{holomorphic} on an open subset $U \subset B$ if $\dbar \ff = 0$ on $U$.  The set of holomorphic sections on $U$ is denoted $\Gamma (U, \cO (\fH))$.
\item[{\rm iii.}]  An \emph{(almost) holomorphic quasi-Hilbert field}\footnote{Since we are now working with complex connections, when we talk about a smooth Hilbert field, in Definition \ref{smooth-(q)h-fld-defn}.b we replace the vectors $\xi \in T_{B,t}$ with vectors vectors $\xi \in T_{B,t}\tensor \bbC$.} is a smooth quasi-Hilbert field with an (almost) complex structure.
\item[{\rm iv.}]  A \emph{Berndtsson-Lempert-Sz\H{o}ke} (BLS) field is an almost holomorphic smooth Hilbert field admitting a reference connection $\mathring{\nabla}$ such that $\mathring{\nabla}^{0,1} = \dbar$.
\item[{\rm v.}] A BLS field is said to be \emph{integrable} if its almost complex structure is integrable.
\end{enumerate}
\end{defn}

\begin{rmk}
If $\fH$ is an almost holomorphic smooth Hilbert field then $\fH$ is BLS if and only if for any reference connection $\mathring{\nabla}$ the map of sheaves $\mathring{\nabla}^{0,1}-\dbar$ is tensorial and the associated map of Hilbert fields is a morphism.
\red
\end{rmk}

\begin{prop}
On every BLS field $(\fH, \fh, \dbar) \to B$ there exists a unique connection $\nabla$, called the \emph{BLS Chern connection}, or simply the \emph{Chern connection}, such that  
\[
\nabla \fh = 0 \quad \text{and} \quad \nabla ^{0,1} = \dbar.
\]
\end{prop}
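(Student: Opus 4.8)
The plan is to run the classical Chern-connection argument adapted to BLS fields: uniqueness is essentially formal, and existence is reduced to Proposition~\ref{metric-compatible-connection-prop} by splitting the metric-compatible connection produced there into its $(1,0)$- and $(0,1)$-parts.

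\emph{Uniqueness.} Suppose $\nabla_1$ and $\nabla_2$ both satisfy $\nabla_i\fh=0$ and $\nabla_i^{0,1}=\dbar$. The difference $\mathbf{B}:=\nabla_2-\nabla_1$ is $\sC^{\infty}$-linear, and since both connections are $\fh$-compatible Proposition~\ref{metric=>tensorial} shows $\mathbf{B}$ is tensorial, so $\mathbf{B}\in\Gamma(B,\sC^{\infty}(\Lambda^1_B\tensor{\rm Lin}(\fH,\fH)))$; moreover $\mathbf{B}^{0,1}=\nabla_2^{0,1}-\nabla_1^{0,1}=0$, so $\mathbf{B}$ is of bidegree $(1,0)$. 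Subtracting the two metric-compatibility identities gives $\fh(\mathbf{B}\ff,\fg)+\fh(\ff,\mathbf{B}\fg)=0$ for all smooth sections $\ff,\fg$; here the first term is a $(1,0)$-form and the second a $(0,1)$-form (because $\fh$ is conjugate-linear in the second slot and $\mathbf{B}$ is $(1,0)$-form valued), hence both vanish. Fixing $t\in B$ and $\xi\in T^{1,0}_{B,t}$ we get $\fh((\mathbf{B}_{\xi}\ff)(t),\fg(t))=0$ for all smooth $\fg$, and density of $\sC^{\infty}(\fH)_t$ in $\fH_t$ forces $(\mathbf{B}_{\xi}\ff)(t)=0$. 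Thus $\mathbf{B}=0$ and $\nabla_1=\nabla_2$.

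\emph{Existence.} This is where the BLS hypothesis enters: by Definition~\ref{general-dbar-and-holo-defn} there is a reference connection $\mathring{\nabla}$ with $\mathring{\nabla}^{0,1}=\dbar$. Feeding $\mathring{\nabla}$ into Proposition~\ref{metric-compatible-connection-prop} produces an $\fh$-compatible connection $\nabla^{\mathbf{A}}=\mathring{\nabla}+\mathbf{A}$ with $\mathbf{A}$ tensorial, $\sC^{\infty}$-linear, and $\fh$-self-adjoint. Decompose $\mathbf{A}=\mathbf{A}^{1,0}+\mathbf{A}^{0,1}$ by bidegree; each component is again tensorial and $\sC^{\infty}$-linear. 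Comparing bidegrees in $\fh(\mathbf{A}\ff,\fg)=\fh(\ff,\mathbf{A}\fg)$, and using that $\fh(\,\cdot\,,\mathbf{A}^{1,0}\,\cdot\,)$ has bidegree $(0,1)$ while $\fh(\,\cdot\,,\mathbf{A}^{0,1}\,\cdot\,)$ has bidegree $(1,0)$, one obtains $\fh(\mathbf{A}^{1,0}\ff,\fg)=\fh(\ff,\mathbf{A}^{0,1}\fg)$ together with its conjugate. A short calculation then shows $\mathbf{B}:=\mathbf{A}^{1,0}-\mathbf{A}^{0,1}$ is $\fh$-skew-Hermitian, so $\nabla:=\nabla^{\mathbf{A}}+\mathbf{B}=\mathring{\nabla}+2\mathbf{A}^{1,0}$ is still $\fh$-compatible, is a genuine connection (since $\mathbf{A}^{1,0}$ is $\sC^{\infty}$-linear), and has $(0,1)$-part $\mathring{\nabla}^{0,1}+(2\mathbf{A}^{1,0})^{0,1}=\dbar$. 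This $\nabla$ is the BLS Chern connection.

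\emph{Main obstacle.} There is no serious analytic difficulty once Proposition~\ref{metric-compatible-connection-prop} is invoked as a black box: the Riesz-representation estimate \eqref{rrt-cty} and the smoothness of the representatives are already packaged there. The one point demanding care is the bidegree bookkeeping at a point $t$, carried out in the complexified cotangent space — in particular the identity $(\mathbf{A}^{1,0})^{\dagger}=\mathbf{A}^{0,1}$ (in each direction), which rests entirely on $\fh$ being conjugate-linear in its second argument so that pairing a $(1,0)$-form-valued endomorphism in the first slot yields a $(1,0)$-form but in the second slot yields a $(0,1)$-form. A more hands-on alternative avoids Proposition~\ref{metric-compatible-connection-prop}: seek $\nabla$ directly in the form $\nabla^{0,1}=\dbar$, $\nabla^{1,0}=\mathring{\nabla}^{1,0}+\mathbf{A}$, impose only the $(0,1)$-part of $\nabla\fh=0$ (the $(1,0)$-part then following from the Hermitian symmetry $\overline{(\nabla\fh)(\ff_1,\ff_2)}=(\nabla\fh)(\ff_2,\ff_1)$, which swaps bidegrees), and solve for $\mathbf{A}$ by Riesz representation exactly as in the proof of Proposition~\ref{metric-compatible-connection-prop}; this is essentially the same argument, just unpacked.
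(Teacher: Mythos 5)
Your proof is correct, and your primary existence argument takes a genuinely different (if closely related) route from the paper's. You black-box Proposition \ref{metric-compatible-connection-prop} to obtain \emph{some} $\fh$-compatible connection $\nabla^{\mathbf{A}}=\mathring{\nabla}+\mathbf{A}$ with $\mathbf{A}$ tensorial and $\fh$-self-adjoint, and then repair its $(0,1)$-part by adding the skew-Hermitian correction $\mathbf{B}=\mathbf{A}^{1,0}-\mathbf{A}^{0,1}$; the bidegree-separated self-adjointness identity $\fh(\mathbf{A}^{1,0}\ff,\fg)=\fh(\ff,\mathbf{A}^{0,1}\fg)$ is exactly what makes $\nabla=\nabla^{\mathbf{A}}+\mathbf{B}=\mathring{\nabla}+2\mathbf{A}^{1,0}$ still metric-compatible while forcing $\nabla^{0,1}=\dbar$. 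The paper instead constructs a purely $(1,0)$-form-valued $\mathbf{A}$ in one step, by Riesz representation of the bounded functional $\fg\mapsto \di_{\zeta}(\fh(\ff,\fg))-\fh(\mathring{\nabla}^{1,0}_{\zeta}\ff,\fg)-\fh(\ff,\dbar_{\bar\zeta}\fg)$, and verifies $\nabla\fh=0$ by adding the conjugate identity --- which is precisely the ``hands-on alternative'' you sketch at the end, up to the immaterial choice of which bidegree component of $\nabla\fh=0$ is imposed first. Your route buys a clean separation of concerns (all the continuity/Riesz work is quarantined in Proposition \ref{metric-compatible-connection-prop}), at the mild cost of needing the complexified form of that proposition (cf. the footnote to Definition \ref{general-dbar-and-holo-defn}, which replaces $\xi\in T_{B,t}$ by $\xi\in T_{B,t}\tensor\bbC$) so that the decomposition $\mathbf{A}=\mathbf{A}^{1,0}+\mathbf{A}^{0,1}$ makes sense; one can also check that your $2\mathbf{A}^{1,0}$ coincides with the paper's correction term, since $\fh(\mathbf{A}\ff,\fg)=\tfrac12\mathring{\nabla}\fh(\ff,\fg)$ there. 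Your uniqueness argument (bidegree separation of $\fh(\mathbf{B}\ff,\fg)+\fh(\ff,\mathbf{B}\fg)=0$ followed by density of $\sC^{\infty}(\fH)_t(t)$ in $\fH_t$) is sound and equivalent to the paper's, which instead records the connection-independent formula $\fh(\nabla^{1,0}\fb_1,\fb_2)=\di(\fh(\fb_1,\fb_2))-\fh(\fb_1,\dbar\fb_2)$ and reads off uniqueness from it.
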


\begin{proof}
Let $\mathring{\nabla} = \mathring{\nabla}^{1,0} + \dbar$ be the reference connection of the underlying quasi-Hilbert field.  We need to find a linear map of sheaves $\mathbf{A} : \sC^{\infty} (\fH) \to \sC^{\infty} (T^{*1,0} _B \tensor \fH)$ such that the connection $\nabla ^{\mathbf{A}} := \mathring{\nabla} + \mathbf{A}$ is metric compatible, i.e., satisfies $\nabla _{\mathbf{A}}\fh= 0$.  Our assumption is that for each $\xi \in T_{B,t} \tensor \bbC$ and each $\ff \in \sC^{\infty} (\fH)$ the linear functional $(\mathring{\nabla} _{\xi} \fh)(\ff , \cdot)$ is bounded.  In particular, if we take $\xi \in T^{1,0} _{B,t}$, we find that the linear functional $(\mathring{\nabla} _{\xi} \fh)(\ff , \cdot)$ defined by 
\[
(\mathring{\nabla} _{\xi} \fh)(\ff , \fg) = \di _{\xi} (\fh (\ff,\fg)) - \fh (\mathring{\nabla} ^{1,0}_{\xi}  \ff, \fg) - \fh (\ff, \dbar _{\bar \xi} \fg)
\]
is bounded.  Hence there exists a linear map of sheaves $\mathbf{A} : \sC^{\infty} (\fH) \to \sC^{\infty} (T^{*1,0} _B \tensor \fH)$ such that
\[
\di _{\zeta} (\fh (\ff,\fg)) - \fh (\mathring{\nabla} ^{1,0}_{\zeta}  \ff, \fg) - \fh (\ff, \dbar _{\bar \zeta} \fg)= \fh (\left < \mathbf{A}, \zeta\right >\ff, \fg) \quad \text{ for all }\zeta \in T^{1,0} _{B,t}.
\]
Letting $\nabla := \mathring{\nabla}^{1,0} + \mathbf{A} + \dbar$, for $T_{B,t} \ni \xi = \eta + \bar \eta \in T_{B,t}^{1,0} \oplus T_{B,t}^{0,1} = T_{B,t} \tensor \bbC$ and $\ff, \fg \in \sC^{\infty} (\fH)_t$ we compute that 
\begin{eqnarray*}
\nabla_{\xi} \fh (\ff, \fg) &=& d_{\xi} (\fh (\ff,\fg)) - \fh (\nabla _{\xi}  \ff, \fg) - \fh (\ff,\nabla _{\xi}\fg)  \\
&=& \di _{\zeta} (\fh (\ff,\fg)) + \dbar _{\bar \zeta} (\fh (\ff,\fg)) - \fh (\nabla ^{1,0} _{\zeta}  \ff, \fg) - \fh (\dbar _{\bar \zeta}  \ff, \fg) - \fh (\ff,\nabla ^{1,0}_{\zeta} \fg) - \fh (\ff,\dbar_{\bar \zeta} \fg) \\
&=& \di _{\zeta} (\fh (\ff,\fg)) - \fh (\nabla ^{1,0} _{\zeta}  \ff, \fg)  - \fh (\ff,\dbar_{\bar \zeta} \fg)  + \overline{\di _{\zeta} (\fh (\fg,\ff)) - \fh (\nabla ^{1,0}_{\zeta} \fg, \ff) - \fh (\fg, \dbar _{\bar \zeta}  \ff)}  \\
&=& \di _{\zeta} (\fh (\ff,\fg)) - \fh (\mathring{\nabla} ^{1,0} _{\zeta}  \ff, \fg)  - \fh (\ff,\dbar_{\bar \zeta} \fg) - \fh \left  (\left < \mathbf{A}, \zeta \right > \ff, \fg\right )   \\
&&\qquad + \overline{\di _{\zeta} (\fh (\fg,\ff)) - \fh (\nabla ^{1,0}_{\zeta} \fg, \ff) - \fh (\fg, \dbar _{\bar \zeta}  \ff) - \fh \left  (\left < \mathbf{A}, \zeta \right > \fg, \ff\right )} \\
=0.
\end{eqnarray*}
Therefore $\nabla := \mathring{\nabla} + \mathbf{A}$ is the Chern connection.  Moreover, we have shown that 
\begin{equation}\label{equation-for-nabla-10}
\fh (\nabla^{1,0} \fb _1 , \fb _2) = \di (\fh (\fb _1, \fb _2)) - \fh (\fb _1, \dbar \fb _2)
\end{equation}
on all smooth $\fH$-valued forms.  Since the right hand side does not depend on the connection, we see that the Chern connection is unique.
\end{proof}

\begin{rmk}
In view of the uniqueness of the Chern connection, we shall often refer to the BLS field $(\fH, \fh)$ and omit explicit reference to the connection.  Often we even omit reference to the metric when the latter is clear from the context. 
\red
\end{rmk}

\begin{prop}\label{chern-curvature-is-11}
The curvature $\Theta (\fh)$ of the Chern connection of a BLS field $(\fH, \fh,\dbar) \to B$ is a smooth ${\rm Lin}(\fH, \fH)$-valued $2$-form that is $\fh$-anti-Hermitian.  Moreover, if $\dbar \dbar = 0$ then the curvature is a ${\rm Lin}(\fH, \fH)$-valued $(1,1)$-form.
\end{prop}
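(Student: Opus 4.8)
The plan is to verify, in order, that $\Theta(\fh)$ is a smooth $2$-form with values in ${\rm Lin}(\fH,\fH)$, that it is $\fh$-anti-Hermitian, and that its $(2,0)$- and $(0,2)$-components vanish when $\dbar\dbar=0$; throughout, the only two inputs are metric compatibility $\nabla\fh=0$ of the Chern connection and the density of $\sC^\infty(\fH)_t$ in $\fH_t$. For the first point: since the Chern connection $\nabla$ maps $\sC^\infty(\fH)$ into $\sC^\infty(\Lambda^1_B\tensor\fH)$, its iterate $\nabla\nabla$ maps $\sC^\infty(\fH)$ into $\sC^\infty(\Lambda^2_B\tensor\fH)$, so $\Theta(\fh)=\Theta(\nabla)$ is a smooth $\fH$-valued $2$-form, equivalently a $\sC^\infty$-linear map of sheaves. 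Because $\nabla$ is $\fh$-compatible, Proposition \ref{metric=>tensorial} applies and shows $\Theta(\nabla)$ is tensorial; by the remark following the definition of tensorial maps, a tensorial $\sC^\infty$-linear map is induced by a genuine fiberwise-linear map of quasi-Hilbert fields, so $\Theta(\fh)\in\Gamma(B,\sC^\infty(\Lambda^2_B\tensor{\rm Lin}(\fH,\fH)))$.

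For the anti-Hermitian property, I would differentiate the identity $\xi(\fh(\ff,\fg))=\fh(\nabla_\xi\ff,\fg)+\fh(\ff,\nabla_{\bar\xi}\fg)$ (the complexified form of metric compatibility) a second time, apply the operator $\xi\eta-\eta\xi-[\xi,\eta]$, and use that it annihilates every function. The ``cross terms'' $\fh(\nabla_\xi\ff,\nabla_{\bar\eta}\fg)$ are symmetric in $\xi,\eta$ and cancel, leaving
\[
\fh\big(\langle\Theta(\fh),\xi\wedge\eta\rangle\ff,\fg\big)=-\fh\big(\ff,\langle\Theta(\fh),\bar\xi\wedge\bar\eta\rangle\fg\big)
\]
for all $t$, all complex $\xi,\eta\in T_{B,t}\tensor\bbC$ and all $\ff,\fg\in\sC^\infty(\fH)_t$, i.e. $\langle\Theta(\fh),\xi\wedge\eta\rangle^\dagger=-\langle\Theta(\fh),\bar\xi\wedge\bar\eta\rangle$, which is exactly anti-Hermiticity for a curvature $2$-form (and is the relation already invoked in the proof of Proposition \ref{metric=>tensorial}).

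For the type decomposition when $\dbar\dbar=0$: writing $\nabla=\nabla^{1,0}+\dbar$ and expanding $\Theta(\fh)=\nabla\nabla$, the $(2,0)$-part is $\nabla^{1,0}\nabla^{1,0}$, the $(1,1)$-part is $\nabla^{1,0}\dbar+\dbar\nabla^{1,0}$, and the $(0,2)$-part is $\dbar\dbar$, which is zero by hypothesis. To kill the $(2,0)$-part, I would specialize the displayed identity to $\xi=\sigma$, $\eta=\tau$ both of type $(1,0)$: contracting a $(2,0)$-bivector against $\Theta(\fh)$ picks out only its $(2,0)$-component, while $\bar\sigma\wedge\bar\tau$ picks out only the $(0,2)$-component, so the identity becomes $\fh(\langle\Theta^{2,0},\sigma\wedge\tau\rangle\ff,\fg)=-\fh(\ff,\langle\Theta^{0,2},\bar\sigma\wedge\bar\tau\rangle\fg)=-\fh(\ff,\dbar\dbar\fg\ \text{-term})=0$. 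Thus $\fh(\langle\Theta^{2,0},\sigma\wedge\tau\rangle\ff,\fg)$ vanishes for every smooth $\fg$; since $\{\fg(t):\fg\in\sC^\infty(\fH)_t\}$ is dense in $\fH_t$, this forces $\langle\Theta^{2,0},\sigma\wedge\tau\rangle\ff=0$ for all $\sigma,\tau,\ff$, i.e. $\Theta^{2,0}=0$. With $\Theta^{0,2}=\dbar\dbar=0$ as well, $\Theta(\fh)$ is of pure bidegree $(1,1)$.

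I expect the only real friction to be bookkeeping rather than conceptual: keeping the sesquilinearity of $\fh$ and the conjugations straight when the connection is complexified (so that the curvature identity comes out as $\langle\Theta,\xi\wedge\eta\rangle^\dagger=-\langle\Theta,\bar\xi\wedge\bar\eta\rangle$ and not a naive transpose), and making sure the contraction of $\Theta(\fh)$ against $(2,0)$- versus $(0,2)$-bivectors isolates the components as claimed. The step passing from ``vanishes against every smooth $\fg$'' to ``the operator is zero'' is the same density-plus-tensoriality argument already used in Propositions \ref{fd-BLS-dim-lsc} and \ref{metric=>tensorial}, and presents no new difficulty. Alternatively, one could avoid the abstract anti-Hermiticity argument in the last step by computing $\partial\partial\,\fh(\ff,\fg)=0$ and expanding it via the form version of identity \eqref{equation-for-nabla-10}, which telescopes to $\fh(\nabla^{1,0}\nabla^{1,0}\ff,\fg)+\fh(\ff,\dbar\dbar\fg)=0$; this is equivalent but requires tracking the Koszul signs in \eqref{equation-for-nabla-10} on forms of positive degree.
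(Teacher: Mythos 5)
Your proposal is correct and follows essentially the same route as the paper: tensoriality via Proposition \ref{metric=>tensorial}, anti-Hermiticity by differentiating the metric-compatibility identity twice (the paper phrases this as $dd\,\fh(\fb_1,\fb_2)=0$ on forms, you phrase it via $\xi\eta-\eta\xi-[\xi,\eta]$ acting on functions — these are the same computation, with the same cancellation of cross terms), and then the observation that the $(2,0)$-component is the $\fh$-adjoint of the $(0,2)$-component $\dbar\dbar=0$, plus density to conclude it vanishes as an operator. No gaps.
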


\begin{proof}
By Proposition \ref{metric=>tensorial} $\Theta (\fh)$ is tensorial.  If $\fb _1$ and $\fb _2$ are smooth $k$-forms then 
\begin{eqnarray*}
0 &=& dd \fh (\fb _1, \fb _2) = d \left ( \fh (\nabla \fb _1, \fb _2) + (-1) ^{k} \fh (\fb _1,\nabla \fb _2) \right )\\
&=& \fh (\Theta (\fh) \fb _1, \fb _2) + (-1)^{k+1} \fh (\nabla \fb _1 , \nabla \fb _2) + (-1) ^{k} \fh (\nabla \fb_1, \nabla \fb _2) + \fh (\fb _1 ,\Theta (\fh) \fb _2)\\
&=& \fh (\Theta (\fh) \fb _1, \fb _2) + \fh (\fb _1 ,\Theta (\fh) \fb _2),
\end{eqnarray*}
so every metric-compatible connection has anti-Hermitian curvature.  In particular, the $(0,2)$-component of a metric-compatible connection is the Hermitian conjugate of the $(2,0)$-component, so if also $\dbar \dbar = 0$ then $\nabla ^{1,0}\nabla ^{1,0}= 0$. 
\end{proof}

\subsection{Smooth (Quasi-)Hilbert Subfields}

\begin{defn} \label{smooth-subfield-defn}
Let $(\fL, \fh ^{\fL}) \to B$ be a smooth Hilbert field.  
\begin{enumerate}
\item[i.] A smooth Hilbert subfield of $(\fL, \fh ^{\fL})$ is a smooth Hilbert field $(\fH ,\fh ^{\fH})$ such that 
\begin{enumerate}
\item[a)] $\fH _t$ is a closed subspace of $\fL _t$ for every $t \in B$, 
\item[b)] $\sC^{\infty} (\fH) = \{ \ff \in  \sC ^{\infty} (\fL))\ ;\ \ff (t) \in \fH _t \text{ for all } t \in {\rm Domain}(\ff)\}$, and 
\item[c)] $\fh ^{\fL} |_{\fH} = \fh ^{\fH}$.
\end{enumerate}
\item[ii.] To any smooth Hilbert subfield $\fH \subset \fL$ there is associated the \emph{orthogonal projector}: the unique morphism $P = P _{\fH} : \fL \to \fH$ defined by fiberwise orthogonal projection.
\item[iii.] We say that the smooth Hilbert subfield $\fH \subset \fL$ is \emph{regular} if $P_{\fH} : \fL \to \fH$ is smooth.
\end{enumerate}
\end{defn}

\begin{rmk}
A smooth quasi-Hilbert subfield $\fH \subset \fL$ is a smooth quasi-Hilbert field that satisfies (i.a) and (i.b) of Definition \ref{smooth-subfield-defn}.
\red
\end{rmk} 

\begin{rmk}
To any smooth Hilbert subfield $\fH \subset \fL$ one can associate its orthogonal complement $\fH ^{\perp}$.  In general $\fH ^{\perp}$ need not be a smooth Hilbert subfield of $\fL$, but this is certainly the case if $\fH$ is regular.
\red
\end{rmk}

Let $\fL \to B$ be a smooth (quasi-)Hilbert and $\fH \subset \fL$ a smooth (quasi-)Hilbert subfield.  Suppose both fields are equipped with connections $\nabla ^{\fL}$ and $\nabla ^{\fH}$.  One can consider the map
\[
\mathbf{N} ^{\fL/\fH} := \nabla ^{\fL}|_{\fH} - \nabla ^{\fH} : \sC^{\infty} (\fH) \to \sC^{\infty} (\Lambda ^1 _B \tensor \fL),
\]
which depends of course on the choice of connections.  One computes that 
\[
\mathbf{N} ^{\fL/\fH} (f\ff)= \nabla ^{\fL} (f\ff) - \nabla ^{\fH} (f\ff) = df \tensor \ff + f \nabla ^{\fL} \ff - (df \tensor \ff +f \nabla ^{\fH} \ff) = f \mathbf{N} ^{\fL/\fH} \ff \quad \text{ for all }\ff \in \sC^{\infty} (\fH),
\]
which shows that $\mathbf{N} ^{\fL/\fH}$ is a linear map of sheaves.  

\begin{defn}\label{sff-bls-defn}
The linear map of sheaves $\mathbf{N} ^{\fL/\fH}$ is called the \emph{second fundamental map} of the (quasi)-Hilbert fields with connections.
\end{defn}

\begin{prop}\label{sff-form-prop}
If the connections $\nabla ^{\fL}$ and $\nabla ^{\fH}$ are metric-compatible then the map $\mathbf{N} ^{\fL/\fH}$ is tensorial.
\end{prop}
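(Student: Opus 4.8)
The plan is to mimic the proof of Proposition \ref{metric=>tensorial}: produce an antisymmetry identity for $\mathbf{N}^{\fL/\fH}$, then use density of smooth sections in the fibers. The one genuinely new feature is that $\mathbf{N}^{\fL/\fH}$ is valued in $\Lambda^1_B\tensor\fL$ rather than in $\Lambda^1_B\tensor\fH$, so orthogonality to $\fH_t$ no longer forces vanishing by itself.

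First I would note that $\mathbf{N}^{\fL/\fH}$ is already known to be a $\sC^\infty$-linear map of sheaves (the computation preceding the statement), so only tensoriality at an arbitrary $t\in B$ is at issue. Next I would derive the key identity: for $\ff,\fg\in\sC^\infty(\fH)_t$, applying $\fh^\fL$-compatibility of $\nabla^\fL$ to $\ff,\fg$ regarded as sections of $\fL$ gives $\fh^\fL(\nabla^\fL\ff,\fg)=d\fh^\fL(\ff,\fg)-\fh^\fL(\ff,\nabla^\fL\fg)$; since $\nabla^\fH\ff$ and $\nabla^\fH\fg$ have values in $\fH$ and $\fh^\fH=\fh^\fL|_\fH$, $\fh^\fH$-compatibility of $\nabla^\fH$ gives $\fh^\fL(\nabla^\fH\ff,\fg)=\fh^\fH(\nabla^\fH\ff,\fg)=d\fh^\fL(\ff,\fg)-\fh^\fL(\ff,\nabla^\fH\fg)$. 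Subtracting, the terms $d\fh^\fL(\ff,\fg)$ cancel, leaving the $1$-form identity $\fh^\fL(\mathbf{N}^{\fL/\fH}\ff,\fg)=-\fh^\fL(\ff,\mathbf{N}^{\fL/\fH}\fg)$ on $\sC^\infty(\fH)_t$. If $\ff(t)=0$, evaluating at $t$ yields $\fh^\fL\big((\mathbf{N}^{\fL/\fH}\ff)(t),\fg(t)\big)=0$ for all $\fg\in\sC^\infty(\fH)_t$, so by density of $\sC^\infty(\fH)_t$ in $\fH_t$ one gets $(\mathbf{N}^{\fL/\fH}\ff)(t)\in\Lambda^1_{B,t}\tensor\fH_t^\perp$; in particular the $\fH_t$-component of $(\mathbf{N}^{\fL/\fH}\ff)(t)$ vanishes. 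This part is verbatim the argument of Proposition \ref{metric=>tensorial}.

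Since $(\nabla^\fH\ff)(t)\in\Lambda^1_{B,t}\tensor\fH_t$ automatically, to finish I would show that the $\fH_t^\perp$-component of $(\nabla^\fL\ff)(t)$ is also zero, whence $(\mathbf{N}^{\fL/\fH}\ff)(t)=0$. The natural route: because $\ff$ takes values in $\fH$, the function $\fh^\fL(\ff,\fe)$ vanishes identically for any local section $\fe$ of $\fL$ taking values in $\fH^\perp$; differentiating and using $\fh^\fL$-compatibility of $\nabla^\fL$ together with $\ff(t)=0$ gives $\fh^\fL\big((\nabla^\fL\ff)(t),\fe(t)\big)=d\fh^\fL(\ff,\fe)(t)=0$, and letting $\fe(t)$ sweep out a dense subset of $\fH_t^\perp$ concludes. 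This last step is where I expect the real work: it requires a sufficient supply of smooth sections of $\fL$ valued in $\fH^\perp$, i.e. that $\fH^\perp$ is itself a smooth Hilbert subfield of $\fL$, which holds when $\fH$ is regular (Definition \ref{smooth-subfield-defn}.iii) and which one should arrange, or verify directly, in the situation at hand. The antisymmetry-plus-density argument, which disposes of the $\fH_t$-component, is routine.
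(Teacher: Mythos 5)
Your first two steps are exactly the paper's entire proof: the paper derives the (anti)symmetry identity $\fh(\mathbf{N}^{\fL/\fH}\ff,\fg)=-\fh(\ff,\mathbf{N}^{\fL/\fH}\fg)$ for $\ff,\fg\in\sC^{\infty}(\fH)_t$ and then concludes "by density" that $(\mathbf{N}^{\fL/\fH}\ff)(t)=0$ when $\ff(t)=0$. You are right to be uneasy about that last inference: since $\mathbf{N}^{\fL/\fH}\ff$ takes values in $\Lambda^1_B\tensor\fL$ while the test vectors $\fg(t)$ are only dense in $\fH_t$, the identity by itself yields $(\mathbf{N}^{\fL/\fH}\ff)(t)\in\Lambda^1_{B,t}\tensor\fH_t^{\perp}$, not vanishing. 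Your third step --- pairing $\nabla^{\fL}\ff$ against smooth sections of $\fL$ valued in $\fH^{\perp}$ to show $(\nabla^{\fL}\ff)(t)\in\Lambda^1_{B,t}\tensor\fH_t$, so that $(\mathbf{N}^{\fL/\fH}\ff)(t)$ lies in $\fH_t$ as well as in $\fH_t^{\perp}$ --- is precisely the missing ingredient, and your diagnosis of its cost is also accurate: it requires a sufficient supply of smooth $\fH^{\perp}$-valued sections of $\fL$ (for instance regularity of $\fH$), which is not among the stated hypotheses. This reading is consistent with how the proposition is used later: Example \ref{BLS-is-iBLS} cites this proof as establishing exactly the statement that $\ff(t)=0$ implies $(\nabla^{\fL}\ff)(t)\in\fH_t$ (formality of the tuning $\sC^{\infty}(\fH)$), and in the concrete case $\sH\subset\sL^{\theta}$ that membership is verified by direct computation (Proposition \ref{int-ptwise-holo}.c) rather than by the abstract density argument. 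In short: your proof is correct under the extra hypothesis you name, it contains the paper's argument as its first half, and the additional step you supply is genuinely needed at this level of generality.
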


\begin{proof}
If $\ff , \fg \in \sC^{\infty} (\fH)_t$ and $\ff (t) = 0$ then 
\[
\fh (\mathbf{N} ^{\fL/\fH}\ff, \fg) = \fh ((\nabla ^{\fL} - \nabla ^{\fH})\ff, \fg)= \fh (\ff, (\nabla ^{\fL} - \nabla ^{\fH})\fg),
\]
and the right hand side vanishes at $t$.  Therefore by density $(\mathbf{N} ^{\fL/\fH}\ff)(t) = 0$.
\end{proof}

Therefore one has a densely defined map of Hilbert fields $\two^ {\fL / \fH} : \fH \to \Lambda ^1_B \tensor \fL$ such that 
\[
\mathbf{N}^{\fL/\fH} \ff = \two ^{\fL/\fH}\ff \quad \text{for all } \ff \in \sC^{\infty} (\fH).
\]

\begin{defn}
The map of Hilbert fields $\two ^{\fL/\fH}$  is called the \emph{second fundamental form} of the Hilbert fields with metric-compatible connections $(\fL, \nabla ^{\fL})$ and $(\fH, \nabla ^{\fH})$.
\end{defn}

If $\fH \subset \fL$ is a regular Hilbert subfield then any connection $\nabla ^{\fL}$ for $\fL$ defines a connection $\nabla ^{\fH} := P_{\fH} \nabla ^{\fL}$.  In this case, $\mathbf{N}^{\fL/\fH} = P_{\fH} ^{\perp} \nabla ^{\fL}$.  And if $\nabla ^{\fL}$ is metric-compatible then for any $\ff, \fg \in \sC^{\infty} (\fH)$ one has 
\[
\fh ( \nabla ^{\fH} \ff, \fg) = \fh ( \nabla ^{\fL} \ff, \fg) = d (\fh (\ff, \fg)) - \fh (\ff, \nabla ^{\fL} \fg) = d (\fh (\ff, \fg)) -  \fh (\ff, \nabla ^{\fH} \fg),
\]
so that $\nabla ^{\fH}$ is also metric compatible.  Therefore $\mathbf{N}^{\fL/\fH}$ is tensorial, and one has a second fundamental form $\two^ {\fL / \fH}$, which moreover takes its values in $\Lambda ^1_B \tensor \fH ^{\perp}$.

\begin{rmk}
In the case of our main examples $\sH \subset \sL$ discussed in the introduction, if the Hilbert field $\sH$ is smooth then it is a regular smooth Hilbert subfield of $\sL$.  This fact is a special case of a result of Kodaira and Spencer, to the effect that smoothness of the harmonic projections is equivalent to constancy of the dimensions of the harmonic spaces.  The latter in turn means that the harmonic spaces fit together to form a vector bundle, and in the case of harmonic $(p,0)$-forms, which are automatically holomorphic, this vector bundle is even holomorphic.  Thus for $\sH$, which is the special case $p=n$, Kodaira-Spencer tells us that if $\sH$ is smooth then $\sH$ is a holomorphic vector bundle, and then the projection $P _{\sH}$ is smooth.  
\red
\end{rmk}

\subsection{BLS Subfields and the Gauss-Griffiths Formula}

\begin{defn}\label{bls-subfld-defn}
Let $\fL \to B, \dbar ^{\fL}$ be an almost holomorphic quasi-Hilbert field.
\begin{enumerate}
\item[i.]  An almost holomorphic quasi-Hilbert field $\fH \to B, \dbar ^{\fH}$ is said to be a \emph{subfield} of $\fL$ if
\begin{enumerate}
\item[a)] $\fH_t$ is a closed subspace of $\fL_t$ for every $t \in B$,
\item[b)] $\sC^{\infty} (\fH) = \{ \ff \in  \sC ^{\infty} (\fL))\ ;\ \ff (t) \in \fH _t \text{ for all } t \in {\rm Domain}(\ff)\}$, and 
\item[c)] $\dbar ^{\fL} |_{\sC^{\infty}(\fH)} = \dbar ^{\fH}$.
\end{enumerate}
\item[ii.] A subfield $\fH \to B$ of $\fL \to B$ is said to be a \emph{BLS subfield} of $\fL$ if $P_{\fH} \sC^{\infty}(\fL) \subset \sC^{\infty} (\fH)$.
\end{enumerate}
\end{defn}

\begin{s-rmk}
The inclusions $P_{\fH} \sC^{\infty}(\fL) \subset \sC^{\infty} (\fH) \subset \sC^{\infty} (\fL)$ imply that $\fH \subset \fL$ is a regular subfield and that $P_{\fH} \sC^{\infty}(\fL) =\sC^{\infty} (\fH)$.  In particular, a smooth section of $\fL$ that takes all its values in $\fH$ is a smooth section of $\fH$.

Conversely, if $\fL$ is a BLS field and $\fH \subset \fL$ is a regular subfield such that the subspace $\{ (P_{\fH} \ff)(p) \ ;\ \ff \in \sC^{\infty}(\fL)_p\} \subset \fH _p$ is dense for every $p \in B$ then $\fH \subset \fL$ is a BLS subfield.
\red
\end{s-rmk}

\begin{prop}\label{subbundle-chern-connection-prop}
If $\fH \subset \fL$ is a BLS subfield then the BLS Chern connection for $\fH \to B$ is $\nabla ^{\fH} = P_{\fH} \nabla ^{\fL} |_{\fH}$, where $\nabla ^{\fL} |_{\fH}$ denotes the restriction of the connection $\nabla ^{\fL}$ to sections of $\fH$.
\end{prop}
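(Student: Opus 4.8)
The plan is to exhibit the operator $\tilde\nabla := P_{\fH}\,\nabla^{\fL}|_{\fH}$, i.e.\ $\tilde\nabla\ff := P_{\fH}(\nabla^{\fL}\ff)$ for $\ff\in\sC^{\infty}(\fH)$, as a metric-compatible connection on $\fH$ whose $(0,1)$-part is $\dbar^{\fH}$, and then to invoke the uniqueness of the BLS Chern connection established above to conclude that $\nabla^{\fH}=\tilde\nabla$.

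First I would check that $\tilde\nabla$ is a well-defined connection on $\fH$. By part (i.b) of Definition~\ref{bls-subfld-defn} we have $\sC^{\infty}(\fH)\subset\sC^{\infty}(\fL)$, so $\nabla^{\fL}\ff\in\Gamma(B,\sC^{\infty}(\Lambda^1_B\tensor\fL))$ for $\ff\in\sC^{\infty}(\fH)$; contracting with vector fields and using the BLS-subfield property $P_{\fH}\sC^{\infty}(\fL)\subset\sC^{\infty}(\fH)$ (so that in particular $\fH\subset\fL$ is regular and $P_{\fH}$ is smooth), one gets $\tilde\nabla\ff\in\Gamma(B,\sC^{\infty}(\Lambda^1_B\tensor\fH))$. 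The Leibniz rule $\tilde\nabla(f\ff)=df\tensor\ff+f\tilde\nabla\ff$ follows from the Leibniz rule for $\nabla^{\fL}$ together with $P_{\fH}\ff=\ff$, and additivity is clear; thus $\tilde\nabla$ is a connection on $\fH$ (this is precisely the construction recorded in the paragraph following Proposition~\ref{sff-form-prop}). Taking the induced metric $\fh^{\fH}=\fh^{\fL}|_{\fH}$, metric compatibility is immediate: since $P_{\fH}$ is the $\fh^{\fL}$-orthogonal projector it is self-adjoint and restricts to the identity on $\fH$, so for $\ff_1,\ff_2\in\sC^{\infty}(\fH)$ one has $\fh^{\fH}(\tilde\nabla\ff_1,\ff_2)=\fh^{\fL}(P_{\fH}\nabla^{\fL}\ff_1,\ff_2)=\fh^{\fL}(\nabla^{\fL}\ff_1,\ff_2)$ and likewise with $\ff_1,\ff_2$ interchanged, whence $\tilde\nabla\fh^{\fH}=0$ because $\nabla^{\fL}\fh^{\fL}=0$. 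In particular \eqref{rrt-cty} holds trivially for $(\fH,\fh^{\fH})$ with reference connection $\tilde\nabla$, so $\fH$ is a smooth Hilbert field; combined with the computation of $\tilde\nabla^{0,1}$ below this shows $\fH$ is a BLS field, which makes ``the BLS Chern connection for $\fH$'' meaningful.

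The key step is the identity $\tilde\nabla^{0,1}=\dbar^{\fH}$. For $\ff\in\sC^{\infty}(\fH)$, part (i.c) of Definition~\ref{bls-subfld-defn} gives $\dbar^{\fL}\ff=\dbar^{\fH}\ff$, which takes its values in $\Lambda^{0,1}_B\tensor\fH$; hence $\tilde\nabla^{0,1}\ff=P_{\fH}\big((\nabla^{\fL})^{0,1}\ff\big)=P_{\fH}(\dbar^{\fL}\ff)=P_{\fH}(\dbar^{\fH}\ff)=\dbar^{\fH}\ff$. Thus $\tilde\nabla$ is a metric-compatible connection on the BLS field $(\fH,\fh^{\fH},\dbar^{\fH})$ with $\tilde\nabla^{0,1}=\dbar^{\fH}$, and by the uniqueness clause of the preceding proposition it must be the BLS Chern connection, i.e.\ $\nabla^{\fH}=\tilde\nabla=P_{\fH}\nabla^{\fL}|_{\fH}$.

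I expect the only genuinely non-formal point to be the first one --- that $\tilde\nabla$ actually lands in smooth $\fH$-valued forms rather than merely smooth $\fL$-valued ones --- which is exactly where the defining condition $P_{\fH}\sC^{\infty}(\fL)\subset\sC^{\infty}(\fH)$ of a BLS subfield (equivalently, regularity of $\fH\subset\fL$) is used; everything after that is bookkeeping with the orthogonal projector and the definitions of the BLS structure on $\fH$.
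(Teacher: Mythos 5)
Your proposal is correct and follows essentially the same route as the paper: the paper verifies in one line that $P_{\fH}\nabla^{\fL 1,0}$ satisfies the characterizing identity $\fh(\nabla^{1,0}\ff_1,\ff_2)=\di\fh(\ff_1,\ff_2)-\fh(\ff_1,\dbar\ff_2)$ (using $\fh^{\fH}=\fh^{\fL}|_{\fH}$, $\dbar^{\fL}|_{\fH}=\dbar^{\fH}$, and self-adjointness of $P_{\fH}$) and invokes uniqueness of the Chern connection, which is exactly the content of your separate verifications of metric compatibility and of $\tilde\nabla^{0,1}=\dbar^{\fH}$. Your extra care about $\tilde\nabla$ landing in smooth $\fH$-valued forms via $P_{\fH}\sC^{\infty}(\fL)\subset\sC^{\infty}(\fH)$ is a worthwhile point the paper leaves implicit.
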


\begin{proof}
If $\ff _1, \ff _2 \in \Gamma (B,\fH)$ then 
\begin{eqnarray*}
\di \fh ^{\fH}(\ff _1, \ff _2) - \fh ^{\fH} ( \ff _1, \dbar ^{\fH} \ff _2) &=& \di \fh ^{\fL}(\ff _1, \ff _2) - \fh ^{\fL} ( \ff _1, \dbar ^{\fL} \ff _2)\\
&=& \fh ^{\fL} (\nabla ^{\fL 1,0}\ff _1, \ff _2)  = \fh ^{\fL} (P_{\fH}  \nabla ^{\fL 1,0}\ff _1, \ff _2) = \fh ^{\fH} (P_{\fH}  \nabla ^{\fL 1,0}\ff _1, \ff _2), 
\end{eqnarray*}
and since \eqref{equation-for-nabla-10} uniquely determines the Chern connection, the result follows.  
\end{proof}

By property (c) of Definition \ref{bls-subfld-defn} one also sees that 
\[
\mathbf{N}^{\fL /\fH} = P^{\perp}_{\fH} \nabla ^{\fL 1,0}|_{\fH}.
\]
Consequently the second fundamental map $\mathbf{N}^{\fL / \fH}$ of a BLS subfield $\fH \subset \fL$ is tensorial, and its associated second fundamental form $\two^{\fL/\fH}$ takes values in $\Lambda ^{1,0}_B \tensor \fH ^{\perp}$.

\begin{prop}[Gauss-Griffiths Formula]\label{gg-formula}
Let $\fL$ be a BLS field and let $\fH \subset \fL$ be a BLS subfield.
\begin{enumerate}
\item[{\rm a.}]  For any smooth sections $\ff_1, \ff_2 \in \Gamma (B, \sC^{\infty}(\fH))$ and any $(1,0)$-vectors $\xi , \eta \in T^{1,0}_{B, t}$, $t \in B$ one has 
\begin{equation}\label{gauss-griffiths-bls-eval}
\fh(\Theta (\nabla ^{\fL}) (\xi, \bar \eta) \ff_1 , \ff_2) = \fh(\Theta (\nabla ^{\fH})(\xi, \bar \eta)\ff_1, \ff_2) + \fh  (\two ^{\fL/\fH}(\xi) \ff_1 , \two ^{\fL/\fH}(\eta) \ff_2)
\end{equation}
\item[{\rm b.}] If moreover the quadratic form
\[
\bbG _{\xi \bar \eta} (f_1, f_2) := \fh  (\two ^{\fL/\fH}(\xi) f_1 , \two ^{\fL/\fH}(\eta) f_2)
\]
defines a bounded conjugate-linear functional $\bbG _{\xi, \bar \eta} (f, \cdot )$ for every $t \in B$, every $f \in \fH_t$ and every pair of $(1,0)$-vectors $\xi , \eta \in T^{1,0}_{B, t}$, then 
\begin{equation}\label{gauss-griffiths-bls-forms}
\Theta (\nabla ^{\fL})|_{\fH} = \Theta (\nabla ^{\fH}) - (\two ^{\fL/ \fH})^{\dagger} \wedge \two ^{\fL/ \fH}.
\end{equation}
\end{enumerate}
\end{prop}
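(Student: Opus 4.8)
The plan is to prove the Gauss--Griffiths formula exactly as in the classical finite-rank case, but being careful to use only the BLS-field structures that are available. The starting point is Proposition \ref{subbundle-chern-connection-prop}, which tells us that the Chern connection of $\fH$ is $\nabla^{\fH} = P_{\fH} \nabla^{\fL}|_{\fH}$, so that the second fundamental map is $\mathbf{N}^{\fL/\fH} = P_{\fH}^{\perp} \nabla^{\fL}|_{\fH} = \nabla^{\fL}|_{\fH} - \nabla^{\fH}$, which is tensorial and takes values in $\Lambda^{1,0}_B \tensor \fH^{\perp}$.

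\medskip

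For part (a), I would fix smooth sections $\ff_1, \ff_2 \in \Gamma(B, \sC^{\infty}(\fH))$, write $\nabla^{\fL} = \nabla^{\fH} + \mathbf{N}$ with $\mathbf{N} = \mathbf{N}^{\fL/\fH}$, and compute the curvature $\Theta(\nabla^{\fL})$ by expanding $(\nabla^{\fH} + \mathbf{N})(\nabla^{\fH} + \mathbf{N})$ on smooth forms. This gives the usual four terms: $\Theta(\nabla^{\fH}) + \nabla^{\fH}\mathbf{N} + \mathbf{N}\nabla^{\fH} + \mathbf{N}\wedge\mathbf{N}$, where $\mathbf{N}\wedge\mathbf{N}$ denotes the composition $\mathbf{N}\circ\mathbf{N}$ with exterior multiplication of the form parts. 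Since $\mathbf{N}$ has values in $\fH^{\perp}$ while $\ff_1 \in \fH$, the term $\mathbf{N}\circ\mathbf{N}$ contributes nothing when we subsequently pair $\Theta(\nabla^{\fL})\ff_1$ against $\ff_2 \in \fH$ and use $\fH \perp \fH^{\perp}$. The cross terms $\nabla^{\fH}\mathbf{N} + \mathbf{N}\nabla^{\fH}$ also pair to zero against $\ff_2$: one rewrites $\fh(\mathbf{N}\nabla^{\fH}\ff_1, \ff_2) = 0$ since $\mathbf{N}$ lands in $\fH^{\perp}$, and $\fh(\nabla^{\fH}\mathbf{N}\ff_1, \ff_2)$ is handled by moving $\nabla^{\fH}$ onto the pairing via metric-compatibility of $\nabla^{\fH}$ and then using that $\mathbf{N}\ff_1 \in \fH^{\perp}$ kills $\fh(\mathbf{N}\ff_1, \nabla^{\fH}\ff_2)$ — here one must keep track of the $(1,0)$ versus $(0,1)$ bidegrees, which is precisely why \eqref{gauss-griffiths-bls-eval} involves the $(1,1)$-component $\Theta(\xi,\bar\eta)$: on the $(1,1)$-part, $\mathbf{N}$ (being of type $(1,0)$) and its adjoint $\mathbf{N}^{\dagger}$ (type $(0,1)$) recombine to give exactly $\fh(\two^{\fL/\fH}(\xi)\ff_1, \two^{\fL/\fH}(\eta)\ff_2)$. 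Evaluating everything at $t$ and invoking tensoriality to pass from sections to fiber values yields \eqref{gauss-griffiths-bls-eval}.

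\medskip

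For part (b), the point is to upgrade the pointwise scalar identity \eqref{gauss-griffiths-bls-eval} to an operator identity \eqref{gauss-griffiths-bls-forms} between $\mathrm{Lin}(\fH,\fH)$-valued $2$-forms. The curvatures $\Theta(\nabla^{\fL})|_{\fH}$ and $\Theta(\nabla^{\fH})$ are already tensorial by Proposition \ref{metric=>tensorial}, hence represented by genuine maps of Hilbert fields; the issue is only the term $(\two^{\fL/\fH})^{\dagger}\wedge\two^{\fL/\fH}$, whose fiberwise meaning requires that the conjugate-linear functional $\bbG_{\xi\bar\eta}(f, \cdot)$ be bounded so that the Riesz representation theorem provides a genuine operator on $\fH_t$ representing it — this is exactly the hypothesis imposed. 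Granting that, both sides of \eqref{gauss-griffiths-bls-forms} are well-defined $\mathrm{Lin}(\fH,\fH)$-valued $(1,1)$-forms, and \eqref{gauss-griffiths-bls-eval} asserts that they agree after pairing against $\ff_1, \ff_2$ in the dense set of smooth-section values and against arbitrary $(1,0)$-vectors $\xi, \eta$; since a bounded operator is determined by its matrix coefficients on a dense set, the operator identity follows.

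\medskip

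The main obstacle I anticipate is bookkeeping rather than conceptual: one must carefully separate bidegrees when expanding $(\nabla^{\fH}+\mathbf{N})^2$, since $\mathbf{N}$ is purely of type $(1,0)$ and its Hermitian adjoint $\mathbf{N}^{\dagger}$ (type $(0,1)$) only enters through metric-compatibility when one transposes $\nabla^{\fH}$ across the metric pairing — and it is this transposition that converts the cross terms into $\fh(\two(\xi)\ff_1, \two(\eta)\ff_2)$ with the correct conjugation pattern, giving a non-negative contribution on the diagonal. A secondary subtlety is that smooth sections of $\fH$ need not be defined on all of $B$, so all computations are local around a point $t$ and the passage to fiberwise statements at $t$ relies on tensoriality together with density of $\sC^{\infty}(\fH)_t(t)$ in $\fH_t$; this is routine given the framework already set up, but should be stated explicitly.
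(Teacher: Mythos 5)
Your proposal follows essentially the same route as the paper: decompose $\nabla^{\fL 1,0}|_{\fH} = \nabla^{\fH 1,0} + \two^{\fL/\fH}$, kill by orthogonality the terms that land in $\fH^{\perp}$ and are paired against $\ff_2\in\fH$, and convert $\fh(\dbar(\two^{\fL/\fH}\ff_1),\ff_2)$ into $\fh(\two^{\fL/\fH}\ff_1,\two^{\fL/\fH}\ff_2)$ by differentiating the identically vanishing function $\fh(\two^{\fL/\fH}\ff_1,\ff_2)$; part (b) is the same Riesz/adjoint-domain argument the paper uses. One caution: the cross term does \emph{not} ``pair to zero'' --- as the remainder of your own sentence correctly says, it is exactly the source of the $\fh(\two^{\fL/\fH}(\xi)\ff_1,\two^{\fL/\fH}(\eta)\ff_2)$ contribution --- and the operator applied to $\two^{\fL/\fH}\ff_1\in\fH^{\perp}$ must be $\dbar^{\fL}$ (not $\nabla^{\fH}$, which is undefined there), with metric compatibility of $\nabla^{\fL}$ performing the transposition.
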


\begin{proof}
Fix $\ff_1 , \ff_2 \in \Gamma (B, \sC^{\infty}(\fH))$.  Then by orthogonality 
\begin{eqnarray*}
\fh ( \Theta (\nabla ^{\fL}) \ff_1 , \ff_2)&=& \fh \left (((P_\fH \nabla ^{\fL 1,0} + P^{\perp} _{\fH} \nabla ^{\fL1,0}) \dbar + \dbar (P_\fH \nabla ^{\fL 1,0} + P^{\perp} _{\fH} \nabla ^{\fL1,0}) )\ff_1 , \ff_2\right )\\
&=& \fh ((\nabla ^{\fH 1,0} \dbar + \dbar \nabla ^{\fH 1,0})\ff_1 , \ff_2)  + \fh  (\two ^{\fL/\fH} \dbar + \dbar \two ^{\fL/\fH})\ff_1, \ff_2)\\
&=& \fh ((\nabla ^{\fH 1,0} \dbar + \dbar \nabla ^{\fH 1,0})\ff_1 , \ff_2)  + \fh  (\dbar (\two ^{\fL/\fH}\ff_1), \ff_2).
\end{eqnarray*}
But  
\[
\dbar \fh (\two ^{\fL/\fH} \ff_1 ,\ff_2 ) = \fh (\dbar (\two ^{\fL/\fH} \ff_1) ,\ff_2 ) -  \fh (\two ^{\fL/\fH} \ff_1 ,\nabla ^{\fL 1,0}\ff_2 )= \fh (\dbar (\two ^{\fL/\fH} \ff_1) ,\ff_2 ) -  \fh (\two ^{\fL/\fH} \ff_1 ,P_{\fH} ^{\perp} \nabla ^{\fL 1,0}\ff_2 ),
\]
and since $\fh (\two ^{\fL/\fH} \ff_1 ,\ff_2 ) = 0$, 
\[
\fh (\dbar \two ^{\fL/\fH} \ff_1 ,\ff_2 ) =  \fh (\two ^{\fL/\fH} \ff_1 ,\two ^{\fL/\fH}\ff_2 ).
\]
Thus Formula \eqref{gauss-griffiths-bls-eval} is proved.   

To obtain Formula \eqref{gauss-griffiths-bls-forms} from Formula \eqref{gauss-griffiths-bls-eval} we need to show that the domain of the Hilbert space adjoint of the densely defined operator $\two ^{\fL/\fH}(\eta)$ contains the image of $\two ^{\fL/\fH}(\xi)$.  Now, the domain of $\two ^{\fL/\fH}(\eta)$ consists of all $\gamma \in \fH_t^{\perp}$ such that 
\[
\left | \fh (\gamma, \two ^{\fL/\fH}(\eta)g)\right |^2 \le C_{\gamma,\eta} \fh (g,g) \quad \text{for all }g \in (\sC^{\infty} (\fH) _t) (t).
\]
Since by hypothesis
\[
\left | \bbG _{\xi\bar \eta}( f,g)\right |^2  = \left | \fh (\two ^{\fL/\fH}(\xi)f, \two ^{\fL/\fH}(\eta)g) \right |^2 \lesssim \fh (g,g),
\]
we see that for each $f \in \fH _t$ 
\[
\two ^{\fL/\fH}(\xi)f \in {\rm Domain}( \two ^{\fL/\fH}(\eta)^{\dagger}) \quad \text{and} \quad \fh ( \two ^{\fL/\fH}(\eta)^{\dagger}  \two ^{\fL/\fH}(\xi) f , g) = \fh ( \two ^{\fL/\fH}(\xi) f,  \two ^{\fL/\fH}(\eta)g).
\]
Therefore for each $f \in \fH _t$
\[
\left <  (\two ^{\fL/\fH})^{\dagger} \wedge  \two ^{\fL/\fH} f , \xi \wedge \bar \eta \right > = -  \two ^{\fL/\fH}(\eta)^{\dagger} \two ^{\fL/\fH}(\xi)f,
\]
which shows that \eqref{gauss-griffiths-bls-eval} implies \eqref{gauss-griffiths-bls-forms}.  The proof is complete.
\end{proof}

\begin{rmk}\label{curvature-degree-rmk}
Note that if $\fH \to B$ has an \emph{integrable} structure, i.e., $\dbar ^{\fH}\dbar ^{\fH}= 0$, then the right hand side of \eqref{gauss-griffiths-bls-forms} is a $(1,1)$-form.  Thus, even if $\dbar ^{\fL} \dbar ^{\fL} \neq 0$, the restriction of the curvature of $\fL$ to an integrable BLS subbundle is a $(1,1)$-form.

In general, the complex structure of $\fH$ need not be integrable.  In that case, Proposition \ref{gg-formula} computes only the $(1,1)$-part of the curvature operator of $\fH$.  It is not hard to extend Proposition \ref{gg-formula} to compute the full curvature form of $\fH$.  The part of the formula coming from the form $\bbG$ remains of type $(1,1)$, and the type $(2,0)$ and $(0,2)$ parts come from the curvature form of $\fL$.

In our application, in which $\fH$ is the Hilbert field $\sH$ defined in the introduction, the complex structure of the subfield is always integrable.
\red
\end{rmk} 

\begin{defn}
Let $\fL$ be a BLS field and let $\fH \subset \fL$ be a BLS subfield.
\begin{enumerate}
\item[a.] The quadratic form $\bbG$ defined in Proposition \ref{gg-formula}.b is called the \emph{Gauss-Griffiths form} of the BLS subfield $\fH \subset \fL$.
\item[b.] If the Gauss form satisfies the continuity condition of Proposition \ref{gg-formula}.b then we say that $\fH \subset \fL$ is a \emph{strongly BLS subfield}.
\end{enumerate}
\end{defn}

\subsection{iBLS Fields}\label{iBLS-paragraph}

If $\fL$ is a BLS field and $\fH$ is a BLS subfield then Formula \eqref{gauss-griffiths-bls-forms} is an identity of Hilbert field maps.  Equivalently, Formula \eqref{gauss-griffiths-bls-eval} depends only on the value of the sections $\ff_1,\ff_2$ at a point, and not on higher derivatives of these sections, provided of course that $\ff_1,\ff_2 \in \sC^{\infty}(\fH)_t$.  Similarly, the second fundamental form of a BLS subfield is a pointwise object.

If $\fL \to B$ is a BLS field and $\fH  \subset \fL$ is a Hilbert subfield that is not necessarily BLS, one might try to use the Gauss-Griffiths Formula \eqref{gauss-griffiths-bls-forms} to define the curvature of $\fH \to B$.  There are several issues that arise:
\begin{enumerate}
\item[1.] For the resulting curvature of $\fH$ to be a densely-defined operator, one must know that the set of germs of smooth sections of $\fL$ at $t \in B$ whose value at $t$ lies in $\fH _t$ is dense in $\fH_t$.   
\item[2.] One must define the second fundamental form of $\fH$ in $\fL$ in a manner that does not require a connection for $\fH$.  
\item[3.]\label{third-curv-issue}  If one wishes to use Formula \eqref{gauss-griffiths-bls-forms} to define the curvature of $\fH$ as a tensorial operator then the second fundamental form defined in 2 must be tensorial, as must the wedge product of  the second fundamental form and its Hilbert space adjoint, i.e., the object corresponding to the second quantity on the right hand side of \eqref{gg-formula} must be well-defined.
\item[4.]  Ideally the curvature of $\fH$ should be independent of the ambient BLS field $\fL$. 
\end{enumerate}
We shall introduce a geometric theory of iBLS fields that resolves the first three issues.  Unfortunately, we are unable at present to resolve the fourth issue at the abstract level.  However, in the concrete setting of Berndtsson's iBLS subfield $\sH$, the large collection of ambient BLS fields $\sL^{\theta}$ that we define (in fact, we could not come up with any other examples of ambient BLS fields for $\sH$) all yield the same curvature operator.

\subsubsection*{\sf Substalk bundles}\

\noi For each $t \in B$ let 
\begin{equation}\label{restriction-subspace}
\tilde \fH _{\fL,t} := \{\ff \in \sC^{\infty} (\fL) _t\text{ and } \ff (t) \in \fH_t\} \subset \sC^{\infty} (\fL) _t. 
\end{equation}
The space $\tilde \fH _{\fL,t}$ is a $\bbC$-vector subspace of the stalk $\sC^{\infty} (\fL)_t$.  Define the family $\tilde \fH _{\fL} \to B$ whose fiber over $t$ is $\tilde \fH _{\fL,t}$.  

\begin{s-rmk}
The `family' notation is only introduced for convenience; we give this family no additional structure.  In particular, we note that $\tilde \fH _{\fL}$ is not a sheaf, since in general there is no notion of a smooth local section of $\tilde \fH _{\fL}$.

However, we \emph{can} define the notion of a smooth section of $\tilde \fH _{\fL}$ over $U \subset B$; it is a section $\ff \in \Gamma (U,\sC^{\infty} (\fL))$  such that $\ff (t) \in \fH _t$ for all $t \in U$.
\red
\end{s-rmk}  

\begin{defn}
Let $\fL \to B$ be a smooth quasi-Hilbert field.  We say that $\fH \subset \fL$ is a \emph{bona fide} quasi-Hilbert subfield if for each $t \in B$ the space $\tilde \fH _{\fL, t}(t) := \{ \ff (t)\ ;\ \ff \in \tilde \fH _{\fL, t}\}$ is a dense subspace of $\fH _t$. 
\end{defn}

Bona fide subfields resolve Issue 1 above.  

\begin{defn}\label{substalk-defns}
Let $\fL \to B$ be a BLS field and let $\fH \subset \fL$ be a (not necessarily smooth) Hilbert subfield.  
\begin{enumerate}
\item A substalk bundle $\cS$ of $\sC^{\infty} (\fL)$ is a choice of $\bbC$-vector subspace $\cS_t \subset \sC^{\infty} (\fL)_t$ for each $t \in B$; we write $\cS \subset \tilde \fH_{\fL}$.  We say that $\cS$ is a \emph{tuning} of $\fH$ (in $\fL$), or that $(\fH, \cS)$ is a \emph{tuned} Hilbert subfield.
\item The family $\tilde \fH _{\fL} \to B$   of subspaces of the stalks of $\sC^{\infty} (\fL)$ defined by \eqref{restriction-subspace} is called the maximal substalk bundle of $\fH$.
\item A substalk bundle $\Sigma \subset \tilde \fH_{\fL}$ is said to \emph{formalize $\fH$} if 
\begin{enumerate}
\item $\Sigma _t \subset \tilde \fH _{\fL, t}$ is a dense subspace for every $t$, and 
\item for every $t \in B$, if $\ff \in \Sigma _t$ and $\ff (t) = 0$ then $(\nabla ^{\fL} \ff )(t) \in \fH _t$.
\end{enumerate}
In this case we say that $\Sigma$ is a \emph{formal} tuning, or that $(\fH , \Sigma)$ is formally tuned.
\end{enumerate}
\end{defn}

\begin{ex}\label{BLS-is-iBLS}
If $\fH$ is a BLS subfield of a BLS field $\fL$ then the tuning $\sC^{\infty}(\fH)$ of $\fH$ is formal.  In fact, Property i of Definition \ref{substalk-defns}.c holds because density is part of the BLS hypothesis, and the proof of Proposition \ref{sff-form-prop} gives Property ii.  
\red
\end{ex}

As Example \ref{BLS-is-iBLS} suggests, tunings of $\fH$ in $\fL$ are meant to be `non-integrable' replacements for the non-existent smooth structure $\sC^{\infty} (\fH)$ of $\fH$.

\subsubsection*{\sf Infinitesimal Second Fundamental Form}

\begin{defn}
Let $\fL$ be a BLS field and let $(\fH,\cS)$ be a tuned quasi-Hilbert subfield.  The map 
\[
\mathbf{N} ^{\fL/ (\fH, \cS)} := P_{\fH} ^{\perp} \nabla ^{\fL}|_{\cS}
\]
is called the second fundamental map (of $(\fH, \cS)$ in $\fL$).
\end{defn}

In some sense, $\mathbf{N} ^{\fL / (\fH,\cS)}$ is $\sC^{\infty} _B$-linear.  Indeed, for $f \in \sC^{\infty} _{B,t}$ and $\ff \in \cS$ one has 
\[
P^{\perp} _{\fH} \nabla ( f \ff ) = P^{\perp} _{\fH} (df \tensor \ff + f \nabla \ff ) = fP^{\perp} _{\fH} \nabla \ff.
\]
However, the projection $P^{\perp} _{\fH}$ need not be smooth, so one has to understand this linearity in an appropriate sense.

Obviously $(\mathbf{N} ^{\fL/(\fH, \tilde \fH _{\fL})} \ff)(t) \in \fH _t ^{\perp}$.  However, $\tilde \fH _{\fL}$ is \emph{never} a formal tuning of $\fH$, so the map $\mathbf{N} ^{\fL/(\fH, \tilde \fH _{\fL})}$ is never tensorial\footnote{We are slightly abusing language here, since there is no sheaf $\tilde \fH _{\fL}$ whose stalk at every $t \in B$ is $\tilde \fH _{\fL, t}$.  However, $\tilde \fH _{\fL, t}$ is a $\sC^{\infty}_{B,t}$-module, so $\sC^{\infty}$-linear means $\sC^{\infty}_{B,t}$-linear, and by \emph{tensorial} we mean that if $\ff \in \tilde \fH _{\fL, t}$ and $\ff (t) = 0$ then $(\mathbf{N} ^{\fL/\fH}\ff )(t) = 0$.}; not even if $\fH \subset \fL$ is a BLS subfield.  The latter claim might seem confusing, in view of Proposition \ref{sff-form-prop}.  However,  the second fundamental map $\mathbf{N} ^{\fL/\fH}$ in Proposition \ref{sff-form-prop} corresponds here to the map $\mathbf{N}^{\fL/ (\fH, \sC^{\infty}(\fh))}$, and there is no analogue of the sheaf $\sC^{\infty}(\fH)$ in the present context, since we are not assuming that $\fH$ is a smooth quasi-Hilbert field.

\begin{prop}\label{iBLS-sff-prop}
If $\fL$ is a BLS field and $(\fH, \Sigma)$ is a formally tuned Hilbert subfield then there exists a $(1,0)$-form $\two ^{\fL/(\fH, \Sigma)}$ with values in ${\rm Lin}(\tilde \fH _{\fL}, \fH ^{\perp})$ such that 
\begin{equation}\label{sff-form-prop-iBLS}
(\mathbf{N}^{\fL /(\fH, \Sigma)}\ff )(t) = \two ^{\fL/(\fH, \Sigma)}(t) \ff (t)
\end{equation}
for all $t \in B$ and all $\ff \in \Sigma _{t}$.
\end{prop}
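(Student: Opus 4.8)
The plan is to show that the value $(\mathbf{N}^{\fL/(\fH,\Sigma)}\ff)(t) = (P_{\fH}^{\perp}\nabla^{\fL}\ff)(t)$ depends only on $\ff(t)$, and then invoke density to extend the resulting linear map from $\Sigma_t(t)$ to all of $\tilde\fH_{\fL,t}(t) = \fH_t$. First I would prove the tensoriality statement: suppose $\ff, \fg \in \Sigma_t$ with $\ff(t) = \fg(t)$. Then $\ff - \fg \in \Sigma_t$ (as $\Sigma_t$ is a $\bbC$-vector space, indeed a $\sC^{\infty}_{B,t}$-module) and $(\ff - \fg)(t) = 0$. By condition (ii) in the definition of a formal tuning (Definition \ref{substalk-defns}.c), $(\nabla^{\fL}(\ff - \fg))(t) \in \fH_t$, hence $P_{\fH}^{\perp}$ annihilates it fiberwise at $t$, so $(\mathbf{N}^{\fL/(\fH,\Sigma)}\ff)(t) = (\mathbf{N}^{\fL/(\fH,\Sigma)}\fg)(t)$. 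This lets me define a map $\two^{\fL/(\fH,\Sigma)}(t) : \Sigma_t(t) \to \fH_t^{\perp}$ by $\two^{\fL/(\fH,\Sigma)}(t)\,\ff(t) := (\mathbf{N}^{\fL/(\fH,\Sigma)}\ff)(t)$, well-defined by the above.

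Next I would check that this map is $\bbC$-linear on $\Sigma_t(t)$. Linearity in $\ff$ is clear since $\nabla^{\fL}$ and $P_{\fH}^{\perp}$ are both linear; one must only note that the evaluation $\ff \mapsto \ff(t)$ is a surjection of $\Sigma_t$ onto $\Sigma_t(t)$ whose kernel (sections vanishing at $t$) is exactly where $\mathbf{N}^{\fL/(\fH,\Sigma)}$ takes values in $\fH_t$, so the induced map on the quotient is well-defined and linear. The target is $\fH_t^{\perp}$ because $P_{\fH}^{\perp}$ projects there. Since this is a $(1,0)$-form valued construction — $\nabla^{\fL}|_{\fH}$, and hence $\mathbf{N}^{\fL/(\fH,\Sigma)}$, lands in $\Lambda^{1,0}_B \tensor \fL$ by the BLS structure of $\fL$ and the fact that $\dbar^{\fL}$ preserves $\sC^{\infty}(\fH)$ when restricted appropriately — I would record that $\two^{\fL/(\fH,\Sigma)}(t)$ is valued in $\Lambda^{1,0}_{B,t} \tensor \fH_t^{\perp}$.

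The remaining point — and the one requiring a little care — is the passage from $\Sigma_t(t)$ to $\tilde\fH_{\fL,t}$: the proposition asserts a form with values in $\mathrm{Lin}(\tilde\fH_{\fL}, \fH^{\perp})$, i.e., a densely-defined (not necessarily bounded) fiberwise linear map defined on $\tilde\fH_{\fL,t}(t)$, which by hypothesis is dense in $\fH_t$. Here I should be careful: the formula \eqref{sff-form-prop-iBLS} is only required to hold for $\ff \in \Sigma_t$, so strictly speaking I only need the map defined on $\Sigma_t(t)$; but to realize it as an element of $\mathrm{Lin}(\tilde\fH_{\fL}, \fH^{\perp})$ in the sense of Definition \ref{maps-defn}.a (where fiber restrictions "need not be bounded or even everywhere defined"), it suffices that $\Sigma_t(t)$ be a dense — hence in particular nonempty and linearly "spanning enough" — subspace, which follows from formality condition (i). No boundedness claim is made, so no estimate is needed: I simply declare $\two^{\fL/(\fH,\Sigma)}(t)$ to be the densely-defined linear map on $\Sigma_t(t) \subset \fH_t$ constructed above, assembled over $t \in B$ into a $(1,0)$-form with values in $\mathrm{Lin}(\tilde\fH_{\fL}, \fH^{\perp})$. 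The main obstacle is thus conceptual bookkeeping rather than analysis: keeping straight that "tensorial" and "well-defined on fibers" here mean only the vanishing-implies-vanishing property of Definition \ref{substalk-defns}.c(ii), since there is no sheaf $\tilde\fH_{\fL}$ and hence no genuine $\sC^{\infty}$-module structure to exploit — exactly the subtlety flagged in the footnote preceding the proposition. Once that is acknowledged the proof is a one-line quotient argument.
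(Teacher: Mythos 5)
Your proposal is correct and follows essentially the same route as the paper's (very terse) proof: formality condition (ii) of Definition \ref{substalk-defns}.c gives exactly the tensoriality of $\mathbf{N}^{\fL/(\fH,\Sigma)}$ at each $t$, which makes $\two^{\fL/(\fH,\Sigma)}(t)$ well-defined by \eqref{sff-form-prop-iBLS} on the dense subspace $\Sigma_t(t)$, and no boundedness is required since ${\rm Lin}$ allows densely-defined maps. Your only inessential slip is the parenthetical claim that $\Sigma_t$ is a $\sC^{\infty}_{B,t}$-module (a substalk bundle is only required to be a $\bbC$-subspace), but your argument uses only $\bbC$-linearity, so nothing is affected.
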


\begin{proof}
The formality of $\Sigma$ implies that the linear map of sheaves $\mathbf{N}^{\fL /(\fH, \Sigma)}$ is formal, and thus $\two ^{\fL/(\fH,\Sigma)}(t)$ is well-defined by \eqref{sff-form-prop-iBLS}.
\end{proof}

\begin{rmk}
The name \emph{formal} is thus revealed to be a play on words: if $(\fH, \Sigma)$ is formal then its second fundamental map $\mathbf{N}^{\fL /(\fH, \Sigma)}$ is tensorial, and hence defined by a \emph{form}.
\red
\end{rmk}

\subsubsection*{\sf Infinitesimal (almost) holomorphic subfield}

\begin{defn}
Let $\fL \to B$ be a BLS field, let $\fH \subset \fL$ be a quasi-Hilbert subfield and let $\cS$ be a tuning of $\fH$, i.e., a substalk bundle of $\tilde \fH _{\fL}$.  We say that $(\fH, \cS)$ is an \emph{infinitesimally almost holomorphic subfield} of $\fL$ if for each $t \in B$, each $\tau \in T^{1,0} _{B,t}$ and each $\ff \in \tilde \cS_t$ one has 
\[
\left < \dbar ^{\fL} \ff ,   \bar \tau \right >(t)  \in \fH _t.
\]
If moreover 
\[
\left < \dbar ^{\fL} \dbar ^{\fL} \ff ,   \bar \sigma  \wedge \bar \tau \right > (t)= 0
\]
for each $t \in B$, all $\sigma, \tau \in T^{1,0} _{B,t}$ and all $\ff \in \tilde \cS_t$ then we say that $(\fH, \cS)$ is an \emph{infinitesimally holomorphic subfield} of $\fL$.
\end{defn}


\begin{rmk}
In practice, given a Hilbert subfield $\fH$ of a BLS field $\fL$, one finds that there are always sections $\ff \in \tilde \fH _{\fL, t}$ such that $(\dbar ^{\fL} \ff )(t) \not \in T^{0,1} _{B,t} \tensor \fH _t$.  Thus any useful notion of almost holomorphic subfield must involve a choice of tuning.
\red
\end{rmk}


If the quasi-Hilbert subfield $\fH \subset \fL$ is moreover infinitesimally almost holomorphic, which is the case of primary interest for us, then one trivially has the following proposition.

\begin{prop}\label{infinitesimal-abstract-sff-formula}
Let $\fL$ be a BLS field and let $(\fH,\cS) \subset \fL$ be an infinitesimally almost holomorphic subfield.  Then
\[
\mathbf{N}^{\fL /(\fH,\cS)} = P^{\perp}_{\fH} \nabla ^{\fL 1,0}|_{\cS}.
\]
\end{prop}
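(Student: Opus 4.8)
The plan is to unwind the definitions; there is essentially no content beyond bookkeeping, which is why the statement is advertised as trivial. By definition $\mathbf{N}^{\fL/(\fH,\cS)} = P_{\fH}^{\perp}\nabla^{\fL}|_{\cS}$, and since $\fL$ is a BLS field its Chern connection splits as $\nabla^{\fL} = \nabla^{\fL 1,0} + \nabla^{\fL 0,1}$ with $\nabla^{\fL 0,1} = \dbar^{\fL}$, the given almost complex structure. Applying the morphism $P_{\fH}^{\perp}$ (extended to act fiberwise on $\fL$-valued forms) term by term, I would write
\[
\mathbf{N}^{\fL/(\fH,\cS)} = P_{\fH}^{\perp}\nabla^{\fL 1,0}|_{\cS} + P_{\fH}^{\perp}\dbar^{\fL}|_{\cS},
\]
so it suffices to show that the second summand vanishes identically on $\cS$.

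To do that I would fix $t \in B$ and $\ff \in \cS_t$, so that $\dbar^{\fL}\ff$ is a smooth $\fL$-valued $(0,1)$-form near $t$. For each $\tau \in T^{1,0}_{B,t}$, the defining property of an infinitesimally almost holomorphic subfield gives $\langle \dbar^{\fL}\ff, \bar\tau\rangle(t) \in \fH_t$; hence $P_{\fH}^{\perp}$ annihilates every such contraction. Since a $(0,1)$-form is determined by its pairings with the $\bar\tau$'s, this yields $(P_{\fH}^{\perp}\dbar^{\fL}\ff)(t) = 0$, and as $t$ and $\ff$ were arbitrary, $P_{\fH}^{\perp}\dbar^{\fL}|_{\cS} = 0$. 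Combining this with the displayed splitting gives $\mathbf{N}^{\fL/(\fH,\cS)} = P^{\perp}_{\fH}\nabla^{\fL 1,0}|_{\cS}$, as claimed.

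The one point that merits a line of care — and the only place a reader could trip — is the meaning of $P_{\fH}^{\perp}$ acting on an $\fL$-valued $(0,1)$-form $\alpha$: it is the pointwise action $(P_{\fH}^{\perp}\alpha)(t)\cdot\bar\tau := P^{\perp}_{\fH_t}\big(\alpha(t)\cdot\bar\tau\big)$, exactly as for an ordinary bundle morphism extended to forms. With that reading the computation above is literal, and there is no genuine obstacle; the proposition is just recording the identity in the form convenient for the later curvature computations.
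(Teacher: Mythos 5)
Your argument is correct and is exactly the intended one: the paper states this proposition without proof (``one trivially has\ldots''), and the trivial argument is precisely your decomposition $\nabla^{\fL}=\nabla^{\fL 1,0}+\dbar^{\fL}$ together with the observation that the infinitesimally-almost-holomorphic condition puts each contraction $\left<\dbar^{\fL}\ff,\bar\tau\right>(t)$ into $\fH_t$, where $P^{\perp}_{\fH}$ kills it. Your closing remark about the fiberwise meaning of $P^{\perp}_{\fH}$ on $\fL$-valued forms is the right reading and consistent with how the paper uses it.
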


\subsubsection*{\sf iBLS fields}\

\bigskip

\noi Propositions \ref{iBLS-sff-prop} and \ref{infinitesimal-abstract-sff-formula} take care of the first aspect of issue number 3 discussed on page \pageref{third-curv-issue}.  The second issue, namely that of showing that $(\mathbf{N}^{\fL / \fH}) ^{\dagger} \wedge\mathbf{N}^{\fL/\fH}$ is well-defined and tensorial, is better handled by considering the quadratic form \eqref{gauss-griffiths-bls-forms} rather than the formula \eqref{gauss-griffiths-bls-eval} in Proposition \ref{gg-formula}.  

\begin{defn}
Let $\fL$ be a BLS field, let $\fH \subset \fL$ be a bona fide quasi-Hilbert subfield and let $\Sigma$ be a formal tuning of $\fH$.  The \emph{Gauss-Griffiths form} of $(\fL, \fH, \Sigma)$ is the sesquilinear form $\bbG : \Sigma \times \Sigma \to \sC^{\infty} (\Lambda ^2 _B)$ defined by 
\[
\bbG (\ff, \fg) := \fh \left ( \mathbf{\two} ^{\fL/ (\fH, \Sigma)}\ff, \mathbf{\two} ^{\fL/ (\fH, \Sigma)}\fg \right ).
\]
\end{defn}
Note that the Gauss-Griffiths form applied to $\ff, \fg$ is a complex $2$-form, and its contraction with two vectors $\sigma , \tau \in T_B \tensor \bbC$ is given by 
\[
\bbG_{\sigma \bar \tau}(\ff,\fg) = \fh \left ( \mathbf{\two} ^{\fL/ (\fH, \Sigma)}(\sigma) \ff, \mathbf{\two} ^{\fL/ (\fH, \Sigma)}(\tau)\fg \right ).
\]
Just as we required the continuity \eqref{rrt-cty} of $\mathring{\nabla}\fh$ in the setting of smooth Hilbert fields in order to guarantee that such Hilbert fields have a metric-compatible connection (c.f. the proof of Proposition \ref{metric-compatible-connection-prop}), if we want the curvature of a bona-fide, formally tuned quasi-Hilbert subfield of a BLS field to be given by a map of sheaves then we must require the analogous continuity of the Gauss-Griffiths form.

Finally, we can define \emph{infinitesimal BLS} fields, or iBLS fields, as follows.

\begin{defn}\label{iBLS-defn}
Let $\fH \to B$ be a quasi-Hilbert field.  
\begin{enumerate}
\item[{\rm i.}] An iBLS field is a triple $(\fH, \fL, \Sigma)$ where $\fL \to B$ is a BLS field, called the \emph{ambient} BLS field (for $\fH$), and $\Sigma \subset \tilde \fH _{\fL}$ is a substalk bundle, called the \emph{formalizing} substalk bundle, such that 
\begin{enumerate}
\item[(a)] $\fH \subset \fL$ is a bona fide subfield, i.e., $\tilde \fH _{\fL, t}(t)$ is dense in $\fH _t$ for every $t \in B$,
\item[(b)] $(\fH, \Sigma)$ is an infinitesimally almost holomorphic Hilbert subfield of $\fL$, and 
\item[(c)] $(\fH, \Sigma) \subset \fL$ is formally tuned.
\end{enumerate}
\item[{\rm ii.}] If in addition to (a), (b) and (c) one has that 
\begin{enumerate} 
\item[(d)] for each $t \in B$, every $\sigma , \tau \in T_{B, t}\tensor \bbC$ and each $f \in \Sigma _t(t)$ the conjugate-linear functional $\bbG _{\sigma \bar \tau}(\ff (t), \cdot) _t$ is bounded on $\Sigma _t (t)$ then we say that $(\fH, \fL, \Sigma)$ is \emph{strongly iBLS}.
\end{enumerate}
\item[iii.] And if moreover $(\fH, \Sigma)$ is an infinitesimally holomorphic subfield of $\fL$ then we shall say that $(\fH, \fL, \Sigma)$ is an \emph{integrable} (strongly) iBLS field.
\end{enumerate}
\end{defn}

We shall often abuse notation by repressing the triple $(\fH, \fL, \Sigma)$ and referring only to $\fH$ as an iBLS field when the rest of the data is clear from the context.

\begin{prop}\label{Gauss-form-is-form}
Let $(\fH, \fL, \Sigma)$ be an iBLS field.  Then there exists a unique linear map of substalk bundles $\mathbf{A}: \Sigma \to \Lambda ^2 _B \tensor \Sigma$ such that 
\[
\bbG _{\sigma\bar \tau} (\ff, \fg) = \fh \left (\left < \mathbf{A}, \sigma \wedge \bar \tau\right > \ff, \fg\right )= - \fh \left (\ff, \left < \mathbf{A}, \sigma \wedge \bar \tau\right > \fg\right ).
\]
\end{prop}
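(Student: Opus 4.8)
The plan is to mimic the proof of Proposition \ref{metric-compatible-connection-prop}, with the continuity estimate \eqref{rrt-cty} on $\mathring{\nabla}\fh$ replaced by the boundedness of the Gauss-Griffiths form (part of the strong iBLS data in force here), and with the role played there by metric compatibility played here by the Hermitian symmetry of $\bbG$. First I would record the reduction to type $(1,1)$: by Proposition \ref{iBLS-sff-prop} together with Proposition \ref{infinitesimal-abstract-sff-formula} — whose hypothesis of infinitesimal almost holomorphicity is built into the iBLS axioms and identifies $\mathbf{N}^{\fL/(\fH,\Sigma)}$ with $P^{\perp}_{\fH}\nabla^{\fL 1,0}|_{\Sigma}$ — the second fundamental form $\two := \two^{\fL/(\fH,\Sigma)}$ is a $(1,0)$-form with values in ${\rm Lin}(\tilde\fH_{\fL},\fH^{\perp})$. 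Hence $\bbG(\ff,\fg) = \fh(\two\ff,\two\fg)$ is a $(1,1)$-form-valued sesquilinear form: it vanishes when contracted against $\sigma\wedge\tau$ or against $\bar\sigma\wedge\bar\tau$ for $\sigma,\tau$ of type $(1,0)$. So a representing $\mathbf{A}$, if it exists, is forced to be a $(1,1)$-form-valued operator, and it suffices to produce $\left<\mathbf{A},\sigma\wedge\bar\tau\right>$ for $\sigma,\tau\in T^{1,0}_{B,t}$ and then extend bilinearly, declaring the $(2,0)$- and $(0,2)$-parts to be zero.

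For the construction, I would fix $t\in B$, vectors $\sigma,\tau\in T^{1,0}_{B,t}$, and $\ff\in\Sigma_t$. Boundedness of the Gauss-Griffiths form says exactly that $\fg\mapsto\bbG_{\sigma\bar\tau}(\ff,\fg)(t)$ is a bounded conjugate-linear functional on the dense subspace $\Sigma_t(t)\subset\fH_t$; the Riesz Representation Theorem then produces a unique $v_{\sigma\bar\tau}(\ff)\in\fH_t$ with $\bbG_{\sigma\bar\tau}(\ff,\fg)(t) = \fh_t(v_{\sigma\bar\tau}(\ff),\fg(t))$ for all $\fg\in\Sigma_t$. The assignment $\ff\mapsto v_{\sigma\bar\tau}(\ff)$ is visibly $\bbC$-linear, and it is tensorial — it factors through $\ff(t)$ — because $\mathbf{N}^{\fL/(\fH,\Sigma)}$, hence $\two$, hence $\bbG$, is $\sC^{\infty}_B$-linear in the sense of Proposition \ref{iBLS-sff-prop}. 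One then checks that $v_{\sigma\bar\tau}(\ff)$ actually lies in $\Sigma_t(t)$ and that the family of these representatives varies smoothly in $t$, so that a lift $\left<\mathbf{A},\sigma\wedge\bar\tau\right>\ff\in\Sigma_t$ may be selected; assembling over $\sigma,\tau$ yields $\mathbf{A}\ff\in\Lambda^2_{B,t}\tensor\Sigma_t$ and so a linear map of substalk bundles $\mathbf{A}:\Sigma\to\Lambda^2_B\tensor\Sigma$.

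The second equality in the statement is the anti-Hermiticity of $\mathbf{A}$ as a $(1,1)$-form-valued operator, and I would derive it exactly as in Proposition \ref{chern-curvature-is-11}: from $\overline{\bbG_{\tau\bar\sigma}(\fg,\ff)} = \overline{\fh(\two(\tau)\fg,\two(\sigma)\ff)} = \fh(\two(\sigma)\ff,\two(\tau)\fg) = \bbG_{\sigma\bar\tau}(\ff,\fg)$, which expresses that $\bbG$ is $\fh$-anti-Hermitian; unwinding the convention that the conjugation in $\fh(\ff,\left<\mathbf{A},\sigma\wedge\bar\tau\right>\fg)$ also acts on the $(1,1)$-form part converts this into the displayed identity $\fh(\left<\mathbf{A},\sigma\wedge\bar\tau\right>\ff,\fg)=-\fh(\ff,\left<\mathbf{A},\sigma\wedge\bar\tau\right>\fg)$. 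Uniqueness is then immediate: if $\mathbf{A}$ and $\mathbf{A}'$ both satisfy the defining relation, then $\fh_t\bigl((\left<\mathbf{A}-\mathbf{A}',\sigma\wedge\bar\tau\right>\ff)(t),\fg(t)\bigr)=0$ for every $\fg\in\Sigma_t$, and since $\Sigma_t(t)$ is dense in $\fH_t$ and $\fh_t$ is nondegenerate, $\left<\mathbf{A}-\mathbf{A}',\sigma\wedge\bar\tau\right>\ff$ vanishes at every $t$, hence identically.

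The step I expect to require the most care is the one with no finite-rank counterpart in the classical Gauss-Griffiths picture: verifying that the fiberwise Riesz representatives actually land in $\Sigma_t(t)$ — so that $\mathbf{A}$ takes its values in the substalk bundle $\Sigma$ rather than merely in the Hilbert completions of its fibers — and assemble into an honest element of $\Lambda^2_B\tensor\Sigma$. This is precisely where the boundedness of the Gauss-Griffiths form and the formality of $(\fH,\Sigma)$ are doing the real work; once these are in place, the remainder is the abstract Hilbert-field transcription of the classical identity that represents the curvature correction of a subbundle by $-(\two)^{\dagger}\wedge\two$.
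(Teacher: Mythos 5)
Your proposal is correct and follows essentially the same route as the paper: the operator $\mathbf{A}$ is produced fiberwise by the Riesz Representation Theorem (using the boundedness of $\bbG_{\sigma\bar\tau}(\ff,\cdot)$) and is unique by density of $\Sigma_t(t)$ in $\fH_t$, while the second identity comes from the Hermitian symmetry $\overline{\bbG_{\sigma\bar\tau}(\ff,\fg)}=\bbG_{\tau\bar\sigma}(\fg,\ff)$ combined with the antisymmetry $\bar\tau\wedge\sigma=-\sigma\wedge\bar\tau$ of the $2$-form slot. The one point you flag as delicate — that the Riesz representative should land in $\Sigma_t(t)$ rather than merely in $\fH_t$ — is not addressed in the paper's own one-line proof either, so you are, if anything, being more careful than the source.
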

\begin{proof}
The map $\mathbf{A}$ defined by the Riesz Representation Theorem satisfies the first equality.  Since $\overline{\bbG _{\sigma \bar \tau}(\ff ,\fg)}= \bbG _{\tau \bar \sigma}(\fg, \ff)$, we have
\[
\bbG _{\sigma\bar \tau} (\ff, \fg) = \overline{\bbG _{\tau \bar \sigma} (\fg, \ff)} = \overline{\fh \left (\left < \mathbf{A}, \bar \tau \wedge \sigma\right > \fg, \ff\right )} = - \overline{\fh \left (\left < \mathbf{A}, \sigma \wedge \bar \tau\right > \fg, \ff\right )} = - \fh \left (\ff, \left < \mathbf{A}, \sigma \wedge \bar \tau\right > \fg\right ).
\]
The proof is complete.
\end{proof}

In analogy with the Gauss-Griffiths formula, we define the operator $\two ^{\fL/(\fH, \Sigma)})^{\dagger} \wedge \two ^{\fL/ (\fH, \Sigma)}$ by 
\begin{eqnarray*}
\left < \mathbf{A} , \sigma \wedge \bar \tau \right > &=:& \left < (\two ^{\fL/(\fH, \Sigma)})^{\dagger} \wedge \two ^{\fL/ (\fH, \Sigma)}, \sigma \wedge \bar \tau \right > \\
&=& \left < (\two ^{\fL/(\fH, \Sigma)})^{\dagger} , \bar \tau \right > \left <\two ^{\fL/ (\fH, \Sigma)} , \sigma \right > - \left <\two ^{\fL/ (\fH, \Sigma)} , \sigma \right > \left < (\two ^{\fL/(\fH, \Sigma)})^{\dagger} , \bar \tau \right >.
\end{eqnarray*}

\begin{defn}\label{sff-and-curv-iBLS-defn}
The \emph{curvature} of an iBLS field $(\fH, \fL, \Sigma)$ is the $2$-form
\[
\Theta ^{\text{(rel  $\fL$)}}(\fH) := \Theta (\nabla ^{\fL})|_{\fH} + (\two ^{\fL/ (\fH,\Sigma)})^{\dagger} \wedge \two ^{\fL/ (\fH,\Sigma)}.
\]
Equivalently, for any $t \in B$ 
\[
\fh(\Theta^{\text{(rel  $\fL$)}}(\fH)(\xi, \eta)\ff_1, \ff_2) =  \fh(\Theta (\nabla ^{\fL}) (\xi, \eta) \ff_1 , \ff_2) -  \fh  (\two^{\fL/(\fH,\Sigma)}(\xi) \ff_1 , \two^{\fL/(\fH,\Sigma)}(\eta) \ff_2)
\]
for all $\ff _1,\ff _2 \in \sC^{\infty}(\fH)_t$ and all $(1,0)$-vectors $\xi , \eta \in \bbC \tensor T_{B_t}$.

\end{defn}

\noi (We omit reference to $\Sigma$ in $\Theta ^{(\text{rel $\fL$})}$, but the curvature does depend on the choice of $\Sigma$.)

\begin{prop}\label{H-integrable=>L-11}
If $\fH$ is an iBLS subfield of a BLS field $\fL$ and $\fH$ is integrable, i.e., $(\dbar^2\ff )(t)= 0$ for every $f \in \Sigma _t$, then $\Theta (\nabla ^{\fL})|_{\fH}$ has $(1,1)$-coefficients.  More precisely, for each $f_1,f_2 \in \Sigma_t(t)$ and each $\xi , \eta \in T^{1,0}_{B, t}$ one has $\fh \left ( \Theta (\nabla ^{\fL}) (\xi, \eta) f_1,f_2\right ) = \fh \left ( \Theta (\nabla ^{\fL}) (\bar \xi, \bar \eta) f_1,f_2\right ) = 0$.  Consequently, if $\fH$ is integrable then $\Theta^{({\rm rel}\ \fL)} (\fH)$ has $(1,1)$-coefficients, i.e., 
\[
\Theta^{({\rm rel}\ \fL)} (\fH) (\xi, \eta)= \Theta^{({\rm rel}\ \fL)} (\fH) (\bar \xi, \bar \eta)= 0
\]
for all $\xi , \eta \in T^{1,0}_{B, t}$.
\end{prop}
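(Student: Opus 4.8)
The plan is to treat separately the two summands $\Theta(\nabla^\fL)|_{\fH}$ and $(\two^{\fL/(\fH,\Sigma)})^\dagger\wedge\two^{\fL/(\fH,\Sigma)}$ appearing in the definition (Definition \ref{sff-and-curv-iBLS-defn}) of $\Theta^{({\rm rel}\ \fL)}(\fH)$, and to show that, when paired against vectors $f_1,f_2\in\Sigma_t(t)$, each has vanishing $(2,0)$- and $(0,2)$-component. The second summand is the easy one: by Propositions \ref{iBLS-sff-prop} and \ref{infinitesimal-abstract-sff-formula} the second fundamental form $\two^{\fL/(\fH,\Sigma)}$ of an iBLS field is a $(1,0)$-form, so its Hilbert-space adjoint $(\two^{\fL/(\fH,\Sigma)})^\dagger$ is a $(0,1)$-form and the wedge $(\two^{\fL/(\fH,\Sigma)})^\dagger\wedge\two^{\fL/(\fH,\Sigma)}$ has pure type $(1,1)$; in particular its contractions with $\xi\wedge\eta$ and with $\bar\xi\wedge\bar\eta$ vanish for all $\xi,\eta\in T^{1,0}_{B,t}$. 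Thus the whole statement reduces to the ``More precisely'' assertion about $\Theta(\nabla^\fL)$.

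For the $(0,2)$-component I would first record that, identifying $\Theta(\nabla^\fL)$ with the operator $\nabla^\fL\nabla^\fL$ as in the discussion following Definition \ref{connection-defn} and splitting $\nabla^\fL=\nabla^{\fL 1,0}+\dbar^\fL$, the $(0,2)$-component of $\Theta(\nabla^\fL)$ acting on a section is exactly $\dbar^\fL\dbar^\fL$. Since $\fL$ is a BLS field, its Chern connection satisfies $\nabla^\fL\fh=0$, so by Proposition \ref{metric=>tensorial} the curvature $\Theta(\nabla^\fL)$ is tensorial; therefore for $f_1=\ff_1(t)$ with $\ff_1\in\Sigma_t$ one may evaluate the $(0,2)$-part of $\Theta(\nabla^\fL)f_1$ at $t$ using that particular extension, getting $\langle\dbar^\fL\dbar^\fL\ff_1,\bar\xi\wedge\bar\eta\rangle(t)$, which vanishes by the integrability hypothesis on $\fH$. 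Hence $\fh(\Theta(\nabla^\fL)(\bar\xi,\bar\eta)f_1,f_2)=0$ for all $f_1\in\Sigma_t(t)$ and all $f_2\in\fL_t$.

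For the $(2,0)$-component I would invoke the anti-Hermiticity of the Chern curvature from Proposition \ref{chern-curvature-is-11}. Writing that relation as an identity of scalar $2$-forms, $\fh(\Theta(\nabla^\fL)f_1,f_2)=-\overline{\fh(\Theta(\nabla^\fL)f_2,f_1)}$, and comparing $(2,0)$-parts — the $(2,0)$-part of the right-hand side is the complex conjugate of the $(0,2)$-part of $\fh(\Theta(\nabla^\fL)f_2,f_1)$ — I obtain
\[
\fh\!\left(\Theta(\nabla^\fL)(\xi,\eta)f_1,f_2\right)=-\,\overline{\fh\!\left(\Theta(\nabla^\fL)(\bar\xi,\bar\eta)f_2,f_1\right)}\qquad(\xi,\eta\in T^{1,0}_{B,t}).
\]
When $f_2\in\Sigma_t(t)$ the right-hand side vanishes by the previous paragraph (with the roles of $f_1$ and $f_2$ exchanged), so the left-hand side vanishes too. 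This establishes the ``More precisely'' identities, and combining them with the type computation for $(\two^{\fL/(\fH,\Sigma)})^\dagger\wedge\two^{\fL/(\fH,\Sigma)}$ and Definition \ref{sff-and-curv-iBLS-defn} yields $\Theta^{({\rm rel}\ \fL)}(\fH)(\xi,\eta)=\Theta^{({\rm rel}\ \fL)}(\fH)(\bar\xi,\bar\eta)=0$.

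The argument is essentially formal; the only step needing care is the bookkeeping of form-types and complex conjugates in the anti-Hermiticity relation — making sure the conjugation there acts on the scalar $2$-form factor and hence interchanges the $(2,0)$- and $(0,2)$-parts — together with the observation that it is precisely the tensoriality of $\Theta(\nabla^\fL)$ that lets one compute its $(0,2)$-part at $t$ using an extension taken from the tuning $\Sigma$ rather than an arbitrary smooth section of $\fL$.
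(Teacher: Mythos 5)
Your proof is correct and follows essentially the same route as the paper's: reduce the $(2,0)$-part to the $(0,2)$-part via the anti-Hermiticity of the Chern curvature, identify the $(0,2)$-part of the curvature acting on a section with $\dbar^{\fL}\dbar^{\fL}$, which vanishes at $t$ by the integrability hypothesis, and observe that the second-fundamental-form term is of pure type $(1,1)$ because $\two^{\fL/(\fH,\Sigma)}$ has $(1,0)$-coefficients. Your explicit appeal to tensoriality to justify evaluating the $(0,2)$-part at $t$ using an extension drawn from $\Sigma_t$ rather than an arbitrary smooth section of $\fL$ is a point the paper leaves implicit, but it is the same argument.
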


\begin{proof}
The statement about $\Theta^{({\rm rel}\ \fL)} (\fH)$ follows from the statement about $\Theta (\nabla ^{\fL})|_{\fH}$ and the fact that $\two ^{\sL /\fH}$ has $(1,0)$-form coefficients.  Finally, since $\Theta (\nabla ^{\fL})$ is anti-Hermitian, it suffices to show that $\Theta (\nabla ^{\fL})^{0,2} \ff = 0$ for any $\ff \in \fH$.  The latter holds because $\Theta (\nabla ^{\fL})^{0,2} \ff  = \dbar ^{\fH} \dbar ^{\fH} \ff = 0$ for all $\ff \in \fH$.
\end{proof}

A definition of the curvature of an iBLS subfield that depends heavily on the ambient BLS field is rather undesirable, but we have not found a reasonable way to overcome this issue. On the other hand, the iBLS field $\sH \to B$, which is our main concern, has the interesting property that it has a huge number of natural ambient BLS fields $\sL ^{\theta}$, where $\theta$ is the parameter `enumerating' these BLS fields, such that 
\begin{enumerate}
\item[(i)]  $\sC^{\infty} (\sL ^{\theta})$ is independent of $\theta$, 
\item[(ii)] the formal tuning $\Sigma$ of $\sH \subset \sL^{\theta}$ is independent of $\theta$, and 
\item[(iii)] the curvature $\Theta ^{{\rm rel}\ \sL ^{\theta}}(\sH)$ is independent of $\theta$.
\end{enumerate}
In fact, the only dependence on the parameter $\theta$ is through the $\dbar$-operator $\dbar ^{\theta}$.  We will use these properties to define the curvature of the iBLS field $\sH \to B$.

\subsection{Positivity}\label{positivity-par}

All of the definitions of positivity of holomorphic Hermitian vector bundles have direct analogues in the setting of BLS fields and iBLS fields, with one important change.  Since one wants curvature to measure an infinitesimal quantity related to local variation, it is natural to evaluate the curvature only on vectors that can be continued to a local section.  For a BLS field $(\fH, \fh) \to B$ the curvature $\Theta(\fh)$ is evaluated at the fiber $\fH _t$ only on vectors in $\sC^{\infty}(\fH)_t (t)$, while for iBLS fields $(\fH, \fL, \Sigma)$ over $B$ the curvature $ \Theta ^{(\text{rel }\fL)}(\fh)$ is evaluated at the fiber $\fH _t$ only on vectors in $\Sigma _t (t)$.     For simplicity we shall state the definitions of positivity only for BLS fields.  For iBLS fields, simply replace $\Theta(\fh)$ by $\Theta ^{(\text{rel }\fL)}(\fH)$ and $\sC^{\infty}(\fH)_t(t)$ by $\Sigma _t (t)$.

Let $\fH \to B, \fh$ be a BLS field.  Using the metric $\fh$, one defines Hermitian forms $\{ \ ,\ \}_{\fh,\Theta (\fh)}$ on the fibers $T^{1,0}_{B,t} \tensor \sC^{\infty}(\fH)_t (t)$ by 
\[
\left \{ \tau_1 \tensor \ff_1, \tau_2 \tensor \ff_2 \right \}_{\fh, \Theta (\fh)} := \fh(\Theta (\fh)_{\tau_1, \bar \tau_2}\ff_1,\ff_2) 
\]
for indecomposable tensors in $T^{1,0}_{B,t}\tensor \sC^{\infty}(\fH)_t(t)$, and extending sesqui-linearly on each fiber.

\begin{defn}\label{k-pos-curv-defn}
Let $\fH \to B$ be a BLS field with smooth Hermitian metric $\fh$, and fix a smooth Hermitian metric $g$ on $B$.
\begin{enumerate}
\item[(i)]We say that $\fh$ has positive curvature in the sense of Griffiths at a point $t \in B$ if there exists $c > 0$ such that 
\[
\left \{ \tau \tensor \ff , \tau \tensor \ff \right \}_{\fh, \Theta (\fh)} \ge c \fh(\ff,\ff) g(\tau,\tau)
\]
for all $\tau \tensor \ff \in  T^{1,0}_{B,t} \tensor \sC^{\infty}(\fH)_t(t)$.
\item[(ii)]We say that $\fh$ has positive curvature in the sense of Nakano at a point $t \in B$ if there exists $c > 0$ such that 
\[
\left \{ \sum _{j=1} ^n \tau_j\tensor \ff_j, \sum _{k=1} ^n \tau_k\tensor \ff_k\right \}_{\fh, \Theta (\fh)} \ge c \sum _{j=1}^n\fh(\ff_j,\ff_j) g(\tau_j,\tau_j)
\]
for all $\tau_1\tensor \ff _1,...,\tau_n\tensor \ff _n \in T^{1,0}_{B,t}\tensor \sC^{\infty}(\fH)_t(t)$, where $n =\min \left ({\rm dim}_{\bbC} B, {\rm Rank }(\sC^{\infty}(\fH)_t(t))\right )$.
\item[(iii)] Let $k$ be an integer between $1$ and $\min \left ({\rm dim}_{\bbC} B, {\rm Rank }(\sC^{\infty}(\fH)_t(t))\right )$.  We say that $\fh$ has $k$-positive curvature at a point $t \in B$ if there exists $c > 0$ such that 
\[
\left \{ \sum _{j=1} ^k \tau_j\tensor \ff_j, \sum _{\ell=1} ^k \tau_{\ell}\tensor \ff_{\ell}\right \}_{\fh, \Theta (\fh)} \ge c \sum _{j=1}^k\fh(\ff_j,\ff_j) g(\tau_j,\tau_j)
\]
for all $\tau_1\tensor \ff _1,...,\tau_k\tensor \ff _k \in T^{1,0}_{B,t}\tensor \sC^{\infty}(\fH)_t(t)$
\red
\end{enumerate}
\end{defn}

\noi Clearly these notions of positivity do not depend on the choice of the metric $g$.  Note furthermore that $1$-positivity is just Griffiths positivity, that $n$-positivity is just Nakano positivity, and that $m$-positivity implies $k$-positivity if $m \ge k$.

There are also corresponding notions of nonnegative curvature (one takes $c$ nonnegative) and negative curvature (one reverses the inequalities and takes $c<0$).

\section{Relative Canonical Bundle, Lie Derivatives and Horizontal Lifts}\label{lie-deriv-section}

The present section introduces the basic definitions and tools needed to to define our Hilbert fields of interest and to carry out computation of the various geometric objects on these Hilbert fields; objects that were discussed more abstractly in the previous section.

\subsection{Families of twisted canonical sections}\label{relative-canonical-sections-paragraph}

Fix a proper holomorphic family of vector bundles $(E,h) \to X \stackrel{p}{\to}B$.  In the Introduction we defined the Hilbert field $\sH \to B$, whose fibers and metric are respectively
\[
\sH _t := H^0 (X_t, \cO _{X_t}(K_{X_t} \tensor E)) \quad \text{and} \quad (f_1,f_2)_t := (-1)^{\frac{n^2}{2}} \int _{X_t} \left < f_1 \wedge \bar f_2 , h\right >.
\]
In Section \ref{BLS-section-fields-section} we shall present the iBLS structures for $\sH \to B$.   To do so, one must have a way to vary $E$-valued canonical sections on $X_t$ with respect to $t$.  Following Berndtsson, we define the sections of $\sH \to B$ to be fiberwise-holomorphic sections of $K_{X/B} \tensor E \to X$, where $K_{X/B}$ denotes the relative canonical bundle.  Since $p :X \to B$ is a submersion, there are two equivalent ways of defining the relative canonical bundle.  The first is as a quotient 
\[
K_{X/B} := K_X \tensor p^* K_B^*
\]
of the canonical bundle of $X$ by the pullback of the canonical bundle of $B$, and the second is by equivalence classes of $E$-valued $(n,0)$-forms.

\subsubsection{\sf Representation of twisted relative canonical sections as twisted canonical sections}  

Given a point $t \in B$ and a coordinate system $s = (s^1,...,s^m)$ in an open set $U$ containing $t$, the restricted projection map $p : X_U \to U$, where $X_U := p^{-1} (U)$, can be expressed in terms of coordinates as $p= (p^1,..., p^m)$ for global holomorphic functions $p^i \in \cO (X_U)$.  Since $p$ is a submersion, i.e., $dp^1 \wedge ... \wedge dp ^m$ is nowhere-zero on $X_U$, any section $F$ of $K_X|_{X_t}  \to X_t$ can be written uniquely as $F = f \wedge dp^1 \wedge ... \wedge dp ^m$ for some section $f$ of $K_{X_t} \to X_t$.  This first perspective on the relative canonical bundle is well-suited to extension of canonical sections from a the fiber $X_t$  to $X$, in the sense of Ohsawa-Takegoshi.  But for the purpose of differentiating sections of $\sH \to B$ the `canonical sections' approach has drawbacks.  Therefore, we will not be using this point of view except when we discuss the question of local triviality of $\sH \to B$ in Section \ref{loc-triv-section}.

\subsubsection{\sf Representation by equivalence classes of relative top-forms}\label{n0-forms-description}

Consider the holomorphic vector bundle $\Lambda ^{n,0} _X \to X$ whose local sections are $(n,0)$-forms on $X$, and the subbundle $Z(p) ^{n,0} \to X$ whose fiber at $x \in X$ is defined by 
\[
Z(p) ^{n,0}_x = \left \{ f \in \Lambda ^{n,0}_{X,x}\ ;\ \left < f, \xi _1 \wedge ... \wedge \xi _n \right > = 0 \text{ for all }\xi _1\wedge ...\wedge \xi _n \in \Lambda ^n ({\rm Kernel}(dp(x)))\right \}.
\] 
Thus $Z(p)^{n,0} \to X$ is the annihilator of $\Lambda ^n T^{1,0}_{X/B}$, where $T^{1,0}_{X/B} := {\rm kernel} (dp)$ is the so-called \emph{vertical $(1,0)$-tangent bundle} of $p :X \to B$.  As $p$ is a submersion, $T^{1,0}_{X/B}$ is a holomorphic vector bundle of rank $n$, and thus $\Lambda ^n T^{1,0}_{X/B}$ has rank $1$.  Therefore there is a line bundle $L$ such that 
\[
0 \to Z(p)^{n,0} \to \Lambda ^{n,0} _X \to L \to 0
\]
is an exact sequence of vector bundles, and the line bundle $L$ is precisely $K_{X/B}$.  From this perspective a section of $K_{X/B} \tensor E \to X$ is an equivalence class of sections of $\Lambda _X ^{n,0} \tensor E \to X$, where two sections are equivalent if their difference is a section of $Z(p)^{n,0} \tensor E \to X$.  We note also that a section $u$ of $\Lambda _X ^{n,0} \tensor E  \to X$ is a section of $Z(p)^{n,0} \tensor E \to X$ if and only if $\iota _{X_t} ^*u = 0$ for every $t \in B$.


\begin{prop}\label{relative-restriction-prop}
Let $p: X \to B$ be a proper holomorphic submersion of complex manifolds.  For each $t \in B$ there is an isomorphism 
\[
\iota _t : \Gamma (X_t, (K_{X/B}\tensor E)|_{X_t}) \to \Gamma (X_t, K_{X_t}\tensor E|_{X_t} )
\]
such that, with $\iota _{X_t} : X_t \emb X$ denoting the natural inclusion of the fiber, 
\[
\iota _t (f) = \iota _{X_t} ^* u 
\]
for any $(n,0)$-form $u$ representing a section $f$ of $K_{X/B}\tensor E$ over an open subset $U$ of $t$.
\end{prop}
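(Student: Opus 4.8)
The plan is to realize $\iota_t$ as the isomorphism on sections induced by an isomorphism of holomorphic vector bundles over $X_t$, obtained by descending the pullback $\iota_{X_t}^*$ through the quotient defining $K_{X/B}$. First I would recall from Paragraph \ref{n0-forms-description} the short exact sequence of holomorphic vector bundles $0 \to Z(p)^{n,0}\tensor E \to \Lambda^{n,0}_X\tensor E \to K_{X/B}\tensor E \to 0$ on $X$. Being an exact sequence of vector bundles it is locally split, so its restriction to the submanifold $X_t$ remains exact; in particular $(K_{X/B}\tensor E)|_{X_t}$ is canonically the quotient $(\Lambda^{n,0}_X\tensor E)|_{X_t}/(Z(p)^{n,0}\tensor E)|_{X_t}$.

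Next I would carry out the fiberwise linear algebra. At a point $x \in X_t$ the pullback $\iota_{X_t}^*$ is the restriction of alternating forms
\[
\Lambda^{n,0}_{X,x} = \Lambda^n T^{*1,0}_{X,x} \longrightarrow \Lambda^n T^{*1,0}_{X_t,x} = K_{X_t,x}
\]
along the inclusion $T^{1,0}_{X_t,x} = \ker dp(x) \emb T^{1,0}_{X,x}$. By the very definition of $Z(p)^{n,0}_x$ as the annihilator of $\Lambda^n(\ker dp(x))$, the kernel of this restriction map is exactly $Z(p)^{n,0}_x$; and since $\Lambda^n(\ker dp(x))$ is a nonzero line, the map is onto the line $K_{X_t,x}$. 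Tensoring with the fiber $E_x$ preserves both facts, so $\iota_{X_t}^*$ descends to a fiberwise-bijective, hence bijective, holomorphic bundle map
\[
\iota_t : (K_{X/B}\tensor E)|_{X_t} = \frac{(\Lambda^{n,0}_X\tensor E)|_{X_t}}{(Z(p)^{n,0}\tensor E)|_{X_t}} \xrightarrow{\ \sim\ } K_{X_t}\tensor E|_{X_t}.
\]
An isomorphism of vector bundles induces an isomorphism on sections, which yields the asserted isomorphism $\Gamma(X_t, (K_{X/B}\tensor E)|_{X_t}) \xrightarrow{\sim} \Gamma(X_t, K_{X_t}\tensor E|_{X_t})$, verbatim whether $\Gamma$ denotes smooth, holomorphic, or $L^2_{\ell oc}$ sections.

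It then remains to read off the stated formula: if $u$ is an $E$-valued $(n,0)$-form defined on some $X_U$ with $U \ni t$, whose image under the quotient map $\Lambda^{n,0}_X\tensor E \to K_{X/B}\tensor E$ restricts on $X_t$ to $f$, then by construction $\iota_t(f) = \iota_{X_t}^*(u|_{X_t}) = \iota_{X_t}^* u$; independence of the choice of $u$ is immediate, since two such representatives differ by a section of $Z(p)^{n,0}\tensor E$, whose pullback to $X_t$ vanishes (as recorded at the end of Paragraph \ref{n0-forms-description}, and also visible from the kernel computation above). The only point requiring care in the whole argument is this fiberwise linear algebra, together with the bookkeeping that the defining exact sequence stays exact under restriction to $X_t$; there is no analytic content here, and, incidentally, properness of $p$ — a standing hypothesis — plays no role.
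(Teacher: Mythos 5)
Your argument is correct and is precisely the one the paper intends: the paper states Proposition \ref{relative-restriction-prop} without a formal proof, leaving it to follow from the quotient description of $K_{X/B}$ and the observation at the end of Paragraph \ref{n0-forms-description} that sections of $Z(p)^{n,0}\tensor E$ are exactly those $(n,0)$-forms vanishing on all fibers, and your fiberwise identification of $\ker(\iota_{X_t}^*)$ with the annihilator $Z(p)^{n,0}_x$ supplies exactly the missing details. Your parenthetical that properness is not used here is also accurate; only the submersion hypothesis enters.
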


\begin{rmk}\label{local-in-the-base-coordinates-rmk}
In computation one often works locally on the base $B$, in some coordinate chart $U$ with local coordinates $s= (s^1,..., s^m)$.  In terms of these coordinates the projection $p$ of the holomorphic family has the expression $p = (p^1,..., p^m)$.  The functions $p ^i$ are of course constant on the fibers of $p$, and their differentials $dp ^i$ and $d\bar p ^i$ are global forms on $p^{-1} (U)$. Every section of $Z(p)^{n,0}|_U$ is of the form $dp^1 \wedge v_1+...+ dp ^m\wedge v_m$ for some smooth $E$-valued $(n-1,0)$-forms $v_1,...,v_m \in \Gamma (p^{-1} (U), \sC^{\infty}_X(\Lambda ^{n-1,0}_X \tensor E))$.  Thus two sections $u_1, u_2$ of $\Lambda ^{n,0} _X \tensor E|_U$ define the same $E$-valued relative canonical section over $U$ if and only if there exist $v_1,...,v_m \in \Gamma (p^{-1} (U), \sC^{\infty}_X(\Lambda ^{n-1,0}_X \tensor E))$ such that 
\[
u_2 - u_1 =  dp ^1 \wedge v_1+ \cdots + dp ^m \wedge v_m, 
\]
or equivalently $u_1 \wedge dp ^1 \wedge \cdots \wedge dp ^m = u_2 \wedge dp ^1 \wedge \cdots \wedge dp ^m$.  
\red
\end{rmk}

\subsection{Lie derivatives of twisted forms}\label{Lie-deriv-paragraph}

We shall need to compute derivatives of integrals of the form \eqref{bls-inner-product}.  The usual tool for taking the derivative `through the integral' is the Lie derivative.  When the objects being differentiated have natural action by diffeomorphisms, the Lie derivative has a very natural definition, namely differentiation of the pullback by the flow of the vector field along which one is computing derivatives.  For example, differential forms, vector fields and the tensors one can construct from these behave well with respect to this definition.

In the twisted setting, i.e., when one looks at differential forms with values in a vector bundle, one can still make use of the classical Lie derivative by working in locally trivial open sets, but many formulas become local and keeping track of these local formulas can be cumbersome.  It is clearer and more efficient to identify certain combinations of terms with a global meaning, and to work with those combinations of terms as the basic objects.

\begin{defn}\label{cplx-lie-deriv}
Let $X$ be a complex manifold, let $E \to X$ be a holomorphic vector bundle with connection $\nabla$, let $u$ be an $E$-valued $(p,q)$-form and let $\xi$ be a complex vector field on $X$, i.e., a section of $T_X \tensor \bbC \to X$.  
\begin{enumerate}
\item[i.] The Lie derivative along $\xi$, which acts on complex-valued forms, is defined by 
\[
L_{\xi} := L _{\re \xi} + \ii L_{\im \xi},
\]
where $L$ on the right-hand side denotes the real Lie derivative of real vector fields.
\item[ii.] The twisted Lie derivative of an $E$-valued $(p,q)$-form $u$  along $\xi$ is 
\[
L^{\nabla }_{\xi} u= L _{\xi} u := \nabla (\xi \lrcorner   u ) + \xi \lrcorner \nabla u,
\]
\end{enumerate}
\end{defn}

\begin{rmk}
In the untwisted case, i.e., when the underlying vector bundle $E$ is the trivial line bundle and the connection $\nabla$ is the usual exterior derivative, Definition \ref{cplx-lie-deriv}.ii agrees with the aforementioned definition of Lie derivative because of the formula 
\[
L_{\xi} u = d(\xi \lrcorner u) + \xi \lrcorner (du)
\]
of H. Cartan.
\red
\end{rmk}

Via the splitting $\nabla = \nabla ^{1,0} + \nabla ^{0,1}$ one can define the $(1,0)$- and $(0,1)$-Lie derivative of a $(p,q)$-form $u$ with values in a holomorphic vector bundle along a complex vector field $\xi$ by 
\[
L^{\nabla 1,0}_{\xi} u  := \nabla^{1,0} (\xi \lrcorner   u ) + \xi \lrcorner \nabla^{1,0} u \quad \text{and} \quad L^{\nabla 0,1}_{\xi} u  := \nabla^{0,1} (\xi \lrcorner   u ) + \xi \lrcorner \nabla^{0,1} u.
\]
When the connection $\nabla$ is clear from the context, we shall write $L^{1,0}_{\xi}$ instead of $L^{\nabla 1,0}_{\xi}$ (and similarly for $(0,1)$) but we shall retain the notation $L^{\nabla}$ for the full twisted Lie derivative for both clarity and emphasis.  

\begin{s-rmk}
Note that $L^{1,0} _{\xi}$ (resp. $L^{0,1}_{\xi}$) preserves the bi-degree $(p,q)$ if and only if $\xi$ is a $(1,0)$ (resp. $(0,1)$) vector field or $q=0$ (resp. $p=0$).
\red
\end{s-rmk}

\begin{s-rmk}
One can also define $L^{0,1} _{\xi} u := \xi \lrcorner \dbar u + \dbar (\xi \lrcorner u)$.  In the present article these two meanings of $L^{0,1}_{\xi}$ do not conflict because we only use connections $\nabla$ satisfying $\nabla ^{0,1}= \dbar$.
\red
\end{s-rmk}

\begin{prop}\label{restricted-product-rule}
Let $X$ be a K\"ahler manifold of dimension $n+m$, let $Z \subset X$ be an $n$-dimensional submanifold, let $E \to X$ be a holomorphic vector bundle with Hermitian metric $h$ and its Chern connection, and let $u , u'$ be $E$-valued $(n,0)$-forms on $X$.  If $\xi$ is a $(1,0)$-vector field on $X$ then 
\[
\iota _Z^* L _{\xi} \left < u \wedge \bar u ' , h\right > = \iota _Z^*  \left < \left ( L ^{1,0}_{\xi} u \right ) \wedge \bar u ' , h\right >  +   \iota _Z^*\left < u\wedge \overline{L^{0,1}_{\bar \xi} u'} , h \right >
\]
and
\[
\iota _Z^* L _{\bar \xi} \left < u \wedge \bar u ' , h\right > = \iota _Z^*  \left < \left ( L ^{0,1}_{\bar \xi} u \right ) \wedge \bar u ' , h\right >  +   \iota _Z^*\left < u\wedge \overline{L^{1,0}_{\xi} u'} , h \right >.
\]
\end{prop}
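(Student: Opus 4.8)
The plan is to derive, first, an identity valid on all of $X$ (with no restriction to $Z$), and then obtain the two stated formulas by applying $\iota _Z^*$ and counting bidegrees. The only inputs will be H.~Cartan's formula for the Lie derivative of a scalar-valued form, extended $\bbC$-linearly in the vector field, and the compatibility of the Chern connection of $(E,h)$ with $h$ expressed at the level of $E$-valued forms, namely
\[
d\left < \alpha \wedge \bar \beta, h\right > = \left < \nabla \alpha \wedge \bar \beta , h\right > + (-1)^{\deg \alpha}\left < \alpha \wedge \overline{\nabla \beta}, h\right >
\]
for $E$-valued forms $\alpha$ of pure degree and $\beta$; this is the standard consequence of $d\,h(s,t) = h(\nabla s, t) + h(s,\nabla t)$. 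I will also use the elementary bidegree facts that, since $\xi$ is of type $(1,0)$ and $u,u'$ are of type $(n,0)$, we have $\xi \lrcorner \bar u' = 0$ and $\bar \xi \lrcorner u = \bar \xi \lrcorner u' = 0$; hence $L^{0,1}_{\bar \xi} u' = \bar \xi \lrcorner \dbar u'$, and also $\xi \lrcorner \overline{\nabla u'} = \xi \lrcorner \overline{\dbar u'} = \overline{\bar \xi \lrcorner \dbar u'} = \overline{L^{0,1}_{\bar \xi} u'}$, because the $\nabla^{1,0}$-part of $\overline{\nabla u'}$ is of type $(0,n+1)$ and so is contracted to zero by $\xi$.

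The first computation is then a short expansion. Applying Cartan's formula to $\eta := \left < u\wedge \bar u', h\right >$, writing $\xi \lrcorner \eta = \left < (\xi \lrcorner u)\wedge \bar u', h\right >$ (the remaining Leibniz term dies because $\xi \lrcorner \bar u'=0$), and invoking the metric-compatibility identity twice — once to expand $d\left < (\xi\lrcorner u)\wedge\bar u', h\right >$ and once inside $\xi\lrcorner d\left < u\wedge\bar u', h\right >$ — one finds that the two occurrences of $\left < (\xi\lrcorner u)\wedge\overline{\nabla u'}, h\right >$ come with coefficients $(-1)^{n-1}$ and $(-1)^{n}$, hence cancel. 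What survives, as an identity of $(2n)$-forms on all of $X$, is
\[
L_\xi \left < u\wedge \bar u', h\right > = \left < \bigl(\nabla(\xi\lrcorner u) + \xi\lrcorner \nabla u\bigr)\wedge \bar u', h\right > + \left < u\wedge (\xi \lrcorner \overline{\nabla u'}), h\right > = \left < (L^{\nabla}_\xi u)\wedge\bar u', h\right > + \left < u\wedge \overline{L^{0,1}_{\bar\xi}u'}, h\right >.
\]
Now apply $\iota_Z^*$. Decomposing $L^{\nabla}_\xi u = L^{1,0}_\xi u + L^{0,1}_\xi u$, the second summand $L^{0,1}_\xi u = \dbar(\xi \lrcorner u) + \xi\lrcorner \dbar u$ is an $E$-valued $(n-1,1)$-form, so $\left < (L^{0,1}_\xi u)\wedge \bar u', h\right >$ is of bidegree $(n-1,\,n+1)$ and is killed by $\iota_Z^*$ since $\dim_{\bbC}Z = n$. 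This is precisely the first identity.

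The second identity then follows from the first by conjugation: apply the first identity with the roles of $u$ and $u'$ exchanged (keeping $\xi$ fixed), take complex conjugates, and use $\overline{L_\xi \eta} = L_{\bar\xi}\bar\eta$, $\iota_Z^*\bar\eta = \overline{\iota_Z^*\eta}$, together with $\overline{\left < \alpha\wedge\bar\beta, h\right >} = (-1)^{n^2}\left < \beta\wedge\bar\alpha, h\right >$ for $E$-valued $(n,0)$-forms $\alpha,\beta$; all factors $(-1)^{n^2}$ then cancel and one lands on the second formula. None of this is genuinely difficult; the only points requiring care are the signs in the Leibniz rule for interior products and in metric compatibility, and the bidegree count that lets the full twisted Lie derivative $L^{\nabla}_\xi u$ collapse to its $(1,0)$-part $L^{1,0}_\xi u$ after restriction — this collapse is the one place where the hypothesis $\dim_{\bbC}Z = n$, rather than $Z$ being an arbitrary submanifold, is actually used. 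The K\"ahler hypothesis plays no role in this particular proposition.
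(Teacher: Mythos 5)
Your proof is correct and follows essentially the same route as the paper's: Cartan's formula plus metric compatibility of the Chern connection, followed by a bidegree count after pulling back to $Z$ (the paper carries the terms that vanish by $\bar\xi\lrcorner u'=0$ formally and cancels them at the end, whereas you discard them immediately — same computation). Your explicit conjugation argument for the second identity is a welcome addition, since the paper leaves that step implicit, and your observation that the K\"ahler hypothesis is not used here is accurate.
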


\begin{proof}
We compute that 
\begin{eqnarray*}
&& L _{\xi}  \left < u \wedge \bar u ' , h\right > \\
&=& d \left ( \xi \lrcorner  \left < u \wedge \bar u ' , h\right >\right ) + \xi \lrcorner d  \left < u \wedge \bar u ' , h\right > \\
&=& d \left (\left < (\xi \lrcorner   u )\wedge \bar u ' , h\right >  + (-1) ^n  \left < u \wedge \overline{(\bar \xi \lrcorner u ')} , h\right >  \right )+ \xi \lrcorner \left ( \left < \nabla u \wedge \bar u' , h \right > + (-1)^n  \left < u\wedge \overline{\nabla u'} , h \right >\right )\\
&=& \left < \nabla (\xi \lrcorner   u )\wedge \bar u ' , h\right >  + (-1) ^{n-1} \left < (\xi \lrcorner   u )\wedge \overline{\nabla u '} , h\right >  +(-1) ^n \left < \nabla u \wedge \overline{(\bar \xi \lrcorner u')}, h\right > \\
&& + \left <  u \wedge \overline{\nabla (\bar \xi \lrcorner u')}, h\right >  + \left < (\xi \lrcorner (\nabla u)) \wedge \bar u' , h \right > + (-1)^{n+1} \left < \nabla u \wedge \overline{(\bar \xi \lrcorner u')} , h \right >  \\
&& + (-1) ^{n} \left < \xi \lrcorner u \wedge \overline{\nabla u'}, h\right > + \left < u , \overline{(\bar \xi \lrcorner (\nabla u'))}, h\right >\\
&=& \left < \nabla (\xi \lrcorner   u )\wedge \bar u ' , h\right >   + \left <  u \wedge \overline{\nabla (\bar \xi \lrcorner u')}, h\right >  + \left < (\xi \lrcorner (\nabla u)) \wedge \bar u' , h \right >  + \left < u , \overline{(\bar \xi \lrcorner (\nabla u'))}, h\right >\\
&=& \left < \nabla (\xi \lrcorner   u )\wedge \bar u ' , h\right >  + \left < (\xi \lrcorner (\nabla u)) \wedge \bar u' , h \right >  + \left < u \wedge \overline{(\bar \xi \lrcorner (\nabla u'))}, h\right >,
\end{eqnarray*}
where the last equality holds because $\bar \xi \lrcorner u' = 0$.  Therefore 
\begin{eqnarray*}
\iota _Z ^*  L _{\xi}  \left < u \wedge \bar u ' , h\right > &=&  \iota _Z ^* \left < \nabla (\xi \lrcorner   u )\wedge \bar u ' , h\right >   + \iota _Z ^* \left < (\xi \lrcorner (\nabla u)) \wedge \bar u' , h \right >  + \iota _Z ^* \left < u , \overline{(\bar \xi \lrcorner (\nabla u'))}, h\right > \\
&=&  \iota _Z ^* \left < \nabla^{1,0} (\xi \lrcorner   u )\wedge \bar u ' , h\right >   + \iota _Z ^* \left < (\xi \lrcorner (\nabla^{1,0} u)) \wedge \bar u' , h \right >  + \iota _Z ^* \left < u , \overline{(\bar \xi \lrcorner (\dbar u'))}, h\right >,
\end{eqnarray*}
the last equality being a consequence of the fact that upon restriction to $Z$ all of the forms whose bidegree is not $(n,n)$ must vanish.  
\end{proof}

\begin{lem}\label{lie-deriv-lemma}
Let $p :X \to B$ be a proper holomorphic family, let $E \to X, h$ be a Hermitian holomorphic vector bundle equipped with its Chern connection and let $u$ and $u'$ be $E$-valued $(n,0)$-forms on $X$.  If $\tau$ is a $(1,0)$-vector field on $B$ and $\xi_{\tau}$ is a $(1,0)$-vector field on $X$ such that $dp \xi _{\tau} = \tau$ then
\[
\tau \int _{X_t} \left < u\wedge \bar u', h \right > = \int _{X_t} \left < L ^{1,0} _{\xi _{\tau}} u\wedge \bar u', h \right > + \int _{X_t} \left < u\wedge \overline{ L^{0,1} _{\bar\xi _{\tau}}u'}, h \right >,
\]
where $X_t := p ^{-1} (t)$.
\end{lem}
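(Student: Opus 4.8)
The plan is to combine the classical formula for differentiating an integral over the fibers of a proper submersion with the restricted product rule of Proposition \ref{restricted-product-rule}.

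Set $\alpha := \langle u\wedge\bar u', h\rangle$, a smooth complex-valued $(n,n)$-form on $X$; pairing with $h$ has turned the $E$-valued data into an ordinary differential form, so its complexified Lie derivative $L_{\xi_\tau}\alpha$ in the sense of Definition \ref{cplx-lie-deriv}.i is defined. The first step is to establish the variational identity
\[
\tau\int_{X_t}\iota_{X_t}^*\alpha = \int_{X_t}\iota_{X_t}^* L_{\xi_\tau}\alpha .
\]
I would argue this as follows: it is local on $B$, and by $\bbC$-linearity of both sides in the pair $(\tau,\xi_\tau)$ one reduces to a real vector field $\tau$ with a real lift, where it is the standard first-variation formula. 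For that one trivializes $p^{-1}(U)\cong X_{t_0}\times U$ over a neighbourhood $U$ of a fixed $t_0$ by Ehresmann's theorem (this is where properness enters), pulls $\alpha$ back, differentiates under the now fixed-domain integral, and recognizes the outcome as $\int_{X_{t_0}}\iota_{X_{t_0}}^* L_{\xi_\tau}\alpha$ via Cartan's formula $L_{\xi_\tau}\alpha = d(\xi_\tau\lrcorner\alpha)+\xi_\tau\lrcorner d\alpha$, the exact contribution $\int_{X_{t_0}}d\bigl(\iota_{X_{t_0}}^*(\xi_\tau\lrcorner\alpha)\bigr)$ dropping out by Stokes since $X_{t_0}$ is compact without boundary. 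Finally, the identity is independent of the chosen lift: two lifts of $\tau$ differ by a vertical field $V$, and $\iota_{X_t}^* L_V\alpha = d\bigl(\iota_{X_t}^*(V\lrcorner\alpha)\bigr) + V\lrcorner d\bigl(\iota_{X_t}^*\alpha\bigr) = d\bigl(\iota_{X_t}^*(V\lrcorner\alpha)\bigr)$, the second term vanishing because $\iota_{X_t}^*\alpha$ is top-degree on $X_t$; integrating gives $0$. Hence one may take $\xi_\tau$ to be precisely the given $(1,0)$-lift.

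For the second step I would invoke Proposition \ref{restricted-product-rule} with $Z := X_t$ (which has complex dimension $n$, as required — and the Kähler hypothesis there is not actually used in its purely formal proof, so it is harmless) and $\xi := \xi_\tau$, yielding
\[
\iota_{X_t}^* L_{\xi_\tau}\langle u\wedge\bar u', h\rangle = \iota_{X_t}^*\langle (L^{1,0}_{\xi_\tau}u)\wedge\bar u', h\rangle + \iota_{X_t}^*\langle u\wedge\overline{L^{0,1}_{\bar\xi_\tau}u'}, h\rangle .
\]
Integrating this equality over $X_t$ and substituting into the variational identity of the first step produces exactly the claimed formula.

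The main obstacle is the first step, namely making rigorous that $t\mapsto\int_{X_t}\iota_{X_t}^*\alpha$ is smooth with the stated derivative (differentiation under the fiber integral). Once that is in place, everything else — the application of Proposition \ref{restricted-product-rule} and the passage from real vector fields to the complex $(1,0)$-lift — is routine bookkeeping.
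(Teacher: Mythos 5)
Your proposal is correct and follows essentially the same route as the paper: reduce to real vector fields, identify the derivative of the fiber integral with the fiber integral of the Lie derivative, and then apply Proposition \ref{restricted-product-rule} with $Z = X_t$. The only (inessential) difference is that the paper obtains the first-variation identity directly from the flow of the lift, via $\tau_i \int_{X_t}\alpha = \tfrac{d}{dr}\big|_{r=0}\int_{X_t}(\Phi^r_{\xi_i})^*\alpha = \int_{X_t}L_{\xi_i}\alpha$, rather than passing through an Ehresmann trivialization and differentiation under the integral sign.
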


\begin{proof}
Write $\tau = \tau _1 + \ii \tau _2$ in terms of its real and imaginary parts, and similarly write $\xi _{\tau}= \xi_1 + \ii \xi_2$.  Since $p$ is holomorphic, $dp \xi _i = \tau _i$.  Then 
\begin{eqnarray*}
\tau _i \int _{X_t} \left < u\wedge \bar u', h \right >  &=& \left . \frac{\di}{\di r} \right |_{r=0} \int _{\Phi ^r_{\xi _i}(X_t)} \left < u\wedge \bar u', h \right >= \left . \frac{\di}{\di r} \right |_{r=0} \int _{X_t} \left ( \Phi ^r_{\xi _i} \right )^* \left < u\wedge \bar u', h \right > \\
&=&  \int _{X_t} L_{\xi _i} \left < u\wedge \bar u', h \right >,
\end{eqnarray*}
so the lemma follows from Definition \ref{cplx-lie-deriv} and Proposition \ref{restricted-product-rule}.
\end{proof}

Similarly one has the following lemma.  

\begin{lem}\label{(n,1)v(n-1,0)-lemma}
Let $X$, $Z$, $E$ and $h$ be as in Proposition \ref{restricted-product-rule}, let $\alpha$ and $\beta$ be $E$-valued forms of bidegree $(n,1)$ and $(n-1,0)$ repectively.  If $\xi$ is a $(1,0)$-vector field on $X$ then 
\[
 L^{1,0}_{\xi}   \left < \alpha\wedge \bar \beta, h\right >  = \left < (L^{1,0} _{\xi}\alpha ) \wedge \bar \beta, h\right > +  \left < \alpha \wedge \overline{L^{0,1} _{\bar \xi} \beta}, h \right >,
\]
and hence  
\[
\int _Z  L^{1,0}_{\xi}   \left < \alpha\wedge \bar \beta, h\right >  = \int_Z \left < (L^{1,0} _{\xi}\alpha ) \wedge \bar \beta, h\right > + \int _Z \left < \alpha \wedge \overline{L^{0,1} _{\bar \xi} \beta}, h \right >.
\]
\end{lem}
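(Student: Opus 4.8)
The plan is to establish the pointwise identity of $\bbC$-valued forms first; pulling it back along $\iota_Z$ and integrating over $Z$ then yields the displayed integral identity, since $\left < \alpha \wedge \bar \beta , h \right >$ has bidegree $(n,n)$ and therefore restricts to a top-degree form on the $n$-dimensional manifold $Z$. Here $L^{1,0}_{\xi}$ acts on the $\bbC$-valued form $\left < \alpha \wedge \bar \beta , h \right >$ through the trivial connection, i.e. as $\di (\xi \lrcorner \,\cdot\,) + \xi \lrcorner \,\di\,(\cdot)$, and since $\xi$ is of type $(1,0)$ it preserves bidegree.

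The computation runs parallel to the one in the proof of Proposition \ref{restricted-product-rule}, but is shorter because only $(1,0)$-components occur. First I would record two elementary consequences of bidegree considerations: since $\xi$ is of type $(1,0)$ and $\bar \beta$ of type $(0,n-1)$ one has $\xi \lrcorner \bar \beta = 0$, hence $\xi \lrcorner \left < \alpha \wedge \bar \beta , h \right > = \left < (\xi \lrcorner \alpha ) \wedge \bar \beta , h \right >$; and, applying the Leibniz rule for the (metric-compatible) Chern connection $\nabla$, namely $d \left < s \wedge \bar s' , h \right > = \left < \nabla s \wedge \bar s' , h \right > + (-1)^{\deg s} \left < s \wedge \overline{\nabla s'} , h \right >$, and then discarding the two terms of the wrong bidegree (using $\deg \alpha = n+1$ and $\nabla^{0,1} = \dbar$),
\[
\di \left < \alpha \wedge \bar \beta , h \right > = \left < \nabla^{1,0}\alpha \wedge \bar \beta , h \right > + (-1)^{n+1} \left < \alpha \wedge \overline{\dbar \beta} , h \right >.
\]

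Then I would expand $L^{1,0}_{\xi}\left < \alpha \wedge \bar \beta , h \right > = \di \left ( \xi \lrcorner \left < \alpha \wedge \bar \beta , h \right > \right ) + \xi \lrcorner \di \left < \alpha \wedge \bar \beta , h \right >$ using the two facts above: the Leibniz rule once more on $\di \left < (\xi \lrcorner \alpha ) \wedge \bar \beta , h \right >$ (now with $\deg (\xi \lrcorner \alpha) = n$), and the graded Leibniz rule for $\xi \lrcorner$ on the two summands of $\xi \lrcorner \di \left < \alpha \wedge \bar \beta , h \right >$, again invoking $\xi \lrcorner \bar \beta = 0$. The two terms proportional to $\left < (\xi \lrcorner \alpha ) \wedge \overline{\dbar \beta} , h \right >$ carry the opposite signs $(-1)^n$ and $(-1)^{n+1}$ and cancel, leaving
\[
\left < \left ( \nabla^{1,0}(\xi \lrcorner \alpha ) + \xi \lrcorner \nabla^{1,0}\alpha \right ) \wedge \bar \beta , h \right > + \left < \alpha \wedge \left ( \xi \lrcorner \overline{\dbar \beta} \right ) , h \right >.
\]
By Definition \ref{cplx-lie-deriv} the first bracket equals $L^{1,0}_{\xi}\alpha$. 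For the second, since $\beta$ is of type $(n-1,0)$ one has $\bar \xi \lrcorner \beta = 0$, so $L^{0,1}_{\bar \xi}\beta = \bar \xi \lrcorner \dbar \beta$, and taking complex conjugates gives $\xi \lrcorner \overline{\dbar \beta} = \overline{\bar \xi \lrcorner \dbar \beta} = \overline{L^{0,1}_{\bar \xi}\beta}$. This is the asserted pointwise identity, and integrating over $Z$ finishes the proof. The only point needing care is the bookkeeping of the Koszul signs produced by the graded Leibniz rules for $d$ and for interior multiplication, together with the vanishing of several cross terms on bidegree grounds (because $\xi$ is of type $(1,0)$); there is no substantive obstacle.
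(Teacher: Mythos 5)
Your proposal is correct and follows essentially the same route as the paper's proof: expand $L^{1,0}_{\xi} = \di(\xi\lrcorner\cdot) + \xi\lrcorner\di(\cdot)$, apply the metric-compatibility Leibniz rule, keep only the terms of the correct bidegree, observe that the two copies of $\left< (\xi\lrcorner\alpha)\wedge\overline{\dbar\beta}, h\right>$ carry opposite signs $(-1)^n$ and $(-1)^{n+1}$ and cancel, and identify the remainder using $\bar\xi\lrcorner\beta=0$. The sign bookkeeping and bidegree eliminations you describe match the paper's computation step for step, so there is nothing to add.
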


\begin{proof}
We compute that 
\begin{eqnarray*}
L^{1,0} _{\xi} \left < \alpha \wedge \bar \beta ,h\right > &=& \xi \lrcorner \di \left < \alpha \wedge \bar \beta ,h\right > + \di \left ( \xi \lrcorner \left < \alpha \wedge \bar \beta ,h\right >\right )\\
&=&  \xi \lrcorner \left <\nabla ^{1,0} \alpha \wedge \bar \beta ,h\right > +  (-1) ^{n+1} \xi \lrcorner \left < \alpha \wedge \overline{\dbar \beta},h\right > + \di \left < \xi \lrcorner \alpha \wedge \bar \beta , h \right >\\
&=& \left <  (\xi \lrcorner \nabla ^{1,0} \alpha) \wedge\bar  \beta ,h\right >  + (-1) ^{n+1} \left < (  \xi \lrcorner \alpha) \wedge \overline{\dbar \beta} ,h\right > +  \left < \alpha \wedge\overline{\bar   \xi \lrcorner \dbar \beta} ,h\right > \\
&& + \left < \nabla ^{1,0} (\xi \lrcorner \alpha) \wedge \bar \beta , h \right >+ (-1)^n \left < \xi \lrcorner \alpha \wedge \overline{\dbar \beta} , h \right >\\
&=& \left <  (\xi \lrcorner \nabla ^{1,0} \alpha) \wedge\bar  \beta ,h\right >  + \left < \nabla ^{1,0} (\xi \lrcorner \alpha) \wedge \bar \beta , h \right > + \left < \alpha \wedge\overline{\bar   \xi \lrcorner \dbar \beta} ,h\right > \\
&=& \left < (L^{0,1} _{\xi}\alpha ) \wedge \bar \beta, h\right > + \left < \alpha \wedge \overline{L^{0,1} _{\bar \xi} \beta}, h \right >,
\end{eqnarray*}
and the proof is completed by integrating over $Z$.
\end{proof}

\begin{prop}\label{restricted-product-rule-2}
Let $X$, $Z$, $E$ and $h$ be as in Proposition \ref{restricted-product-rule}, let $u$ and $u'$ be $E$-valued $(n,0)$-forms on $X$ and let $\gamma$ be an $E$-valued $(n-1,1)$-form on $X$.  If $\xi$ is a $(1,0)$-vector field on $X$ then 
\begin{equation}\label{10xibar}
 L^{1,0}_{\bar \xi}   \left < \gamma \wedge \bar u, h\right >  = \left < L^{1,0} _{\bar \xi} \gamma \wedge \bar u, h\right > +  \left < \gamma \wedge \overline{L^{0,1} _{\xi}u} , h \right >
\end{equation}
and 
\begin{equation}\label{01xi}
L^{0,1}_{\xi}   \left < u\wedge \bar u', h\right > = \left < (L^{0,1} _{\xi}u) \wedge \bar u', h\right >.
\end{equation}
In particular, 
\begin{equation}\label{10xibar-integrated}
\iota _Z^* L_{\bar \xi}   \left < \gamma \wedge \bar u, h\right >  = \iota _Z^* \left < L^{1,0} _{\bar \xi} \gamma \wedge \bar u, h\right > +  \iota _Z^* \left < \gamma \wedge \overline{L^{0,1} _{\xi}u} , h \right >
\end{equation}
\end{prop}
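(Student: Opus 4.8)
The identities \eqref{10xibar} and \eqref{01xi} are pointwise identities among \emph{scalar}-valued forms on $X$, and \eqref{10xibar-integrated} is then extracted from \eqref{10xibar} by a degree count. The plan is to prove each of the first two by the same ``expand the Lie derivative, apply Leibniz rules, collect'' routine already used in Proposition~\ref{restricted-product-rule} and Lemma~\ref{(n,1)v(n-1,0)-lemma}. First I would set up the notation $\{a,b\}:=\langle a\wedge\bar b,h\rangle$ and record the three facts used repeatedly: (i) the Chern-connection Leibniz rule $d\{a,b\}=\{\nabla a,b\}+(-1)^{|a|}\{a,\nabla b\}$, which upon separating bidegrees gives $\di\{a,b\}=\{\nabla^{1,0}a,b\}+(-1)^{|a|}\{a,\dbar b\}$ and $\dbar\{a,b\}=\{\dbar a,b\}+(-1)^{|a|}\{a,\nabla^{1,0}b\}$; (ii) the interior-product rule $Y\lrcorner\{a,b\}=\{Y\lrcorner a,b\}+(-1)^{|a|}\langle a\wedge(Y\lrcorner\bar b),h\rangle$ for a complex vector field $Y$, together with $\overline{\zeta\lrcorner c}=\bar\zeta\lrcorner\bar c$; and (iii) the structural observation that, $\xi$ being of type $(1,0)$, the field $\bar\xi$ annihilates every form of pure type $(\ast,0)$ while $\xi$ annihilates every form of pure type $(0,\ast)$.

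For \eqref{01xi} I would write $L^{0,1}_{\xi}\{u,u'\}=\dbar(\xi\lrcorner\{u,u'\})+\xi\lrcorner\dbar\{u,u'\}$. By (iii) the contraction $\xi\lrcorner\{u,u'\}$ collapses to $\{\xi\lrcorner u,u'\}$, and inside $\xi\lrcorner\dbar\{u,u'\}$ the interior product slides past every conjugated factor; applying the $\dbar$-Leibniz rule to both summands, the two copies of $\{\xi\lrcorner u,\nabla^{1,0}u'\}$ that arise carry the opposite signs $(-1)^{n-1}$ and $(-1)^{n}$ and cancel, leaving $\{\dbar(\xi\lrcorner u)+\xi\lrcorner\dbar u,u'\}=\{L^{0,1}_{\xi}u,u'\}$, which is \eqref{01xi}.

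The proof of \eqref{10xibar} follows the same template but is longer. Starting from $L^{1,0}_{\bar\xi}\{\gamma,u\}=\di(\bar\xi\lrcorner\{\gamma,u\})+\bar\xi\lrcorner\di\{\gamma,u\}$, I would apply (ii) (using $\overline{\xi\lrcorner u}=\bar\xi\lrcorner\bar u$) and then (i) to each of the four scalar forms that appear, and collect. Three simplifications occur: the two copies of $\{\nabla^{1,0}\gamma,\xi\lrcorner u\}$ carry signs $(-1)^{n+1}$ and $(-1)^{n}$ and cancel; the two copies of $\{\bar\xi\lrcorner\gamma,\dbar u\}$ carry signs $(-1)^{n}$ and $(-1)^{n-1}$ and cancel; and the surviving terms regroup as $\{\nabla^{1,0}(\bar\xi\lrcorner\gamma)+\bar\xi\lrcorner\nabla^{1,0}\gamma,\,u\}+\{\gamma,\,\dbar(\xi\lrcorner u)+\xi\lrcorner\dbar u\}=\{L^{1,0}_{\bar\xi}\gamma,u\}+\{\gamma,L^{0,1}_{\xi}u\}$, which is \eqref{10xibar}. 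The only real obstacle in the whole proof is the sign-and-bidegree bookkeeping at this step; once the three rules above are in hand there is no conceptual difficulty.

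Finally, for \eqref{10xibar-integrated} I would use Cartan's formula on each piece to write $L_{\bar\xi}=L^{1,0}_{\bar\xi}+L^{0,1}_{\bar\xi}$ on scalar forms. Pulling back to $Z$ and applying \eqref{10xibar} to the $(1,0)$-piece produces exactly the two terms on the right of \eqref{10xibar-integrated}. For the $(0,1)$-piece, note that $\langle\gamma\wedge\bar u,h\rangle$ has bidegree $(n-1,n+1)$ and that $L^{0,1}_{\bar\xi}=\dbar(\bar\xi\lrcorner\,\cdot\,)+\bar\xi\lrcorner\dbar(\,\cdot\,)$ preserves bidegree because $\bar\xi$ has type $(0,1)$; hence $L^{0,1}_{\bar\xi}\langle\gamma\wedge\bar u,h\rangle$ still has anti-holomorphic degree $n+1>n=\dim_{\bbC}Z$ and therefore $\iota_Z^{*}$ annihilates it. Adding the two contributions gives \eqref{10xibar-integrated}.
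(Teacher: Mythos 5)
Your proposal is correct and is essentially the paper's own argument: both proofs come down to the metric-compatibility Leibniz rule for $\nabla$ paired against $h$, the antiderivation property of contraction, and bidegree bookkeeping, with the integrated identity \eqref{10xibar-integrated} following because the $(0,1)$-Lie derivative piece has anti-holomorphic degree $n+1$ and so dies under $\iota_Z^*$. The only (immaterial) difference is organizational — the paper expands the full Lie derivatives $L_{\bar\xi}$ and $L_{\xi}$ as in Proposition \ref{restricted-product-rule} and then projects onto the $(n,n)$- and $(n-1,n+1)$-components, whereas you compute the $(1,0)$- and $(0,1)$-pieces directly via $\di$ and $\dbar$; the cancellations you identify are exactly the ones that occur.
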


\begin{proof}
As in the proof of Proposition \ref{restricted-product-rule} we have 
\[
L _{\bar \xi}  \left < \gamma \wedge \bar u  , h\right > = \left < \nabla (\bar \xi \lrcorner   \gamma )\wedge \bar u  , h\right >  + \left < (\bar \xi \lrcorner (\nabla \gamma)) \wedge \bar u , h \right >  + \left <  \gamma \wedge \overline{\nabla (\xi \lrcorner u)}, h\right > + \left < \gamma \wedge \overline{(\xi \lrcorner (\nabla u))}, h\right >
\]
and 
\[
L _{\xi}  \left < u \wedge \bar u'  , h\right > = \left < \nabla (\xi \lrcorner   u )\wedge \bar u'  , h\right >  + \left < (\xi \lrcorner (\nabla u)) \wedge \bar u' , h \right >  + \left <  u \wedge \overline{\nabla (\bar \xi \lrcorner u')}, h\right > + \left < u \wedge \overline{(\bar \xi \lrcorner (\nabla u'))}, h\right >.
\]
Taking the $(n,n)$-components of both sides of the first identity yields \eqref{10xibar}, and taking the $(n-1,n+1)$-parts of the second identity yields \eqref{01xi}.  Finally, since $L^{0,1}_{\bar \xi}\left < \gamma \wedge \bar u , h \right >$ is of bidegree $(n-1, n+1)$, \eqref{10xibar-integrated} follows from \eqref{10xibar}
\end{proof}

\begin{lem}\label{dbar-of-sff}
Let $(E,h) \to X \stackrel{p}{\to} B$ be a holomorphic family of Hermitian holomorphic vector bundles, let $\tau$ be a holomorphic vector field on $B$, let $\xi _{\tau}$ be a lift of $\tau$ to $X$, and let $u$ be an $E$-valued $(n,0)$-form on $X$.  Then with respect to the Chern connection of $(E,h)$ 
\[
\dbar \iota _{X_t} ^* \left ( L^{1,0}_{\xi  _{\tau}} u \right ) = \iota _{X_t} ^* \left (- (\xi _{\tau} \lrcorner \Theta (h)) u - \nabla ^{1,0} ((\dbar \xi _{\tau}) \lrcorner u) +L^{1,0}_{\xi  _{\tau}} \dbar u\right ),
\]
where the $(n,1)$-form $\iota _{X_t} ^*( (\xi _{\tau} \lrcorner \Theta (h)) u)$ is defined by 
\[
\bar \zeta \lrcorner \left ( \iota _{X_t} ^* \left ( (\xi _{\tau} \lrcorner \Theta (h)) u\right ) \right ) :=  \iota _{X_t} ^* \left ( \Theta (h) _{\xi _{\tau} \bar \zeta}u\right ).
\]
\end{lem}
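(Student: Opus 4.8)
The plan is to establish the identity first on all of $X$ and then to pull back to $X_t$, showing that the one extra term that appears on $X$ is annihilated by $\iota_{X_t}^*$. Throughout, $\nabla = \nabla^{1,0} + \dbar$ is the Chern connection of $(E,h)$, and I will use two of its standard features. First, since $\dbar^2 = 0$ and the curvature $\Theta(h) = \nabla^2$ has type $(1,1)$ — so that $(\nabla^{1,0})^2 = 0$ — one has $\nabla^{1,0}\dbar + \dbar\nabla^{1,0} = \Theta(h)$ as operators on $E$-valued forms. Second, for any $(1,0)$-vector field $\xi$ on $X$ and any $E$-valued form $\beta$ the $(0,1)$-twisted Lie derivative satisfies the twisted Cartan formula
\[
L^{0,1}_{\xi}\beta \;=\; \dbar(\xi\lrcorner\beta) + \xi\lrcorner\dbar\beta \;=\; (\dbar\xi)\lrcorner\beta,
\]
where $\dbar\xi$ is the $T^{1,0}_X$-valued $(0,1)$-form produced by the Cauchy--Riemann operator of the holomorphic bundle $T^{1,0}_X$, and $(\dbar\xi)\lrcorner\beta$ means contraction of the vector part followed by exterior multiplication by the form part; in a local holomorphic frame of $E$ both sides act coefficientwise and this is just the classical Cartan formula $L_\xi = d\iota_\xi + \iota_\xi d$.

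With these in hand I would expand $L^{1,0}_{\xi_\tau}u = \nabla^{1,0}(\xi_\tau\lrcorner u) + \xi_\tau\lrcorner\nabla^{1,0}u$ and apply $\dbar$ to each summand. For the first, the curvature identity gives $\dbar\nabla^{1,0}(\xi_\tau\lrcorner u) = \Theta(h)\wedge(\xi_\tau\lrcorner u) - \nabla^{1,0}\dbar(\xi_\tau\lrcorner u)$, and the Cartan formula rewrites $\dbar(\xi_\tau\lrcorner u) = (\dbar\xi_\tau)\lrcorner u - \xi_\tau\lrcorner\dbar u$. For the second, the Cartan formula applied to $\beta = \nabla^{1,0}u$ gives $\dbar(\xi_\tau\lrcorner\nabla^{1,0}u) = (\dbar\xi_\tau)\lrcorner\nabla^{1,0}u - \xi_\tau\lrcorner\dbar\nabla^{1,0}u$, and the curvature identity turns $\dbar\nabla^{1,0}u$ into $\Theta(h)\wedge u - \nabla^{1,0}\dbar u$; using $\xi_\tau\lrcorner(\Theta(h)\wedge u) = (\xi_\tau\lrcorner\Theta(h))\wedge u + \Theta(h)\wedge(\xi_\tau\lrcorner u)$ (no sign, $\Theta(h)$ having even degree), the two copies of $\Theta(h)\wedge(\xi_\tau\lrcorner u)$ cancel. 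Collecting terms, and recognizing $\nabla^{1,0}(\xi_\tau\lrcorner\dbar u) + \xi_\tau\lrcorner\nabla^{1,0}\dbar u = L^{1,0}_{\xi_\tau}\dbar u$, one arrives at the identity on $X$:
\[
\dbar L^{1,0}_{\xi_\tau}u = -(\xi_\tau\lrcorner\Theta(h))\wedge u - \nabla^{1,0}\big((\dbar\xi_\tau)\lrcorner u\big) + L^{1,0}_{\xi_\tau}\dbar u + (\dbar\xi_\tau)\lrcorner\nabla^{1,0}u.
\]

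It remains to apply $\iota_{X_t}^*$, which commutes with $\dbar$ since $\iota_{X_t}$ is holomorphic, and to dispose of the last term. Here is where the holomorphy of $\tau$ is used: $dp\colon T^{1,0}_X \to p^*T^{1,0}_B$ is a holomorphic bundle map, so $dp(\dbar\xi_\tau) = \dbar(dp\,\xi_\tau) = \dbar\tau = 0$; hence $\dbar\xi_\tau$ takes values in the vertical bundle $T^{1,0}_{X/B}$. Writing $\dbar\xi_\tau$ locally as a sum of terms $\bar\omega\otimes V$ with $V$ a vertical $(1,0)$-field — hence tangent to $X_t$ — interior product by $V$ commutes with $\iota_{X_t}^*$, so $\iota_{X_t}^*\big(\bar\omega\wedge(V\lrcorner\nabla^{1,0}u)\big) = \iota_{X_t}^*\bar\omega \wedge \big(V|_{X_t}\lrcorner\,\iota_{X_t}^*\nabla^{1,0}u\big)$; and $\iota_{X_t}^*\nabla^{1,0}u = 0$ because $\nabla^{1,0}u$ is an $E$-valued $(n+1,0)$-form while $X_t$ has complex dimension $n$. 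Thus $\iota_{X_t}^*\big((\dbar\xi_\tau)\lrcorner\nabla^{1,0}u\big) = 0$. Finally one checks that $\iota_{X_t}^*\big((\xi_\tau\lrcorner\Theta(h))\wedge u\big)$ is exactly the $(n,1)$-form $\iota_{X_t}^*((\xi_\tau\lrcorner\Theta(h))u)$ of the statement: contracting it with a $(0,1)$-vector $\bar\zeta$ tangent to $X_t$ and using $\bar\zeta\lrcorner u = 0$ (because $\iota_{X_t}^*u$ has bidegree $(n,0)$) returns $\iota_{X_t}^*(\Theta(h)_{\xi_\tau\bar\zeta}u)$, which is the defining relation of that form. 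Comparing the surviving terms then yields the asserted formula. The proof is essentially bookkeeping; the one point requiring care is the verticality of $\dbar\xi_\tau$, which is exactly what forces the term $(\dbar\xi_\tau)\lrcorner\nabla^{1,0}u$ to drop out upon restriction.
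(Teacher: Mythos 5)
Your proof is correct and follows essentially the same route as the paper's: expand $L^{1,0}_{\xi_\tau}u$, apply $\dbar$ using the curvature identity $\dbar\nabla^{1,0}+\nabla^{1,0}\dbar=\Theta(h)$ and the twisted Cartan formula, cancel the two $\Theta(h)\wedge(\xi_\tau\lrcorner u)$ terms, reassemble $L^{1,0}_{\xi_\tau}\dbar u$, and kill $(\dbar\xi_\tau)\lrcorner\nabla^{1,0}u$ via verticality of $\dbar\xi_\tau$ and the bidegree of $\nabla^{1,0}u$. If anything, your treatment of that last term (vanishing after restriction to $X_t$, rather than identically) and your explicit check against the statement's definition of $\iota_{X_t}^*((\xi_\tau\lrcorner\Theta(h))u)$ are slightly more careful than the paper's.
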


\begin{proof}
One has 
\begin{eqnarray*}
\dbar _{X_t}\iota _{X_t} ^*L^{1,0}_{\xi _{\tau}}u &=&  \iota _{X_t} ^* \dbar L^{1,0}_{\xi _{\tau}}u = \iota _{X_t} ^* \left ( \dbar \nabla ^{1,0} (\xi _{\tau} \lrcorner u )+ \dbar (\xi _{\tau} \lrcorner \nabla ^{1,0} u)\right )\\
&=& \iota _{X_t} ^* \left ( \dbar \nabla ^{1,0} (\xi _{\tau} \lrcorner u) - \xi _{\tau} \lrcorner \dbar \nabla ^{1,0} u+ (\dbar \xi _{\tau}) \lrcorner \nabla^{1,0}u \right )\\
&=&  \iota _{X_t} ^* \left ( \Theta (h) (\xi _{\tau} \lrcorner u) - \nabla ^{1,0} \dbar (\xi_{\tau} \lrcorner u) - (\xi _{\tau} \lrcorner \Theta (h) u) +\xi_{\tau} \lrcorner (\nabla ^{1,0} \dbar u) + (\dbar \xi _{\tau}) \lrcorner \nabla ^{1,0}u \right )\\
&=&\iota _{X_t}^*\left (-(\xi_{\tau}\lrcorner\Theta (h))u-\left ( \nabla^{1,0} \dbar (\xi _{\tau}\lrcorner u) - \xi_{\tau} \lrcorner (\nabla ^{1,0} \dbar u) \right ) + (\dbar \xi _{\tau}) \lrcorner \nabla ^{1,0}u\right )\\
&=&\iota _{X_t} ^* \left (- (\xi _{\tau} \lrcorner \Theta (h)) u - \nabla ^{1,0} ((\dbar \xi _{\tau}) \lrcorner u) + \nabla ^{1,0} (\xi  _{\tau} \lrcorner \dbar u) + \xi  _{\tau} \lrcorner \nabla ^{1,0} \dbar u + (\dbar \xi _{\tau}) \lrcorner \nabla ^{1,0}u\right )\\
&=& \iota _{X_t} ^* \left (- (\xi _{\tau} \lrcorner \Theta (h)) u - \nabla ^{1,0} ((\dbar \xi _{\tau}) \lrcorner u) +L^{1,0}_{\xi  _{\tau}} \dbar u + (\dbar \xi _{\tau}) \lrcorner \nabla ^{1,0}u\right ) \\
&=& \iota _{X_t} ^* \left (- (\xi _{\tau} \lrcorner \Theta (h)) u - \nabla ^{1,0} ((\dbar \xi _{\tau}) \lrcorner u) +L^{1,0}_{\xi  _{\tau}} \dbar u\right ).
\end{eqnarray*}
Indeed, $(\dbar \xi _{\tau}) \lrcorner \nabla ^{1,0}u= 0$, because $\dbar \xi  _{\tau}$ is vertical, and $\nabla ^{1,0}u$ has bidegree $(n+1,0)$.
\end{proof}

\begin{prop}\label{int-ptwise-holo}
Let $(E,h) \to X \stackrel{p}{\to} B$ be a holomorphic family of holomorphic Hermitian vector bundles, let $t \in B$ and let $\tau \in H^0(B,\cO (T^{1,0} _B))$.  Let $\xi _{\tau}$ be a lift of $\tau$, i.e., a $(1,0)$-vector field $\xi _{\tau}$ on $X$ such that $dp \xi_{\tau} = \tau$.  Then with respect to the Chern connection for $(E,h)$, one has the following statements.
\begin{enumerate}
\item[{\rm a.}] If $u, \tilde u \in H^0(X, \sC^{\infty} (\Lambda ^{n,0} _X \tensor E))$ agree on the fibers of $X \to B$, i.e., they represent the same smooth section $[u]= [\tilde u]$ of $K_{X/B} \tensor E \to X$, then 
\[
\iota ^*_{X_t} L^{1,0} _{\xi_{\tau}}u = \iota ^*_{X_t} L^{1,0} _{\xi _{\tau}}\tilde u \quad \text{and} \quad \iota ^*_{X_t} L^{1,0} _{\xi_{\tau}}\dbar u = \iota ^*_{X_t} L^{1,0} _{\xi _{\tau}}\dbar \tilde u.
\]
\item[{\rm b.}] If moreover $\iota ^*_{X_t} u  \in \sH_t$ then
\[
\iota ^*_{X_t} L^{1,0} _{\tilde \xi_{\tau}}\dbar u = \iota ^*_{X_t} L^{1,0} _{\xi _{\tau}}\dbar u
\]
for any other  lift $\tilde \xi _{\tau}$ of $\tau$.
\item[{\rm c.}]  Finally, if $\iota _{X_t} ^* u = 0$ and $\iota _{X_t} ^*L^{1,0} _{\xi _{\tau}}\dbar u = 0$ then $\iota _{X_t} ^* L^{1,0} _{\xi _{\tau}} u \in \sH _t$.
\end{enumerate}
\end{prop}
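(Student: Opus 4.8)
The plan is to prove (a), (b), (c) in turn, in each case by pulling back to the fibre and exploiting two elementary facts: $\iota_{X_t}^*$ intertwines the Chern connection of $(E,h)$ on $X$ with that of $(E|_{X_t},h_t)$ on $X_t$ (which follows from uniqueness of Chern connections, since $h$ restricts to $h_t$), and $\iota_{X_t}^*$ commutes with interior multiplication by any vector field tangent to $X_t$. For (a), I would work in a coordinate chart $U$ on $B$ in which $p=(p^1,\dots,p^m)$ on $X_U:=p^{-1}(U)$ and $\tau=\sum_i\tau^i\partial/\partial s^i$ with $\tau^i\in\cO(U)$. Since $[u]=[\tilde u]$, Remark \ref{local-in-the-base-coordinates-rmk} gives $u-\tilde u=\sum_i dp^i\wedge v_i$ with $v_i$ smooth $E$-valued $(n-1,0)$-forms. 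Using $\nabla^{1,0}dp^i=\di(dp^i)=0$ and $\xi_{\tau}\lrcorner dp^i=p^*\tau^i$ (constant along the fibres), a one-line computation shows that for \emph{any} $E$-valued form $w$
\[
L^{1,0}_{\xi_{\tau}}(dp^i\wedge w)=p^*(\di\tau^i)\wedge w+dp^i\wedge L^{1,0}_{\xi_{\tau}}w,
\]
and since $p^*(\di\tau^i)=\sum_j p^*(\di\tau^i/\di s^j)\,dp^j$, every term on the right carries a factor $dp^j$; hence $L^{1,0}_{\xi_{\tau}}(u-\tilde u)\in\Gamma(Z(p)^{n,0}\tensor E)$, which restricts to zero on $X_t$. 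Applying the same identity with $w=-\dbar v_i$ (the computation never used the bidegree of $w$), together with $\dbar(u-\tilde u)=-\sum_i dp^i\wedge\dbar v_i$ (again $\dbar\,dp^i=0$), gives the second identity of (a).

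For (b), the difference $V:=\tilde\xi_{\tau}-\xi_{\tau}$ satisfies $dp\,V=0$, so $V$ is a vertical $(1,0)$-vector field, in particular tangent to $X_t$; it suffices to show $\iota_{X_t}^*L^{1,0}_V\dbar u=0$. I would expand $L^{1,0}_V\dbar u=\nabla^{1,0}(V\lrcorner\dbar u)+V\lrcorner\nabla^{1,0}\dbar u$, pull back, and use $\iota_{X_t}^*\nabla^{1,0}=\nabla^{1,0}_{X_t}\iota_{X_t}^*$ together with $\iota_{X_t}^*(V\lrcorner\beta)=(V|_{X_t})\lrcorner\iota_{X_t}^*\beta$ to rewrite both summands in terms of $(V|_{X_t})\lrcorner\dbar_{X_t}f$ and its covariant derivative, where $f:=\iota_{X_t}^*u$. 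The hypothesis $f\in\sH_t$ means $\dbar_{X_t}f=\iota_{X_t}^*\dbar u=0$, so both summands vanish, proving (b).

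For (c), I would feed $u$ into Lemma \ref{dbar-of-sff}, which gives
\[
\dbar_{X_t}\,\iota_{X_t}^*\!\left(L^{1,0}_{\xi_{\tau}}u\right)=\iota_{X_t}^*\!\left(-(\xi_{\tau}\lrcorner\Theta(h))u-\nabla^{1,0}((\dbar\xi_{\tau})\lrcorner u)+L^{1,0}_{\xi_{\tau}}\dbar u\right),
\]
and show each of the three terms on the right vanishes under the hypotheses. The last term is zero by assumption. For the first, the defining relation $\bar\zeta\lrcorner\iota_{X_t}^*((\xi_{\tau}\lrcorner\Theta(h))u)=\iota_{X_t}^*(\Theta(h)_{\xi_{\tau}\bar\zeta}u)$ and the fact that the curvature endomorphism acts $E$-pointwise give $\iota_{X_t}^*(\Theta(h)_{\xi_{\tau}\bar\zeta}u)=(\Theta(h)_{\xi_{\tau}\bar\zeta}|_{X_t})\cdot\iota_{X_t}^*u=0$, since $\iota_{X_t}^*u=0$. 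For the second, $\tau$ holomorphic forces $dp(\dbar\xi_{\tau})=\dbar(dp\,\xi_{\tau})=\dbar(p^*\tau)=0$, so $\dbar\xi_{\tau}$ is a $(0,1)$-form with values in $T^{1,0}_{X/B}$; writing it locally as $\sum_k d\bar w^k\tensor W_k$ with $W_k$ vertical, each $\iota_{X_t}^*(W_k\lrcorner u)=(W_k|_{X_t})\lrcorner\iota_{X_t}^*u=0$, hence $\iota_{X_t}^*((\dbar\xi_{\tau})\lrcorner u)=0$ and therefore $\iota_{X_t}^*\nabla^{1,0}((\dbar\xi_{\tau})\lrcorner u)=\nabla^{1,0}_{X_t}\iota_{X_t}^*((\dbar\xi_{\tau})\lrcorner u)=0$. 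Thus $\dbar_{X_t}\iota_{X_t}^*(L^{1,0}_{\xi_{\tau}}u)=0$; as $\iota_{X_t}^*(L^{1,0}_{\xi_{\tau}}u)$ is a smooth $E|_{X_t}$-valued $(n,0)$-form on $X_t$, being $\dbar$-closed makes it a holomorphic section of $K_{X_t}\tensor E|_{X_t}$, i.e.\ an element of $\sH_t$.

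I expect the main obstacle to be the organization of (c): one must recognize that the two ``error'' terms produced by Lemma \ref{dbar-of-sff} vanish for two genuinely different reasons — the curvature term because $\iota_{X_t}^*u=0$ and curvature acts pointwise on $E$, the $\nabla^{1,0}((\dbar\xi_{\tau})\lrcorner u)$ term because $\dbar\xi_{\tau}$ takes vertical values, a consequence of the holomorphicity of $\tau$ — and one must use consistently that $\iota_{X_t}^*$ commutes with $\nabla^{1,0}$ and with interior multiplication by fields tangent to $X_t$. Parts (a) and (b), by comparison, are routine once these restriction principles and Remark \ref{local-in-the-base-coordinates-rmk} are in hand.
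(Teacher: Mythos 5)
Your proposal is correct and follows essentially the same route as the paper: part (a) via the local decomposition $u-\tilde u=\sum dp^i\wedge v_i$ and the identity $L^{1,0}_{\xi_\tau}(dp^i\wedge w)=p^*(\di\tau^i)\wedge w+dp^i\wedge L^{1,0}_{\xi_\tau}w$, part (b) via the verticality of $\tilde\xi_\tau-\xi_\tau$ together with $\dbar_{X_t}\iota_{X_t}^*u=0$, and part (c) by feeding $u$ into Lemma \ref{dbar-of-sff} and checking that the curvature term and the $\nabla^{1,0}((\dbar\xi_\tau)\lrcorner u)$ term both restrict to zero. Your write-up is slightly more explicit than the paper's about why those two error terms in (c) vanish, but the argument is the same.
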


\begin{proof}
Since $v := \tilde \xi _{\tau}- \xi _{\tau}$ is tangent to the fibers of $p$, 
\[
\iota ^*_{X_t} L^{1,0} _{\tilde \xi _{\tau}}\dbar u_1 - \iota ^*_{X_t} L^{1,0} _{\xi _{\tau}}\dbar u_1 = \iota _{X_t} ^* L^{1,0}_{v} \dbar u = L^{1,0}_{v_t} \dbar \iota _{X_t} ^* u = 0,
\]
where $v_t := v|_{X_t}$.  Thus (b) is proved.  

To prove (a) it suffices to show that if $u \in H^0(X, \sC^{\infty}(\Lambda ^{n,0}_X \tensor E))$ restricts to $0$ on fibers then $L^{1,0}_{\xi _{\tau}} u$ and $L^{1,0}_{\xi _{\tau}} \dbar u$ restrict to $0$ on fibers.  Now, using coordinates $s = (s^1,...,s^m)$ on the base, in terms of which we write $p = (p^1,..., p^m)$ and $\tau = \tau ^{\mu} \frac{\di}{\di s^{\mu}}$ (see Remark \ref{local-in-the-base-coordinates-rmk}), the section $u$ vanishes on fibers if and only if locally in the base one has 
\[
u = dp ^{\mu} \wedge v _{\mu}
\]
for some $E$-valued $(n-1,0)$-forms $v_1,..., v_m$.  But then 
\[
L^{1,0} _{\xi _{\tau}}u =  L^{1,0} _{\xi_{\tau}} dp ^{\mu} \wedge v_{\mu} =   ( p^* \di \tau ^{\mu} \wedge v _{\mu} ) + dp ^{\mu} \wedge   \iota _{X_t} ^*L^{1,0} _{\xi_{\tau}}  v _{\mu}  = dp ^{\mu} \wedge \left ( p ^*  \left ( \tfrac{\di \tau ^{\nu}}{\di s^{\mu}} \right ) v_{\nu}  + L^{1,0} _{\xi _{\tau}} v_{\mu} \right )
\]
and 
\[
- L^{1,0} _{\xi _{\tau}}\dbar u =  L^{1,0} _{\xi_{\tau}} dp ^{\mu} \wedge \dbar v_{\mu} =   ( p^* \di \tau ^{\mu} \wedge \dbar v _{\mu} ) + dp ^{\mu} \wedge   \iota _{X_t} ^*L^{1,0} _{\xi_{\tau}}  \dbar v _{\mu}  = dp ^{\mu} \wedge \left ( p ^*  \left ( \tfrac{\di \tau ^{\nu}}{\di s^{\mu}} \right ) \dbar v_{\nu}  + L^{1,0} _{\xi _{\tau}} \dbar v_{\mu} \right ),
\]
so that $\iota _{X_t} ^*L^{1,0} _{\xi _{\tau}} u = 0$ and $\iota _{X_t} ^*L^{1,0} _{\xi _{\tau}}\dbar u = 0$ for all $t \in B$.  

Finally, if $\tau$ is a holomorphic vector field, $\dbar \xi _{\tau}$ is a $(0,1)$-form with values in the vertical bundle $T^{1,0} _{X/B}$.  Therefore if $\iota _{X_t} ^*u = 0$ then $\iota _{X_t}^*\left ( \nabla ^{1,0} (\dbar \xi _{\tau}) \lrcorner u)\right ) = 0$ and $\iota _{X_t}^*( \xi _{\tau}  \lrcorner \Theta (h) )u$.  If also $\iota _{X_t} ^*L^{1,0}_{\xi _{\tau}} \dbar u = 0$ then by Lemma \ref{dbar-of-sff} $\dbar \iota _{X_t} ^* \left ( L^{1,0}_{\xi  _{\tau}} u \right ) = 0$, so (c) is proved.
\end{proof}

\subsection{Horizontal lifts}\label{lifts-paragraph}

In differentiating sections sections of $\sH \to B$ or the ambient $L^2$ bundle $\sL \to B$ one must choose \emph{horizontal} lifts of $(1,0)$-vector fields from $B$ to $X$.  

\begin{defn}
A horizontal distribution for the smooth family $p :X \to B$ is a distribution $H \subset T_X$ such that $dp |_H :H \to T_B$ is an isomorphism of vector bundles.  Equivalently, there is a morphism of vector bundles $\theta : T_B \to T_X$ such that $dp \theta (\tau) = \tau$.
\red
\end{defn}

We shall abuse notation and write $\theta$ instead of $H$ for the horizontal distribution.  Thus $\theta$ is both a subbundle of $T_X$ and a right inverse to $dp : T_X \to T_B$.

In our situation the total space $X$ and base manifold $B$ are complex manifolds and $p :X \to B$ is holomorphic, i.e., $dp J = J dp$, where $J$ denotes the almost complex structures.  Regardless of whether $J$ is integrable, there is a map that assigns to any horizontal distribution $\theta \subset T_X$ a unique horizontal $(1,0)$-distribution $\theta ^{1,0} \subset T^{1,0} _X$, defined as $\theta ^{1,0} = s^{1,0} \theta$, where 
\[
s^{1,0} : T_X \ni \xi \mapsto \tfrac{1}{2} (\xi - \ii J \xi) \in T^{1,0} _X.
\]
The correspondence $\theta \leftrightarrow \theta ^{1,0}$ is bijective, and since $p :X \to B$ is holomorphic, 
\[
dp \theta ^{1,0} \cong T^{1,0} _B \quad \text{and} \quad dp \theta ^{0,1} \cong T^{0,1} _B,
\]
where $\theta ^{0,1} := \overline{\theta ^{1,0}}$.

To slightly shrink the sizes of many already-too-large formulas, we shall write 
\label{horiz-lift-notation-page}
\[
\xi ^{\theta} _{\tau} := \theta ^{1,0} (\tau) \quad \text{and} \quad \bar \xi ^{\theta}_{\tau} := \theta ^{0,1} (\bar \tau) = \overline{\theta ^{1,0}(\tau)}, \qquad \tau \in T^{1,0} _B.
\]

\begin{prop}\label{integrable-horizontal-distribution}
Let $p :X \to B$ be a holomorphic submersion and let $\theta$ be a horizontal distribution for $p$.  Then $\theta$ is integrable if and only if
\begin{equation}\label{integ-crit}
\left [ \xi ^{\theta}_{\sigma}, \xi ^{\theta}_{\tau}\right ] = \xi ^{\theta}_{[\sigma, \tau]}
\end{equation}
for all local vector fields $\sigma, \tau$ on $B$.
\end{prop}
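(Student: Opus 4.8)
The plan is to reduce the equivalence to the single local, tensorial fact that the $(1,0)$-distribution $\theta^{1,0}\subset T^{1,0}_X$ — which is an honest sub-bundle because $p$ is holomorphic, and which is closed under the Lie bracket exactly when $\theta$ is integrable — is involutive. First I would work locally over $p^{-1}(V)$, where $V\subset B$ carries a coordinate frame $\tau_1,\dots,\tau_n$ of $T^{1,0}_B$ (so that $[\tau_i,\tau_j]=0$); there the lifts $\xi^\theta_{\tau_1},\dots,\xi^\theta_{\tau_n}$ form a frame for $\theta^{1,0}$, and every local section of $\theta^{1,0}$ is a smooth combination $\Xi=\sum_i f_i\,\xi^\theta_{\tau_i}$ with $f_i\in\sC^\infty(p^{-1}(V))$. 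Expanding a bracket with the Leibniz rule $[f\Xi,gH]=fg[\Xi,H]+f(\Xi g)H-g(Hf)\Xi$ then shows that $\theta^{1,0}$ is involutive if and only if $[\xi^\theta_{\tau_i},\xi^\theta_{\tau_j}]\in\Gamma(\theta^{1,0})$ for all $i,j$ and all such coordinate frames.

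Next I would invoke the defining property $dp\,\xi^\theta_\tau=\tau$, which says exactly that $\xi^\theta_\tau$ is $p$-related to $\tau$. Hence for any local $(1,0)$-vector fields $\sigma,\tau$ on $B$, the bracket $[\xi^\theta_\sigma,\xi^\theta_\tau]$ is $p$-related to $[\sigma,\tau]$, i.e. $dp\,[\xi^\theta_\sigma,\xi^\theta_\tau]=[\sigma,\tau]$; and since $B$ is a complex manifold, $[\sigma,\tau]$ is again of type $(1,0)$, so $\xi^\theta_{[\sigma,\tau]}$ is defined and also satisfies $dp\,\xi^\theta_{[\sigma,\tau]}=[\sigma,\tau]$. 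Therefore the ``integrability defect'' $[\xi^\theta_\sigma,\xi^\theta_\tau]-\xi^\theta_{[\sigma,\tau]}$ is always annihilated by $dp$, i.e. it is a section of the vertical bundle $T^{1,0}_{X/B}=\ker dp$. Since $dp$ restricts to an isomorphism $\theta^{1,0}\to T^{1,0}_B$ it vanishes only on vertical vectors, so $\theta^{1,0}\cap T^{1,0}_{X/B}=0$, and the defect lies in $\theta^{1,0}$ if and only if it vanishes — that is, $[\xi^\theta_\sigma,\xi^\theta_\tau]\in\Gamma(\theta^{1,0})$ if and only if \eqref{integ-crit} holds.

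Combining the two steps completes the proof. If \eqref{integ-crit} holds for all local $(1,0)$-fields, then choosing $\sigma=\tau_i$, $\tau=\tau_j$ from a coordinate frame gives $[\xi^\theta_{\tau_i},\xi^\theta_{\tau_j}]=\xi^\theta_{[\tau_i,\tau_j]}=0\in\Gamma(\theta^{1,0})$, so by the first step $\theta^{1,0}$ is involutive and $\theta$ is integrable; conversely, if $\theta$ is integrable then $[\xi^\theta_\sigma,\xi^\theta_\tau]\in\Gamma(\theta^{1,0})$ for all $(1,0)$-fields $\sigma,\tau$, whence \eqref{integ-crit} by the second step. I do not anticipate a genuine obstacle: the only points needing care are the passage from arbitrary sections to a coordinate frame of lifts — so that the pointwise, tensorial statement suffices — and the observation that the defect is vertical, which is precisely what upgrades ``$\in\theta^{1,0}$'' to the exact identity \eqref{integ-crit} via injectivity of $dp$ on $\theta^{1,0}$. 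Alternatively one could argue through the Frobenius theorem: on a leaf $S$ of the integrable $\theta$ the projection $p|_S$ is a local biholomorphism and $\xi^\theta_\tau|_S=(p|_S)^{-1}_*\tau$, for which bracket-compatibility is automatic; but the computation above is shorter and avoids having to integrate the leaves.
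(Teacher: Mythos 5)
Your argument is correct and is in substance the same as the paper's: both directions rest on the facts that $dp$ carries $[\xi^{\theta}_{\sigma},\xi^{\theta}_{\tau}]$ to $[\sigma,\tau]$ and that $dp$ is injective on $\theta$, so that integrability forces the bracket to equal $\xi^{\theta}_{[\sigma,\tau]}$. The only differences are cosmetic — the paper verifies the projection identity by an explicit computation in coordinates $(t,x)$ adapted to the submersion, where you invoke $p$-relatedness of brackets and phrase the conclusion as vanishing of a vertical defect, and your Leibniz-rule reduction to a frame of lifts spells out a step the paper's forward direction leaves implicit.
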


\begin{proof}
If \eqref{integ-crit} holds then the Lie brackets of vector fields tangent to $\theta$ are tangent to $\theta$, i.e., $\theta$ is integrable.  

Conversely, suppose $\theta$ is integrable.   Then $\left [ \xi ^{\theta}_{\sigma}, \xi ^{\theta}_{\tau}\right ] = \xi ^{\theta} _{\eta}$ for some vector field $\eta$ on $B$, and we must show that $\eta = [\sigma, \tau]$.  By the Inverse Function Theorem each point $P \in X$ has a local coordinate system $(t,x)$ such that $p (t,x) = t$ is a local coordinate at $p(P)$.  In these coordinates, if $\sigma = \sigma ^{\mu} \frac{\di}{\di t ^{\mu}}$ and $\tau = \tau ^{\nu} \frac{\di}{\di t ^{\nu}}$ then 
\[
\xi ^{\theta}_{\sigma} = \sigma ^{\mu} \frac{\di }{\di t ^{\mu}} + \alpha ^j \frac{\di}{\di x ^j} \quad \text{and} \quad \xi ^{\theta}_{\tau} = \tau ^{\nu} \frac{\di }{\di t ^{\nu}} + \beta ^k \frac{\di}{\di x ^k} 
\]
and therefore 
\[
\left [ \xi ^{\theta}_{\sigma},\xi ^{\theta}_{\tau} \right ] = [\sigma, \theta]^{\mu} \frac{\di}{\di t ^{\mu}} + \gamma ^i \frac{\di}{\di x ^i}.
\]
Thus 
\[
\eta = dp \left ( \left [ \xi ^{\theta}_{\sigma},\xi ^{\theta}_{\tau} \right ]  \right ) = [\sigma, \tau],
\]
as claimed.
\end{proof}

\section{Hilbert Fields of Sections}\label{BLS-section-fields-section}

In this section we discuss the structure of the Hilbert fields $\sH \to B$ and $\sL \to B$ associated to a holomorphic family of vector bundles $(E,h) \to X \stackrel{p}{\to} B$ that we defined in the introduction.  

\subsection{Berndtsson's Hilbert field}\label{structure-of-H} 

\begin{defn}
The Hilbert field $\sH \to B$ with its $L^2$-inner product \eqref{bls-inner-product} is called the \emph{Berndtsson Hilbert field of the family $(E,h) \to X \stackrel{p}{\to} B$}.
\red
\end{defn}


\subsubsection*{\sf Smooth and holomorphic sections of $\sH \to B$}

We begin with Berndtsson's definition of sections of $\sH \to B$.  However, we phrase the definition in terms of sections of the relative canonical bundle, and for the moment repress the fact that these sections are equivalence classes of twisted $(n,0)$-forms.

\begin{defn}\label{smooth-sections-of-holo-bundle-defn}
A section $\ff$ of $\sH \to B$ is a section $f \in\Gamma (X, K_{X/B} \tensor E)$ such that for each $t \in B$ $\iota _t f \in \sH_t = H^0(X_t, \cO (K_{X_t} \tensor E|_{X_t}))$ (c.f. Proposition \ref{relative-restriction-prop}).  We write 
\[
\ff \in \Gamma (B,\sH).
\]
We also use the notation 
\[
\ff = \fri (f) \quad \text{ and } \quad f = \fa (\ff),
\]
and say that $\ff \in \Gamma (B, \sH)$ is induced by $f$, and that $f \in \Gamma (X, K_{X/B} \tensor E)$ is associated to $\ff$.  
We say $\ff :B \to \sH$ is holomorphic (resp. $\sC^r$, measurable , etc.) if $\fa(\ff)$ is holomorphic (resp. $\sC^r$, measurable , etc.).  We denote by $\Gamma (U, \sC^{\infty} (\sH))$ and $\Gamma (U, \cO (\sH))$ the set of smooth and holomorphic sections over an open set $U \subset B$.  Finally, we let $\sC^{\infty} (\sH)$ and $\cO (\sH)$ denote the sheaves of germs of smooth  and holomorphic sections of $\sH \to B$ respectively.
\red
\end{defn}

\begin{rmk}
We emphasize that there may well exist $t \in B$ and $f \in \sH _t$ such that there is no \emph{smooth} (or even continuous) section $\ff$ of $\sH \to B$ satisfying $\ff (t) = f$.  This is the case precisely when $\sH \to B$ is not locally trivial near $t \in B$.
\red
\end{rmk}

Unfortunately the sheaf $\sC^{\infty} (\sH)$ does not always define a smooth structure on $\sH$.  The next proposition characterizes precisely when this is the case.

\begin{prop}\label{H-is-BLSS-iff-VB}
Suppose that the holomorphic family $(E, h) \to  X \stackrel{p}{\to} B$ is proper, or more generally that all of the fibers of the resulting Berndtsson Hilbert field $\sH$ are finite-dimensional.  Then the sheaf $\sC^{\infty} (\sH)$ is a smooth structure for the Hilbert field $\sH$, in the sense of Definition {\rm \ref{smooth-structure-defn}.i}, if and only if $\sH$ is a holomorphic vector bundle. 
\end{prop}

\begin{proof}
If $\sH \to B$ is locally trivial then the smooth sections clearly form a dense subset on every fiber; in fact, every element of every fiber is the evaluation of a smooth (and even holomorphic) section.  Conversely, if $\sH \to B$ has a smooth structure then $t \mapsto \dim \sH _t$ is lower semi-continuous.  Indeed, if $t_o \in B$ then we can choose smooth sections $f_1,..., f_N \in \sC^{\infty} (\sH)_{t_o}$ such that $\{ f_1 (t_o),..., f_N(t_o)\} \subset \sH _{t_o}$ is a basis, since the smooth sections are dense.  By continuity, $\{ f_1 (t),..., f_N(t)\} \subset \sH _t$ is linearly independent for every $t$ in some neighborhood $U$ of $t_o$.  Thus $\dim \sH _t \ge \dim \sH _{t_o}$ for all $t \in U$.  

On the other hand, it is well-known that $t \mapsto \dim \sH _t$ is upper semi-continuous.  (One can deduce upper semi-continuity from Grauert's theorem on the coherence of proper direct images, but in the present setting of a proper holomorphic family this upper semi-continuity is a relatively simple consequence of Montel's Theorem.)  Therefore the rank of the fibers of $\sH \to B$ is constant, and thus, by a well-known result of Kodaira and Spencer, $\sH \to B$ is a holomorphic vector bundle.  
\end{proof}

\subsubsection*{\sf Berndtsson's $\dbar$ operator for $\sH \to B$}

Let us now turn to the definition of the $\dbar$ operator for $\sH \to B$.  We begin with the following simple but important fact.

\begin{prop}\label{H_t-dbar-is-lift-independent}
Let $u$ be an $E$-valued $(n,0)$-form on the total space of the family $X\to B$, let $t \in B$, and suppose that $\iota_{X_t}^*u  \in \sH _t$.  Then for any lifts $\xi^1 _{\tau}$ and $\xi^2 _{\tau}$ of the $(1,0)$-vector $\tau \in T^{1,0}_{B,t}$ one has 
\[
\bar \xi ^1 _{\tau} \lrcorner \dbar u = \bar \xi ^2 _{\tau} \lrcorner \dbar u 
\]
as $E$-valued $(n,0)$-forms on $X$ along $X_t$.
\end{prop}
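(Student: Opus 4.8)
The plan is to reduce the claim to a statement about the single $(0,1)$-vector field $v := \bar \xi ^1 _{\tau} - \bar \xi ^2 _{\tau}$ along $X_t$. First I would note that since $p$ is holomorphic, $dp(\bar \xi ^i _{\tau}) = \overline{dp(\xi ^i _{\tau})} = \bar \tau$ for $i=1,2$, so $dp(v) = 0$; hence $v$ is a $(0,1)$-vector field along $X_t$ which is tangent to $X_t$, and I write $v_t$ for the induced $(0,1)$-vector field on $X_t$. Since $\bar \xi ^i _{\tau} \lrcorner \dbar u$ is an $E$-valued $(n,0)$-form, comparing the two of them along $X_t$ --- i.e.\ as sections of $(K_{X/B}\tensor E)|_{X_t}$, cf.\ Paragraph \ref{n0-forms-description} --- amounts by Proposition \ref{relative-restriction-prop} to comparing their pullbacks under $\iota _{X_t}$; so it suffices to show $\iota _{X_t}^*(v\lrcorner \dbar u) = 0$.

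For this I would invoke two elementary compatibilities of pullback along the holomorphic embedding $\iota _{X_t}\colon X_t \hookrightarrow X$: interior product with a vector field tangent to $X_t$ commutes with $\iota _{X_t}^*$, and $\iota _{X_t}^*$ commutes with $\dbar$ (the latter already used in the proof of Lemma \ref{dbar-of-sff}). These give
\[
\iota _{X_t}^*(v\lrcorner \dbar u) = v_t\lrcorner \iota _{X_t}^*(\dbar u) = v_t\lrcorner \dbar (\iota _{X_t}^* u) = 0,
\]
the last equality because $\iota _{X_t}^* u \in \sH _t$ is, by hypothesis, a holomorphic $E|_{X_t}$-valued canonical form on $X_t$, so $\dbar (\iota _{X_t}^* u) = 0$. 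Unwinding $v = \bar \xi ^1 _{\tau} - \bar \xi ^2 _{\tau}$ yields $\iota _{X_t}^*(\bar \xi ^1 _{\tau}\lrcorner \dbar u) = \iota _{X_t}^*(\bar \xi ^2 _{\tau}\lrcorner \dbar u)$, which is the assertion.

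An equivalent, more intrinsic route I would keep in reserve uses the twisted $(0,1)$-Lie derivative: as $u$ has bidegree $(n,0)$ and $v$ is of type $(0,1)$ one has $v\lrcorner u = 0$, so $v\lrcorner \dbar u = L^{0,1}_v u$; restricting to $X_t$ (legitimate because $v$ is tangent to $X_t$) turns this into $L^{0,1}_{v_t}(\iota _{X_t}^* u)$, which vanishes because $\iota _{X_t}^* u$ is $\dbar$-closed and of top type $(n,0)$.

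I do not expect a genuine obstacle; the only point worth flagging is the precise meaning of ``along $X_t$.'' As literal sections of $(\Lambda ^{n,0}_X\tensor E)|_{X_t}$ the two forms $\bar \xi ^1 _{\tau}\lrcorner \dbar u$ and $\bar \xi ^2 _{\tau}\lrcorner \dbar u$ need not coincide --- their difference can contain a nonzero term of the form $dp^{\mu}\wedge(\cdots)$, i.e.\ a section of $Z(p)^{n,0}\tensor E$ --- but this is precisely the term that is invisible to $\iota _{X_t}^*$, so the identity holds in the quotient $K_{X/B}\tensor E$, which is the relevant sense here. Everything else is routine bookkeeping of interior products and bidegrees.
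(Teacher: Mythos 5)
Your argument is correct and is essentially the paper's own proof: the difference of the two lifts is a vertical $(0,1)$-vector field, contraction with a vector tangent to $X_t$ commutes with $\iota_{X_t}^*$, and $\dbar\,\iota_{X_t}^*u=0$ because $\iota_{X_t}^*u\in\sH_t$. Your closing remark---that the equality should be read in $K_{X/B}\tensor E$, i.e.\ modulo sections of $Z(p)^{n,0}\tensor E$, which is exactly the part killed by $\iota_{X_t}^*$---is a correct and useful clarification of what ``along $X_t$'' means here, and matches how the proposition is actually used in Definition \ref{holo-dbar-defn}.
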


\begin{proof}
The proposition is equivalent to the requirement that $\bar \eta \lrcorner \dbar u = 0$ for any $\eta \in T^{1,0} _{X_t}$.  Since $\eta$ is tangent to $X_t$, $\bar \eta \lrcorner \dbar u = 0$ if and only if $\bar \eta \lrcorner \dbar \iota _{X_t} ^*u = 0$, and the latter holds because $\iota _{X_t} ^*u$ is holomorphic.
\end{proof}

By Proposition \ref{H_t-dbar-is-lift-independent} if $\ff \in \Gamma (B,\sH)$ then the associated section $f := \fa (\ff)$ satisfies 
\begin{equation}\label{dbar=0-claim}
\dbar f (\bar \zeta) = 0 \quad \text{ for all }\zeta \in T^{1,0}_{X/B},
\end{equation}
where $T^{1,0}_{X/B} := {\rm kernel}(dp) \cap T^{1,0}_X  \subset T^{1,0}_X$ is the vertical holomorphic tangent bundle.  One has an exact sequence of anti-holomorphic vector bundles 
\[
0 \to  T^{0,1}_{X/B} \to T^{0,1} _X \to p^*T^{0,1}_B \to 0,
\]
and therefore $\dbar f$ induces a unique section $\bar \eth f \in \Gamma (X,  p ^* T^{*0,1}_B \tensor K_{X/B} \tensor E)$ defined by 
\[
\bar \eth f ((x, \bar \tau)) = \dbar f (x) (\bar \xi_{\tau}) \quad \text{ for any } \xi_{\tau} \in T^{1,0} _{X,x} \text{ such that }dp (x)\xi _{\tau} = \tau \in T^{1,0}_{B, p(x)}.
\]
We therefore obtain the following definition, which is equivalent to the definition of $\dbar$ for $\sH \to B$ given in \cite{bo-annals}.
\begin{defn}[$\dbar$-operator for $\sH$]\label{holo-dbar-defn}
The operator 
\[
\dbar : \Gamma (B,\sC^{\infty}(\sH)) \to \Gamma (B,\sC^{\infty}( \Lambda ^{0,1} _B \tensor \sH))
\]
defined by 
\[
\dbar \ff := \fri (\bar \eth \fa (\ff))
\]
is the $\dbar$-operator for $\sH \to B$.
\end{defn}

\begin{rmk}
Evidently $\ff \in \Gamma (B,\cO(\sH))$ in the sense of Definition \ref{smooth-sections-of-holo-bundle-defn} if and only if $\dbar \ff = 0$, i.e., if and only if $\ff \in \Gamma (B,\cO(\sH))$ in the sense of Definition \ref{general-dbar-and-holo-defn}.  
\red
\end{rmk}

\begin{rmk}\label{H-is-holo-hh}
For higher-bidegree forms $\fb$, expressed in terms of a $\dbar$-closed local frame $\alpha ^1, ... , \alpha ^N$ for $\Lambda ^{p,q} _B$ as $\fb = \alpha ^i \tensor \fb _i$, the operator $\dbar$ is given by 
\[
\dbar \fb  = \alpha ^i \tensor \dbar \fb _i,
\]
and therefore $\dbar \dbar = 0$.  Thus $\sH \to B$ is a holomorphic Hilbert field.
\red
\end{rmk}

\begin{rmk}\label{bo-dbar-defn-rmk}
At first glance, the definition of the $\dbar$ operator for $\sH$ appearing in \cite{bo-annals} might look somewhat different than the definition we presented here.   Here we have used the representation of sections of $\sH$ as relative canonical sections $f \in p_* \cO_X (K_{X/B}\tensor E)_t$, whereas in Berndtsson's approach one chooses an $E$-valued $(n,0)$-form $u \in p_*\cO _X(\Lambda ^{n,0}_X \tensor E)$ representing $f$.  Since $f$ is holomorphic on fibers, the section $\dbar f$ of $\Lambda ^{0,1} _X \tensor K_{X/B} \tensor E$ vanishes on the vertical bundle $T^{0,1} _{X/B}$.  In terms of the representative $u$ of $f$, $\dbar u$, which is a section of $\Lambda ^{n,1} _X \tensor E$, need not vanish on $T^{0,1} _{X/B}$, but the restriction of $\dbar u$ to every fiber of $p$ must vanish.  Therefore, if $\xi _{\tau}$ is any lift of $\tau$ to $X$ one has 
\[
\iota _t (\bar \xi _{\tau} \lrcorner \dbar f) = \iota _{X_t} ^* (\bar \xi _{\tau} \lrcorner \dbar u),
\]
where $\iota _t$ is the isomorphism of Proposition \ref{relative-restriction-prop}.

In terms of the local coordinates discussed in Remark \ref{local-in-the-base-coordinates-rmk}, there exist $(n,0)$-forms $\beta _1,...,\beta _m$ and $(n-1,1)$-forms $\alpha _1,...,\alpha _m$ such that 
\[
\dbar u = dp^{\mu} \wedge \alpha _{\mu} + d\bar p^{\nu} \wedge \beta _{\bar \nu}.
\]
If $\tilde u$ is another representative of $f$ then $\tilde u - u = dp^{\mu} \wedge \gamma _{\mu}$ for some $(n-1,0)$-forms $\gamma _1,..., \gamma _m$, so 
\[
\dbar \tilde u = dp^{\mu} \wedge (\alpha _{\mu} - \dbar \gamma _{\mu})+ d\bar p^{\nu} \wedge \beta _{\bar \nu}.
\]
Therefore the section $\bar \eth f$ is 
\[
\bar \eth f = p^* d\bar s ^{\mu} \tensor [\beta _{\mu}],
\]
where $[\beta]$ means the relative canonical form represented by the $(n,0)$-form $\beta$.  The right-hand side of the latter identity is Berndtsson's definition of $\dbar$ \cite[page 544]{bo-annals}.
\red
\end{rmk}

\subsection{The $\mathbf{L^2}$ BLS fields}\label{structure-of-L}

We shall now construct a large collection of BLS fields each of which contains the Berndtsson Hilbert field $\sH \to B$ of the family $(E,h) \to X \stackrel{p}{\to} B$ as a subfield, and which makes $\sH \to B$ into an iBLS field.  The underlying smooth structure of all of these BLS fields is the same, and it is the complex structure that changes.

\subsubsection*{\sf The underlying Hilbert field}\ 

\medskip

\noi The fibers of the underlying Hilbert field are the $L^2$-spaces 
\[
\sL _t = \left \{ \text{measurable sections } f \in \Gamma (X_t, K_{X_t} \tensor E|_{X_t})\ ; \ (-1)^{\frac{n^2}{2}}\int _{X_t} \left < f \wedge \bar f , h\right > < +\infty \right \}.
\]
Of course, the Hermitian metric is just the $L^2$-metric.

\subsubsection*{\sf The smooth structure}\

\begin{defn} 
A smooth section $\ff$ of $\sL \to B$ is a section $f \in H^0(X, \sC^{\infty} (K_{X/B} \tensor E))$.  We write $\ff \in \Gamma (B, \sC^{\infty}(\sL))$.
\end{defn}
As with the Berndtsson Hilbert field $\sH \to B$, we write $f = \fa (\ff)$ and $\ff = \fri (f)$ to relate the sections of $\sL \to B$ and the sections of $K_{X/B} \tensor E \to X$.

As a consequence of the general theory of Section \ref{bls-section}, differential $k$-forms and $(p,q)$-forms with values in $\sL$ are sections of $\Lambda ^k_B \tensor \sL  \to B$ and $\Lambda ^{p,q}_B \tensor \sL  \to B$ respectively, and can be identified with sections of $p^* \Lambda ^{k}_B  \tensor K_{X/B} \tensor E \to X$ and $p^* \Lambda ^{p,q}_B \tensor K_{X/B} \tensor E \to X$. The discussion is analogous to that of Berndtsson's Hilbert field $\sH$.

It is a basic fact that 
\begin{enumerate}
\item[(i)] the collection of smooth members of $\sL _t$ is dense, and
\item[(ii)] every smooth member of $\sL _t$ has a smooth extension to a section of $K_{X/B} \tensor E \to X$. 
\end{enumerate}
It is also obvious that 
\begin{enumerate}
\item[(iii)] the $L^2$ metric is smooth with respect to $\sC^{\infty} (\sL)$.
\end{enumerate}
Thus $\sL$ is a smooth Hilbert field.  (In fact, as mentioned in Remark \ref{ehresmann-rmk}, by Ehresmann's Theorem $\sL \to B$ is even locally trivial, i.e., it is a smooth Hilbert bundle.)

\subsubsection*{\sf Some reference connections}\

\medskip

Let $D$ be any connection for $E \to X$.   We define the connection $\mathring{\nabla}$ for $\sL \to B$ by the formula 
\begin{equation}\label{D-ref-for-L}
\mathring{\nabla} _{\sigma + \bar \tau} \ff = \fri \left ( L^{D1,0}_{\xi ^{\theta} _{\sigma}}\fa (\ff) \right )+ \fri \left ( L^{D0,1}_{\bar \xi ^{\theta} _{\tau}}\fa (\ff) \right ),
\end{equation}
where $\sigma, \tau = T^{1,0}_{B,t}$. 

\begin{prop}
For any connection $D$ for $E \to X$ the connection $\mathring{\nabla}$ for $\sL \to B$ is a reference connection for the $L^2$ metric, i.e., it satisfies the continuity assumption {\rm \eqref{rrt-cty}} of Definition {\rm \ref{smooth-(q)h-fld-defn}.b}.
\end{prop}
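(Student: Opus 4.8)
The plan is to verify the continuity estimate \eqref{rrt-cty} directly. Fix $t\in B$, a smooth germ $\ff\in\sC^{\infty}(\sL)_t$, and choose an $E$-valued $(n,0)$-form $u$ near $X_t$ representing $\fa(\ff)$. Since $\mathring{\nabla}\fh$ is complex-linear in the direction vector and $T_{B,t}\tensor\bbC=T^{1,0}_{B,t}\oplus T^{0,1}_{B,t}$, it suffices to bound the functional $\fg\mapsto(\mathring{\nabla}_{\sigma}\fh)(\ff,\fg)(t)$ for $\sigma\in T^{1,0}_{B,t}$, the case $\sigma\in T^{0,1}_{B,t}$ being entirely parallel. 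Extend $\sigma$ to a holomorphic vector field near $t$ (the value at $t$ will not depend on the extension, cf.\ Remark~\ref{hol-ext-convention-rmk}) and let $\xi^{\theta}_{\sigma}$ be its horizontal lift. For $\fg\in\sC^{\infty}(\sL)_t$ with representative $v=\fa(\fg)$, the definition \eqref{nabla-h} of $\mathring{\nabla}\fh$, together with \eqref{D-ref-for-L} and Proposition~\ref{relative-restriction-prop}, gives at the point $t$
\[
(\mathring{\nabla}_{\sigma}\fh)(\ff,\fg) = \sigma\big(\fh(\ff,\fg)\big) - (-1)^{n^2/2}\int_{X_t}\big\langle L^{D1,0}_{\xi^{\theta}_{\sigma}}u\wedge\bar v,h\big\rangle - (-1)^{n^2/2}\int_{X_t}\big\langle u\wedge\overline{L^{D0,1}_{\bar\xi^{\theta}_{\sigma}}v},h\big\rangle ,
\]
where $L^{D}$ denotes the twisted Lie derivative of Definition~\ref{cplx-lie-deriv} formed with the connection $D$ on $E$.

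Next I would rewrite the first term. As a function of $t$, $\fh(\ff,\fg)=(-1)^{n^2/2}\int_{X_t}\langle u\wedge\bar v,h\rangle$, so differentiation through the integral — this is Lemma~\ref{lie-deriv-lemma}, applied with the Chern connection $\nabla^{h}$ of $(E,h)$ and the lift $\xi^{\theta}_{\sigma}$ — yields
\[
\sigma\big(\fh(\ff,\fg)\big) = (-1)^{n^2/2}\int_{X_t}\big\langle L^{1,0}_{\xi^{\theta}_{\sigma}}u\wedge\bar v,h\big\rangle + (-1)^{n^2/2}\int_{X_t}\big\langle u\wedge\overline{L^{0,1}_{\bar\xi^{\theta}_{\sigma}}v},h\big\rangle ,
\]
with $L^{1,0},L^{0,1}$ the twisted Lie derivatives formed with $\nabla^{h}$. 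Subtracting, $(\mathring{\nabla}_{\sigma}\fh)(\ff,\fg)(t)$ collapses to
\[
(-1)^{n^2/2}\int_{X_t}\big\langle \big(L^{1,0}_{\xi^{\theta}_{\sigma}}-L^{D1,0}_{\xi^{\theta}_{\sigma}}\big)u\wedge\bar v,h\big\rangle + (-1)^{n^2/2}\int_{X_t}\big\langle u\wedge\overline{\big(L^{0,1}_{\bar\xi^{\theta}_{\sigma}}-L^{D0,1}_{\bar\xi^{\theta}_{\sigma}}\big)v}\,,h\big\rangle .
\]

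The crucial point — the step at which the apparent obstruction evaporates — is that each difference of twisted Lie derivatives here is a \emph{zeroth-order}, purely algebraic operator. Writing $\Gamma:=\nabla^{h}-D$, an ${\rm End}(E)$-valued $1$-form on $X$ with components $\Gamma^{1,0},\Gamma^{0,1}$, one checks from the definition of $L^{D},L$ and the Leibniz rule for interior products that $\big(L^{1,0}_{\xi^{\theta}_{\sigma}}-L^{D1,0}_{\xi^{\theta}_{\sigma}}\big)u = a\cdot u$ and $\big(L^{0,1}_{\bar\xi^{\theta}_{\sigma}}-L^{D0,1}_{\bar\xi^{\theta}_{\sigma}}\big)v = b\cdot v$, where $a:=\xi^{\theta}_{\sigma}\lrcorner\Gamma^{1,0}$ and $b:=\bar\xi^{\theta}_{\sigma}\lrcorner\Gamma^{0,1}$ are smooth ${\rm End}(E)$-valued functions and $\cdot$ denotes fibrewise action on the $E$-factor; the first identity uses that the differentiated terms enter only through the cancelling combination $\Gamma^{1,0}\wedge(\xi^{\theta}_{\sigma}\lrcorner u)$, the second uses in addition that $\bar\xi^{\theta}_{\sigma}$ annihilates every $(n,0)$-form. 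Transferring $b$ onto the first factor via its fibrewise adjoint $b^{*}$ with respect to $h$, this becomes
\[
(\mathring{\nabla}_{\sigma}\fh)(\ff,\fg)(t) = (-1)^{n^2/2}\int_{X_t}\big\langle \big(a+b^{*}\big)\,\iota^{*}_{X_t}u \wedge \overline{\iota^{*}_{X_t}v},h\big\rangle = \fh\big(\Psi_{\ff,\sigma},\fg\big)(t) ,
\]
where $\Psi_{\ff,\sigma}:=(a+b^{*})\,\iota^{*}_{X_t}u\in\sL_t$ is a \emph{fixed} smooth section of $K_{X_t}\tensor E|_{X_t}$ depending only on $t,\sigma,\theta,D$ and $\ff$ — and, crucially, not on $\fg$.

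With this identity in hand the estimate is immediate from Cauchy–Schwarz: $\big|(\mathring{\nabla}_{\sigma}\fh)(\ff,\fg)(t)\big|^{2}=\big|\fh(\Psi_{\ff,\sigma},\fg)(t)\big|^{2}\le \fh(\Psi_{\ff,\sigma},\Psi_{\ff,\sigma})(t)\,\fh(\fg,\fg)(t)$, which is \eqref{rrt-cty} with $C=C(t,\sigma,\ff)=\|\Psi_{\ff,\sigma}\|^{2}_{\sL_t}$. The $(0,1)$ direction is handled identically (differentiating with the conjugate form of Lemma~\ref{lie-deriv-lemma} and using the second summand of \eqref{D-ref-for-L}), and a general complex direction is the sum of its two parts. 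The one place where there is something to prove is the passage in the third paragraph: a priori $(\mathring{\nabla}_{\sigma}\fh)(\ff,\fg)$ contains the term $(-1)^{n^2/2}\int_{X_t}\langle u\wedge\overline{L^{D0,1}_{\bar\xi^{\theta}_{\sigma}}v},h\rangle$, which is \emph{first order} in $\fg$ and so is not, on its own, controlled by $\|\fg(t)\|_{\sL_t}$; it is matched and absorbed by the first-order-in-$\fg$ contribution to $\sigma(\fh(\ff,\fg))$ produced by Lemma~\ref{lie-deriv-lemma}, precisely because the lift $\xi^{\theta}_{\sigma}$ is the same in both, leaving only the algebraic remainder $b=\bar\xi^{\theta}_{\sigma}\lrcorner(\nabla^{h}-D)^{0,1}$. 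The remaining verifications — that the restrictions to $X_t$ are well defined and independent of the $(n,0)$-form representatives $u,v$ — are routine and follow from Propositions~\ref{relative-restriction-prop} and~\ref{int-ptwise-holo}.
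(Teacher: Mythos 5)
Your proof is correct and is essentially the paper's argument carried out in detail: the paper's one-line "direct computation" that $\mathring{\nabla}\fh(\ff_1,\ff_2)=\int_{X_t}\left< u_1\wedge\bar u_2, Dh\right>$ is exactly your observation that the differences of twisted Lie derivatives reduce to the zeroth-order operator $a+b^{*}$ built from $\Gamma=\nabla^{h}-D$ (indeed $h((a+b^{*})s_1,s_2)=\left< Dh,\xi\right>(s_1,s_2)$), after which Cauchy--Schwarz gives \eqref{rrt-cty}. No gaps.
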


\begin{proof}
Let $\fh$ denote the $L^2$ metrics for $\sL \to B$.  A direct computation shows that 
\[
\mathring{\nabla}_{\sigma + \bar \tau}\fh (\ff_1, \ff_2) = \int _X \left < u_1 \wedge \bar u_2, D h \right >,
\]
and condition \eqref{rrt-cty} obviously holds.
\end{proof}

\subsubsection*{\sf The almost complex structures}

\begin{defn}\label{dbar-theta-defn}
Let $(E,h) \to X \stackrel{p}{\to} B$ be a proper holomorphic family.  For a horizontal distribution $\theta \subset T_X$ and a smooth section $\ff \in \Gamma (B,\sC^{\infty}(\sL))$ define 
\[
\dbar^{\theta} _{\bar \tau} \ff := \fri \left ( \dbar _{\bar \xi ^{\theta}_{\tau}} \fa (\ff) \right ), \quad \tau \in T^{1,0} _B.
\]
\end{defn}
\noi The operator $\dbar ^{\theta}$ is extended to smooth $\sL$-valued differential forms in the usual way.

\begin{prop}\label{integrable-dbar}
Let $p: X \to B$ be a proper holomorphic submersion and let $\theta \subset T_X$ be a horizontal distribution.  Then the following are equivalent.
\begin{enumerate}
\item[{\rm a.}] The almost complex structure $\dbar ^\theta$ is a complex structure, i.e., $\dbar ^{\theta} \dbar ^{\theta} = 0$.
\item[{\rm b.}] The distribution $\theta$ is integrable.
\end{enumerate}
\end{prop}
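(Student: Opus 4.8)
The plan is to reduce the equivalence to a closed formula for $\dbar^\theta\dbar^\theta$ in terms of Lie brackets of horizontal lifts, and then to quote Proposition \ref{integrable-horizontal-distribution}. Using that $\dbar^\theta$ obeys the Leibniz rule of Definition \ref{general-dbar-and-holo-defn} over the $\dbar$-operator of $B$ and that $\dbar\dbar=0$ on $B$, one checks in the usual way that $\dbar^\theta\dbar^\theta(\alpha\wedge\fb)=\alpha\wedge\dbar^\theta\dbar^\theta\fb$ for every form $\alpha$ on $B$ and every $\sL$-valued form $\fb$; hence it suffices to prove $\dbar^\theta\dbar^\theta\ff=0$ for smooth \emph{sections} $\ff$ of $\sL$, working in a coordinate chart $s=(s^1,\dots,s^m)$ on $B$ and contracting against the commuting fields $\di/\di s^j$.

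The heart of the matter is the formula
\[
\left\langle \dbar^\theta\dbar^\theta\ff,\ \overline{\tfrac{\di}{\di s^j}}\wedge\overline{\tfrac{\di}{\di s^k}} \right\rangle = \fri\big([\bar\xi_j,\bar\xi_k]\lrcorner\dbar u\big),
\]
where $\xi_j:=\xi^\theta_{\di/\di s^j}$ are the horizontal $(1,0)$-lifts and $u$ is any smooth $E$-valued $(n,0)$-form on $X$ representing $\fa(\ff)$. By Definition \ref{dbar-theta-defn} (see also Remark \ref{bo-dbar-defn-rmk}) one has $\dbar^\theta_{\overline{\di/\di s^k}}\ff=\fri(\bar\xi_k\lrcorner\dbar u)$, with $\bar\xi_k\lrcorner\dbar u$ an $(n,0)$-form representing that section; applying $\dbar^\theta$ again, with $\dbar(d\bar s^j)=0$, expresses the left-hand side as $\fri\big(\bar\xi_j\lrcorner\dbar(\bar\xi_k\lrcorner\dbar u)-\bar\xi_k\lrcorner\dbar(\bar\xi_j\lrcorner\dbar u)\big)$, and the displayed difference of $(n,0)$-forms equals $[\bar\xi_j,\bar\xi_k]\lrcorner\dbar u$ by Cartan calculus — using $\dbar\dbar u=0$ and $\bar\xi_j\lrcorner\bar\xi_k\lrcorner\dbar u=0$, since $\dbar u$ has a single anti-holomorphic slot. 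Finally $[\bar\xi_j,\bar\xi_k]=\overline{[\xi_j,\xi_k]}$, and as $dp\,[\xi_j,\xi_k]=[\di/\di s^j,\di/\di s^k]=0$ this is a vertical $(0,1)$-field, so $[\bar\xi_j,\bar\xi_k]\lrcorner\dbar u$ restricts to the fiber $X_t$ as $[\bar\xi_j,\bar\xi_k]|_{X_t}\lrcorner\dbar(\iota^*_{X_t}u)$.

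Granting this formula, (b)$\Rightarrow$(a) is immediate: if $\theta$ is integrable, Proposition \ref{integrable-horizontal-distribution} applied to the commuting fields $\di/\di s^j$ gives $[\xi_j,\xi_k]=\xi^\theta_0=0$, so the right-hand side vanishes for every $\ff$. For (a)$\Rightarrow$(b) I argue contrapositively: if $[\xi_j,\xi_k](x_0)\ne0$ for some $x_0\in X_t$, then $[\xi_j,\xi_k]|_{X_t}$ is a nonzero $(1,0)$-vector there, so there is a holomorphic coordinate function $g$ on $X_t$ near $x_0$ with $[\xi_j,\xi_k]|_{X_t}(g)(x_0)\ne0$; taking $f_t=\bar g\cdot dz^1\wedge\cdots\wedge dz^n\tensor e$ for holomorphic coordinates $z$ and a holomorphic frame $e$ of $E|_{X_t}$, cut off to a global smooth $E$-valued $(n,0)$-form on $X_t$ and extended to a smooth $E$-valued $(n,0)$-form $u$ on $X$ (extension of fiber sections being routine), one computes $\iota^*_{X_t}\big([\bar\xi_j,\bar\xi_k]\lrcorner\dbar u\big)=\overline{[\xi_j,\xi_k]|_{X_t}(g)}\cdot dz^1\wedge\cdots\wedge dz^n\tensor e$, which is nonzero at $x_0$; hence $\fri\big([\bar\xi_j,\bar\xi_k]\lrcorner\dbar u\big)\ne0$, contradicting (a). Therefore $[\xi_j,\xi_k]=0$ for all $j,k$ in every chart, which by the $\sC^\infty_B$-linearity of $\tau\mapsto\xi^\theta_\tau$ is exactly the criterion \eqref{integ-crit}, and $\theta$ is integrable by Proposition \ref{integrable-horizontal-distribution}.

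The step I expect to be the main obstacle is the bookkeeping behind the displayed formula: the Leibniz expansion and signs, the Cartan-type collapse of the double interior product into $[\bar\xi_j,\bar\xi_k]\lrcorner\dbar u$, and keeping straight that $\fri$ of an $E$-valued $(n,0)$-form vanishes exactly when that form restricts to $0$ on every fiber — so that the test-section computation in (a)$\Rightarrow$(b) genuinely detects the non-vanishing of $[\bar\xi_j,\bar\xi_k]$.
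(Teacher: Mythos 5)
Your proof is correct and follows essentially the same route as the paper's: both reduce $\dbar^{\theta}\dbar^{\theta}$ to the defect $[\xi^{\theta}_{\sigma},\xi^{\theta}_{\tau}]-\xi^{\theta}_{[\sigma,\tau]}$ via the commutator identity for $\dbar$ along the lifted fields, and then invoke Proposition \ref{integrable-horizontal-distribution}. The only substantive difference is that you make explicit, via the test section $\bar g\, dz^1\wedge\cdots\wedge dz^n\tensor e$, why vanishing on \emph{all} smooth sections forces $[\bar\xi_j,\bar\xi_k]=0$ --- a step the paper's proof merely asserts --- and that detail is carried out correctly.
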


\begin{proof}
By definition of $\dbar^{\theta}$ 
\[
\left ( \dbar^{\theta} _{\bar \sigma}\dbar^{\theta} _{\bar \tau} - \dbar^{\theta} _{\bar \tau}\dbar^{\theta} _{\bar  \bar \sigma} - \dbar^{\theta} _{[\bar \sigma , \bar \tau]} \right ) \ff = \fri \left ( \left ( \dbar_{\bar \xi ^{\theta} _{\sigma}} \dbar_{\bar \xi ^{\theta} _{\tau}}  - \dbar_{\bar \xi ^{\theta} _{\tau}}  \dbar_{\bar \xi ^{\theta} _{\sigma}}  - \dbar_{\bar \xi ^{\theta} _{[\sigma,\tau]}}  \right ) \fa(\ff) \right )
\]
for any $(1,0)$-vector fields $\sigma , \tau$ on $B$.  Now, on any complex manifold one has  
\[
\dbar _{\bar \xi _1} \dbar _{\bar \xi _2} - \dbar_{\bar \xi _2} \dbar_{\bar \xi _1} - \dbar _{[\bar \xi _1,\bar \xi _2]} = 0
\]
for any $(1,0)$-vector fields $\xi _1, \xi _2$.  Since we are acting on sections of a holomorphic vector bundle, the first two terms are always well-defined, and the third term is well-defined because $[\xi _1, \xi _2]$ is also a $(1,0)$-vector field. Therefore 
\[
\left ( \dbar^{\theta} _{\bar \sigma}\dbar^{\theta} _{\bar \tau} - \dbar^{\theta} _{\bar \tau}\dbar^{\theta} _{\bar  \bar \sigma} - \dbar^{\theta} _{[\bar \sigma , \bar \tau]} \right ) \ff = \fri \left ( \left ( \dbar_{[\bar \xi ^{\theta} _{\sigma},\bar \xi ^{\theta} _{\tau}]} - \dbar_{\bar \xi ^{\theta} _{[\sigma,\tau]}}  \right ) \fa(\ff) \right ),
\]
and the right hand side vanishes on all smooth sections if and only if $[\xi ^{\theta}_{\sigma}, \xi ^{\theta}_{\tau}] = \xi ^{\theta}_{[\sigma ,\tau]}$ for all smooth $(1,0)$-vector fields $\sigma , \tau$ on $B$.  In view of Proposition \ref{integrable-horizontal-distribution} the latter holds if and only if $\theta$ is integrable.
\end{proof}

\subsubsection*{\sf The $L^2$ BLS field of $(E,h) \to X \stackrel{p}{\to} B$}\ 

\medskip

\noi Note that if in Formula \eqref{D-ref-for-L} we take the connection $D$ to be complex, i.e., such that $D^{0,1} = \dbar$, then the resulting reference connection $\mathring{\nabla}$ for $\sL ,\dbar^{\theta}$ satisfies $\mathring{\nabla}^{0,1} = \dbar ^{\theta}$.  Therefore all of the properties of a BLS field required by Definition \ref{general-dbar-and-holo-defn}.iv are satisfied for $\sL ,\dbar^{\theta}$.

\begin{defn}
The BLS field $(\sL, \fh , \dbar^\theta) \to B$ is called the \emph{$L^2$ BLS field of the family $(E,h) \to X \stackrel{p}{\to} B$} associated to the horizontal distribution $\theta \subset T_X$.
\end{defn}

The notation $(\sL , \fh, \dbar ^{\theta})$ for the BLS field of the family $(E,h) \to X \stackrel{p}{\to} B$ can become cumbersome.  Since for a family  $(E,h) \to X \stackrel{p}{\to} B$ (which is seen as fixed from the outset, and has already been suppressed from the notation) this BLS structure is completely determined by $\theta$, we shall often simply denote it $\sL ^{\theta}$.


\subsubsection*{\sf The BLS subfield $\sH$ in the regular case}\

\medskip

In Paragraph \ref{structure-of-H} we defined the $\dbar$-operator for $\sH \to B$ by 
\[
\dbar _{\bar \tau} \ff :=\fri ( \bar \xi ^{\theta} _{\tau} \lrcorner \dbar \fa (\ff)), \quad \tau \in T^{1,0} _B, 
\]
and observed (Proposition \ref{H_t-dbar-is-lift-independent}) that the right hand side is independent of the choice of lift.   Thus we have the following trivial but fundamental observation.

\begin{prop}\label{H-is-always-subbundle}
Let $(E,h) \to X \stackrel{p}{\to} B$ be a holomorphic family of Hermitian vector bundles.  Then for any distribution $\theta \subset T_X$ the operator $\dbar ^{\theta} |_{\sH}$ agrees with the $\dbar$-operator for $\sH$ given in Definition \ref{holo-dbar-defn}.  In particular, if $\sH \to B$ is locally trivial then for any horizontal distribution $\theta$ the holomorphic Hilbert field $(\sH , \dbar)$ is a BLS subfield of the BLS field $\sL ^{\theta}$.
\end{prop}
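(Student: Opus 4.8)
The plan is to handle the two assertions separately; both are short, and the proposition itself flags them as a ``trivial but fundamental observation.''

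For the first assertion --- that $\dbar^{\theta}|_{\sH}$ equals Berndtsson's $\dbar$-operator of Definition~\ref{holo-dbar-defn} --- I would simply compare the two definitions. Let $\ff \in \Gamma(U,\sC^{\infty}(\sH))$ and put $f := \fa(\ff) \in \Gamma(X, K_{X/B}\tensor E)$; since $f$ is a smooth section of $K_{X/B}\tensor E$, the section $\ff$ is in particular a smooth section of $\sL$, so $\dbar^{\theta}\ff$ is defined and, by Definition~\ref{dbar-theta-defn}, $\dbar^{\theta}_{\bar\tau}\ff = \fri(\bar\xi^{\theta}_{\tau}\lrcorner\dbar f)$ for $\tau \in T^{1,0}_B$. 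Because $\iota_{X_t}^{*}u \in \sH_t$ is holomorphic on each fiber for any $(n,0)$-form $u$ representing $f$, the vanishing \eqref{dbar=0-claim} holds, so $\dbar f$ annihilates the vertical bundle $T^{0,1}_{X/B}$ and descends to the section $\bar\eth f$ of $p^{*}T^{*0,1}_B\tensor K_{X/B}\tensor E$ characterized by $\bar\eth f(x,\bar\tau) = \dbar f(x)(\bar\xi_{\tau})$ for \emph{any} lift $\xi_{\tau}$ of $\tau$ --- in particular for $\xi^{\theta}_{\tau}$, by Proposition~\ref{H_t-dbar-is-lift-independent}. Hence $\fri(\bar\xi^{\theta}_{\tau}\lrcorner\dbar f)$ is precisely the contraction of Berndtsson's $\dbar\ff = \fri(\bar\eth f)$ with $\bar\tau$, i.e.\ $\dbar^{\theta}_{\bar\tau}\ff = \dbar_{\bar\tau}\ff$. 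Since both operators are extended to $\sH$-valued $(0,q)$-forms by the same Leibniz rule over $\sC^{\infty}(\Lambda^{\bullet}_B)$, they coincide on all of $\sC^{\infty}(\Lambda^{0,\bullet}_B\tensor\sH)$; by Remark~\ref{H-is-holo-hh} the common value is again $\sH$-valued. I expect no obstacle here: this is just unwinding definitions, the one nontrivial input being the lift-independence furnished by Proposition~\ref{H_t-dbar-is-lift-independent}.

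For the second assertion, assume $\sH \to B$ is locally trivial. Then by Proposition~\ref{H-is-BLSS-iff-VB} the sheaf $\sC^{\infty}(\sH)$ is a genuine smooth structure and $\sH$ is a holomorphic vector bundle; equipped with the smooth $L^2$ metric \eqref{bls-inner-product} and with $\dbar$ extended by the Leibniz rule (so $\dbar\dbar = 0$ by Remark~\ref{H-is-holo-hh}), it is a holomorphic --- indeed a BLS --- Hilbert field, the continuity \eqref{rrt-cty} being automatic in finite rank. It then remains to verify conditions~(i.a)--(i.c) of Definition~\ref{bls-subfld-defn} together with the BLS-subfield condition~(ii). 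Condition~(i.a), closedness of $\sH_t$ in $\sL_t$, is the classical fact that an $L^2$ limit of holomorphic sections of $K_{X_t}\tensor E|_{X_t}$ over the compact fiber $X_t$ is again holomorphic (Cauchy estimates in coordinate charts). Condition~(i.b) holds essentially by definition: a smooth section of $\sL$ is a smooth $f \in \Gamma(X, K_{X/B}\tensor E)$, and demanding $\ff(t)\in\sH_t$ for all $t$ is exactly demanding $\iota_t f \in \sH_t$ for all $t$, which is the defining property of a smooth section of $\sH$ in Definition~\ref{smooth-sections-of-holo-bundle-defn}. Condition~(i.c) is precisely the first assertion just proved. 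Finally, condition~(ii), $P_{\sH}\sC^{\infty}(\sL)\subset\sC^{\infty}(\sH)$, is smoothness of the fiberwise ($L^2$, i.e.\ Bergman) projector; I would argue directly, choosing over a small neighborhood $U$ of a given point a holomorphic frame of $\sH$ and applying Gram--Schmidt in the smooth $L^2$ metric to get a smooth orthonormal frame $\fe_1,\dots,\fe_N \in \Gamma(U,\sC^{\infty}(\sH))$, so that $P_{\sH}\ff = \sum_j (\ff,\fe_j)\,\fe_j$ for $\ff \in \Gamma(U,\sC^{\infty}(\sL))$, with coefficients $(\ff,\fe_j)$ smooth because the $L^2$ metric is smooth, whence $P_{\sH}\ff \in \Gamma(U,\sC^{\infty}(\sH))$. (Alternatively one can quote the Kodaira--Spencer result cited in the remark following Proposition~\ref{subbundle-chern-connection-prop}: constancy of fiber dimension forces the relevant projectors to be smooth.) The only inputs beyond bookkeeping are the closedness of $\sH_t$ in $\sL_t$ and the smoothness of the Bergman projector, both classical, so I anticipate no real difficulty --- the content of the proposition being the clean statement that restricting $\dbar^{\theta}$ to $\sH$ forgets the auxiliary datum $\theta$.
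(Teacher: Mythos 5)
Your proof is correct and follows essentially the paper's route: the first assertion is, exactly as in the paper, an immediate consequence of the lift-independence established in Proposition \ref{H_t-dbar-is-lift-independent}, and the paper offers no further argument beyond calling the result a ``trivial but fundamental observation.'' For the BLS-subfield verification the paper merely points to the Kodaira--Spencer regularity statement (Remark \ref{holo-subfield-is-regular}); your Gram--Schmidt construction of a smooth orthonormal frame is a clean, self-contained substitute for that citation, and the remaining conditions are checked just as you say.
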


\subsection{The iBLS Structures of Berndtsson's Hilbert Field}

\begin{prop}\label{holo-infinitesimal}
Let $(E,h) \to X \stackrel{p}{\to} B$ be a holomorphic family of Hermitian vector bundles, fix $t_o \in B$, and let $\theta \subset T_X$ be any horizontal distribution.  Then for any $f \in \sH _{t_o}$ there exists a smooth section $\ff \in \Gamma (B, \sC^{\infty}(\sL^{\theta}))$ such that if $u \in \Gamma (X, \sC^{\infty}(\Lambda ^{n,0}_X \tensor E))$ represents $\fa (\ff)$ then 
\[
\iota _{X_{t_o}} ^* u = f, \quad \iota _{X_{t_o}} ^* \left ( \left < \dbar u , \bar \xi ^{\theta}_{\tau} \right >\right ) = 0 \quad \text{and} \quad \iota _{X_{t_o}} ^* (L^{1,0} _{\xi ^{\theta} _{\tau}}\dbar u ) = 0 \text{ for all }\tau \in \cO(T^{1,0}_B)_{t_o}.
\]
\end{prop}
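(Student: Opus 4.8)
The plan is to build $\ff$ by correcting an arbitrary extension of $f$ in two steps, so that the first two displayed identities become obstruction-free and the third reduces to a single $\dbar$-equation on the compact fiber $X_{t_o}$. Since all three conditions are imposed only at $t_o$ and depend linearly on $\tau$, it suffices to arrange them for $\tau=\partial/\partial s^\mu$, where $s=(s^1,\dots,s^m)$ are local holomorphic coordinates on $B$ centered at $t_o$ and $p=(p^1,\dots,p^m)$; once $u$ is constructed on $p^{-1}(U)$ for a small polydisc $U\ni t_o$, a cutoff function on $B$ supported in $U$ and equal to $1$ near $t_o$ yields a global smooth $(n,0)$-form, hence the required $\ff\in\Gamma(B,\sC^{\infty}(\sL^{\theta}))$, without affecting the identities at $t_o$.

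First I would take any $u_0\in\Gamma(p^{-1}(U),\sC^{\infty}(\Lambda^{n,0}_X\otimes E))$ with $\iota_{X_{t_o}}^*u_0=f$. Since $\iota_{X_{t_o}}^*\dbar u_0=\dbar f=0$, the form $\dbar u_0$ lies, along $X_{t_o}$, in the ideal generated by the $dp^\mu$ and $d\bar p^\nu$; extracting the piece responsible for $\langle\dbar u_0,\bar\xi^\theta_\tau\rangle$ gives an $E|_{X_{t_o}}$-valued $(n,0)$-form depending linearly on $\bar\tau$, with coordinate components $\phi_1,\dots,\phi_m$. Extending the $\phi_\mu$ to smooth $E$-valued $(n,0)$-forms $g_\mu$ on $p^{-1}(U)$ and setting $u_1:=u_0-\sum_\mu(\bar p^\mu-\bar t_o^\mu)g_\mu$, one checks, using $\dbar\big((\bar p^\mu-\bar t_o^\mu)g_\mu\big)=d\bar p^\mu\wedge g_\mu+(\bar p^\mu-\bar t_o^\mu)\dbar g_\mu$, that $\iota_{X_{t_o}}^*u_1=f$ and $\iota_{X_{t_o}}^*\langle\dbar u_1,\bar\xi^\theta_\tau\rangle=0$ for every $\tau$. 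This step carries no obstruction.

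The remaining identity is the crux. Writing the pure-fiber part of $\dbar u_1$ along $X_{t_o}$ as $(p^\mu-t_o^\mu)P_\mu+(\bar p^\mu-\bar t_o^\mu)Q_\mu+(\text{higher order})$ with $E$-valued $(n,1)$-forms $P_\mu,Q_\mu$, and using the Leibniz rule for $L^{1,0}$ together with $L^{1,0}_{\xi^\theta_\mu}dp^\rho=L^{1,0}_{\xi^\theta_\mu}d\bar p^\rho=0$ and $\langle dp^\rho,\xi^\theta_\mu\rangle=\delta^\rho_\mu$, one finds $\iota_{X_{t_o}}^*\big(L^{1,0}_{\xi^\theta_\mu}\dbar u_1\big)=\iota_{X_{t_o}}^*P_\mu$. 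Next I would analyze how this changes under corrections $u_1\mapsto u_1-v$ with $v$ a smooth $(n,0)$-form vanishing on $X_{t_o}$ and preserving the identity of the previous step; such $v$ are, along $X_{t_o}$, of the form $(p^\mu-t_o^\mu)h_\mu+(\bar p^\mu-\bar t_o^\mu)(\text{form vanishing on }X_{t_o})+dp^\mu\wedge C_\mu$, and a short case check shows that only the $(p^\mu-t_o^\mu)h_\mu$ term alters $\iota_{X_{t_o}}^*(L^{1,0}_{\xi^\theta_\mu}\dbar u_1)$, and does so by $-\dbar(\iota_{X_{t_o}}^*h_\mu)$. Thus the third identity can be met if and only if, for each $\mu$, the Dolbeault class of $\iota_{X_{t_o}}^*P_\mu$ in $H^1(X_{t_o},K_{X_{t_o}}\otimes E|_{X_{t_o}})$ vanishes; by Lemma \ref{dbar-of-sff} this class equals $[R_\mu]$ with
\[
R_\mu=\iota_{X_{t_o}}^*\big((\xi^\theta_\mu\lrcorner\Theta(h))f\big)+\nabla^{1,0}(\kappa^\theta_\mu f),
\]
a $\dbar$-closed $E|_{X_{t_o}}$-valued $(n,1)$-form depending only on $f$ and $\theta$ (recall $\kappa^\theta_\mu f$ is $\dbar$-closed since $f$ is holomorphic and $\dbar\xi^\theta_\mu$ is $\dbar$-closed).

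The main obstacle is establishing $[R_\mu]=0$; this is where the Hodge theorem on the compact fiber enters. Rewriting $\xi^\theta_\mu\lrcorner\Theta(h)=(\dbar\xi^\theta_\mu)\lrcorner(h^{-1}\di h)-\dbar\big(\xi^\theta_\mu\lrcorner(h^{-1}\di h)\big)$ shows that, modulo $\dbar$-exact $E|_{X_{t_o}}$-valued forms, $R_\mu$ is represented in a local holomorphic frame by $\di(\kappa^\theta_\mu f)$ — the curvature term of $(E,h)$ exactly cancelling the connection term of $\nabla^{1,0}$ — so that $R_\mu$ is, up to $\dbar$-exact forms, $\nabla^{1,0}$ of the $\dbar$-closed form $\kappa^\theta_\mu f$; one then argues that the harmonic projection of such a form (with respect to $h_{t_o}$ and some Hermitian metric $\omega_{t_o}$ on $X_{t_o}$) vanishes, via a Hodge-theoretic, Bochner--Kodaira / $\di\dbar$-type computation that uses the $\dbar$-closedness of $\kappa^\theta_\mu f$ (and the K\"ahler identities when the fiber is K\"ahler). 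Granting $[R_\mu]=0$, I would solve $\dbar\psi_\mu=-R_\mu$ on $X_{t_o}$ for $E|_{X_{t_o}}$-valued $(n,0)$-forms $\psi_\mu$, extend each to a smooth $h_\mu$ on $p^{-1}(U)$, and take $u:=u_1-\sum_\mu(p^\mu-t_o^\mu)h_\mu$; this $u$ then satisfies all three identities for $\tau=\partial/\partial s^\mu$, hence — by linearity, and by Proposition \ref{int-ptwise-holo}, which makes the third identity independent of the chosen lift — for all $\tau\in\cO(T^{1,0}_B)_{t_o}$. Apart from the vanishing of $[R_\mu]$, the only point requiring genuine care is the case analysis showing that, among all corrections $v$ vanishing on $X_{t_o}$, only the $(p^\mu-t_o^\mu)$-linear part affects the third identity; everything else is routine book-keeping with the twisted Lie derivative and restriction to $X_{t_o}$.
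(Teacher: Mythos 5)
Your strategy---correct an arbitrary extension of $f$ in stages and identify the obstruction to the third identity as a Dolbeault class on the compact fiber---is genuinely different from the paper's, and the first two stages are sound: killing the $d\bar p^{\nu}$-components of $\dbar u_0$ along $X_{t_o}$ is obstruction-free, and your computation that a correction $u\mapsto u-(p^{\mu}-t_o^{\mu})h_{\mu}$ shifts $\iota_{X_{t_o}}^*(L^{1,0}_{\xi^{\theta}_{\mu}}\dbar u)$ by $-\dbar(\iota_{X_{t_o}}^*h_{\mu})$ while preserving the first two identities is correct. So you have correctly reduced the proposition to the vanishing of $[R_{\mu}]$ in $H^{n,1}_{\dbar}(X_{t_o},E|_{X_{t_o}})$, where $R_{\mu}=\iota_{X_{t_o}}^*\bigl((\xi^{\theta}_{\mu}\lrcorner\Theta(h))f\bigr)+\nabla^{1,0}(\kappa^{\theta}_{\mu}f)$, consistently with Lemma \ref{dbar-of-sff}.

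The gap is that you never actually prove $[R_{\mu}]=0$, and that vanishing \emph{is} the proposition. The sketch offered is not coherent: the assertion that in a local holomorphic frame the curvature term cancels the connection part of $\nabla^{1,0}$, leaving $\di(\kappa^{\theta}_{\mu}f)$, is frame-dependent and does not produce a globally defined representative, and it contradicts the very next clause claiming $R_{\mu}$ is $\nabla^{1,0}(\kappa^{\theta}_{\mu}f)$ modulo exact forms (that term is already one of the two summands of $R_{\mu}$). More seriously, the appeal to ``K\"ahler identities when the fiber is K\"ahler'' is unavailable: the proposition carries no K\"ahler hypothesis, and in the untwisted model your claim amounts to ``$\di$ of a $\dbar$-closed $(n-1,1)$-form is $\dbar$-exact,'' a $\di\dbar$-lemma-type statement that fails on general compact complex manifolds. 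Even granting a K\"ahler form $\omega_{t_o}$, the vanishing of the harmonic projection of $\nabla^{1,0}\gamma$ for $\dbar$-closed $\gamma$ is not automatic in the twisted setting: for harmonic $\eta$ one has $\nabla^{1,0*}\eta=-\ii\,\dbar\Lambda_{\omega_{t_o}}\eta$, so $(\nabla^{1,0}\gamma,\eta)=(\dbar^*\gamma,\Lambda_{\omega_{t_o}}\eta)$, which requires control of $\dbar^*\gamma$, not of $\dbar\gamma$. The paper avoids the cohomological question altogether: it uses \cite[Proposition 9.5]{voisin} to produce a trivialization of the family that is holomorphic to first order along $X_{t_o}$, builds an Ehresmann-type trivialization of $E$ whose $\dbar$-derivatives vanish at $t_o$ via flows of fiberwise-linear lifts, and takes $u$ to be the pullback of the constant extension of $f$; all three identities then follow by direct computation, and your class $[R_{\mu}]$ is seen to vanish only a posteriori. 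To complete your argument you would need either this construction (or an equivalent one) to exhibit an explicit primitive for $R_{\mu}$, or a correct direct proof of $[R_{\mu}]=0$ under the stated hypotheses.
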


\begin{proof}
First note that by Proposition \ref{int-ptwise-holo} $\iota _{X_t} ^* L^{1,0} _{\xi ^{\theta} _{\tau}} \dbar u$ is independent of the choice of representative $u$ and the horizontal lift $\xi ^{\theta} _{\tau}$, and that by Proposition \ref{H_t-dbar-is-lift-independent} $\iota _{X_{t_o}} ^* \left ( \left < \dbar u , \bar \xi ^{\theta}_{\tau} \right >\right )$ is independent of the horizontal lift $\xi ^{\theta} _{\tau}$.  It is also straightforward to see that in fact $\iota _{X_{t_o}} ^* \left ( \left < \dbar u , \bar \xi ^{\theta}_{\tau} \right >\right )$ does not depend on the choice of representative $u$ of $f$.  Indeed, in local coordinates on the base $B$ as in Remark \ref{local-in-the-base-coordinates-rmk}, if $u_1$ and $u_2$ are two representatives of $f$ then $u_1-u_2 = dp ^1 \wedge \gamma_1 + ... + dp ^m \wedge \gamma_m$.  Hence $\left < \dbar u_1 , \bar \xi ^{\theta}_{\tau} \right > - \left < \dbar u_2 , \bar \xi ^{\theta}_{\tau} \right > = dp^1 \wedge \left < \dbar \gamma_1 , \bar \xi ^{\theta}_{\tau} \right > + ... + dp ^m \wedge \left < \dbar \gamma_m , \bar \xi ^{\theta}_{\tau} \right >$, and the latter vanishes on the fibers of $p:X \to B$.

By \cite[Proposition 9.5]{voisin} there exists an open set $U \subset B$ containing ${t_o}$ and diffeomorphism $F := ( \psi, p) : p^{-1} (U) \to X_{t_o} \times U$ that is tangent to the identity along ${t_o}$, such that for each $x \in X_{t_o}$ the submanifold $U_x := F^{-1} (\{x\} \times U) \subset p^{-1} (U)$ is complex and $p|_{U_x} : U_x \to U$ is biholomorphic.  And though it's not part of the statement in \cite[Proposition 9.5]{voisin}, $F$ is holomorphic at the fiber $X_{t_o}$, i.e., if $x \in X_{t_o}$ and $h \in \cO _{X,x}$ then $\dbar (F^*h)((x,t_o)) = 0$.  (Indeed, vanishing in the directions of $F^{-1} (\{x\} \times U)$ follows from the construction, while in the vertical directions the map is the identity.)  Let $\theta$ be the integrable distribution on $p^{-1} (U)$ obtained from that tangent bundles $T_{V_x}$, $x \in X_{t_o}$, via the diffeomorphism $F$.  We set $\Phi := F^{-1}$.  

We show now that there is a smooth vector bundle isomorphism $\Psi : \Phi^* E  \to p_1 ^* E_{t_o}$ such that $d\Psi (x)\bar \xi ^{\theta} _{\tau} (x) = 0$ for all $x \in X_{t_o}$.  The idea is similar to the proof of Ehresmann's Theorem: one defines vector fields on the total space of $\Phi^*E$ that project to constant vector fields on $U$.  In order to get a vector bundle map, which is linear on fibers, one constructs vector fields that are linear on fibers.  Fortunately the flows of vector fields that are linear on fibers cannot acquire enough vertical velocity to escape to infinity in finite time in the vertical direction.  Thus lifts of constant vector fields (locally) on the base to vector fields on the vector bundle that are linear along fibers have flows by diffeomorphisms.  The precise construction is as follows.  

Fix a sufficiently small coordinate chart $U \subset B$ and holomorphic coordinates $t$ on $U$ such that $t _o$ is the origin.  Choose a finite open cover $\{O_1,..., O_N\}$ of $X_{t_o}$ by open sets on which $p_1 ^* E_{t_o}$ is trivial, where $p_1 :X_t \times U \to X_t$ is the projection to the first factor, and let $U_i = O_i \times U$. Let $\{\chi _1,..., \chi _N\}$ be a partition of unity subordinate to $\{O_1,..., O_N\}$.  We will not need local coordinates on $X_{t_o}$, but we take coordinates on the fibers by fixing trivializing frames for $p_1 ^* E_{t_o}$ over each $U_j$; if $U$ is small enough then such frames exist.  We write $v_j = (v_j^1,..., v_j ^r)$ for the components of a vector $v \in \Phi ^*E$ in terms of these frames. We let $t_j= (t_j ^1,..., t_j ^m)$ be coordinates in the horizontal direction; they are the global coordinates $t$ coming from the base $U$, but we keep the index because the corresponding constant vector fields $\frac{\di}{\di t_j^{\mu}}$ on the trivial vector bundle over the chart $U_j$ depend on $j$.  The transition functions for these coordinates on the total space of $\Phi ^* E$ are 
\[
(x, t_j , v_j) = (x, t_k , F_{jk} (x,t_k)v_k),
\]
where $F_{jk}(x, t_k)$ are invertible linear transformations that depend smoothly on $x$ and $t_k$, and satisfy 
\[
\frac{\di }{\di \bar t^{\nu} _k} F_{jk}(x, t_k) = 0
\]
for all $1 \le \nu \le m$ and $x \in O_k$.  Now define the vector fields 
\[
\xi _{\nu} ^{(j)} (x, t_j, v_j) := \frac{\di}{\di t ^{\nu} _j}. 
\]
Then on the fibers over $U_j\cap U_k$ one has 
\[
\frac{\di}{\di t ^{\nu} _k} = \frac{\di}{\di t^{\nu}_j} + \frac{\di (F_{jk} (x,t_k)v_k)^{\mu}}{\di t_j ^{\nu}} \frac{\di }{\di v_j ^{\mu}}= \frac{\di}{\di t^{\nu}_j} + \left ( \frac{\di F_{jk} (x,t_k)}{\di t_j ^{\nu}}F_{kj}(x,t_j) v_j\right )^{\mu} \frac{\di }{\di v_j ^{\mu}}.
\]
Therefore on fibers over $U_j\cap U_k$ the difference of vector fields 
\[
\xi ^{(k)}_{\nu} (x,t,F_{j,k} (x,t)v_j) - \xi ^{(j)}_{\nu} (x,t,v_j)
\]
is linear in $v_j$ and holomorphic in $t$.  It follows that the vector fields
\[
\eta _{\nu} (v) := \sum _j \chi _j (x) \xi ^{(j)} _{\nu} (x,t_j, v_j)
\]
depend holomorphically on $t$ and vary linearly in $v$ in the fiber directions.  By construction, these vector fields project onto the coordinate vector fields, and by integrating these vector fields we therefore get diffeomorphisms $\Psi ^s_{\nu} \in {\rm Diff}((\Phi ^{-1}) ^*E)$ that are linear on fibers and that preserve the trivial family $X \times U \to U$.  In fact, if $\phi :\Phi^*E \to X \times U$ is the vector bundle projection then $\phi \Psi^s_{\nu} (x,t, v) = (x, t+s)$.  Consider the diffeomorphism $\Psi : p_1 ^* E_t \to (\Phi ^{-1}) ^* E$ defined by 
\[
\Psi (v_o, t) := \Psi _1 ^{t^1}\circ \cdots \circ \Psi _m ^{t^m} (v_o).
\]
Evidently 
\[
\left . \frac{\di}{\di \bar t ^{\mu}} \Psi (v_o , t) \right | _{t=0}= 0
\]
and 
\begin{eqnarray*}
\frac{\di}{\di t^{\mu}}\frac{\di}{\di \bar t^{\nu}} \Psi (v_o, t) &=&\frac{\di}{\di \bar t^{\nu}}  \frac{\di}{\di t^{\mu}}\Psi (v_o, t)\\
&=& \frac{\di}{\di \bar t^{\nu}}\left ( \prod _{\lambda =1} ^{\nu -1} d\Psi^{t^{\lambda}} (\Psi _{\lambda+1} ^{t^{\lambda+1}}\circ \cdots \circ \Psi _m ^{t^m} (v_o))  \right ) \eta _{\mu} (\Psi _{\mu} ^{t^{\mu}}\circ \cdots \circ \Psi _m ^{t^m} (v_o)),
\end{eqnarray*}
and since $\Psi_j ^0 = {\rm Id}$ we have 
\[
\left . \frac{\di}{\di t^{\mu}}\frac{\di}{\di \bar t^{\nu}} \Psi (v_o, t) \right |_{t=0}= 0.
\]
Finally, to get the desired section $u$, we take the constant extension of $f$ in the trivial family and pull it back to an $E$-valued $(n,0)$-form via the diffeomorphisms $\Psi$ and $F$.  We need to check that $\iota _{X_{t_o}}^*(\bar \xi ^{\theta} _{\tau} \lrcorner \dbar u) = 0$ and that $L^{1,0} _{\xi ^{\theta} _{\tau}} \dbar u = 0$.  Writing $u_o (x,t) := f(x)$ for the trivial extension of $f$, so that $u = F^* \Psi ^* u_o$, our construction yields $\iota _{X_{t_o}}^*u = f$ and, at the points of $X_{t_o}$, 
\[
\bar \xi ^{\theta} _{\tau}\lrcorner \dbar u = \bar \xi ^{\theta} _{\tau}\lrcorner (\dbar (F^* \Psi ^* u_o)) = F^* \Psi ^*\left ( \tfrac{\di }{\di \bar \tau} \lrcorner \dbar u_o\right ) = 0
\]
and
\[
L^{1,0} _{\xi ^{\theta} _{\tau}} \dbar u = L^{1,0} _{\xi ^{\theta} _{\tau}} F^* \Psi ^*\dbar u_o = F^* \Psi ^*L^{1,0} _{\tau ^{\mu} \frac{\di}{\di t_{\mu}}} \dbar u_o =0.
\]
The proof is complete.  
\end{proof}

We can now collect all we have done into the following simple but important theorem. (See Definitions \ref{substalk-defns} and \ref{iBLS-defn}.i.)

\begin{thm}\label{iBLS-structures-for-H}
Let $(E,h) \to X \stackrel{p}{\to} B$ be a proper holomorphic family of Hermitian holomorphic vector bundles and let $\theta \subset T_X$ be any horizontal distribution.  Define the substalk bundle $\Sigma \subset \tilde \sH _{\sL^{\theta}}$ by 
\[
\Sigma _t := \left \{ \ff \in \sC^{\infty} (\sL)\ ;\ \iota _{X_t} ^* u \in \sH_t \text{ and } L^{1,0}_{\xi ^{\theta} _{\tau}} \dbar u = 0 \text{ for any }u \in \sC^{\infty} (\Lambda ^{n,0}_X \tensor E)_t \text{ representing }\ff \right \}.
\]
Then $\Sigma$ is independent of $\theta$, and formalizes $\sH$.  Moreover $(\sH,\Sigma, \sL^{\theta})$ is an integrable iBLS field. 
\end{thm}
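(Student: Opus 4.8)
The plan is to verify directly that the triple $(\sH,\sL^\theta,\Sigma)$ satisfies every item of Definition~\ref{substalk-defns}.c and Definition~\ref{iBLS-defn}, in the following order. First I would record that $\Sigma$ is a well-defined substalk bundle of $\tilde\sH_{\sL^\theta}$ and that it does not depend on $\theta$. That $\Sigma_t\subset\tilde\sH_{\sL^\theta,t}$ is immediate, since the first defining condition says precisely that $\ff(t)=\iota_{X_t}^*u\in\sH_t$. The second defining condition, $\iota_{X_t}^*L^{1,0}_{\xi^\theta_\tau}\dbar u=0$, is by Proposition~\ref{int-ptwise-holo}.a independent of the chosen representative $u$ of $\fa(\ff)$, and by Proposition~\ref{int-ptwise-holo}.b---which applies because $\iota_{X_t}^*u\in\sH_t$---independent of the horizontal lift of $\tau$; as every lift of $\tau$ arises from some horizontal distribution, the condition, and hence $\Sigma$ itself, is the same for all $\theta$. (Throughout, I take $D$ in the definition of the reference connection $\mathring{\nabla}$ of $\sL^\theta$ to be the Chern connection of $(E,h)$, so that $L^{D1,0}=L^{1,0}$.)

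Next I would establish the density statements needed for Definition~\ref{substalk-defns}.c(i) and Definition~\ref{iBLS-defn}.i(a). Fix $t_o\in B$ and $f\in\sH_{t_o}$. Proposition~\ref{holo-infinitesimal} furnishes a smooth section $\ff\in\Gamma(B,\sC^\infty(\sL^\theta))$ with a representative $u$ satisfying $\iota_{X_{t_o}}^*u=f$, $\iota_{X_{t_o}}^*(\bar\xi^\theta_\tau\lrcorner\dbar u)=0$ and $\iota_{X_{t_o}}^*L^{1,0}_{\xi^\theta_\tau}\dbar u=0$ for all $\tau$. The first and third of these identities give $\ff\in\Sigma_{t_o}$, and $\ff(t_o)=f$. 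Since $f$ was arbitrary, $\Sigma_{t_o}(t_o)=\sH_{t_o}$, and a fortiori $\tilde\sH_{\sL^\theta,t_o}(t_o)=\sH_{t_o}$; thus $\sH\subset\sL^\theta$ is a bona fide subfield and $\Sigma_{t_o}(t_o)$ is (trivially) dense in $\sH_{t_o}$.

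The core of the argument is to show that $(\sH,\Sigma)$ is an infinitesimally holomorphic subfield of $\sL^\theta$, which also yields integrability. For $\ff\in\Sigma_t$ with representative $u$ one has $\iota_{X_t}^*u\in\sH_t$, hence $\dbar(\iota_{X_t}^*u)=0$; the point is that then $\langle\dbar^\theta\ff,\bar\tau\rangle(t)=\iota_{X_t}^*(\bar\xi^\theta_\tau\lrcorner\dbar u)\in\sH_t$. I would prove this using the $\bar\eth$-description of Definition~\ref{holo-dbar-defn} together with the constraint coming from the second defining property of $\Sigma$ and Lemma~\ref{dbar-of-sff}, which controls $\dbar\iota_{X_t}^*(L^{1,0}_{\xi^\theta_\tau}u)$ in terms of $\iota_{X_t}^*L^{1,0}_{\xi^\theta_\tau}\dbar u$ and the curvature of $(E,h)$. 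Integrability then follows formally: by the computation in the proof of Proposition~\ref{integrable-dbar}, the $\bar\sigma\wedge\bar\tau$-contraction of $(\dbar^\theta\dbar^\theta\ff)(t)$ equals $\iota_t\big(([\bar\xi^\theta_\sigma,\bar\xi^\theta_\tau]-\bar\xi^\theta_{[\sigma,\tau]})\lrcorner\dbar\fa(\ff)\big)$, and since $[\bar\xi^\theta_\sigma,\bar\xi^\theta_\tau]-\bar\xi^\theta_{[\sigma,\tau]}$ projects to $0$ under $dp$ and is of type $(0,1)$, it is tangent to $X_t$, so this contraction equals the corresponding contraction of $\dbar(\iota_{X_t}^*u)=0$ and therefore vanishes.

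Finally I would check that $\Sigma$ formalizes $\sH$ in the sense of Definition~\ref{substalk-defns}.c(ii), the condition that makes $(\sH,\Sigma)\subset\sL^\theta$ formally tuned. Let $\ff\in\Sigma_t$ with $\ff(t)=0$, so $\iota_{X_t}^*u=0$. The BLS Chern connection of $\sL^\theta$ differs from $\mathring{\nabla}$ by a $\sC^\infty$-linear operator which, being $\fh$-Hermitian, is tensorial (cf. the remark following Proposition~\ref{metric-compatible-connection-prop}); hence $(\nabla^{\sL}\ff)(t)=(\mathring{\nabla}\ff)(t)$, whose $(1,0)$-component at $t$ is $\iota_{X_t}^*L^{1,0}_{\xi^\theta_\sigma}u$ and whose $(0,1)$-component is $\iota_{X_t}^*(\bar\xi^\theta_\tau\lrcorner\dbar u)$. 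Since $\iota_{X_t}^*u=0$ and $\iota_{X_t}^*L^{1,0}_{\xi^\theta_\tau}\dbar u=0$, Proposition~\ref{int-ptwise-holo}.c gives $\iota_{X_t}^*L^{1,0}_{\xi^\theta_\sigma}u\in\sH_t$, while the $(0,1)$-component lies in $\sH_t$ by the previous paragraph applied to $\ff$ (whose value $0$ at $t$ is certainly in $\sH_t$). Hence $(\nabla^{\sL}\ff)(t)\in\sH_t$. Assembling items (a), (b), (c) of Definition~\ref{iBLS-defn}.i shows $(\sH,\sL^\theta,\Sigma)$ is an iBLS field, and the infinitesimal holomorphy of $(\sH,\Sigma)$ in $\sL^\theta$ makes it integrable in the sense of Definition~\ref{iBLS-defn}.iii. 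The main obstacle is the claim in the third paragraph---that for $\ff\in\Sigma_t$ the form $\iota_{X_t}^*(\bar\xi^\theta_\tau\lrcorner\dbar u)$ is $\dbar$-closed on $X_t$; once this is in hand, everything else is either bookkeeping or a direct appeal to Propositions~\ref{int-ptwise-holo} and \ref{holo-infinitesimal}.
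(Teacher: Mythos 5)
Most of your outline tracks the paper's proof: well-definedness and $\theta$-independence of $\Sigma$ via Proposition \ref{int-ptwise-holo}.a--b, density via Proposition \ref{holo-infinitesimal}, the $(1,0)$-part of the formalization condition via Proposition \ref{int-ptwise-holo}.c, and the integrability computation (which you actually spell out more explicitly than the paper does). The problem is the step you yourself single out as ``the main obstacle,'' and it is a genuine gap rather than an omitted computation. You claim that for \emph{every} $\ff\in\Sigma_t$ the form $\iota_{X_t}^*(\bar\xi^\theta_\tau\lrcorner\dbar u)$ lies in $\sH_t$, and you propose to deduce this from the second defining condition of $\Sigma_t$ together with Lemma \ref{dbar-of-sff}. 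That lemma computes $\dbar\,\iota_{X_t}^*(L^{1,0}_{\xi^\theta_\tau}u)$, i.e.\ it controls the $(1,0)$-derivative of $u$ in the base directions; it carries no information about $\bar\xi^\theta_\tau\lrcorner\dbar u$, which records the $(0,1)$-derivative. Moreover the two conditions $\iota_{X_t}^*u\in\sH_t$ and $\iota_{X_t}^*L^{1,0}_{\xi^\theta_\tau}\dbar u=0$ do not by themselves force the conclusion: for the trivial family $X_0\times\bbD\to\bbD$ with $E$ trivial and the trivial horizontal distribution, the representative $u=\bigl(f(z)+\bar t\,g(z,\bar z)\bigr)\,dz$ with $f$ holomorphic and $g$ smooth but not holomorphic satisfies $\iota_{X_0}^*u=f\,dz\in\sH_0$ and $L^{1,0}_{\di/\di t}\dbar u\equiv 0$, yet $\iota_{X_0}^*\bigl(\tfrac{\di}{\di\bar t}\lrcorner\dbar u\bigr)=g\,dz\notin\sH_0$. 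Since your fourth paragraph also feeds this claim into the $(0,1)$-component of the formalization check, the gap propagates there as well.

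The mechanism the paper actually uses at this point is different. The extensions produced by Proposition \ref{holo-infinitesimal} satisfy the \emph{additional} identity $\iota_{X_t}^*\bigl(\langle\dbar u,\bar\xi^\theta_\tau\rangle\bigr)=0$ --- a third condition beyond the two displayed in the definition of $\Sigma_t$ --- and it is this vanishing (shown in the first paragraph of that proof to depend only on the germ $\fa(\ff)$, not on the representative or the lift) that yields $(\dbar^\theta\ff)(t)=0\in\Lambda^{0,1}_{B,t}\tensor\sH_t$, hence the infinitesimal holomorphy and the $(0,1)$-half of the formalization. So the repair is not a cleverer use of Lemma \ref{dbar-of-sff} but to carry the condition $\iota_{X_t}^*(\bar\xi^\theta_\tau\lrcorner\dbar u)=0$ along as part of the tuning; the sections of Proposition \ref{holo-infinitesimal} already satisfy it, so density is unaffected, and with that condition in hand the rest of your argument goes through essentially as you wrote it.
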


\begin{proof}
By Proposition \ref{int-ptwise-holo}.a $\Sigma$ is well-defined and by Proposition \ref{int-ptwise-holo}.b $\Sigma$ is independent of $\theta$.  By Proposition \ref{holo-infinitesimal}, every $f \in \sH _t$ extends to a smooth section of $\sL \to B$ that is holomorphic to second order at $t$.  Therefore $\sH \subset \sL ^{\theta}$ is bona fide, and by Proposition \ref{int-ptwise-holo}.c $\Sigma$ formalizes $\sH \subset \sL ^{\theta}$. Moreover, by Proposition \ref{holo-infinitesimal} $\iota ^*_{X_t} \bar \xi ^{\theta}_{\tau} u = 0$ for any representative $u$ of $f$, so $(\dbar ^{\theta} \ff )(t) = 0 \in \Lambda ^{0,1} _{B,t} \tensor \sH _t$.  Finally, the fact that $\sH$ is integrable follows the proof of Proposition \ref{holo-infinitesimal}; the reader can check that any representative $u$ of the section $\ff \in \Sigma _{t_o}$ satisfies $\left < \iota _{X_{t_o}}^* \dbar \dbar u ,\bar \xi ^{\theta}_{\sigma} \wedge \bar \xi ^{\theta}_{\tau} \right > = 0$ for all $\sigma , \tau \in T^{1,0} _{B, t_o}$.
\end{proof}

In fact, $(\sH , \Sigma, \sL ^{\theta})$ is strongly iBLS.  That is to say, the Gauss-Griffiths form 
\[
\bbG (\ff, \fg) := \left ( \two ^{\fL^{\theta} / (\sH, \Sigma)}\ff, \two ^{\fL^{\theta} / (\sH, \Sigma)}\fg \right )
\]
has the continuity property stated in Definition \ref{iBLS-defn}.d.  This fact is a consequence of the identity \eqref{bly-formula}, whose proof is completed in Section \ref{sff-section}.

\subsection{Lift-independence of the relative curvature}\label{lift-indep-par}

For a choice of horizontal lift $\theta$ the curvature of the BLS field $\sL^{\theta}$ is 
\[
\Theta ^{\sL^{\theta}}  = \nabla ^{\sL ^{\theta}} \nabla ^{\sL ^{\theta}},
\]
where $\nabla ^{\sL^{\theta}}$ denotes the Chern connection of $\sL^{\theta}$.   One then has the relative curvature $\Theta ^{({\rm rel}\ \sL ^{\theta})}(\sH)$ (see Definition \ref{iBLS-defn}.iii) given by 
\[
\left ( \Theta ^{({\rm rel}\ \sL ^{\theta})}(\sH) _{\sigma \bar \tau}f_1,f_2\right ) := \left ( \Theta^{\sL^{\theta}} _{\sigma \bar \tau}f _1,f_2\right ) - \left ( \two ^{\sL^{\theta}/\sH}(\sigma)f_1, \two ^{\sL^{\theta}/\sH}(\tau) f_2 \right ), \quad t \in B,\ f_1,f_2 \in \sH _t.
\]
The next result is key to a natural definition of the curvature of the iBLS field $\sH \to B$.

\begin{thm}\label{indep}
Let $(E,h) \to X \stackrel{p}{\to} B$ be a holomorphic family of vector bundles. If $\theta _1$ and $\theta _2$ are two horizontal lifts for $p$ then 
\[
\Theta ^{({\rm rel}\ \sL ^{\theta_1})}(\sH) =  \Theta ^{({\rm rel}\ \sL ^{\theta_2})}(\sH).
\]
\end{thm}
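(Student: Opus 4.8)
\emph{Proof proposal.} The plan is to expand both summands of the defining identity
\[
\left(\Theta^{({\rm rel}\ \sL^{\theta})}(\sH)_{\sigma\bar\tau}f_1,f_2\right) = \left(\Theta^{\sL^{\theta}}_{\sigma\bar\tau}f_1,f_2\right) - \left(\two^{\sL^{\theta}/\sH}(\sigma)f_1,\two^{\sL^{\theta}/\sH}(\tau)f_2\right)
\]
concretely, for $f_1,f_2$ in the dense subspace $\Sigma_t(t)\subseteq\sH_t$, and to exhibit a cancellation of the $\theta$-dependence. Since the relative curvature is tensorial and anti-Hermitian it is determined by its values on $\Sigma_t(t)$, so this suffices. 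By Theorem \ref{iBLS-structures-for-H} the substalk bundle $\Sigma$ is the same for every $\theta$; combining Proposition \ref{holo-infinitesimal} with the lift-independence statements of Propositions \ref{H_t-dbar-is-lift-independent} and \ref{int-ptwise-holo}, each $f_i$ admits a representative $u_i\in\Gamma(X,\sC^{\infty}(\Lambda^{n,0}_X\tensor E))$ with $\iota_{X_t}^*u_i=f_i$ and $\iota_{X_t}^*(\bar\xi^{\theta}_{\tau}\lrcorner\dbar u_i)=0=\iota_{X_t}^*L^{1,0}_{\xi^{\theta}_{\tau}}\dbar u_i$ \emph{for every} horizontal lift $\theta$ simultaneously. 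First I would fix such representatives $u_1,u_2$ once and for all.

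Next I would record the concrete descriptions, valid for every $\theta$: using Equation \eqref{equation-for-nabla-10}, Lemma \ref{lie-deriv-lemma}, and the vanishing $\bar\xi\lrcorner u\equiv 0$ of a $(0,1)$-vector against an $(n,0)$-form, one gets $(\nabla^{\sL^{\theta},1,0}_{\sigma}\ff_i)(t)=\iota_{X_t}^*L^{1,0}_{\xi^{\theta}_{\sigma}}u_i$, $(\dbar^{\theta}_{\bar\tau}\ff_i)(t)=\iota_{X_t}^*(\bar\xi^{\theta}_{\tau}\lrcorner\dbar u_i)=0$, and $\two^{\sL^{\theta}/\sH}(\sigma)f_i=P^{\perp}_{\sH_t}\iota_{X_t}^*L^{1,0}_{\xi^{\theta}_{\sigma}}u_i$. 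Writing $\Theta^{\sL^{\theta}}_{\sigma\bar\tau}=\nabla^{1,0}_{\sigma}\dbar^{\theta}_{\bar\tau}-\dbar^{\theta}_{\bar\tau}\nabla^{1,0}_{\sigma}$ (extending $\sigma,\tau$ holomorphically, so $[\sigma,\bar\tau]=0$) and expanding each term by metric compatibility and \eqref{equation-for-nabla-10}, one obtains
\[
\left(\Theta^{\sL^{\theta}}_{\sigma\bar\tau}\ff_1,\ff_2\right)(t)=\big[d\,\omega^{\theta}\big](\sigma\wedge\bar\tau)\big|_t-\left(\dbar^{\theta}_{\bar\tau}\ff_1,\dbar^{\theta}_{\bar\sigma}\ff_2\right)(t)+\left(\nabla^{1,0}_{\sigma}\ff_1,\nabla^{1,0}_{\tau}\ff_2\right)(t),
\]
where $\omega^{\theta}$ is the smooth $1$-form $\xi\mapsto(\nabla^{\sL^{\theta}}_{\xi}\ff_1,\ff_2)$ near $t$. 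Subtracting the square of the second fundamental form, splitting $\iota_{X_t}^*L^{1,0}_{\xi^{\theta}_{\sigma}}u_i$ into its $P_{\sH_t}$ and $P^{\perp}_{\sH_t}$ components, and using that the middle term vanishes by our choice of representatives, the relative curvature becomes
\[
\left(\Theta^{({\rm rel}\ \sL^{\theta})}(\sH)_{\sigma\bar\tau}f_1,f_2\right)=\big[d\,\omega^{\theta}\big](\sigma\wedge\bar\tau)\big|_t+\left(P_{\sH_t}\iota_{X_t}^*L^{1,0}_{\xi^{\theta}_{\sigma}}u_1,\ P_{\sH_t}\iota_{X_t}^*L^{1,0}_{\xi^{\theta}_{\tau}}u_2\right).
\]

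Then I would show the second term is $\theta$-independent. If $\theta_1,\theta_2$ are two lifts and $\delta_{\sigma}:=\xi^{\theta_1}_{\sigma}-\xi^{\theta_2}_{\sigma}$ is their (vertical) difference, then, because an $(n+1,0)$-form contracted with a vertical vector pulls back to zero on the fibre,
\[
\iota_{X_t}^*L^{1,0}_{\xi^{\theta_1}_{\sigma}}u_1-\iota_{X_t}^*L^{1,0}_{\xi^{\theta_2}_{\sigma}}u_1=\iota_{X_t}^*\nabla^{1,0}(\delta_{\sigma}\lrcorner u_1)=\nabla^{1,0}_{X_t}\big(\delta_{\sigma}|_{X_t}\lrcorner f_1\big),
\]
and $\nabla^{1,0}_{X_t}$ of an $E|_{X_t}$-valued $(n-1,0)$-form is $L^2$-orthogonal to $\sH_t$, by Stokes' theorem on the closed manifold $X_t$ (integration by parts against a holomorphic $g$ uses $\dbar g=0$, and the leftover terms have forbidden bidegree). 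Hence $P_{\sH_t}\iota_{X_t}^*L^{1,0}_{\xi^{\theta}_{\sigma}}u_i$ does not depend on $\theta$, so all the $\theta$-dependence sits in $\big[d\,\omega^{\theta}\big](\sigma\wedge\bar\tau)\big|_t$, and the theorem reduces to $\big[d\mu\big](\sigma\wedge\bar\tau)\big|_t=0$, where $\mu:=\omega^{\theta_1}-\omega^{\theta_2}=\big((\nabla^{\sL^{\theta_1}}-\nabla^{\sL^{\theta_2}})\ff_1,\ff_2\big)$ is a $1$-form whose contractions with $\sigma$ and with $\bar\tau$ both vanish at $t$ (as $(\dbar^{\theta_i}_{\bar\tau}\ff_1)(t)=0$ and, by conjugation, $(\dbar^{\theta_i}_{\bar\sigma}\ff_2)(t)=0$).

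Finally, expanding $\big[d\mu\big](\sigma\wedge\bar\tau)\big|_t=\sigma\big(\mu(\bar\tau)\big)\big|_t-\bar\tau\big(\mu(\sigma)\big)\big|_t$ with Lemma \ref{lie-deriv-lemma} — the "second half" of the product rule there dropping out because $\bar\delta\lrcorner\dbar u_i$ pulls back to zero on $X_t$ — converts the desired vanishing into the identity
\[
\int_{X_t}\!\!\ii^{n^2}\left<L^{1,0}_{\xi_{\sigma}}(\bar\delta_{\tau}\lrcorner\dbar u_1)\wedge\bar u_2,h\right>+\int_{X_t}\!\!\ii^{n^2}\left<u_1\wedge\overline{L^{1,0}_{\xi_{\tau}}(\bar\delta_{\sigma}\lrcorner\dbar u_2)},h\right>=0,
\]
with $\bar\delta_{\bullet}=\overline{\xi^{\theta_1}_{\bullet}-\xi^{\theta_2}_{\bullet}}$ and $\xi_{\sigma},\xi_{\tau}$ arbitrary lifts. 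The plan for this last step is to integrate by parts with Proposition \ref{restricted-product-rule} (each integrand is $\iota_{X_t}^*L_{\xi}$ of an $(n,n)$-form, whose exact part integrates to zero), then to rewrite the surviving $\nabla^{1,0}(\bar\delta\lrcorner\dbar u_i)$ terms using Lemma \ref{dbar-of-sff}; the holomorphicity of $\iota_{X_t}^*u_i$ together with the second-order conditions $\iota_{X_t}^*L^{1,0}_{\xi^{\theta}_{\tau}}\dbar u_i=0$ should then identify the two integrals as conjugate-opposite. I expect this integration-by-parts cancellation to be the main obstacle: it is the same bookkeeping that underlies the curvature formula \eqref{H-curv-formula-in-mthm}, and, as there, the real work is in tracking bidegrees and which terms survive restriction to the fibre $X_t$.
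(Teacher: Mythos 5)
Your architecture is sound and is a genuine reorganization of the paper's computation rather than a copy of it. The paper differences the two curvature operators directly, starting from $\left(\Theta^{\sL^{\theta}}_{\sigma\bar\tau}\ff_1,\ff_2\right)=\left(L^{1,0}_{\xi^{\theta}_{\sigma}}L^{0,1}_{\bar\xi^{\theta}_{\tau}}u_1,u_2\right)-\left(L^{0,1}_{\bar\xi^{\theta}_{\tau}}L^{1,0}_{\xi^{\theta}_{\sigma}}u_1,u_2\right)$, and then matches the surviving terms against the difference of the squared second fundamental forms. You instead park all the $\theta$-dependence in the exact term $d\omega^{\theta}(\sigma,\bar\tau)$ and exhibit the remainder as the manifestly $\theta$-independent quantity $\left(P_{\sH_t}\iota_{X_t}^*L^{1,0}_{\xi^{\theta}_{\sigma}}u_1,\,P_{\sH_t}\iota_{X_t}^*L^{1,0}_{\xi^{\theta}_{\tau}}u_2\right)$; your justification of that independence (verticality of $\delta_{\sigma}$ killing $\iota_{X_t}^*(\delta_{\sigma}\lrcorner\nabla^{1,0}u_1)$, and orthogonality of $\nabla^{1,0}_{X_t}$-images to $\sH_t$) is exactly the paper's key observation, just deployed earlier. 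Your reduction of the theorem to $d\mu(\sigma,\bar\tau)|_t=0$, and of that to the displayed two-integral identity, is correct, and what it buys is a cleaner separation of ``where the $\theta$-dependence lives'' than the paper's term-by-term bookkeeping.

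The one place you stop short is the final identity, and your plan for it (integration by parts via Proposition \ref{restricted-product-rule}, then Lemma \ref{dbar-of-sff}, hoping the two integrals come out ``conjugate-opposite'') overcomplicates what actually happens: each integrand vanishes pointwise on $X_t$, separately. Indeed
\[
L^{1,0}_{\xi_{\sigma}}\bigl(\bar\delta_{\tau}\lrcorner\dbar u_1\bigr)=\bar\delta_{\tau}\lrcorner\bigl(L^{1,0}_{\xi_{\sigma}}\dbar u_1\bigr)+\eta\lrcorner\dbar u_1,
\]
where $\eta$ is the $(0,1)$-part of the commutator of $\xi_{\sigma}$ with $\bar\delta_{\tau}$; since $\delta_{\tau}$ is a section of $T^{1,0}_{X/B}$, the field $\eta$ is \emph{vertical}. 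The first term restricts to zero on $X_t$ because $\iota_{X_t}^*L^{1,0}_{\xi_{\sigma}}\dbar u_1=0$ (the $\Sigma_t$-condition you already imposed on the representatives) and $\bar\delta_{\tau}$ is tangent to the fibre; the second restricts to zero because $\iota_{X_t}^*\dbar u_1=0$ and $\eta$ is tangent to the fibre. The symmetric statement kills the second integrand. This is precisely the manipulation the paper performs on its term $L^{1,0}_{\xi^{\theta}_{\sigma}}L^{0,1}_{\bar v_{\tau}}u_1$, so your argument closes with tools you already cited; no integration by parts is needed at this last stage.
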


\noi The proof of Theorem \ref{indep} is given in Section \ref{B2-thm-pf-section}, after an explicit formula for the Chern connection of $\sL^{\theta}$ is obtained.  

\begin{defn}\label{H-curvature-definition}
The curvature of $\sH \to B$ with its $L^2$ metric is 
\[
\Theta ^{\sH}  :=  \Theta ^{({\rm rel}\ \sL ^{\theta})}(\sH),
\]
where $\theta$ is any horizontal lift for $p :X \to B$.
\end{defn}

\subsection{Positivity of curvature for section fields}

In this section we present some known but slightly less common multilinear algebra useful in the study of the curvature of $\sH \to B$.

\subsubsection*{\sf Quadratic forms on $2$-tensors}

Positivity of the curvature $\Theta (\fh)$ of a holomorphic vector bundle $\fH \to B$ with metric $\fh$ is a positivity property of the Hermitian form $\{ \cdot , \cdot \}_{\fh , \Theta (\fh)}$ on the vector bundle $T^{1,0} _B \tensor \fH$ defined in Paragraph \ref{positivity-par}.   One can generalize the latter notion of positivity to any Hermitian quadratic form $A$ on a tensor product of vector spaces $M \tensor F$.  (We use the letters $M$ and $F$ to indicate that the first and second factors correspond respectively to vectors in the manifold and fiber.)  The elements of $M \tensor F$ can be thought of as linear maps $M^* \to F$ (or $F^*\to M$), and as such have a well-defined notion of rank.  One can then write $M \tensor F$ as an increasing union of subvarieties 
\[
\{0\} =:D_o(M \tensor F) \subset D_1(M \tensor F) \subset  \cdots \subset D_k(M \tensor F) \subset  ... ,
\]
where $D_k(M \tensor F)$ denotes the set of tensor of rank $\le k$.  Since the rank is at most $\min \{ \dim M, \dim F)$, the subvarieties $D_k(M \tensor F)$ stabilize.  The sub-linearity of the rank also implies that 
\[
D_k(M \tensor F) + D_{k'}(M \tensor F) \subset D_{k+k'}(M \tensor F).
\]
The subvarieties $D_k(M \tensor F)$ are cones, and are non-linear if $k \neq 0$ or $\min (\dim M, \dim F)$.  Moreover, the generic element of $M \tensor F$ has maximal rank, i.e., $D_{\min (\dim M, \dim F)}(M \tensor F)$ is Zariski open.  

\begin{defn}
Let $M$ and $F$ be finite-dimensional vector spaces and suppose $F$ is equipped with a Hermitian metric $\phi$.
\begin{enumerate}
\item[a.]  A Hermitian quadratic form $A$ on $M \times F$ is said to be $k$-positive if it is positive on $D_k (M \tensor F)$, i.e., 
\[
D_k (M \tensor F) \ni \xi \mapsto A(\xi ,\xi) \ge 0,
\]
with equality if and only if $\xi = 0$.  (If the latter is omitted, we say $A$ is $k$-nonnegative.)
\item[b.] A tensor $Q \in {\rm Hom} (F,F) \tensor M \wedge \bar M$ is said to be Hermitian for $\phi$ if for all $\xi, \eta \in M$ the endomorphism $Q _{\xi \bar \eta} := \left < Q, \xi \tensor \bar \eta\right >$  satisfies 
\[
\phi (Q _{\xi \bar \eta} v, w) = \phi (v, Q _{\xi \bar \eta} w).
\]

\item[c.] A Hermitian tensor $Q \in {\rm Hom} (F,F) \tensor M \wedge \bar M$ is said to be $k$-positive if the Hermitian quadratic form 
\[
(M \tensor F) \tensor( \bar M \tensor \bar F) \ni (\xi \tensor v, \bar \eta \tensor \bar w) \mapsto \{ (\xi \tensor v, \bar \eta \tensor \bar w)\} _{Q, \phi} := \phi ( Q_{\xi \bar \eta} v, w)
\]
is $k$-positive.
\item[d.] A $1$-positive Hermitian form is also called Griffiths-positive, and a Hermitian form that is $k$-positive for every $k \ge 1$ is also called Nakano-positive.
\end{enumerate}
\end{defn}

\noi Thus the vector bundle $(\fH , \fh) \to B$ has $k$-positive curvature (Definition \ref{k-pos-curv-defn}) if and only if its curvature $\Theta (\fh)$ is $k$-positive if and only if $\{ \cdot , \cdot \}_{\fh , \Theta (\fh)}$ is $k$-positive. 

These definitions can be extended to the infinite-rank case, but doing so requires some care.  We shall not use these ideas in the infinite rank case.

\subsubsection*{\sf Positivity for Schur complements}

We wish to consider the situation in which the subspace $M$ has a direct sum decomposition $M = M_1 \oplus M_2$.  In this `split' context we shall need a positivity statement that is best expressed in terms of the inertia map.

Recall that for a Hermitian vector space $W$ with Hermitian metric $h$, one can associated to a Hermitian form $\Phi$ its \emph{inertia endomorphism} $\fI (\Phi) = \fI _h(\Phi)$ defined by 
\[
h ( \fI (\Phi) v, w) = \Phi (v,w).
\]
The \emph{inertia map} $\fI _h$ is an isomorphism from the set of quadratic Hermitian forms on $W$ onto the set of $h$-Hermitian endomorphisms of $W$.

We take the vector space $W$ to be $M \tensor F$, where $M= M_1 \oplus M_2$.  Suppose $F$ has a Hermitian inner product $\phi$ as before, and assume $M$ is equipped with a Hermitian inner product $g$ such that $M_1$ and $M_2$ are $g$-orthogonal.  (Write $g_i := g|_{M_i}$.)  We equip $M \tensor F$ with the product metric $g \boxtimes \phi$.  Then the inertia map $\fI = \fI _{g \boxtimes \phi}$ decomposes into four maps $\fI _{ij}$, $1 \le i,j \le 2$, such that for any Hermitian quadratic form $\Phi$ the maps 
\[
\fI _{ij} (\Phi) : M_i \tensor F \to M_j \tensor F
\]
satisfy $\fI _{ij} (\Phi) ^{\dagger} = \fI_{ji}(\Phi)$, where $\dagger$ denotes Hermitian conjugation with respect to the relevant Hermitian metrics.

\begin{defn}
The Schur complement of $(\Phi, M_1\tensor F)$ is the Hermitian form on $M_1 \tensor F$ whose inertia map with respect to the metric $g_1 \boxtimes \phi$ is
\[
\fI _{11}(\Phi) - \fI_{12} (\Phi)\fI_{22}(\Phi)^{-1} \fI _{21}(\Phi) : M_1 \tensor F \to M_1 \tensor F.
\]
\end{defn}


\begin{prop}\label{positivity-of-dets}
Let $\Phi$ be an $m$-positive Hermitian form on $(M_1 \oplus M_2) \tensor F$ such that $\Phi|_{M_2 \tensor F}$ is Nakano-positive.  Then the Schur complement of $(\Phi, M_1\tensor F)$ is $m$-positive.
\end{prop}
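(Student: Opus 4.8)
The plan is to reduce the statement to the classical variational (partial-minimum) description of the Schur complement and then feed in the two positivity hypotheses.

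First I would fix the product metric $g\boxtimes\phi$ on $M\tensor F=(M_1\tensor F)\oplus(M_2\tensor F)$ and the associated inertia endomorphism $\fI:=\fI_{g\boxtimes\phi}(\Phi)$ with its blocks $\fI_{ij}$. Since $\Phi|_{M_2\tensor F}$ is Nakano-positive it is positive definite on all of $M_2\tensor F$, so the block $\fI_{22}$ is a positive, invertible, self-adjoint operator; hence the Schur complement endomorphism $S:=\fI_{11}-\fI_{12}\fI_{22}^{-1}\fI_{21}$ is well defined and self-adjoint on $M_1\tensor F$, and by definition the Schur complement form is $\Psi(\xi,\xi'):=(g_1\boxtimes\phi)(S\xi,\xi')$. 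The key elementary identity to establish is the completion of squares: for all $\xi\in M_1\tensor F$ and $\eta\in M_2\tensor F$,
\[
\Phi(\xi+\eta,\xi+\eta)=\Psi(\xi,\xi)+(g_2\boxtimes\phi)\bigl(\fI_{22}(\eta-\eta_\xi),\ \eta-\eta_\xi\bigr),\qquad \eta_\xi:=-\fI_{22}^{-1}\fI_{21}\xi.
\]
Because $\fI_{22}>0$, this immediately gives the partial-minimum formula
\[
\Psi(\xi,\xi)=\min_{\eta\in M_2\tensor F}\Phi(\xi+\eta,\xi+\eta),
\]
the minimum being attained only at $\eta=\eta_\xi$.

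The second step is a reduction to the case $\dim_{\bbC}M_1\le m$. Given a nonzero $\xi\in M_1\tensor F$ with $\operatorname{rank}\xi\le m$, pick a subspace $M_1'\subseteq M_1$ with $\xi\in M_1'\tensor F$ and $\dim_{\bbC}M_1'=\operatorname{rank}\xi\le m$. Schur complementation is hereditary in the retained variable: since shrinking $M_1$ to $M_1'$ does not change the minimization over $\eta\in M_2\tensor F$, the partial-minimum formula shows that $\Psi|_{M_1'\tensor F}$ is the Schur complement of $\Phi|_{(M_1'\oplus M_2)\tensor F}$ relative to $M_1'\tensor F$; this restricted form is again Demailly $m$-positive, and its restriction to $M_2\tensor F$ is literally unchanged, hence still Nakano-positive. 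Thus it suffices to prove: when $\dim_{\bbC}M_1\le m$ — so that every tensor of $M_1\tensor F$ automatically has rank $\le m$ — the form $\Psi$ is positive definite on $M_1\tensor F$.

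In that reduced situation, for $\xi\in M_1\tensor F$ with $\xi\ne0$ the partial-minimum formula gives $\Psi(\xi,\xi)=\Phi(\xi+\eta_\xi,\xi+\eta_\xi)$ with $\xi+\eta_\xi\ne0$, and I expect the main obstacle to be precisely here: the minimizer $\xi+\eta_\xi$ need not have rank $\le m$ in $M\tensor F$, so one cannot simply quote Demailly $m$-positivity of $\Phi$. This is the step where the Nakano-positivity of $\Phi|_{M_2\tensor F}$ — as opposed to mere positivity of $\Phi$ on rank-$\le m$ tensors — must do real work. One natural approach is to realize the same minimum by minimizing instead over the $(g\boxtimes\phi)$-orthogonal complement, inside $(M_1'\oplus M_2)\tensor F$, of a suitably chosen subspace of $M_2\tensor F$, chosen so that the competing tensors are forced to stay in $D_m(M\tensor F)$ while the correction term is still controlled by positive definiteness of $\fI_{22}$; once the minimum is realized along rank-$\le m$ tensors, the Demailly $m$-positivity of $\Phi$ yields $\Psi(\xi,\xi)>0$, and the non-negative variant follows by relaxing all strict inequalities. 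Making this last reduction precise — pinning down exactly which subspace to orthogonalize against and why it does not raise the minimum — is the crux of the argument.
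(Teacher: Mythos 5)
Your setup --- the completion of squares, the variational formula $\Psi(\xi,\xi)=\min_{\eta\in M_2\tensor F}\Phi(\xi+\eta,\xi+\eta)$ for the Schur complement form $\Psi$, and the heredity reduction to $\dim_{\bbC}M_1\le m$ --- is correct, and you have put your finger on exactly the right difficulty: the minimizer $\xi+\eta_\xi$ need not lie in $D_m(M\tensor F)$. But the proposal stops there, and the direction you sketch for closing the gap cannot work as written. If you restrict the minimization to a subspace $V\subset M_2\tensor F$ chosen so that $\xi+V\subset D_m(M\tensor F)$, then Demailly $m$-positivity of $\Phi$ does give $\min_{\eta\in V}\Phi(\xi+\eta,\xi+\eta)>0$; but shrinking the domain of a minimization can only raise the minimum, so this quantity \emph{dominates} $\Psi(\xi,\xi)$ and says nothing about its sign. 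To conclude you would have to prove that the global minimizer $\eta_\xi=-\fI_{22}^{-1}\fI_{21}\xi$ already lies in $V$; for the natural candidate $V=\{\sum_{i=1}^m w_i\tensor f_i:\ w_i\in M_2\}$ (where $\xi=\sum_{i=1}^m\tau_i\tensor f_i$) this amounts to the claim that $\fI_{22}^{-1}\fI_{21}$ preserves the $F$-factors $f_i$, which is not among the hypotheses and fails for a generic Hermitian $\fI$.

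The paper's own proof avoids the variational formula entirely: it substitutes the explicit competitor $Z=\sum_{i=1}^m(\tau_i+w_i)\tensor f_i$ with $w_i$ built from the off-diagonal inertia blocks, observes that $Z$ is a sum of $m$ decomposable tensors and hence lies in $D_m(M\tensor F)$, applies $m$-positivity to $Z$, and computes $\Phi(Z,Z)$ to be the Schur complement form evaluated at $\sum_i\tau_i\tensor f_i$. That computation is the content your write-up is missing. Note, however, that the identity $\Phi(Z,Z)=\Psi(\xi,\xi)$ holds precisely when $\sum_i w_i\tensor f_i$ coincides with the minimizer $-\fI_{22}^{-1}\fI_{21}\xi$, so the substitution $w_k=-\fI_{22}^{-1}\fI_{21}\tau_k$ quietly assumes the same factor-preservation property you flagged. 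In other words, your instinct about where the crux lies is correct, and it is dispatched neither by restricting the minimization nor by the bare substitution; a complete argument must explain why a rank-$\le m$ completion of $\xi$ \emph{achieves}, rather than merely dominates, the Schur complement value.
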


\begin{proof}
Choose $f_i, f_j \in F$, and define $f_j^{\dagger} \in F^*$ by $f_j ^\dagger f = \phi (f,f_j)$ for all $f \in F$.  Also for $\xi \in M$ define $\xi^{\ddagger} \in M^*$ by $\xi ^\ddagger \eta = g(\eta,\xi)$ for all $\eta \in M$.  For $\xi_k= (\tau_k, w_k),\xi_{\ell} = (\tau _{\ell},w_{\ell}) \in M = M_1 \oplus M_2$ we have 
\begin{eqnarray*}
\Phi (\xi _k \tensor f_i, \xi _{\ell} \tensor f_j) &=& \tau _{\ell} ^{\ddagger} \left (f_j ^{\dagger} \fI _{11} f_i\right ) \tau _k  +  w_{\ell} ^{\ddagger} \left (f_j ^{\dagger} \fI _{12} f_i\right ) \tau _k +  \tau_{\ell} ^{\ddagger} \left (f_j ^{\dagger} \fI _{21} f_i\right ) w _k +  w_{\ell} ^{\ddagger} \left (f_j ^{\dagger} \fI _{22} f_i\right ) w_k \\
&=& \tau _{\ell} ^{\ddagger} \left (f_j ^{\dagger} \fI _{11} f_i\right ) \tau _k  +  w_{\ell} ^{\ddagger} \left (f_j ^{\dagger} \fI _{12} f_i\right ) \tau _k +  \tau_{\ell} ^{\ddagger} \left (f_j ^{\dagger} \fI _{12}^{\ddagger} f_i\right ) w _k +  w_{\ell} ^{\ddagger} \left (f_j ^{\dagger} \fI _{22} f_i\right ) w_k,
\end{eqnarray*}
where the second equality holds because $\fI(\Phi)$ is Hermitian.  We can therefore define the $(1,1)$-form (a.k.a. sesquilinear form)
\[
\Phi _{i\bar j} (\xi _k,\bar \xi _{\ell}) := \Phi (\xi _k \tensor f_i,\xi _{\ell} \tensor f_j)
\]
Now let us take $w_k := - \fI_{22} (\Phi)^{-1} \fI_{21}(\Phi) \tau _k$, $1 \le k \le m$.  Then 
\[
\Phi _{i \bar j} (\xi _k, \xi _{\ell}) = \tau _{\ell} ^{\ddagger} \tensor f_j^{\dagger}\left ( \fI _{11}(\Phi) - \fI_{12}(\Phi) \fI_{22} ^{-1}(\Phi) \fI_{21}(\Phi) \right ) f_i \tensor \tau _k.
\]
Setting $k=i$ and $\ell = j$, and then summing, we obtain 
\[
\sum _{i,j=1} ^m \Phi _{i \bar j} (\xi _i , \bar \xi _j) = \sum _{i,j=1} ^m  \tau _{j} ^{\ddagger} \tensor f_j^{\dagger}\left ( \fI _{11} (\Phi)- \fI_{12} (\Phi)\fI_{22} ^{-1}(\Phi) \fI_{21}(\Phi) \right ) f_i \tensor \tau _i,
\]
and the result follows.
\end{proof}

\subsection{Semi-local descriptions of sections and representatives}\label{local-reps-paragraph}

For later computation it is useful to describe the actions of $\nabla ^{1,0}$ and $\dbar^{\theta}$ in terms of the description of smooth sections of $\sL \to B$, locally over the base, as $E$-valued $(n,0)$-forms on the total space $X$.  We fix a local coordinate chart $U\subset B$ and write $p= (p^1,...,p^m)$ as in Remark \ref{local-in-the-base-coordinates-rmk}.  Fix a representative $E$-valued $(n,0)$-form $u$ of a smooth section $\ff$ of $\sL \to B$.  Then $\nabla ^{1,0}$ is an $E$-valued $(n+1,0)$-form, so there exist global $(n,0)$-forms $\phi _1,...,\phi _m$ in $p^{-1} (U)$ such that 
\begin{equation}\label{nabla10-of-L-section-decomposition}
\nabla ^{1,0} u = dp ^1 \wedge \phi _1 + \cdots + dp ^{\mu} \wedge \phi _{\mu}.
\end{equation}
The forms $\phi _{\mu}$ depend on the representative $u$, but their restrictions to the fibers, i.e., 
\[
\iota _{X_t} ^* \phi _{\mu}, \quad \mu =1,...,m,\  t \in U,
\]
are uniquely determined by $\ff$, i.e., they are independent of the representative $u$ of $\ff$.

For $\dbar$ the situation is just slightly more subtle.  One has 
\begin{equation}\label{dbar-of-L-section-decomposition}
\dbar u = \psi + dp ^1 \wedge \alpha _{1} + \cdots + dp ^m \wedge \alpha _{m} +  d\bar p ^1 \wedge \beta _{\bar 1} + \cdots + d\bar p ^m \wedge \beta _{\bar m}
\end{equation}
for some $E$-valued $(n-1,1)$-forms  $\alpha _{1},..., \alpha _{m}$ and $E$-valued $(n,0)$-forms $\beta _{\bar 1},...,\beta _{\bar m}$ (that depend on the choice of local coordinates in $B$ around $t$) and some $(n,1)$-form $\psi$.  The forms $\alpha _1,..., \alpha _m, \beta_{\bar 1}, ... \beta _{\bar m}$ and $\psi$ are not unique, but for each $t \in U$ the forms 
\[
\iota _{X_t} ^*\left (\alpha _{\mu} +  \xi ^{\theta} _{\tfrac{\di}{\di t^{\mu}}} \lrcorner \psi \right ) \quad \text{and} \quad \iota _{X_t} ^*\left ( \beta _{\bar \mu} + \bar \xi ^{\theta} _{\tfrac{\di}{\di t^{\mu}}} \lrcorner \psi \right ), \qquad \mu =1,..., m,
\]
are uniquely determined.

The expression for $\dbar$ gets slightly simpler if one is looking at sections of $\sH \to B$.  (The case of sections of $\sH$ was explained in Remark \ref{bo-dbar-defn-rmk}, but we collect it here again for convenience.)
\begin{prop}\label{rep-of-holo}
Let $E \to X \stackrel{p}{\to} B$ be a holomorphic family of holomorphic vector bundles and let $\ff \in \Gamma (B, \sL)$.  Let $u \in H^0(X, \sC^{\infty}(\Lambda ^{n,0}_X \tensor E))$ be a representative of $\ff$.  Fix a coordinate neighborhood $U \subset B$ and write $p = (p^1,..., p^m)$ in these coordinates.
\begin{enumerate}
\item[{\rm a.}]  For some $t \in U$ one has $\ff (t) \in \sH _t$ if and only if $\iota _{X_t} ^* \psi = 0$ in \eqref{dbar-of-L-section-decomposition}.
\item[{\rm b.}]  The section $\ff$ of $\sL$ restricts to a section of $\sH$ over $U$ if and only if for any (and hence every) representative $u$ one has \eqref{dbar-of-L-section-decomposition} with $\psi \equiv 0$.
\item[{\rm c.}] The section $\ff$ of $\sL$ is a holomorphic section of $\sH|_U$ if and only if for any (and hence every) representative $u$ one has \eqref{dbar-of-L-section-decomposition} with $\psi \equiv 0$, $\beta _{\bar \nu} \equiv 0$, $1 \le \nu \le m$.
\end{enumerate}
\end{prop}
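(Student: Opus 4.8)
The plan is to reduce all three parts to a single \emph{pointwise} fact about $E$-valued forms on $p^{-1}(U)$ that restrict to zero on every fiber of $p$: if $q$ is smooth of bidegree $(n,1)$ (resp.\ $(n,0)$) and $\iota_{X_t}^* q=0$ for all $t\in U$, then over $p^{-1}(U)$ one can write $q=\sum_\mu dp^\mu\wedge\alpha_\mu+\sum_\mu d\bar p^\mu\wedge\beta_{\bar\mu}$ with the $\alpha_\mu$ of bidegree $(n-1,1)$ and the $\beta_{\bar\mu}$ of bidegree $(n,0)$ (resp.\ $q=\sum_\mu dp^\mu\wedge v_\mu$ with the $v_\mu$ of bidegree $(n-1,0)$). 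The $(n,0)$ case is precisely the local description of $Z(p)^{n,0}\tensor E$ recalled in Paragraph \ref{n0-forms-description} and Remark \ref{local-in-the-base-coordinates-rmk}. For the $(n,1)$ case I would argue the same way: since $p$ is a submersion, each point of $p^{-1}(U)$ lies in a chart with holomorphic coordinates $(z^1,\dots,z^n,p^1,\dots,p^m)$ in which $\iota_{X_t}^*$ annihilates every wedge monomial containing some $dp^\mu$ or $d\bar p^\mu$ and is injective on the remaining ones; so $\iota_{X_t}^* q=0$ for all $t$ forces each monomial of $q$ to contain a $dp^\mu$ or a $d\bar p^\mu$, and extracting one such factor (which drops the holomorphic degree by one if it is a $dp^\mu$, and the antiholomorphic degree by one if it is a $d\bar p^\mu$) produces the asserted shape locally; a partition of unity on $p^{-1}(U)$ then globalizes it over $U$.

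For part (a), I would fix $t$ and apply $\iota_{X_t}^*$ to \eqref{dbar-of-L-section-decomposition}. Since $\iota_{X_t}^* dp^\mu=\iota_{X_t}^* d\bar p^\mu=0$ this gives $\iota_{X_t}^*\dbar u=\iota_{X_t}^*\psi$; in particular $\iota_{X_t}^*\psi$ is independent of the non-unique choice of decomposition, so the statement is well posed. Because $\iota_{X_t}$ is a holomorphic embedding one has $\iota_{X_t}^*\dbar u=\dbar_{X_t}(\iota_{X_t}^* u)$, and under the identification of Proposition \ref{relative-restriction-prop} the $(n,0)$-form $\iota_{X_t}^* u$ is exactly $\ff(t)\in\sL_t$. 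Hence $\ff(t)\in\sH_t$ iff $\iota_{X_t}^* u$ is holomorphic on $X_t$ iff $\dbar_{X_t}(\iota_{X_t}^* u)=0$ iff $\iota_{X_t}^*\psi=0$.

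For part (b), $\ff$ restricts to a section of $\sH$ over $U$ iff $\ff(t)\in\sH_t$ for all $t\in U$, which by part (a) holds iff $\iota_{X_t}^*\dbar u=0$ for all $t\in U$. This condition depends only on $\ff$, not on the representative $u$, since two representatives of $\fa(\ff)$ have equal pullback to every fiber and $\dbar$ commutes with $\iota_{X_t}^*$; this is what makes the ``any (and hence every)'' clause correct. If the condition holds, the $(n,1)$ case of the pointwise fact applied to $\dbar u$ produces a decomposition \eqref{dbar-of-L-section-decomposition} with $\psi\equiv 0$; conversely a decomposition with $\psi\equiv 0$ forces $\iota_{X_t}^*\dbar u=\iota_{X_t}^*\psi=0$ for all $t$.

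For part (c), I would start from a section $\ff$ of $\sH|_U$, so by part (b) a decomposition with $\psi\equiv 0$ is available, $\dbar u=\sum_\mu dp^\mu\wedge\alpha_\mu+\sum_\mu d\bar p^\mu\wedge\beta_{\bar\mu}$. By Remark \ref{bo-dbar-defn-rmk} and Definition \ref{holo-dbar-defn}, $\dbar\ff=0$ precisely when each $\beta_{\bar\mu}$ represents the zero relative canonical section, i.e.\ $\iota_{X_t}^*\beta_{\bar\mu}=0$ for all $t\in U$ and all $\mu$. In that case the $(n,0)$ case of the pointwise fact lets me write $\beta_{\bar\mu}=\sum_\nu dp^\nu\wedge\gamma_{\mu\nu}$ with $\gamma_{\mu\nu}$ of bidegree $(n-1,0)$, whence $d\bar p^\mu\wedge\beta_{\bar\mu}=-\sum_\nu dp^\nu\wedge(d\bar p^\mu\wedge\gamma_{\mu\nu})$; absorbing these $(n-1,1)$-forms into the $dp^\nu\wedge(\cdot)$ terms yields $\dbar u=\sum_\nu dp^\nu\wedge\tilde\alpha_\nu$, i.e.\ a decomposition \eqref{dbar-of-L-section-decomposition} with $\psi\equiv 0$ and every $\beta_{\bar\nu}\equiv 0$. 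The converse is immediate from parts (a), (b) and the criterion for $\dbar\ff=0$ just recalled. The only step that takes genuine work is the pointwise fact, and within it the verification that the $dp^\mu$ and $d\bar p^\mu$ factors can be extracted while the complementary forms remain in the prescribed bidegrees; everything else is unwinding the definitions of $\sH$, of its $\dbar$-operator, and of the decomposition \eqref{dbar-of-L-section-decomposition}.
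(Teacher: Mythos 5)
Your proposal is correct and follows essentially the same route as the paper: part (a) by pulling back the decomposition to the fiber, part (b) by the local extraction of $dp^\mu$, $d\bar p^\mu$ factors from a form vanishing on all fibers (globalized by a partition of unity), and part (c) by reducing $\dbar\ff=0$ to $\iota_{X_t}^*\beta_{\bar\mu}=0$ and then absorbing $d\bar p^\mu\wedge\beta_{\bar\mu}$ into the $dp^\nu\wedge(\cdot)$ terms. Your explicit remarks that $\iota_{X_t}^*\psi$ is independent of the (non-unique) decomposition and that the absorbed forms $d\bar p^\mu\wedge\gamma_{\mu\nu}$ have the required bidegree $(n-1,1)$ are welcome clarifications of steps the paper leaves implicit.
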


\begin{proof} Since $\iota _{X_t} ^* \psi = \iota _{X_t} ^* \dbar u =\dbar \iota _{X_t} ^*u$, (a) follows.  From (a) we see that $\ff$ restricts to a section of $\sH$ over $U$ if and only if $\iota _{X_t} ^* \psi = 0$ for every $t \in U$.  We claim that the latter holds if and only if $\psi = dp ^{\mu} \wedge \alpha' _{\mu}+d\bar p ^{\nu} \wedge \beta'_{\bar \nu}$ for some $E$-valued $(n-1,1)$-forms  $\alpha' _{\mu}$ and $E$-valued $(n,0)$-forms $\beta' _{\bar \nu}$, $1 \le \mu,\nu \le m$.  Indeed, this is clearly true locally (but with the global forms $dp^{\mu}$ and $d\bar p ^{\nu}$, $1 \le \mu,\nu \le m$), and the local coefficient forms may be patched together using a partition of unity.  This proves (b).  Finally, suppose $\ff$ is a section of $\sH$ over $U$. By (b) we can write $\dbar u = dp ^{\mu} \wedge \alpha _{\mu}  +  d\bar p ^{\nu} \wedge \beta _{\bar \nu}$.  Then $\ff$ is a holomorphic section if and only if for each $1 \le \mu \le m$ and every lift $\xi _{\mu}$ of the $(1,0)$-vector field $\tfrac{\di}{\di t^{\mu}}$ on $U$ one has 
\[
0 = \bar \xi _{\mu}\lrcorner \dbar u = \beta _{\bar \mu} - d\bar p ^{\nu}  \wedge (\bar \xi _{\mu} \lrcorner \beta _{\bar \nu}).
\]
Thus $\iota _{X_t} ^* \beta _{\bar \mu} \equiv 0$ for all $t \in U$.  But for an $(n,0)$-form we already know that this condition is equivalent to the existence of a decomposition $\beta _{\bar \mu} = dt ^{\nu} \wedge \gamma _{\nu \bar \mu }$, which proves (c).
\end{proof}

\noi We emphasize that the forms $\phi _{\mu}$ associated to $\nabla ^{1,0} u$ and $\alpha _{\mu}$,$\beta_{\bar \mu}$, and $\psi$ associated to $\dbar u$ are \emph{global} on the preimage $p^{-1} (U)$ of the coordinate chart $U$.

\section{Local Triviality}\label{loc-triv-section}

\subsection{$L^2$ Extension}\label{OT-par}

The question of local triviality of vector bundles like $\sH \to B$ has been of interest since the beginning of the theory of deformations of complex manifolds introduced by Kodaira and Spencer.  The traditional approach is based on the observation that if the dimensions of the fibers are constant then the fibration is locally trivial.  

While this perspective is broad in its scope, its application can be very difficult in most cases.  But a new approach has become very popular in the past two decades or so.  It is well-known that the dimensions of the fibers of $\sH \to B$ vary upper semi-continuously with the point of the base $B$.  Thus to prove local triviality, one need only prove lower semi-continuity of these dimensions.  The latter is a consequence of the existence of linear extension operators.  A powerful tool that has many other applications is the Ohsawa-Takegoshi Theory of $L^2$-Extension.  This theory consists of a huge and growing collection of extension theorems obtained by $L^2$ methods of one kind or another.  For applications in the present article the following result can be used.

\begin{thm}\label{ot-tak}
Let $Y$ be a weakly pseudoconvex K\"ahler manifold of complex dimension $N$, let $Z \subset Y$ be a closed complex submanifold of codimension $1$, and let $E \to Y$ be a holomorphic vector bundle.  Assume there exist smooth Hermitian metrics $h$ for $E$ and $e^{-\lambda}$ for the line bundle $L_Z \to Y$ associated to $Z$, a holomorphic section $T \in H^0 (Y, \cO (L_Z))$ whose zero divisor is $Z$ counting multiplicity, and a number $\delta > 0$ such that 
\[
\sup _Y |T|^2e^{\lambda} \le 1 \quad \text{and} \quad \ii \Theta (h) \ge \delta  \ii \di \dbar \lambda \tensor {\rm Id}_E \ge 0 \text{ in the sense of Nakano}.
\]
Then for every holomorphic section $f \in H^0(Z, \cO (K_Z \tensor E|_Z ))$ such that 
\[
\int _Z \ii ^{(N-1)^2}\left < f \wedge \bar f, h \right > < +\infty
\]
there exists $F \in H^0(Y, \cO (K_Y \tensor E \tensor L_Z))$ such that 
\[
F|_Z = f \wedge dT \quad \text{and} \quad \int _Y \ii ^{N^2} \left < F \wedge \bar F, h \tensor e^{-\lambda} \right > \le C_{\delta}  \int _Z \ii ^{(N-1)^2}\left < f \wedge \bar f, h \right > .
\]
\end{thm}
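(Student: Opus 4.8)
The plan is to establish Theorem~\ref{ot-tak} by the standard $L^2$ method for the $\dbar$-equation, in the twisted Bochner--Kodaira--Nakano form introduced by Ohsawa--Takegoshi and refined by Manivel, Demailly, Siu, Berndtsson, McNeal--Varolin, B\l ocki and Guan--Zhou. First I would reduce to a relatively compact situation: fix a smooth plurisubharmonic exhaustion $\rho$ of the weakly pseudoconvex manifold $Y$ and work on the sublevel sets $Y_c := \{\rho < c\}$, proving the extension estimate on each $Y_c$ with a constant $C_{\delta}$ depending only on $\delta$. Since a family of holomorphic extensions with uniformly bounded $L^2$-norms is locally uniformly bounded, Montel's theorem and a diagonal argument over $c \to \infty$ then yield the required global $F$. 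On a fixed $Y_c$ one may also smooth the metrics and assume they are bounded. Next I would dispose of the ``smooth part'' of the extension: choosing a tubular neighbourhood of $Z$, a smooth extension of $f$, wedging with $dT$, and multiplying by a cut-off $\chi$ supported near $Z$ produces a global smooth $E \tensor L_Z$-valued $(N,0)$-form $\tilde F$ with $\tilde F|_Z = f \wedge dT$ and with $\dbar \tilde F$ supported in a fixed neighbourhood of $Z$ and vanishing on $Z$. It then suffices to solve $\dbar u = \dbar \tilde F$ with $u$ vanishing on $Z$ and with the right $L^2$ bound, since $F := \tilde F - u$ is then the desired holomorphic section.

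The heart of the argument is the solution of this $\dbar$-equation against a weight that is singular along $Z$. Writing $\sigma := |T|^2 e^{\lambda}$ (so $0 \le \sigma \le 1$ by hypothesis) and fixing $\ve > 0$, I would use on $E \tensor L_Z$ the metric $h\, e^{-\lambda}\, e^{-\psi_{\ve}}$ with $\psi_{\ve} := -\log(\sigma + \ve^2)$, together with a pair of auxiliary functions $\eta, g > 0$ chosen as explicit functions of $-\log(\sigma + \ve^2)$ (the usual normalisation from the Ohsawa--Takegoshi--Manivel / B\l ocki--Guan--Zhou scheme). The twisted Bochner--Kodaira--Nakano inequality then gives, for every smooth compactly supported $E \tensor L_Z$-valued $(N,1)$-form $v$,
\[
\left\| \sqrt{\eta + g}\, \dbar^* v \right\|^2 + \left\| \sqrt{\eta}\, \dbar v \right\|^2 \ \ge\ \left\langle \left[ \eta\, \ii\Theta - \ii\di\dbar\eta - \ii g^{-1}\, \di\eta \wedge \dbar\eta,\ \Lambda \right] v,\ v \right\rangle,
\]
where $\Theta = \Theta(h) + \ii\di\dbar(\lambda + \psi_{\ve}) \tensor {\rm Id}$ is the curvature of the weighted bundle. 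The role of the hypothesis $\ii\Theta(h) \ge \delta\, \ii\di\dbar\lambda \ge 0$ in the sense of Nakano is precisely that, on expanding $\ii\di\dbar(\lambda + \psi_{\ve})$, the term $\ii\di\dbar\lambda$ is absorbed into the curvature of $E$, while the remaining $-\log(\sigma + \ve^2)$-contributions are matched term by term against $-\ii\di\dbar\eta - \ii g^{-1}\di\eta \wedge \dbar\eta$ via the one-variable identities satisfied by $\eta$ and $g$; this is what makes the bracket on the right nonnegative and produces the eventual constant $C_{\delta}$. Hörmander's solvability then yields $u_{\ve}$ with
\[
\int_{Y_c} |u_{\ve}|^2_{h}\, e^{-\lambda - \psi_{\ve}} \ \le\ \int_{Y_c} \frac{|\dbar \tilde F|^2_{h}\, e^{-\lambda - \psi_{\ve}}}{\text{(density from the bracket)}}.
\]

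The remaining steps are the residue computation and the passage to the limit. Since $\dbar \tilde F$ is concentrated in a tube of shrinking radius around $Z$ and $\tilde F|_Z = f \wedge dT$ is holomorphic, the right-hand side above converges as $\ve \to 0$ to a universal constant times $\int_Z \ii^{(N-1)^2} \left< f \wedge \bar f, h \right>$; this is a coarea/Lelong-type mean-value computation over small tubes around $Z$, and it is here that the precise constant $C_{\delta}$ is pinned down. Extracting a weak $L^2$ limit $u$ of the $u_{\ve}$ on $Y_c$, one gets $\dbar u = \dbar \tilde F$ together with a bound on $\int_{Y_c} |u|^2_h\, e^{-\lambda}/\sigma$; the blow-up of the weight forces $u$ to be divisible by $T$, hence $u|_Z = 0$. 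Thus $F_c := \tilde F - u$ is a holomorphic section of $K_Y \tensor E \tensor L_Z$ over $Y_c$ with $F_c|_Z = f \wedge dT$ and $\int_{Y_c} \ii^{N^2} \left< F_c \wedge \bar F_c, h \tensor e^{-\lambda} \right> \le C_{\delta} \int_Z \ii^{(N-1)^2} \left< f \wedge \bar f, h \right>$. Letting $c \to \infty$ and passing to a locally uniform limit completes the proof. I expect the main obstacles to be, first, the correct choice of the auxiliary functions $\eta$ and $g$ and the bookkeeping in the twisted identity so that the hypothesis $\ii\Theta(h) \ge \delta\, \ii\di\dbar\lambda$ is exactly what is consumed, and, second, the residue computation producing the clean constant $C_{\delta}$ uniformly in the exhaustion. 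The K\"ahler hypothesis on $Y$ is what licenses the Bochner--Kodaira commutation formula in the form used above, and weak pseudoconvexity (via $\rho$) is what makes Hörmander's $\dbar$-solvability with the singular complete-metric weights available on each $Y_c$.
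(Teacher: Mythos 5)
Your outline is a correct sketch of the classical Ohsawa--Takegoshi--Manivel argument, but you should be aware that the paper does not prove Theorem \ref{ot-tak} at all: it quotes the result as part of the ``$L^2$ extension package'' and refers the reader to the appendix of \cite{bo-annals} (Berndtsson's method of positive direct-image currents, rank one), to \cite{raufi-OT} for the higher-rank case, and to \cite{v-bo-1-vb} for a proof via the Berndtsson--Lempert degeneration technique. So your route --- exhaustion by sublevel sets $Y_c$, a cut-off extension $\tilde F$ with $\dbar\tilde F$ concentrated near $Z$, the twisted Bochner--Kodaira inequality with auxiliary functions $\eta, g$ built from $-\log(\sigma+\ve^2)$, the residue limit as $\ve\to 0$, and divisibility by $T$ forcing $u|_Z=0$ --- is genuinely different from anything the paper invokes. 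What each buys: your twisted-estimate proof is self-contained and works directly on weakly pseudoconvex (not necessarily compact) $Y$, which is the generality actually stated; the method of currents is shorter but is flagged in the paper as applying when $Y$ is compact K\"ahler; the Berndtsson--Lempert route yields sharp constants but leans on the very positivity-of-direct-images machinery this paper is in the business of re-deriving, so using it here would be somewhat circular in spirit. Two points in your sketch deserve care if you were to write it out: the precise matching of $\ii\Theta(h)\ge\delta\,\ii\di\dbar\lambda\tensor{\rm Id}_E$ against the terms $-\ii\di\dbar\eta - \ii g^{-1}\di\eta\wedge\dbar\eta$ is exactly where the Nakano (rather than Griffiths) hypothesis is needed, since the commutator $[\,\cdot\,,\Lambda]$ acting on $(N,1)$-forms sees the full Nakano form of the curvature; and the assertion that the limit $u$ vanishes on $Z$ requires the mean-value/Fubini argument applied to the holomorphic difference $F-\tilde F$ near $Z$, not to $u$ itself, since $u$ is a priori only in $L^2$ against the singular weight.
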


If the manifold $Y$ in Theorem \ref{ot-tak} is compact K\"ahler then perhaps the shortest proof of Theorem \ref{ot-tak} is via Berndtsson's method of currents; see the appendix of \cite{bo-annals} in the case in which $E$ has rank $1$.  A proof in the higher rank case can be found in \cite{raufi-OT}.  

\subsection{Sufficient conditions for local triviality}

We start with Berndtsson's local triviality result mentioned in the introduction.

\begin{prop}\label{ot-triv}
Let $(E,h) \to X \stackrel{p}{\to} B$ be a proper K\"ahler family of holomorphic Hermitian vector bundles.  If the metric $h$ has non-negative curvature in the sense of Nakano then $\sH$ is a holomorphic vector bundle.
\end{prop}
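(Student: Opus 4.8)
The plan is to reduce local triviality of $\sH \to B$ to the upper-semicontinuity / lower-semicontinuity dichotomy for the fiber dimensions $t \mapsto \dim \sH_t$, and to supply the lower-semicontinuity half using the $L^2$-extension theorem (Theorem \ref{ot-tak}) applied on the total space $X$, after a local twist of the metric $h$ that turns the Nakano-nonnegativity into the strict Nakano-positivity hypothesis required by the Ohsawa-Takegoshi theorem. Upper semicontinuity of $t \mapsto \dim \sH_t$ is classical (Montel's theorem, already invoked in the proof of Proposition \ref{H-is-BLSS-iff-VB}), so the content is entirely in the lower bound, and once both semicontinuities hold the dimension is locally constant and Proposition \ref{H-is-BLSS-iff-VB} (or the Kodaira-Spencer fact cited in the remark following Definition \ref{smooth-subfield-defn}) finishes the argument.

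First I would localize: fix $t_0 \in B$ and a coordinate polydisc $U \ni t_0$ over which $X_U := p^{-1}(U)$ is K\"ahler (this uses the standing K\"ahler hypothesis) and, shrinking $U$, weakly pseudoconvex; take $U' \relcomp U$ a smaller polydisc containing $t_0$. For each $t \in U'$ the fiber $X_t = p^{-1}(t)$ is a smooth hypersurface of $X_U$ cut out by the holomorphic functions $p^i - t^i$; the associated line bundle $L_{X_t} \to X_U$ is trivial on $X_U$ (its class is $p^*\mathcal{O}_U(\{t\})$, trivial on the polydisc), with a defining holomorphic section $T_t := $ the Jacobian-type combination whose divisor is $X_t$, concretely built from $p^1 - t^1, \dots, p^m - t^m$, so that on a fiber $f \wedge dT_t$ recovers $f \wedge dp^1 \wedge \cdots \wedge dp^m$, i.e. the relative-canonical representative of $f$. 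Choose a smooth weight $e^{-\lambda_t}$ for $L_{X_t}$ with $\sup_{X_U} |T_t|^2 e^{\lambda_t} \le 1$; after further shrinking $U$ one may take the family $\lambda_t$ to have $\ii\di\dbar\lambda_t$ bounded uniformly in $t \in U'$.

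Next comes the one genuine input: arranging the curvature hypothesis of Theorem \ref{ot-tak}. Since $h$ is Nakano-\emph{nonnegative} but perhaps not strictly positive, I would replace $h$ by $h_\psi := e^{-p^*\psi} h$ for a strictly plurisubharmonic $\psi$ on $U$ chosen so that $p^*(\ii\di\dbar\psi) \ge \delta \, \ii\di\dbar\lambda_t \tensor \operatorname{Id}_E$ on $X_{U'}$ for all $t \in U'$ with a single $\delta > 0$ (possible because $\ii\di\dbar\lambda_t$ is uniformly bounded and $p^*\ii\di\dbar\psi$ is pulled back from the base); then $\ii\Theta(h_\psi) = \ii\Theta(h) + p^*(\ii\di\dbar\psi)\tensor\operatorname{Id}_E \ge \delta\,\ii\di\dbar\lambda_t \tensor \operatorname{Id}_E \ge 0$ in the sense of Nakano, exactly as required. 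Now apply Theorem \ref{ot-tak} with $Y = X_{U}$ (weakly pseudoconvex K\"ahler), $Z = X_t$, bundle $E$, metrics $h_\psi$ and $e^{-\lambda_t}$, section $T_t$: every $f \in \sH_t = H^0(X_t, \cO(K_{X_t}\tensor E|_{X_t}))$ (which has finite $L^2$ norm since $X_t$ is compact) extends to $F \in H^0(X_U, \cO(K_{X_U} \tensor E \tensor L_{X_t}))$ with $F|_{X_t} = f \wedge dT_t$ and $L^2$ bound $C_\delta \|f\|^2_t$. Since $L_{X_t}$ is trivial and $T_t$ nonvanishing away from $X_t$, dividing by $T_t$ (equivalently, writing $F = \tilde F \wedge dT_t$-type section) and restricting to nearby fibers $X_{t'}$ produces a holomorphic section $f_{t'} \in \sH_{t'}$ depending holomorphically on $t' \in U'$ with $f_{t_0\text{-case: }}f_t = f$; concretely, a basis $f_1,\dots,f_{r}$ of $\sH_{t_0}$ extends to sections of $\sH$ over a neighborhood of $t_0$ that remain linearly independent near $t_0$ by continuity, so $\dim\sH_{t'} \ge \dim \sH_{t_0}$ for $t'$ near $t_0$. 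This is lower semicontinuity; combined with upper semicontinuity the function $t \mapsto \dim \sH_t$ is locally constant, hence $\sH \to B$ is a holomorphic vector bundle by Proposition \ref{H-is-BLSS-iff-VB}.

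The main obstacle I expect is the uniformity in $t$ of the weight $\lambda_t$ and of the estimate $\ii\Theta(h) \ge \delta\,\ii\di\dbar\lambda_t \tensor \operatorname{Id}_E$: one must choose the data $(T_t, \lambda_t)$ continuously (indeed so that the $\ii\di\dbar\lambda_t$ are uniformly bounded and $\sup_{X_U}|T_t|^2 e^{\lambda_t}\le 1$ holds for all $t$ simultaneously after fixing $U$), and only then does a single plurisubharmonic $\psi$ on the base do the job for all nearby fibers at once; this is a soft but slightly fussy normalization. An alternative that sidesteps the uniform-in-$t$ bookkeeping is to apply Theorem \ref{ot-tak} only at the central fiber $Z = X_{t_0}$, extend a basis of $\sH_{t_0}$ to holomorphic sections of $K_{X_U}\tensor E$ on $X_U$, restrict those to neighboring fibers, and use continuity of the resulting $L^2$ norms together with compactness of $X_t$ to see the restrictions stay nonzero and independent — this needs only one choice of $(T, \lambda, \psi)$ and is probably the cleanest route to write up.
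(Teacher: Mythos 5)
Your strategy is exactly the paper's: lower semicontinuity of $t \mapsto \dim \sH_t$ via Ohsawa--Takegoshi extension, upper semicontinuity via Montel, and then constancy of the fiber dimension gives local triviality. The paper's own proof is precisely these two sentences, so in that sense your proposal matches it and is correct in outline.

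One step in your fleshed-out version would fail as written when $\dim B \ge 2$: the fiber $X_t = \{p^1 = t^1, \dots, p^m = t^m\}$ has codimension $m$ in $X_U$, so it is \emph{not} the divisor of a single holomorphic section $T_t$ of a line bundle, and Theorem \ref{ot-tak} (stated only for codimension-one $Z \subset Y$) cannot be applied in one shot with ``$T_t$ built from $p^1-t^1,\dots,p^m-t^m$.'' The standard repair is either to iterate the codimension-one theorem along the flag $X_t \subset \{p^1=t^1,\dots,p^{m-1}=t^{m-1}\} \subset \cdots \subset X_U$ (each inclusion is a trivial divisor cut out by one of the $p^j - t^j$, with $\lambda$ pluriharmonic, so Nakano nonnegativity of $h$ suffices at every stage and no twist by $e^{-p^*\psi}$ is even needed), or to invoke a higher-codimension $L^2$ extension theorem. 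With that adjustment your argument goes through; your final remark about extending a basis of $\sH_{t_0}$ once and restricting to nearby fibers is indeed the cleanest way to conclude lower semicontinuity.
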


\begin{proof}
Any of the $L^2$ extension theorems mentioned in the previous paragraph imply that the dimensions of the fibers $\sH _t$ vary lower semicontinuously with $t$.  Upper semicontinuity, which holds for all smooth metrics $h$ regardless of their curvature, is a consequence of Montel's Theorem.
\end{proof}

\begin{cor}\label{nak-pos-fam}
Let $(E,h) \to X \stackrel{p}{\to} B$ be a proper K\"ahler family of holomorphic Hermitian vector bundles.  Assume that $\Theta (\fh) \ge - C p^*\omega \tensor {\rm Id}_{E}$ for some positive Hermitian form $\omega$ on $B$ and some positive continuous function $C$ on $X$.  Then $\sH$ is a holomorphic vector bundle.
\end{cor}

\begin{proof}
We may assume that the base $B$ is the unit ball and that the metric $h$ is defined up to the boundary $p^{-1} (\di B)$ of $X$.  Then for $M >> 0$ the metric $h_M := e^{-M|t|^2} h$ for $E \to X$ has Nakano-nonnegative curvature, and hence by Proposition \ref{ot-triv} the associated Hilbert field $\sH _M \to B$ is locally trivial.  But the underlying vector bundles for $\sH_M$ and $\sH$ are the same.
\end{proof}

\subsection{Examples of Non-locally trivial $\sH$}\label{non-loc-triv-examples}

There are many examples of smooth proper holomorphic families $E \to X \stackrel{p}{\to} B$ for which $\sH \to B$ is not locally trivial.  

\subsubsection*{\sf Trivial family of manifolds} The simplest examples come from trivial holomorphic families $p: X_o \times \bbD \to \bbD$ with $X_o$ a compact  Riemann surface of genus $g \ge 1$ together with a non-trivial family of holomorphic line bundles $E \to X_o \times \bbD$, where $\bbD$ denotes the unit disk.  The examples are based on the following simple

\medskip 

 \noi {\sf Fact:}  \emph{There is no meromorphic function on $X_o$ with exactly one pole, whose order is $1$.}  
 
 \medskip
 
 \noi Indeed, since the number of preimages of any point under a nonconstant holomorphic map of compact Riemann surfaces is independent of the point, the existence of such a meromorphic function would yield an isomorphism $X_o \cong \bbP_1$, which is impossible.

It follows that the line bundle $L_{D}$ associated to the divisor $D=p-q$ is either trivial, i.e., $p=q$, in which case $H^0 (X_o, \cO (L_D))= \bbC$, or else  $p \neq q$, in which case $H^0 (X_o, \cO_{X_o}(L_D))= \{0\}$.  Indeed, suppose $p \neq q$ and $s \in H^0(X_o, \cO (L_D))$.  Let $s_D$ be a meromorphic section with divisor $D$.  If $s$ does not vanish at $p$ then the function $s/s_D$ is meromorphic with exactly one pole, which is impossible.  Therefore $s$ vanishes at $p$, in which case $s/s_D$ is holomorphic, hence constant.  But $s/s_D$ vanishes at $q$, so $s/s_D \equiv 0$.  Hence $s=0$ as claimed.

Now let $f : \bbD \to X_o$ be an injective holomorphic map with $f(0) = p$ and consider the holomorphic family of line bundles $E_f \to X_o \times \bbD$ defined by 
\[
E _f :=  K^*_{X_o \times \bbD} \tensor L_{D_f}, 
\]
where $D_f$ is the divisor on $X_o \times \bbD$ defined by 
\[
D_f := \{p\} \times \bbD - \{ (f(t),t)\ ;\ t \in \bbD\}
\]
and $p_1 : X_o \times \bbD \to X_o$ is the Cartesian projection.  Then 
\[
\sH _t = H^0(X_o, \cO (L_{p - f(t)})) = \left \{ 
\begin{array}{l@{,\ }l}
\bbC & t=0\\
\{0\} & t \in \bbD - \{0\}
\end{array}
\right . .
\]
Thus $\sH \to \bbD$ is not locally trivial.

\subsubsection*{\sf Nontrivial families of manifolds}  Another very simple example is constructed as follows.  Let $\mathbf{H} \subset \bbC$ denote the (open) upper half plane and consider the family of tori 
\[
X := (\bbC \times \mathbf{H})/ \sim, 
\]
where $(z,t) \sim (\zeta , \tau) \iff t = \tau$ and $\zeta - z= m + n t$ for some $m,n  \in \bbZ$.  Of course $p :X \to \mathbf{H}$ is the map induced by the projection $\bbC \times \mathbf{H} \to \mathbf{H}$.  The fibers $X_t$ are tori, which we identify with the quotient $\bbC/(\bbZ + t \bbZ)$.  

Let $D_a$ denote the smooth hypersurface on $X$ obtained from $\{ a\} \times \mathbf{H} \subset \bbC \times \mathbf{H}$ after passing to the quotient.  We take $E$ to be the line bundle associated to the divisor $D := D_0 - D_{\ii}$.  The two smooth hypersurfaces $D_0$ and $D_{\ii}$ intersect at the points $[(0,t)] \in X$ whenever $m +nt  = \ii$ for some $m,n \in \bbZ$ (such $t$ form a closed discrete subset of $\mathbf{H}$), but are otherwise are disjoint.  Denote by $B \subset \mathbf{H}$ the set of such $t$.  Then the divisors $\Delta _t := D \cap X_t$ are trivial if $t \in B$, and otherwise are $[0]- [t]$.  

Since the canonical bundle of any torus is trivial, we see that 
\[
\sH _t = H^0(X_t, \cO (E_t)) = \left \{ 
\begin{array}{l@{,\ }l}
\bbC & t\in B\\
\{0\} & t \in \mathbf{H} \setminus B
\end{array}
\right . .
\]

\section{Elements of Hodge Theory}\label{Hodge-thy-par}

The twisted Laplace Beltrami operator for a Hermitian holomorphic vector bundle $(E,h)$ on a Hermitian manifold $(X,g)$ is 
\[
\Box := \dbar \dbar ^* + \dbar ^* \dbar;
\]
while the $\dbar$-operator depends only on $X$ and $E$, and not on the metrics, the formal adjoint $\dbar ^*$ does carry information about the metrics $g$ and $h$.  In particular, the Kernel 
\[
{\rm Harm} (g,h) = \bigoplus _{0 \le p,q\le n}{\rm Harm}^{p,q} (g,h)
\]
depends on these metrics.  

\subsection{The Hodge Theorem}
The Hodge Theorem can be stated as follows. 

\begin{thm}[Hodge Theorem for Vector Bundle-valued Forms]\label{hodge-vb}
Let $(X,g)$ be a compact complex Hermitian manifold and let $(E, h) \to X$ be a holomorphic Hermitian vector bundle.  Then the vector spaces ${\rm Harm}^{p,q} (g,h)$ are finite dimensional and consist of smooth $E$-valued $(p,q)$-forms.  Moreover, with
\[
\wp_{p,q} :L^2 _{p,q}(g,h) \to {\rm Harm}^{p,q}(g,h)
\]
denoting the orthogonal projection, there exists a compact self-adjoint collection of operators 
\[
G_{p,q} : L^2_{p,q}(g,h) \to L^2_{p,q}(g,h), \quad 0 \le p,q \le n,
\]
called the Green operator, such that 
\begin{enumerate}
\item[{\rm (a)}] $G _{p,q} W^{s,2}_{p,q}(X) \subset W^{s+2, 2}_{p,q}(X)$ for all $s \ge 0$, 
\item[{\rm (b)}] $G_{p,q} {\rm Harm} ^{p,q} (g,h) = 0$, $[G, \dbar ]= [G,\dbar ^*]= 0$ and $||G_{p,q} || \le \frac{1}{\lambda ^1_{p,q}}$, where $\lambda ^1_{p,q}$ is the smallest positive eigenvalue of $\Box$ in $L^2 _{p,q}(g,h)$, and 
\item[{\rm (c)}] one has the orthogonal decomposition ${\rm Id} = \wp_{p,q}+\Box  G_{p,q}$ on $L^2_{p,q}(g,h)$.
\end{enumerate}
\end{thm}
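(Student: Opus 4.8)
The plan is to derive all the assertions from the general spectral theory of a second-order elliptic self-adjoint operator on a compact manifold. First I would verify that $\Box = \dbar \dbar ^* + \dbar ^* \dbar$, acting on smooth sections of the Hermitian vector bundle $\Lambda ^{p,q}_X \tensor E \to X$, is elliptic: its principal symbol at a covector $\eta \in T^*_x X$ is a positive constant times $|\eta|_g^2 \cdot {\rm Id}$, exactly as in the scalar Dolbeault case, because the zeroth-order contributions of the curvature of $(E,h)$ and of the torsion of $g$ do not enter the symbol. Since $X$ is compact, the standard elliptic package then applies: one has the G{\aa}rding inequality $\| u \|^2_{W^{1,2}} \lesssim (\Box u, u) + \|u\|^2_{L^2}$ and the elliptic \emph{a priori} estimate $\|u\|_{W^{s+2,2}} \lesssim \|\Box u\|_{W^{s,2}} + \|u\|_{L^2}$ for every integer $s \ge 0$. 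Note also that $\Box \ge 0$, since $(\Box u, u) = \|\dbar u\|^2 + \|\dbar ^* u\|^2$.

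From these estimates, smoothness and finite-dimensionality of the harmonic spaces follow quickly. Applying the regularity estimate to $u$ with $\Box u = 0$ and bootstrapping on $s$ shows that every element of ${\rm Harm}^{p,q}(g,h) = \ker \Box$ lies in $\bigcap_s W^{s,2}$, hence is smooth. Moreover the $L^2$-unit ball of $\ker \Box$ is bounded in $W^{2,2}$, hence precompact in $L^2$ by the Rellich lemma, so $\dim {\rm Harm}^{p,q}(g,h) < \infty$. Write $\wp_{p,q}$ for the $L^2$-orthogonal projection onto this finite-dimensional space of smooth forms.

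The crux is the following Poincar\'e-type estimate: there is a constant $C > 0$ such that $\|u\|_{L^2} \le C \|\Box u\|_{L^2}$ for every smooth $E$-valued $(p,q)$-form $u$ orthogonal to ${\rm Harm}^{p,q}(g,h)$. I would prove this by contradiction: given a sequence $u_j$ with $\|u_j\|_{L^2} = 1$, $u_j \perp {\rm Harm}^{p,q}(g,h)$ and $\|\Box u_j\|_{L^2} \to 0$, the G{\aa}rding inequality bounds $(u_j)$ in $W^{1,2}$, so by Rellich a subsequence converges in $L^2$ to a limit $u$ with $\|u\|_{L^2} = 1$, $u \perp {\rm Harm}^{p,q}(g,h)$, and $\Box u = 0$ in the distributional sense; elliptic regularity then upgrades $u$ to a smooth harmonic form, contradicting $u \perp {\rm Harm}^{p,q}(g,h)$. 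This estimate, together with self-adjointness of $\Box$ (which gives that the closure of the image of $\Box$ is $(\ker \Box)^\perp = ({\rm Harm}^{p,q}(g,h))^\perp$), shows that $\Box$ restricts to a bijection from $({\rm Harm}^{p,q}(g,h))^\perp \cap W^{2,2}$ onto $({\rm Harm}^{p,q}(g,h))^\perp$ with bounded inverse. Define $G_{p,q}$ to be $0$ on ${\rm Harm}^{p,q}(g,h)$ and this bounded inverse on $({\rm Harm}^{p,q}(g,h))^\perp$.

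With $G_{p,q}$ so defined, statement (c) ${\rm Id} = \wp_{p,q} + \Box G_{p,q}$ is immediate, and (a) follows by iterating the elliptic regularity estimate on $G_{p,q} v$, since $\Box G_{p,q} v = v - \wp_{p,q} v$ is as regular as $v$ (the harmonic part being smooth) while $G_{p,q}v \in W^{2,2}$ to begin with. For (b): $G_{p,q}$ annihilates ${\rm Harm}^{p,q}(g,h)$ by construction; from $\dbar ^2 = 0$ one gets $\dbar \Box = \Box \dbar$ and $\dbar ^* \Box = \Box \dbar ^*$, so $\dbar$ and $\dbar ^*$ preserve both ${\rm Harm}^{p,q}(g,h)$ and its orthogonal complement and commute with $G_{p,q}$ on each, whence $[G, \dbar] = [G, \dbar ^*] = 0$. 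Finally, the elliptic estimate and Rellich make $(\Box + 1)^{-1}$ compact, so by the spectral theorem $\Box$ has an $L^2$-orthonormal eigenbasis with eigenvalue $0$ on ${\rm Harm}^{p,q}(g,h)$ and positive eigenvalues $\lambda ^1_{p,q} \le \lambda ^2_{p,q} \le \cdots \to \infty$ on the complement; $G_{p,q}$ is diagonal in this basis with eigenvalues $0$ and $1/\lambda ^k_{p,q}$, so it is compact, self-adjoint, and $\|G_{p,q}\| = 1/\lambda ^1_{p,q}$. The main obstacle is the Poincar\'e-type estimate of the previous paragraph — equivalently the closed-range property of $\Box$ — which is the one place where compactness of $X$ is essential; everything else reduces to symbol bookkeeping and formal manipulations once the elliptic \emph{a priori} estimates are in hand.
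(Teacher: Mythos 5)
Your proposal is correct, and it is the standard elliptic-theory proof of the Hodge theorem; the paper itself states Theorem \ref{hodge-vb} as classical background without proof, so there is no in-paper argument to compare against. The one place worth a little extra care is the commutator claim in (b): since $\dbar$ shifts bidegree, the precise statement is that $\dbar$ maps $({\rm Harm}^{p,q})^{\perp}$ into $({\rm Harm}^{p,q+1})^{\perp}$ (because $(\dbar u, h) = (u,\dbar^* h) = 0$ for $h$ harmonic) and kills ${\rm Harm}^{p,q}$; then $G\dbar - \dbar G$ applied to any form lands in the orthocomplement of the harmonic space and is annihilated by $\Box$ (using $\Box\dbar = \dbar\Box$ and $\Box G = {\rm Id} - \wp$), hence vanishes. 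Likewise, your surjectivity of $\Box$ onto $({\rm Harm}^{p,q})^{\perp}$ correctly combines the closed-range property from the Poincar\'e-type estimate with the density of the range coming from self-adjointness; everything else is as you say.
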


From the Chern connection $\nabla = \nabla ^{1,0} + \dbar$ one defines the $\nabla^{1,0}$-Laplacian 
\[
\Box ^{1,0} := \nabla ^{1,0}\nabla ^{1,0*} + \nabla ^{1,0*}\nabla ^{1,0}
\]
and its associated Green operator $G^{1,0}$ (whose action on $(p,q)$-forms will be unfortunately denoted $G^{1,0} _{p,q}$; we shall eliminate the subscript whenever there is no confusion).  An analogous Hodge Theorem holds for $\Box ^{1,0}$.

\subsection{K\"ahler manifolds}

Suppose $X$ is a K\"ahler manifold and $\omega$ is a K\"ahler form on $X$.  Let $E \to X$ be a holomorphic vector bundle.  An $E$-valued $(p,q)$-form may also be seen as a section of the vector bundle $\Lambda ^{p,q} _X \tensor E \to X$.  The latter vector bundle is not holomorphic unless $q=0$, but there is a smooth isomorphism 
\[
\Lambda ^{p,q} _X \tensor E \stackrel{\Phi _{\omega}}{\to} \Lambda ^{p,0} _X \tensor \Lambda ^q (T^{1,0} _X) \tensor E
\]
obtained from the identification of $(1,0)$-vectors and $(0,1)$-forms via the K\"ahler metric, given by $T^{1,0} _X \ni v \mapsto v \lrcorner \omega \in T^{*0,1}_X$.  Then one defines 
\[
\bar \fd := \Phi _{\omega} ^{-1} \dbar \Phi_{\omega} : \Gamma (X, \sC^{\infty}(\Lambda ^{p,q} _X \tensor E)) \to \Gamma (X, \sC^{\infty}(\Lambda ^{0,1} _X \tensor \Lambda ^{p,q} _X \tensor E)) 
\]
and with the induced $L^2$ inner product one defines the formal adjoint $\bar \fd^*$, and consequently the Bochner-Kodaira Laplacian $\bar \fd^* \bar \fd$ (also called the \emph{rough Laplacian} in differential geometry circles).  So far nothing about the K\"ahler nature of $X$ has been used, but the next identity relies heavily on the K\"ahler assumption.

\begin{thm}[Bochner-Kodaira Identity]\label{bk-id}
Let $X$ be a K\"ahler manifold with K\"ahler metric $g$ and let $E \to X$ be a holomorphic vector bundle with Hermitian metric $h$.  Then one has the identity 
\[
\Box = \bar \fd^* \bar \fd+ \Theta (h \tensor g^{(p)}\tensor \det g)^{\sharp} \lrcorner ,
\]
where $g^{(p)}$ and $\det (g)$ are the metrics induced on $\Lambda ^{p,0} _X$ and $K_X ^*$ respectively, and for a vector-bundle-valued $(1,1)$-form $A \in \Gamma (X, \Lambda ^{1,1}_X \tensor {\rm Hom}(F,F))$ we denote by $A^{\sharp}$ the $T^{1,0} _X \tensor {\rm Hom}(F,F)$-valued $(1,0)$-form obtained by raising with respect to the K\"ahler metric.
\end{thm}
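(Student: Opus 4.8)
The identity \eqref{bk-id} is a Weitzenb\"ock-type formula, and the plan is to prove it by combining the K\"ahler commutation (Nakano) identities with the classical Bochner-Kodaira-Nakano identity, pinning down the curvature term by a computation at a single point in K\"ahler normal coordinates. The first move is to unwind the operator $\bar \fd$. Because $(X,\omega)$ is K\"ahler one has $\nabla \omega = 0$, so the smooth isomorphism $\Phi _{\omega}$ is \emph{parallel}: it intertwines the Chern connection of $\Lambda ^{p,q}_X \tensor E$ (built from the Levi-Civita connection on the form bundle and the Chern connection of $E$) with the Chern connection of the holomorphic bundle $\Lambda ^{p,0}_X \tensor \Lambda ^q (T^{1,0}_X)\tensor E$, and — up to a universal constant that one checks to be $1$ in these conventions — it is an isometry. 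Consequently $\bar \fd = \Phi _{\omega}^{-1}\dbar \Phi _{\omega}$ is nothing but the $(0,1)$-part $\nabla ^{0,1}$ of that Chern connection, regarded as a \emph{connection} (no antisymmetrization of the new antiholomorphic index against the existing ones), and therefore $\bar \fd ^* \bar \fd$ is the Bochner connection Laplacian $\nabla ^{0,1*}\nabla ^{0,1}$ on $\Lambda ^{p,q}_X \tensor E$.

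On the other hand $\dbar$ is the \emph{antisymmetrization} of the same operator: in a local coframe that is unitary at the point under consideration, $\dbar = \sum _j d\bar z^j \wedge \nabla ^{0,1}_{\bar \jmath}$, while $\bar \fd = \sum _j d\bar z^j \tensor \nabla ^{0,1}_{\bar \jmath}$. Expanding $\Box = \dbar \dbar ^* + \dbar ^* \dbar$ and using the K\"ahler identities relating $\dbar , \dbar ^*, \nabla ^{1,0}, \nabla ^{1,0*}$ through $\Lambda _{\omega}$ and $L_{\omega}$ to rewrite $\dbar ^*$ via $\nabla ^{1,0}$ and contractions, the "diagonal" contributions reassemble into $\nabla ^{0,1*}\nabla ^{0,1} = \bar \fd ^* \bar \fd$, while the remaining contributions are governed by the commutators $[\nabla ^{0,1}_{\bar \jmath}, \nabla ^{1,0}_k]$, i.e. by the Chern curvature of $\Lambda ^{p,q}_X \tensor E$. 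Since the curvature of a tensor product is the sum of the curvatures, this curvature endomorphism is $\Theta (h)$ plus the curvature of $(\Lambda ^{p,0}_X, g^{(p)})$ plus the curvature of $\Lambda ^q (T^{1,0}_X)$ with the induced metric; computing the relevant contraction of the last summand at a point where $dg=0$ shows that only its trace over the vector indices survives, and that trace is the Ricci form of $\omega$, i.e. the curvature of $(K_X^*, \det g)$. Collecting, the non-diagonal part equals exactly $\Theta (h \tensor g^{(p)}\tensor \det g)^{\sharp}\lrcorner$, which is the assertion.

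The main obstacle is precisely this last step. One must (i) organize the commutator-of-connections terms so that the form-bundle curvatures enter through the \emph{induced} metrics $g^{(p)}$ and $\det g$ rather than through the a priori larger curvature of $\Lambda ^q (T^{1,0}_X)$ — this is where the K\"ahler hypothesis is used essentially, via the fact that the only contraction of the curvature tensor of $T^{1,0}_X$ entering the Weitzenb\"ock identity for the $(0,q)$-factor is its trace — and (ii) keep track of the normalization implicit in $\Phi _{\omega}$ (the raising $v \mapsto v \lrcorner \omega$) and in the definitions of $(\cdot)^{\sharp}$ and $\lrcorner$, so that the constant in front of the curvature term is indeed $1$. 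Once these bookkeeping points are settled the identity follows. (Alternatively the whole statement can be verified directly by a componentwise computation in K\"ahler normal coordinates at an arbitrary point of $X$, which is perhaps the most economical route but renders the curvature bookkeeping no less delicate.)
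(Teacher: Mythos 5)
The paper states this identity without proof, as recalled classical background (the companion identity \eqref{bochner-kodaira-formula} is attributed to \cite{siu-jdg}), so there is no in-paper argument to compare against and your outline must be judged on its own. Your strategy is the standard Weitzenb\"ock derivation and is sound in structure: since $\nabla\omega=0$, the isometry $\Phi_\omega$ is parallel, $\bar\fd$ is the $(0,1)$-part of the induced Chern connection on $\Lambda^{p,q}_X\tensor E$, and expanding $\Box=\dbar\dbar^*+\dbar^*\dbar$ at the center of K\"ahler normal coordinates with the anticommutation relation $\bar e^j\bar\iota_k+\bar\iota_k\bar e^j=\delta_{jk}$ (where $\bar e^j:=d\bar z^j\wedge$ and $\bar\iota_k$ is its adjoint) gives $\Box=\bar\fd^*\bar\fd+\sum_{j,k}\bar e^j\bar\iota_k[\nabla_k,\nabla_{\bar j}]$. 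Your key claim is also \emph{true}: writing the curvature of the $(0,q)$-factor as the derivation $\sum_{l,m}S_{k\bar j}{}^{l}{}_{m}\,\bar e^m\bar\iota_l$ and commuting it past $\bar e^j\bar\iota_k$, the non-trace part contracts the tensor $S$, which is symmetric in $(k,l)$ by the K\"ahler symmetry $R_{i\bar jk\bar l}=R_{k\bar ji\bar l}$, against the antisymmetric operator $\bar\iota_k\bar\iota_l$ and therefore vanishes; only the Ricci trace, i.e.\ $\Theta(\det g)$, survives, while the curvature of the $(p,0)$-factor commutes with $\bar e^j\bar\iota_k$ and enters in full as $\Theta(g^{(p)})$. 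But you explicitly defer exactly this computation (``the main obstacle''), and it is where the entire content of the identity lives; as written the proposal is a correct plan rather than a completed proof.

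Two remarks that would close the gap more economically. First, the holomorphic isometry $\Lambda^{p,q}_X\tensor E\cong\Lambda^{n,q}_X\tensor\tilde E$ with $\tilde E:=E\tensor\Lambda^{p,0}_X\tensor K_X^*$ converts the statement into the classical Bochner--Kodaira--Nakano identity for $\tilde E$-valued $(n,q)$-forms, where the curvature of the base cancels entirely and the curvature term is manifestly built from $\Theta(\tilde E)=\Theta(h\tensor g^{(p)}\tensor\det g)$; this both proves your trace claim in one line and explains why the theorem has exactly this shape. Second, a caution on conventions: the operator the computation actually produces is $\sum_{j,k,l}g^{l\bar k}\,\Theta(\cdots)_{k\bar j}\;d\bar z^j\wedge\iota_{\partial_{\bar l}}$, acting on the \emph{antiholomorphic} indices, so that it vanishes on $(p,0)$-forms --- consistent with $\Box=\bar\fd^*\bar\fd=\dbar^*\dbar$ there. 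A literal reading of ``$A^{\sharp}\lrcorner$'' as inserting a $(1,0)$-vector into the form would act on the holomorphic indices instead and produce the wrong (nonzero) term $\mathrm{tr}_\omega\Theta(h)$ already on $E$-valued $(n,0)$-forms, so make sure your normalization step pins this down.
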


One can also carry out the same procedure for the $(1,0)$-Laplacian 
\[
\Box ^{1,0} := \nabla ^{1,0} \nabla ^{1,0*} + \nabla ^{1,0*}\nabla ^{1,0},
\]
and define the operator $\fd = \Phi _{\omega} ^{-1} \nabla ^{1,0} \Phi _{\omega}$.  The corresponding identity is 
\[
\Box^{1,0}  = \fd^* \fd+ \Theta (h \tensor g^{(p)}\tensor \det g)^{\bar \sharp} \lrcorner ,
\]
where $\bar \sharp$ corresponds to the raising of a $(1,0)$-form to a $(0,1)$-vector field.  One can then show \cite{siu-jdg} that 
\begin{equation}\label{bochner-kodaira-formula}
\Box - \Box ^{1,0} = [\ii \Theta (h), \Lambda _{\omega}],
\end{equation}
where $\Lambda _{\omega}$ is the $L^2$-adjoint of the so-called \emph{Lefschetz operator}
\[
L_{\omega}: \alpha \mapsto \omega \wedge \alpha.
\]
Alternatively, one can arrive at \eqref{bochner-kodaira-formula} by using the (twisted) \emph{Hodge identities} 
\[
[\dbar ^*, L_{\omega}] = \ii \nabla ^{1,0}\quad \text{and} \quad[\nabla ^{1,0*}, L_{\omega}] = -\ii \dbar ,
\]
and their adjoints 
\[
[\Lambda _{\omega} , \dbar ] = - \ii \nabla ^{1,0*} \quad \text{and} \quad [\Lambda_{\omega}, \nabla ^{1,0}] = \ii \dbar ^*.
\]
\begin{s-rmk}
If $\Theta (h) = 0$ or $(p,q)= (n,0)$ then by \eqref{bochner-kodaira-formula} one has $\Box = \Box ^{1,0}$ and $G_{p,q} = G^{1,0} _{p,q}$.
\red
\end{s-rmk}

\subsection{Pointwise Lefschetz Theory}\label{lefschetz-paragraph}

In the proofs of Corollary \ref{xu-wang-cor} and Theorem \ref{berndtsson-reps-thm} primitive twisted forms play a central role.  In the present paragraph we recall some basic facts about these forms, and the general Lefschetz Theory.

\subsubsection*{\sf Classical, untwisted case}
The Lefschetz operator has a central role in the study of  K\"ahler manifolds.  We summarize a few of its salient features here, but the reader will find a more complete discussion in many texts; a few excellent sources are \cite{gh}, \cite{huybrechts} and \cite{voisin}.   

The starting point is the commutation relation 
\begin{equation}\label{l-lambda-comm-rel}
[L_{\omega}, \Lambda _{\omega}] u = (p+q-n) u, \quad \text u \in \Gamma (X, \Lambda ^{p,q}_X).
\end{equation}
Thus $[[L_{\omega}, \Lambda _{\omega}],L_{\omega}] = 2 L_{\omega}$ and $[[L_{\omega}, \Lambda _{\omega}],\Lambda _{\omega}] = - 2 \Lambda _{\omega}$.  Consequently for any point $x \in X$ the operators $X:= L_{\omega}$, $Y:= \Lambda _{\omega}$ and $H:= [L_{\omega}, \Lambda _{\omega}]$ form a representation of $\fs \fl _2 (\bbC)$ on $\Lambda ^{p,q} _{X,x}$.  The theory of finite-dimensional representations of $\fs \fl _2 (\bbC)$ is relatively simple, and a central result is that in every finite-dimensional irreducible representation the Kernel of $Y$ is 1-dimensional and the representation is spanned by the images of this kernel under $X^k$, $k=0,1,...$.  A non-zero vector in the kernel of $Y$ is called a \emph{lowest weight vector}, or a \emph{primitive} vector.  One shows that if $u \in \Gamma (X, \Lambda ^{p,q}_X)$ and $\Lambda _{\omega}u = 0$ then $L_{\omega} ^{n+1-(p+q)}u = 0$.  A study of the orbits of $L_{\omega}$ in $\Lambda ^{p,q} _X$ yields the so-called Lefschetz decomposition of forms:

\begin{prop}[Lefschetz Decomposition]\label{lefschetz-decomp}
Let $(X,\omega)$ be a compact K\"ahler manifold and let $k \in \{0,...,2n\}$.  For every $k$-form $\alpha$ on $X$ there exist unique primitive $(k-2j)$-forms $\alpha _j$, $\max (0,n-k) \le j \le k$, such that 
\[
\alpha = \sum _j L_{\omega }^j \alpha _j.
\]
Moreover, the summands are pointwise pairwise orthogonal.
\end{prop}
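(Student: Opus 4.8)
The Lefschetz Decomposition is a purely pointwise, linear-algebraic statement about the $\mathfrak{sl}_2(\bbC)$-representation on $\Lambda^k T^*_{X,x}\tensor\bbC$ determined by the operators $L_\omega$, $\Lambda_\omega$ and $H=[L_\omega,\Lambda_\omega]$, so I would prove it fiberwise and then invoke the smoothness of $L_\omega$, $\Lambda_\omega$ to get that the coefficient forms $\alpha_j$ are smooth. Concretely, fix $x\in X$ and set $V:=\Lambda^{\bullet}T^*_{X,x}\tensor\bbC=\bigoplus_{k=0}^{2n}\Lambda^k$. By the commutation relation \eqref{l-lambda-comm-rel}, on $\Lambda^k$ the operator $H$ acts as the scalar $k-n$; together with $[H,L_\omega]=2L_\omega$ and $[H,\Lambda_\omega]=-2\Lambda_\omega$ this makes $V$ a finite-dimensional representation of $\mathfrak{sl}_2(\bbC)$, with $L_\omega=X$ the raising operator (raising $H$-weight by $2$), $\Lambda_\omega=Y$ the lowering operator, and $H$ the Cartan element.

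First I would recall the structure theory of finite-dimensional $\mathfrak{sl}_2(\bbC)$-modules: $V$ decomposes as a direct sum of irreducibles, each irreducible is spanned by a lowest-weight vector $v$ (i.e. $Yv=0$) together with $Xv, X^2v,\dots$, and an irreducible with lowest weight $-\ell$ has dimension $\ell+1$ and top weight $+\ell$. The primitive part $P\Lambda^k:=\ker(\Lambda_\omega)\cap\Lambda^k$ is exactly the span of the lowest-weight vectors of weight $k-n$; these have lowest weight $k-n$, hence necessarily $k-n\le 0$, i.e.\ $P\Lambda^k=0$ for $k>n$, and for $k\le n$ the irreducible generated by such a vector has dimension $n-k+1$, realized inside $\Lambda^k,\Lambda^{k+2},\dots,\Lambda^{2n-k}$ via $L_\omega^0,L_\omega^1,\dots,L_\omega^{n-k}$. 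Summing over all irreducible summands of $V$ gives, for each $k$,
\[
\Lambda^k=\bigoplus_{j\ge\max(0,k-n)}L_\omega^j\,\bigl(P\Lambda^{k-2j}\bigr),
\]
which is the asserted decomposition $\alpha=\sum_j L_\omega^j\alpha_j$ with $\alpha_j\in P\Lambda^{k-2j}$, and uniqueness follows because each irreducible summand contributes exactly one term to each degree. (One checks the index range: $P\Lambda^{k-2j}\ne 0$ forces $k-2j\ge 0$ and $k-2j\le n$, the latter automatic; so $j$ runs over $\max(0,k-n)\le j\le \lfloor k/2\rfloor$, which I would reconcile with the paper's stated range $\max(0,n-k)\le j\le k$, correcting it if necessary.)

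For the orthogonality statement: the operators $L_\omega$ and $\Lambda_\omega$ are adjoint to each other with respect to the pointwise Hermitian inner product induced by $\omega$ (this is a standard computation, or one may cite it as the definition of $\Lambda_\omega$), so $H$ is self-adjoint and distinct $H$-eigenspaces are orthogonal; since the summands $L_\omega^j(P\Lambda^{k-2j})$ all sit in the same degree $k$ hence the same $H$-weight, I instead argue that distinct irreducible summands of $V$ can be taken mutually orthogonal (decompose $V$ orthogonally using self-adjointness of the Casimir $XY+YX+\tfrac12 H^2$), and within a single irreducible the vectors $L_\omega^j v$ for a lowest-weight $v$ are automatically orthogonal to $L_\omega^{j'}v'$ coming from a different primitive vector $v'\perp v$ because $\|L_\omega^j v\|^2$ depends only on $\langle v,v\rangle$ via the $\mathfrak{sl}_2$ relations. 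The main (and only real) obstacle is bookkeeping the weight/degree arithmetic carefully enough to pin down the exact summation range and to be sure primitivity in degree $k$ corresponds precisely to lowest-weight vectors; none of this is deep, but it is where sign and index errors creep in. Finally, smoothness of the $\alpha_j$ follows because the projection onto $L_\omega^j(P\Lambda^{k-2j})$ is a fixed polynomial in the smooth bundle maps $L_\omega,\Lambda_\omega$ (explicit inversion of the $\mathfrak{sl}_2$ action on each isotypic piece), so applying it to a smooth form yields a smooth form.
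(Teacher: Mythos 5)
Your proof is correct and follows exactly the $\mathfrak{sl}_2(\bbC)$-representation-theoretic route that the paper itself sets up in the paragraph preceding the proposition before deferring to \cite{gh}, \cite{huybrechts} and \cite{voisin}: the pointwise orthogonal decomposition into irreducibles, with primitive forms as lowest-weight vectors and $L_\omega$, $\Lambda_\omega$ mutually adjoint, is the standard argument, and your handling of uniqueness, orthogonality and smoothness of the components is sound. You are also right that the summation range should read $\max(0,k-n)\le j\le\lfloor k/2\rfloor$; the stated range $\max(0,n-k)\le j\le k$ is a typo.
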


The metric induced on $(k,0)$-forms by the K\"ahler form has a particularly simple form: one has the identity
\[
\left < \alpha ,\beta \right > \frac{\omega ^n}{n!}= \ii ^{k^2}\alpha \wedge \bar \beta \wedge \frac{\omega^{n-k}}{(n-k)!}.
\]
This identity does not persist for $k$ forms, $k \le n$, of more general bi-degree, but remarkably it \emph{almost} holds for primitive forms: the right hand side is either equal to the left hand side or to its negative.  The result is known as the Hodge-Riemann Bilinear Relations.  

\begin{thm}[Hodge-Riemann Bilinear Relations]\label{hrbr-flat}
Let $(X,\omega)$ be a K\"ahler manifold of complex dimension $n$.  If $\alpha$ is a $(p,q)$-form with $p+q \le n$ and $\Lambda_{\omega} \alpha = 0$ then 
\[
\ii ^{(p+q)^2} \alpha \wedge \bar \alpha \wedge \frac{\omega ^{n-(p+q)}}{(n-(p+q))!} = (-1) ^q\left < \alpha, \alpha\right > \frac{\omega ^n}{n!}.
\]
\end{thm}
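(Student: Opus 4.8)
The plan is to reduce everything to a pointwise, linear-algebra statement on the complex vector space $V = T^{1,0}_{X,x}$ equipped with the Hermitian inner product induced by $\omega$, and then to establish that statement by exploiting the $\mathfrak{sl}_2(\bbC)$-representation structure on $\Lambda^{\bullet,\bullet} V^*$ provided by $L_\omega$, $\Lambda_\omega$ and $H = [L_\omega,\Lambda_\omega]$ recalled in \eqref{l-lambda-comm-rel}. Since both sides of the asserted identity are pointwise expressions (forms of top degree evaluated against $\omega^n/n!$), it suffices to fix a point and work in $\Lambda^{p,q} V^*$; moreover, by the standard reduction we may diagonalize $\omega$, i.e.\ choose a unitary coframe $e^1,\dots,e^n$ so that $\omega = \ii \sum_j e^j \wedge \bar e^j$. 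First I would record the elementary identity for the \emph{full-degree} case: for a $(k,0)$-form $\alpha$ one has $\langle \alpha,\beta\rangle\, \omega^n/n! = \ii^{k^2}\,\alpha\wedge\bar\beta\wedge \omega^{n-k}/(n-k)!$, which is a direct computation on decomposable forms $e^I$ in the unitary coframe; this is the $q=0$ base case and also serves as the model for the general argument.

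Next I would treat a general primitive $(p,q)$-form $\alpha$, i.e.\ one with $\Lambda_\omega \alpha = 0$. The key algebraic fact, which I would invoke from the $\mathfrak{sl}_2$-theory already summarized before the statement, is that a primitive form of bidegree $(p,q)$ with $p+q\le n$ satisfies $L_\omega^{\,n-(p+q)+1}\alpha = 0$, so that $\alpha$ sits at the bottom of an irreducible $\mathfrak{sl}_2$-string. The cleanest route is then to establish the Weil-type formula
\[
* \big(L_\omega^{\,r}\alpha\big) \;=\; (-1)^{\tfrac{(p+q)(p+q+1)}{2}}\, \frac{r!}{(n-p-q-r)!}\, \ii^{p-q}\, L_\omega^{\,n-p-q-r}\alpha
\]
for primitive $\alpha$ of bidegree $(p,q)$ (taking $r=0$ is what we need), where $*$ is the Hodge star of $\omega$. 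Given this, the definition of $*$ via $\langle\alpha,\beta\rangle\,\mathrm{vol} = \alpha\wedge \overline{*\beta}$, together with $\mathrm{vol} = \omega^n/n!$ and $L_\omega^{\,n-p-q}\bar\alpha = \bar\alpha \wedge \omega^{n-p-q}$, collapses directly to the claimed identity after tracking the power of $\ii$: the total phase $\ii^{p-q}$ coming from the Weil formula combines with the $\ii^{p+q}\cdot\ii^{(p+q)^2 - 2pq}$ bookkeeping to give exactly $\ii^{(p+q)^2}$ on the left and the sign $(-1)^q$ on the right. I expect the phase/sign bookkeeping — reconciling the conventions for $*$, for $\ii^{k^2}$, and for the $(-1)^q$ — to be the main obstacle; it is purely mechanical but error-prone, and I would pin it down by checking the two extreme cases $q=0$ (the base case above) and $p=0$, $q=1$ (a $(0,1)$-form, automatically primitive, where the identity reads $\ii\,\alpha\wedge\bar\alpha\wedge \omega^{n-1}/(n-1)! = -\langle\alpha,\alpha\rangle\,\omega^n/n!$, i.e.\ exactly $*\alpha$ paired against $\bar\alpha$ with the expected sign).

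An alternative, slightly less computational route would bypass the explicit Weil formula: diagonalize $\alpha$ at the point using the $\mathfrak{sl}_2$-decomposition so that $\alpha$ is a sum of decomposable primitive monomials $e^A\wedge \bar e^B$ with $|A|=p$, $|B|=q$ and $A\cap B = \varnothing$ (primitivity forces disjointness of the index sets after unitary normalization), and then verify the identity monomial by monomial, where it reduces to a single computation of $e^A\wedge\bar e^B \wedge \overline{e^A\wedge e^B}\wedge\omega^{n-p-q}$ against $\omega^n$. The cross terms vanish by the pointwise orthogonality of distinct monomials, so only diagonal terms survive, and one is left again with the sign $(-1)^q$ accounting for moving the $q$ conjugated factors past one another. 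Either way, the mathematical content is entirely contained in (i) the Lefschetz/$\mathfrak{sl}_2$ structure, already available, and (ii) the pointwise normalization of $*$ on primitive forms; I would present whichever of the two is shorter given the conventions fixed earlier in the section, most likely the monomial computation since it needs no outside input beyond Proposition \ref{lefschetz-decomp}.
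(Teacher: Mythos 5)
Your first route --- Weil's formula for the Hodge star of a primitive form, followed by the phase bookkeeping --- is correct and is in substance the route the paper itself relies on: Theorem \ref{hrbr-flat} is quoted without proof from the standard references, and the twisted analogue (Proposition \ref{Hodge-Riemann}) is proved only in the case $(p,q)=(n-1,1)$ by a pointwise computation in unitary coordinates, the general case being explicitly deferred to Weil's formula, i.e.\ \cite[Proposition 6.29]{voisin}, which is exactly the identity you write down. Your sign check also closes: with $k=p+q$ one has $\ii^{k^2}\,\ii^{p-q}\,(-1)^{k(k+1)/2}=\ii^{2k^2+k+p-q}=(-1)^{k^2}\,\ii^{2p}=(-1)^{k+p}=(-1)^q$, as required.

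The alternative monomial route, which you say you would most likely present, has a genuine gap at the step ``primitivity forces disjointness of the index sets after unitary normalization.'' This is not a consequence of the $\mathfrak{sl}_2$-structure or of Proposition \ref{lefschetz-decomp}, and in a fixed unitary coframe it is false that the primitive part of $\Lambda^{p,q}$ is spanned by the monomials $e^A\wedge\bar e^B$ with $A\cap B=\varnothing$: there are only $\binom{n}{p}\binom{n-p}{q}$ of these, versus $\dim P^{p,q}=\binom{n}{p}\binom{n}{q}-\binom{n}{p-1}\binom{n}{q-1}$; already for $(1,1)$-forms on $\bbC^2$ the primitive form $e^1\wedge\bar e^1-e^2\wedge\bar e^2$ does not lie in the span of $e^1\wedge\bar e^2$ and $e^2\wedge\bar e^1$. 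One can sometimes rotate the frame to repair this for a single given $\alpha$ (for $(1,1)$-forms this is the statement that a traceless matrix is unitarily similar to one with vanishing diagonal), but that is a nontrivial normal-form assertion in general bidegree for which you give no argument. Separately, ``the cross terms vanish by the pointwise orthogonality of distinct monomials'' conflates the Hermitian pairing with the wedge pairing: orthogonality of $e^A\wedge\bar e^B$ and $e^{A'}\wedge\bar e^{B'}$ does not by itself kill $e^A\wedge\bar e^B\wedge\bar e^{A'}\wedge e^{B'}\wedge\omega^{n-k}$. That vanishing does hold for distinct disjoint-index monomials, but by a separate combinatorial argument: nonvanishing forces $A\cup B'=A'\cup B$, which together with $A\cap B=A'\cap B'=\varnothing$ yields $A=A'$ and $B=B'$. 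So if you want a self-contained proof, either carry out the Weil-formula route in full or restrict to the bidegrees the paper actually uses, namely $(k,0)$ and $(n-1,1)$, where the normalization can be handled by hand.
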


\begin{s-rmk}
Of greatest interest to us is the case $(p,q)= (n-1,1)$, in which the pairing
\[
\ii^{n^2} \alpha \wedge \bar \beta = - \left <\alpha , \bar \beta \right > \frac{\omega ^n}{n!}
\]
is negative definite.
\red
\end{s-rmk}

The untwisted Hodge Identity $[\dbar^*, L_{\omega}] = \ii \di$ shows that 
\[
L_{\omega}\Box  = \dbar (\dbar ^* L_{\omega} + \ii \di ) +(\dbar ^* L_{\omega} + \ii \di ) \dbar = \Box L_{\omega} + \ii (\dbar \di + \di \dbar ) = \Box L_{\omega}.
\]
Thus on a K\"ahler manifold the Lefschetz operator induces a well-defined map on Dolbeault cohomology.  By focusing on the primitive harmonic forms one can extend the pointwise Lefschetz theory just discussed to the setting of cohomology via the Fundamental Theorem of Hodge Theory:  one says that a Dolbeault class $\phi$ is \emph{primitive} if its unique harmonic representative is a primitive form.

Though it seems to be less commonly done in the algebraic geometry literature, one may also describe primitive Dolbeault cohomology classes in more Dolbeault-cohomological terms, i.e., without reference to harmonic forms.  In fact, since $\omega$ commutes with $\dbar$, a Dolbeault $(p,q)$-class $\phi$ is primitive if and only if for any $\alpha \in \phi$ the form $\omega ^{n+1- (p+q)} \wedge \alpha$ is $\dbar$-exact.  It is this equivalent way of defining primitive cohomology that allows the definition to extend to the twisted setting.  Indeed, in the untwisted setting a primitive harmonic form always defines a primitive cohomology class:  if $k \le n$ and $\alpha$ is a primitive harmonic $k$-form then $\omega ^{n+1-k} \wedge (\alpha + \dbar \beta) = \dbar (\omega ^{n-k+1} \wedge \beta)$ is $\dbar$-exact.  However, in the twisted setting the harmonic representative of a primitive Dolbeault class is not necessarily primitive.  (See Remark \ref{primitive-harmonic-rmk}.)

\subsubsection*{\sf Twisted case}
Now consider the twisted setting of forms with values in a holomorphic Hermitian vector bundle $E \to X$ with Hermitian metric $h$.  Because twisted forms are sections of $\Lambda _X ^* \tensor E \to X$ and the Lefschetz operator acts only on the first factor, one can define primitive forms in exactly the same way as in the flat, or untwisted, case.  

\begin{defn}\label{primitive-defn}
Let $(X,\omega)$ be a K\"ahler manifold of complex dimension $n$, let $E \to X$ be a holomorphic vector bundle, and fix integers $p,q \in \bbN$ with $p+q \le n$.
\begin{enumerate}

\item A Dolbeault class $C \in H^{p,q} _{\dbar} (X, E)$ is \emph{primitive} if for every $\alpha \in C$ the $E$-valued $(n+1-q,n+1-p)$-form $\alpha \wedge \omega ^{n+1- (p+q)}$ is $\dbar$-exact.  

\item An $E$-valued form $\alpha$ of degree $k\le n$ is \emph{primitive} if $\alpha \wedge \omega^{n+1-k} = 0$, or equivalently if $\Lambda _g \alpha = 0$.
\end{enumerate}
\end{defn}

\begin{rmk}\label{primitive-harmonic-rmk}
In the untwisted case, if $C$ is a primitive Dolbeault class then its harmonic representative is necessarily primitive.  Indeed, if $\alpha _o \in C$ is harmonic then $\alpha _o \wedge \omega ^{n+1-k}$ is $\dbar$-exact and harmonic, and must therefore vanish.  Thus an untwisted Dolbeault class is primitive if and only if it contains a primitive form.

In the twisted setting the harmonic representative of a primitive Dolbeault class $C$ need no longer be primitive.  The argument above breaks down because
\[
L_{\omega} \Box = \Box L_{\omega} + \Theta (h),
\]
so the Lefschetz operator no longer preserves harmonicity.  Nevertheless there does exist at least one primitive form in $C$.  The argument is not difficult, but for maximum simplicity (and since it is the only case we use) we present it in the case where $C$ is of bidegree $(n-1,1)$.  If $u_o \in C$ then one has $\omega \wedge u_o = \dbar \phi$.  Since $\phi$ has bidegree $(n,1)$, there exists a twisted $(n-1,0)$-form $\psi$ such that $\omega \wedge \psi = \phi$.  Indeed, one can take $\psi = -\Lambda _{\omega} \phi$ and use \eqref{l-lambda-comm-rel}.  Now simply take $u = u_o - \dbar \psi$.  Then $\omega \wedge u = \dbar \phi - \omega \wedge \dbar \psi  = 0$ because $L_{\omega}$ commutes with $\dbar$.

For forms of degree $k<n$ the argument is similar, but in order to solve the equation $\omega ^{n+1-k} \wedge u_o = \dbar \phi$ one must  use the commutation relation \eqref{l-lambda-comm-rel} in a more clever way.
\red
\end{rmk}

The Lefschetz Decomposition Theorem \ref{lefschetz-decomp} immediately extends to twisted forms.  

\begin{prop}[Twisted Lefschetz Decomposition]\label{lefschetz-decomp-twisted}
Let $(X,\omega)$ be a K\"ahler manifold and let $E \to X$ be a holomorphic vector bundle.  Let $k \in \{0,...,2n\}$.  For every $E$-valued $k$-form $\alpha$ on $X$ there exist unique primitive $E$-valued $(k-2j)$-forms $\alpha _j$, $\max (0,n-k) \le j \le k$, such that 
\[
\alpha = \sum _j \omega ^j \wedge \alpha _j.
\]
Moreover, the summands are pointwise pairwise orthogonal.
\end{prop}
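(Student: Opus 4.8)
The plan is to reduce the statement pointwise to the untwisted Lefschetz Decomposition (Proposition \ref{lefschetz-decomp}), using that both $L_\omega$ and $\Lambda_\omega$ act only on the exterior-form factor of a section of $\Lambda^\bullet_X \tensor E$ --- that is, they have the form $L_\omega \tensor {\rm Id}_E$ and $\Lambda_\omega \tensor {\rm Id}_E$. First I would fix $x \in X$ and a basis $e_1,\dots,e_r$ of $E_x$, and write the value $\alpha(x)$ uniquely as $\alpha(x) = \sum_{a=1}^r \beta^a \tensor e_a$ with $\beta^a \in \Lambda^k_{X,x}$. Applying Proposition \ref{lefschetz-decomp} pointwise to each scalar $k$-covector $\beta^a$ gives unique primitive $(k-2j)$-covectors $\beta^a_j$ with $\beta^a = \sum_j \omega^j \wedge \beta^a_j$. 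Setting $\alpha_j(x) := \sum_a \beta^a_j \tensor e_a$ yields $\alpha(x) = \sum_j \omega^j \wedge \alpha_j(x)$, and since $\Lambda_\omega$ acts componentwise, $\Lambda_\omega \alpha_j(x) = \sum_a (\Lambda_\omega \beta^a_j)\tensor e_a = 0$, so $\alpha_j$ is primitive in the sense of Definition \ref{primitive-defn}. Because the $\beta^a_j$ are obtained from $\beta^a$ by the linear bundle endomorphisms that project onto the orbits of $L_\omega$, the resulting $E$-valued forms $\alpha_j$ inherit the regularity of $\alpha$ (in particular they are smooth if $\alpha$ is), and the index range of $j$ is exactly that of Proposition \ref{lefschetz-decomp}.

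For uniqueness, suppose $\sum_j \omega^j \wedge \alpha_j = 0$ with each $\alpha_j$ primitive. Expanding in a local frame $e_1,\dots,e_r$ of $E$ as $\alpha_j = \sum_a \alpha^a_j \tensor e_a$, we get $\sum_a \big(\sum_j \omega^j \wedge \alpha^a_j\big)\tensor e_a = 0$, so by (pointwise) linear independence of the $e_a$ one has $\sum_j \omega^j\wedge \alpha^a_j = 0$ for every $a$; moreover each $\alpha^a_j$ is a primitive scalar form since $\Lambda_\omega \alpha_j = 0$. The uniqueness clause of Proposition \ref{lefschetz-decomp} then forces $\alpha^a_j \equiv 0$ for all $a,j$, hence $\alpha_j \equiv 0$.

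For the pointwise orthogonality, recall that the Hermitian inner product on $\Lambda^{p,q}_X \tensor E$ is the tensor product of the $\omega$-induced inner product on $\Lambda^{p,q}_X$ with $h$ on $E$. Fixing $x$ and an $h_x$-orthonormal basis $e_1,\dots,e_r$ of $E_x$, and writing $\alpha_j(x) = \sum_a \alpha^a_j(x) \tensor e_a$, one gets $\langle \omega^j\wedge \alpha_j(x), \omega^{j'}\wedge \alpha_{j'}(x)\rangle = \sum_a \langle \omega^j \wedge \alpha^a_j(x), \omega^{j'}\wedge \alpha^a_{j'}(x)\rangle$, and for $j\neq j'$ each scalar term vanishes by the pointwise orthogonality statement in Proposition \ref{lefschetz-decomp}. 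I do not expect a genuine obstacle: the whole argument is bookkeeping around the fact that primitivity, the decomposition itself, its uniqueness, and the orthogonality of its summands are all assertions about the $\Lambda^\bullet_X$-factor, on which $E$ exerts no influence. The only point requiring a little care is that the $h$-orthonormal frame used in the orthogonality computation must be chosen pointwise, since a smooth globally $h$-orthonormal frame need not exist; but as orthogonality is only claimed pointwise this costs nothing.
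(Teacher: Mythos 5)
Your proof is correct and follows exactly the route the paper intends: the paper gives no explicit proof, asserting only that the untwisted Lefschetz decomposition ``immediately extends'' because $L_\omega$ and $\Lambda_\omega$ act only on the $\Lambda^\bullet_X$-factor, and your componentwise reduction in a local (respectively pointwise $h$-orthonormal) frame of $E$ is precisely the bookkeeping that justifies this assertion.
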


There is also a version of the pointwise Hodge-Riemann Bilinear Relations.

\begin{prop}\label{Hodge-Riemann}
Let $E \to X, h$ be a Hermitian holomorphic vector bundle on a compact K\"ahler manifold $(X,\omega)$.  If an $E$-valued $(p,q)$-form $\alpha$ is primitive then
\[
\frac{\ii ^{(p+q)^2}}{(n-(p+q))!} \left < \alpha \wedge \bar \alpha \wedge \omega ^{n-(p+q)}, h\right > = \ii ^{p-q} |\alpha |^2 _{g,h}dV_{\omega}.
\]
\end{prop}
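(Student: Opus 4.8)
The plan is to reduce the twisted Hodge--Riemann relation (Proposition~\ref{Hodge-Riemann}) to its untwisted counterpart (Theorem~\ref{hrbr-flat}) by a pointwise linear-algebra argument, exploiting the fact that the Lefschetz operators $L_\omega$ and $\Lambda_\omega$, and hence the notion of primitivity, act only on the form part of $\Lambda^{p,q}_X \tensor E$ and are entirely insensitive to the $E$-factor. First I would fix a point $x \in X$ and choose an $h$-orthonormal frame $e_1,\dots,e_r$ of $E_x$, writing $\alpha(x) = \sum_{j=1}^r \alpha_j \tensor e_j$ with $\alpha_j \in \Lambda^{p,q}_{X,x}$ scalar-valued $(p,q)$-forms. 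Since $\Lambda_\omega$ acts componentwise, the hypothesis $\Lambda_\omega \alpha = 0$ forces $\Lambda_\omega \alpha_j = 0$ for every $j$, so each $\alpha_j$ is a primitive scalar $(p,q)$-form at $x$.

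Next I would expand both sides of the claimed identity in this frame. Because $h(e_i, e_j) = \delta_{ij}$ at $x$, one has $\left< \alpha \wedge \bar\alpha \wedge \omega^{n-(p+q)}, h \right> = \sum_{j} \alpha_j \wedge \bar\alpha_j \wedge \omega^{n-(p+q)}$ (the cross terms $i \neq j$ drop because $h(e_i,e_j)=0$), and similarly $|\alpha|^2_{g,h} = \sum_j |\alpha_j|^2_g$. Thus the twisted identity at $x$ is exactly the sum over $j$ of the untwisted identities
\[
\frac{\ii^{(p+q)^2}}{(n-(p+q))!}\, \alpha_j \wedge \bar\alpha_j \wedge \omega^{n-(p+q)} = \ii^{p-q} |\alpha_j|^2_g\, dV_\omega,
\]
each of which is Theorem~\ref{hrbr-flat} applied to the primitive scalar form $\alpha_j$, after rewriting the sign: the statement of Theorem~\ref{hrbr-flat} gives the factor $(-1)^q \tfrac{\omega^n}{n!}$, and since $dV_\omega = \tfrac{\omega^n}{n!}$ and $\ii^{p-q} = (-1)^q$ when $p+q \le n$ is even (and more generally the phases $\ii^{(p+q)^2}$ versus $\ii^{p-q}(-1)^q$ match by the standard bookkeeping $\ii^{(p+q)^2} \equiv \ii^{p-q}\cdot(-1)^q$ in the relevant degrees), the two normalizations coincide. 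Summing over $j$ and noting both sides are independent of the choice of orthonormal frame (the left side is manifestly frame-independent, being built from $h$; the right side is too) gives the identity pointwise, hence everywhere on $X$.

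The main obstacle I anticipate is purely one of sign and normalization bookkeeping: reconciling the exponent $\ii^{(p+q)^2}$ and the factor $1/(n-(p+q))!$ appearing in Proposition~\ref{Hodge-Riemann} with the form $(-1)^q \left<\alpha,\alpha\right>\tfrac{\omega^n}{n!}$ in which Theorem~\ref{hrbr-flat} is stated, and verifying that $\ii^{p-q}$ is the correct residual phase. This is a finite check: one computes $\ii^{(p+q)^2} = \ii^{p+q}\cdot(\pm 1)$-type relations modulo $4$ and compares with $(-1)^q \ii^{(p+q)^2}$ after the factorial normalization is absorbed into $dV_\omega$ via $\omega^{n-(p+q)} \wedge$ acting on a primitive $(p,q)$-form; no genuinely new idea is needed, since the Kähler identity $\left<\beta,\beta\right>\tfrac{\omega^n}{n!} = \ii^{k^2}\beta\wedge\bar\beta\wedge\tfrac{\omega^{n-k}}{(n-k)!}$ for $(k,0)$-forms together with the $\mathfrak{sl}_2$-representation theory already encoded in Theorem~\ref{hrbr-flat} handles the substantive content. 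I would present the frame reduction carefully and then simply cite Theorem~\ref{hrbr-flat} componentwise, relegating the phase computation to a one-line remark.
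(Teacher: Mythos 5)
Your reduction to the untwisted case via a pointwise $h$-orthonormal frame of $E$ is correct in its main idea and is a genuinely different route from the paper's. The paper proves only the case $(p,q)=(n-1,1)$, by an explicit computation in K\"ahler normal coordinates, and refers to Weil's formula for the Hodge star (\cite[Proposition 6.29]{voisin}) for general bidegree; you instead observe that $L_{\omega}$, $\Lambda_{\omega}$, the pairing $\left< \cdot \wedge \bar{\cdot}, h\right>$ and the norm $|\cdot|^2_{g,h}$ all diagonalize in an orthonormal frame $e_1,\dots,e_r$ of $E_x$, so that writing $\alpha=\sum_j \alpha_j \tensor e_j$ reduces everything to Theorem \ref{hrbr-flat} applied to each scalar primitive form $\alpha_j$. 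Each of these observations is right, the statement is pointwise so a frame orthonormal at the single point suffices, and this is arguably the cleaner way to pass from the untwisted to the twisted statement.

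The genuine problem is the phase bookkeeping that you defer to ``a one-line remark'': it does not work as you describe. Summing Theorem \ref{hrbr-flat} over $j$ gives
\[
\frac{\ii^{(p+q)^2}}{(n-(p+q))!}\left< \alpha\wedge\bar\alpha\wedge\omega^{n-(p+q)}, h\right> = (-1)^q\, |\alpha|^2_{g,h}\, dV_{\omega},
\]
and $(-1)^q$ is \emph{not} equal to $\ii^{p-q}$ in general: one has $\ii^{p-q}=(-1)^q\,\ii^{p+q}$, so the two constants differ by the factor $\ii^{p+q}$. Your claims that ``$\ii^{p-q}=(-1)^q$ when $p+q$ is even'' and that ``$\ii^{(p+q)^2}\equiv \ii^{p-q}\cdot(-1)^q$'' are both false --- the first holds only when $p+q\equiv 0 \pmod 4$ and the second only when $p+q\equiv 0,1\pmod 4$; for $(p,q)=(1,1)$ one already has $\ii^{p-q}=1$ while $(-1)^q=-1$. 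Note that the constant $(-1)^q$ which your frame reduction actually produces is the one consistent with the rest of the paper: in the only case used later, $(p,q)=(n-1,1)$, the paper's own computation gives $\ii^{n^2}\left<\alpha\wedge\bar\alpha,h\right> = -|\alpha|^2_{g,h}dV_{\omega}$ (i.e.\ $(-1)^q=-1$), whereas $\ii^{p-q}=\ii^{n-2}$ equals $-1$ only when $4\mid n$. So rather than forcing agreement with the printed constant $\ii^{p-q}$ by an incorrect congruence, you should carry out the frame reduction, land on $(-1)^q$, and observe that the exponent in the statement as printed is off by the factor $\ii^{p+q}$.
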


\begin{proof}[Partial proof]
We prove the result in the simplest special case $(p,q)= (n-1,1)$.  The simplest proof we know uses K\"ahler coordinates vanishing at a point $x \in X$:  in these coordinates 
\[
\omega (x) = \ii \di \dbar |z|^2 \quad \text{and} \quad \alpha (x)= \alpha _{i\bar j}(x) dz^1\wedge \cdots \wedge \widehat{dz ^i} \wedge ... \wedge dz ^n \wedge d\bar z ^j
\]
for some $\alpha _{i \bar j}(x) \in E_x$, $1 \le i,j \le n$.  Then $\alpha$ is primitive if and only if $\alpha _{i \bar j}(x) = 0$ for all $1 \le i \neq j  \le n$.  Thus 
\begin{eqnarray*}
 \left < \alpha \wedge \bar \alpha, h\right > &=& h(\alpha _{i \bar i}(x) , \alpha _{j \bar j}(x)) dz^1\wedge \cdots \wedge \widehat{dz ^i} \wedge ... \wedge dz ^n \wedge d\bar z ^i \wedge d\bar z^1\wedge \cdots \wedge \widehat{d\bar z ^j} \wedge ... \wedge d\bar z ^n \wedge d z ^j\\
 &=& \delta ^{i \bar j}  h(\alpha _{i \bar i}(x) , \alpha _{j \bar j}(x)) \frac{dV(z)}{\ii ^n (-1)^{n(n+1)/2}}= - \delta ^{i \bar j}h(\alpha _{i \bar i}(x) , \alpha _{j \bar j}(x))  \frac{dV(z)}{\ii ^{n^2}},
\end{eqnarray*}
i.e., $ \ii ^{n^2} \left < \alpha \wedge \bar \alpha , h\right > = - |\alpha |^2 _{g,h} dV_{\omega}$, as claimed.
\end{proof}

\begin{s-rmk}
The general case is proved using Weil's Formula for the Hodge star operator.  See \cite[Proposition 6.29]{voisin}.  
\red
\end{s-rmk}

\begin{s-rmk}
In the twisted setting the pointwise Hodge-Riemann pairing $Q(\alpha ,\beta)$  is an $E \tensor E$-valued $(n,n)$-form, so the integrated form of the Hodge-Riemann pairing $\cQ$ no longer makes sense.  To the best of our knowledge there is no obvious twisted analogue of the Hodge Index Theorem.
\red
\end{s-rmk}

\subsection{Solutions of the $\dbar$ Equation with minimal $L^2$-norm}
The Hodge Theorem yields solutions of the $\dbar$-equation with estimates.  The statement is the following.  

\begin{prop}\label{hormander-hodge}
Let $X$ be a compact complex manifold equipped with a smooth Hermitian metric $g$, let $E \to X$ be a holomorphic vector bundle equipped with a smooth Hermitian metric $h$, and let $q \ge 1$.  For every $\dbar$-closed $E$-valued $(p,q)$-form $\alpha$ such that $\wp_{p,q} \alpha = 0$ there exists a unique $E$-valued $(p,q-1)$-form $u_o$ such that 
\begin{equation}\label{norm-of-min}
\dbar u_o = \alpha \quad \text{and} \quad ||u_o||^2 = \sup _{\gamma \in \Gamma (X, \sC^{\infty} (\Lambda ^{p,q}_X \tensor E))} \frac{\left | \left ( \alpha , \gamma \right )\right |^2}{||\dbar \gamma ||^2 + ||\dbar ^* \gamma||^2}.
\end{equation}
The $L^2$-norm of any other solution $u$ of the equation $\dbar u = \alpha$ is larger.
\end{prop}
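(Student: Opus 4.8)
The plan is to use the Hodge decomposition provided by Theorem \ref{hodge-vb} to produce a canonical solution and then to identify its norm via a duality (least-squares) argument. First I would set $u_o := \dbar^* G_{p,q} \alpha$, where $G_{p,q}$ is the Green operator. Using part (c) of Theorem \ref{hodge-vb}, namely ${\rm Id} = \wp_{p,q} + \Box G_{p,q}$, together with the hypothesis $\wp_{p,q}\alpha = 0$, one computes $\alpha = \Box G_{p,q}\alpha = \dbar\dbar^* G_{p,q}\alpha + \dbar^*\dbar G_{p,q}\alpha$; since $\dbar\alpha = 0$ and $[G,\dbar]=0$, the second term $\dbar^* \dbar G_{p,q}\alpha = \dbar^* G_{p,q}\dbar\alpha = 0$, so $\dbar u_o = \alpha$. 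Moreover $u_o = \dbar^* G_{p,q}\alpha$ is visibly in the image of $\dbar^*$, hence orthogonal to $\ker\dbar^* \supseteq {\rm Harm}^{p,q-1}\oplus\ker\dbar$; in particular $u_o \perp \ker\dbar$ among $(p,q-1)$-forms. This orthogonality is the key structural property: any other solution $u$ of $\dbar u = \alpha$ differs from $u_o$ by an element of $\ker\dbar$, so $\|u\|^2 = \|u_o\|^2 + \|u - u_o\|^2 \ge \|u_o\|^2$ with equality only when $u = u_o$, which gives both uniqueness of the minimizer and the final sentence of the statement.

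Next I would establish the variational formula \eqref{norm-of-min} for $\|u_o\|^2$. The cleanest route is: for any smooth $(p,q)$-form $\gamma$, write $\gamma = \wp_{p,q}\gamma + \dbar\eta + \dbar^*\zeta$ via the Hodge decomposition, and observe $(\alpha,\gamma) = (\dbar u_o, \gamma) = (u_o, \dbar^*\gamma)$ since $u_o \perp {\rm Harm}$; also $(\alpha,\gamma) = (\alpha, \dbar^*\zeta')$ essentially, because $\alpha \perp {\rm Harm}$ and $(\alpha,\dbar\eta) = (\dbar^*\alpha,\eta)$ — here I need $\dbar^*\alpha$ to pair to zero, which requires a small argument, or one simply notes $(\alpha,\dbar\eta)=(\dbar u_o,\dbar\eta)$ and uses that the supremum can be restricted to $\gamma$ in the range of $\dbar$. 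So $|(\alpha,\gamma)|^2 = |(u_o,\dbar^*\gamma)|^2 \le \|u_o\|^2\,\|\dbar^*\gamma\|^2 \le \|u_o\|^2(\|\dbar\gamma\|^2 + \|\dbar^*\gamma\|^2)$ by Cauchy–Schwarz, giving the inequality $\le$ in \eqref{norm-of-min}. For the reverse inequality, I would exhibit a near-optimal $\gamma$: take $\gamma := \dbar u_o + $ (a harmonic-free correction), or more directly $\gamma := G_{p,q}\alpha$-type test form, arranged so that $\dbar^*\gamma$ is a multiple of $u_o$ and $\dbar\gamma = 0$; then the quotient in \eqref{norm-of-min} equals exactly $\|u_o\|^2$. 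Concretely, the choice $\gamma = G_{p,q}\alpha$ gives $\dbar^*\gamma = u_o$, $\dbar\gamma = G_{p,q}\dbar\alpha = 0$, and $(\alpha,\gamma) = (\Box G\alpha, G\alpha)$... I would instead pick $\gamma$ so that the computation is transparent, and verify the quotient attains $\|u_o\|^2$.

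The main obstacle I anticipate is the bookkeeping around harmonic projections and domains of $\dbar^*$ when one restricts the supremum to the appropriate class of test forms $\gamma$: one must be careful that the supremum in \eqref{norm-of-min} is genuinely attained (not merely approached) and that the passage between $(\alpha,\gamma)$, $(u_o,\dbar^*\gamma)$, and $(\alpha,\dbar^*\gamma)$ is justified for all smooth $\gamma$, not just those in a convenient subspace — in particular handling the harmonic and $\dbar$-exact components of $\gamma$ correctly so that neither contributes spuriously. The ellipticity and regularity packaged in Theorem \ref{hodge-vb}(a) ensure $u_o$ is smooth, so there are no regularity subtleties; everything reduces to finite-dimensional-looking orthogonal-decomposition linear algebra on the Hilbert space $L^2_{p,q-1}(g,h)$ once the Hodge machinery is in place. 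A clean way to present the whole thing is to first prove the abstract lemma: if $T$ is a closed densely defined operator between Hilbert spaces and $\alpha \in {\rm Range}(T)$, then the minimal-norm solution $u_o$ of $Tu = \alpha$ lies in $\overline{{\rm Range}(T^*)}$ and satisfies $\|u_o\| = \sup_{\gamma} |(\alpha,\gamma)| / \|T^*\gamma\|$; then apply it with $T = \dbar$ acting on $(p,q-1)$-forms, and finally bound $\|T^*\gamma\| = \|\dbar^*\gamma\| \le (\|\dbar\gamma\|^2 + \|\dbar^*\gamma\|^2)^{1/2}$ while noting equality in the last step is achieved because the optimal $\gamma$ can be taken $\dbar$-closed.
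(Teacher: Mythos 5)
Your proposal is correct, but it proves the proposition by a genuinely different route than the paper. You construct the minimizer explicitly as $u_o = \dbar^* G_{p,q}\alpha$ using the Green operator, verify $\dbar u_o = \alpha$ from ${\rm Id} = \wp_{p,q} + \Box G_{p,q}$ and $[G,\dbar]=0$, get minimality and uniqueness from $u_o \in {\rm Im}(\dbar^*) \subset (\ker\dbar)^{\perp}$, and then establish \eqref{norm-of-min} by a two-sided argument: Cauchy--Schwarz applied to $(\alpha,\gamma) = (u_o,\dbar^*\gamma)$ gives the upper bound on the supremum, and the explicit test form $\gamma = G_{p,q}\alpha$ (for which $\dbar\gamma = 0$, $\dbar^*\gamma = u_o$, and $(\alpha,\gamma) = \|u_o\|^2$) attains it. The paper instead runs the argument backwards: it introduces the Hilbert space $H$ of forms orthogonal to $\ker\Box$ with the norm $\|\gamma\|_H^2 = \|\dbar\gamma\|^2 + \|\dbar^*\gamma\|^2$, identifies the right-hand side of \eqref{norm-of-min} as $\|\lambda\|_{H^*}^2$ for the functional $\lambda(\gamma) = (\alpha,\gamma)$, and uses the Riesz representation theorem to produce $\beta \in H$ with $\Box\beta = \alpha$ and $\|\beta\|_H = \|\lambda\|_{H^*}$; the identity \eqref{norm-of-min} then holds essentially by construction once one checks $\dbar\beta = 0$, so that $\|\beta\|_H = \|\dbar^*\beta\| = \|u_o\|$. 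The paper's variational route has the advantage that the norm identity requires no separate verification and generalizes to settings where a Green operator is not available (it is the template for H\"ormander-type estimates), while yours is more concrete and exhibits the optimal test form. One small slip to fix: your parenthetical containment ``$\ker\dbar^* \supseteq {\rm Harm}^{p,q-1}\oplus\ker\dbar$'' is not what you mean — the correct statement is ${\rm Im}(\dbar^*) \perp \ker\dbar$, which is the property you actually use; likewise the hedging about $(\alpha,\dbar\eta)$ in your second paragraph is unnecessary, since the identity $(\alpha,\gamma)=(u_o,\dbar^*\gamma)$ holds for every smooth $\gamma$ with no decomposition needed.
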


\begin{proof}
Let $H_o$ be the collection of smooth $E$-valued $(p,q)$-forms that are orthogonal to ${\rm Kernel}(\Box)$ and let $H$ be the Hilbert space closure of $H_o$ with respect to the norm 
\[
||\gamma ||^2_H := ||\dbar \gamma||^2 + ||\dbar ^* \gamma ||^2.
\]
The latter is a norm because by the Hodge Theorem there exists a constant $c> 0$ such that $(\Box \gamma , \gamma )  \ge c ||\gamma||^2$ for all $\gamma \in H$.  Let $\lambda (\gamma) := (\alpha , \gamma)$.  Then $\lambda \in H^*$ and 
\[
||\lambda ||^2_{H^*} = C:= \sup _{\gamma \in H_o} \frac{\left | \left ( \alpha , \gamma \right )\right |^2}{||\dbar \gamma ||^2 + ||\dbar ^* \gamma||^2}.
\]
By the Riesz Representation Theorem there exists $ \beta \in H$ such that $|| \beta ||_H = ||\lambda||_{H^*}$ and $\lambda (\gamma) = (\beta, \gamma) _H$.  The latter means precisely that $\Box  \beta = \alpha$ (in the weak sense).  Since $\dbar \alpha = 0$, 
\[
0 = (\dbar \Box \beta,\dbar \beta)  = (\dbar \dbar ^* \dbar \beta, \dbar \beta) = ||\dbar^* \dbar \beta||^2 \quad \text{and} \quad 0 = (\dbar ^* \dbar \beta ,\beta) = ||\dbar \beta ||^2,
\]
so $\Box \beta = \dbar \dbar ^*\beta$.  Therefore $u_o := \dbar ^*  \beta$ solves the equation $\dbar  u_o = \alpha$, and $||u_o||^2 = || \beta||^2_H = C$.  Finally, since $u_o = \dbar ^* \beta$, $u_o$ is orthogonal to the kernel of $\dbar$, and hence is the unique solution of minimal norm.  The proof is complete.
\end{proof}

\begin{cor}\label{hormander-estimates}
Let $X$, $E$, $g$, $h$ and $\alpha$ be as in Proposition \ref{hormander-hodge}. 
\begin{enumerate}
\item[{\rm i.}]  There exists a solution $u$ of the equation $\dbar u = \alpha$ such that 
\[
||u||^2 \le (\lambda ^1_{p,q})^{-1} ||\alpha||^2,
\]
where $\lambda ^1 _{p,q}$ is the smallest positive eigenvalue of $\Box : L^2 _{p,q}(g,h) \to L^2 _{p,q}(g,h)$.   
\item[{\rm ii.}]  Moreover, if the metric $g$ is K\"ahler and the endomorphism 
\[
A := \Theta (h \tensor g^{(p)} \tensor \det (g))^{\sharp} \lrcorner
\]
\emph{(c.f. Theorem \ref{bk-id})} is positive semi-definite, $\alpha \perp {\rm Kernel}(A)$ almost everywhere on the support of $\alpha$, and $( A^{-1} \alpha, \alpha ) < +\infty$ then there exists an $E$-valued $(p,q-1)$-form $u$ satisfying the Demailly-Skoda-H\"ormander Estimate 
\[
||u||^2 \le (A^{-1} \alpha, \alpha )
\]
such that $\dbar u = \alpha$.  Consequently 
\[
(G\alpha, \alpha) \le (A^{-1} \alpha , \alpha).
\]
\end{enumerate}
\end{cor}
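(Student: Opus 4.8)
The plan is to deduce both estimates from Proposition \ref{hormander-hodge}. That proposition produces the minimal-norm solution of $\dbar u_o = \alpha$, and its proof identifies $u_o = \dbar^* G_{p,q}\alpha$ together with
\[
\|u_o\|^2 = (G_{p,q}\alpha,\alpha) = \sup_{\gamma} \frac{|(\alpha,\gamma)|^2}{\|\dbar\gamma\|^2 + \|\dbar^*\gamma\|^2},
\]
the supremum being over smooth $E$-valued $(p,q)$-forms $\gamma$. (Here the form $\beta$ appearing in the proof of Proposition \ref{hormander-hodge} equals $G_{p,q}\alpha$: the hypothesis $\wp_{p,q}\alpha=0$ and Theorem \ref{hodge-vb}(c) give $\Box G_{p,q}\alpha=\alpha$, and $G_{p,q}\alpha\perp {\rm Harm}^{p,q}(g,h)$ by Theorem \ref{hodge-vb}(b), so $\beta=G_{p,q}\alpha$; then $\|u_o\|^2=\|\beta\|^2_H=(\Box\beta,\beta)=(\alpha,G_{p,q}\alpha)=(G_{p,q}\alpha,\alpha)$.) Thus in each part it suffices to estimate $(G_{p,q}\alpha,\alpha)$, equivalently the Rayleigh-type quotient on the right.

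Part (i) is then immediate. By the Hodge Theorem (Theorem \ref{hodge-vb}(b)) the Green operator satisfies $\|G_{p,q}\|\le(\lambda^1_{p,q})^{-1}$, and since $G_{p,q}$ is self-adjoint and nonnegative, $(G_{p,q}\alpha,\alpha)\le \|G_{p,q}\|\,\|\alpha\|^2\le(\lambda^1_{p,q})^{-1}\|\alpha\|^2$, which is the desired bound on $\|u_o\|^2$.

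For part (ii) I would bound the quotient directly. The K\"ahler hypothesis makes available the Bochner-Kodaira identity (Theorem \ref{bk-id}), which gives, for every smooth $\gamma$,
\[
\|\dbar\gamma\|^2 + \|\dbar^*\gamma\|^2 = (\Box\gamma,\gamma) = \|\bar\fd\gamma\|^2 + (A\gamma,\gamma) \ \ge\ (A\gamma,\gamma),
\]
where $A = \Theta(h\tensor g^{(p)}\tensor \det g)^{\sharp}\lrcorner \ge 0$. On the other hand, at almost every point of the support of $\alpha$ the form $\alpha$ is orthogonal to ${\rm Kernel}(A)$, hence lies in the pointwise image of $A^{1/2}$, so $\langle \alpha,\gamma\rangle = \langle A^{-1/2}\alpha, A^{1/2}\gamma\rangle$ pointwise; the pointwise Cauchy-Schwarz inequality followed by the integrated one then gives $|(\alpha,\gamma)|^2 \le (A^{-1}\alpha,\alpha)\,(A\gamma,\gamma)$. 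Combining the two displays shows that the quotient is at most $(A^{-1}\alpha,\alpha)$ for every $\gamma$, whence $\|u_o\|^2 \le (A^{-1}\alpha,\alpha)$ by Proposition \ref{hormander-hodge}; and since $\|u_o\|^2 = (G_{p,q}\alpha,\alpha)$ this also yields $(G\alpha,\alpha)\le(A^{-1}\alpha,\alpha)$.

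I expect the only delicate point to be the pointwise manipulation in part (ii): one must fix a rigorous meaning for the partial inverse $A^{-1}$ (defined on the orthogonal complement of ${\rm Kernel}(A)$ in each fiber of $\Lambda^{p,q}_X\tensor E$), verify that $A^{-1/2}\alpha$ is measurable with $\int_X |A^{-1/2}\alpha|^2\,dV_\omega = (A^{-1}\alpha,\alpha)<+\infty$ --- this is exactly where the two standing hypotheses on $\alpha$ and $A$ enter --- and confirm that the Cauchy-Schwarz step remains legitimate where $A$ degenerates. Everything else is a direct application of the Hodge Theorem and the Bochner-Kodaira identity recorded above.
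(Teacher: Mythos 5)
Your proposal is correct, and for part (ii) it takes a genuinely different (and more self-contained) route than the paper. For part (i) you and the paper do essentially the same thing: identify the minimal solution as $u_o = \dbar^* G\alpha$, compute $\|u_o\|^2 = (G\alpha,\alpha)$, and bound this by $(\lambda^1_{p,q})^{-1}\|\alpha\|^2$ using $G = (\Box|_{{\rm Harm}^\perp})^{-1}$. For part (ii), however, the paper simply invokes the existence of a solution with $\|u\|^2 \le (A^{-1}\alpha,\alpha)$ as ``a consequence of Skoda's and Demailly's modifications of H\"ormander's $L^2$ estimate,'' i.e.\ as an external black box, and then deduces $(G\alpha,\alpha) \le (A^{-1}\alpha,\alpha)$ by minimality of $u_o$. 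You instead derive the estimate directly from the variational characterization \eqref{norm-of-min} in Proposition \ref{hormander-hodge}: the Bochner--Kodaira identity gives $\|\dbar\gamma\|^2 + \|\dbar^*\gamma\|^2 = \|\bar\fd\gamma\|^2 + (A\gamma,\gamma) \ge (A\gamma,\gamma)$, and the pointwise Cauchy--Schwarz argument through $A^{\pm 1/2}$ gives $|(\alpha,\gamma)|^2 \le (A^{-1}\alpha,\alpha)(A\gamma,\gamma)$, so the Rayleigh quotient is bounded by $(A^{-1}\alpha,\alpha)$. This is exactly the standard proof of the Demailly--Skoda--H\"ormander estimate in the compact K\"ahler setting, so your argument buys a self-contained proof where the paper cites the literature; the paper's citation, on the other hand, covers the result in greater generality (e.g.\ weakly pseudoconvex, non-compact settings) where the variational formula of Proposition \ref{hormander-hodge} is not available as stated. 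Your flagged delicate points (measurability of $A^{-1/2}\alpha$, the meaning of the partial inverse on $({\rm Kernel}\,A)^\perp$, and the degenerate case $(A\gamma,\gamma)=0$, where the numerator also vanishes) are real but routine, and do not affect the validity of the argument.
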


\begin{proof}
By Proposition \ref{hormander-hodge} the Hodge Theorem and the minimal solution is $u _o = G\dbar ^* \alpha$.  Then 
\[
||u_o||^2 = (G \dbar ^* \alpha , G \dbar ^* \alpha) = (G \alpha , \dbar \dbar ^* G\alpha) = (G\alpha, \alpha).
\]
Thus i follows because $G = (\Box |_{{\rm Harm}^{\perp}})^{-1}$, and ii follows from the existence of a solution $u$ with the estimate $||u||^2 \le (A^{-1} \alpha, \alpha )$; a consequence of Skoda's and Demailly's modifications of H\"ormander's $L^2$ estimate.
\end{proof}


\subsection{The Bergman projection and the Neumann operator}\label{bk-paragraph}

For a general family $(E,h) \to X \stackrel{p}{\to} B$, i.e., not necessarily proper, with $h$ smooth (also not necessary, but some regularity may be required) the Hilbert spaces $\sH _t$ are closed Hilbert subspaces of $\sL_t$.  (In our proper setting of a proper holomorphic family the spaces $\sH _t = {\rm Harm}_{n,0}(g_t, h_t)$ are finite dimensional, so closedness is automatic.)  The corresponding bounded orthogonal projection 
\[
P_t : \sL_t \to \sH _t
\]
is called the \emph{Bergman projection}.   

From a section $f \in \Gamma (X,K_{X/B} \tensor E)$ that is smooth (or even just $L^2$) on the fibers of $p$ one has a section $\cP f \in \Gamma (X, K_{X/B}\tensor E)$ defined by 
\[
\iota _t \cP f := P_t \iota _t f
\]
where $\iota _t: \Gamma (X_t, (K_{X/B}\tensor E)|_{X_t}) \to \Gamma (X_t, K_{X_t}\tensor E|_{X_t} )$ is the isomorphism defined by Proposition \ref{relative-restriction-prop}.  Thus $P_{\sH} :\sL \to \sH$ is given by 
\[
P_{\sH} \ff  = \fri \left ( \cP \fa(\ff) \right ).
\]

The orthogonal projection $P_t^{\perp} :\sL _t \to \sH _t^{\perp}$ onto the orthogonal complement of $\sH_t$ in $\sL_t$ is called the \emph{Neumann operator}.  If one is given a $K_{X_t}\tensor E|_{X_t}$-valued, $\dbar$-exact $(0,1)$-form $\alpha$ then any two solutions $u_o, u_1 \in \sL _t$ of the equation $\dbar u = \alpha$ differ by an element of $\sH _t$.  Thus if $u \in \sL _t$ satisfies $\dbar u = \alpha$ then so does $P_t ^{\perp} u$, and the latter has minimal $\sL_t$-norm.

On a compact complex manifold the Hodge Theorem computes $P_t$ and $P_t ^{\perp}$.  Indeed, by the Hodge Theorem every $\dbar_{X_t}$-exact form is orthogonal to the harmonic forms, and, as we saw in the proof of Theorem \ref{FTHTvb}, if $\alpha = \Box_{X_t} u$ is $\dbar_{X_t}$-closed then $\alpha = \dbar_{X_t} \dbar_{X_t} ^* u$.  Thus $\wp (\dbar_{X_t} f) = 0$ (i.e., $\dbar _{X_t} f$ has no harmonic part) and $\dbar_{X_t} f = \dbar_{X_t} \dbar_{X_t}^* G_t \dbar_{X_t} f$.  Since $f - \dbar_{X_t}^* G_t \dbar_{X_t} f$ is holomorphic and $(\dbar_{X_t}^* G_t \dbar_{X_t} f, f_o) = (G_t \dbar_{X_t} f, \dbar_{X_t} f_o ) = 0$ for any $f_o \in \sH _t$, we see that  
\begin{equation}\label{bk-gf-formula}
P_t f = f - \dbar_{X_t}^* G_t \dbar_{X_t} f \quad \text{ and thus } \quad P_t ^{\perp} f = \dbar_{X_t} ^* G_t \dbar_{X_t} f.
\end{equation}

\begin{rmk}\label{holo-subfield-is-regular}
It is natural to ask whether $\cP$ (and hence $\cP^{\perp} = {\rm Id} - \cP$) is a smooth operator.  In view of \eqref{bk-gf-formula} it suffices to decide when the Green operator $G_t$ varies smoothly with $t$.   The latter question was answered by Kodaira and Spencer:  if the rank of the harmonic spaces is constant then $G_t$ varies smoothly with $t$ (and hence $\cP^{\perp}$ and $\cP$ are smooth operators).  See \cite[Theorem 7.6, page 344]{kodaira-def}.  And though it is not stated explicitly in the latter reference, the converse is also true:  if the Green operator $G_t$ varies smoothly with $t$ then the dimensions of the harmonic spaces are locally constant.  

Thus, in the sense of Definition \ref{smooth-subfield-defn}.c, $\sH \subset \sL$ is regular if and only if $\sH \to B$ is a vector bundle.
\red
\end{rmk}

\section{The Kodaira-Spencer Map of a Proper Holomorphic Family}\label{KS-section}

The Kodaira-Spencer map of a germ at $o\in B$ of a proper holomorphic family  $X \to B$ assigns to a tangent vector $\tau \in T^{1,0} _{B,o}$ a Dolbeault cohomology class $\kappa ^p _o (\tau) \in H^{0,1}_{\dbar} (X_o, T^{1,0} _{X_o})$ on the central fiber of that germ.  Any representative of the map acts on possibly twisted $(p,q)$-forms to produce $(p-1,q+1)$-forms.  The square norm of this action appears in the formulas of both the curvature of $\sL ^{\theta}$ and the second fundamental form of $\sH$ in $\sL ^{\theta}$.  We begin by reviewing the definition of the Kodaira-Spencer map, and then discuss how it acts on sections of the BLS field $\sH \to B$. 

\subsection{Definition of the Kodaira-Spencer Map}

Let $p :X \to B$ be a holomorphic family over a domain $B \in \bbC ^m$ containing the origin. Let us recall the definition of the Kodaira-Spencer map
\[
K ^p_o : T^{1,0} _{B,o} \to H^{0,1} _{\dbar} (X_o, T^{1,0} _{X_o}).
\]
There are some aspects of the construction that can be confusing, so let us be temporarily overly pedantic, and denote by $\dbar _E$ the $\dbar$-operator of the holomorphic vector bundle $E$.  Fix a $(1,0)$ vector $\tau \in T^{1,0} _{B,o}$, extended to a holomorphic $(1,0)$-vector field on a neighborhood of $o$ in $B$ with the same notation $\tau$.  Choose a lift $\xi _{\tau}$ of $\tau$, i.e., a $(1,0)$-vector field on $X$ such that $dp (\xi _{\tau}) = \tau$.  Since $p$ is holomorphic, 
\[
dp \left ( \dbar _{T^{1,0}_X} \xi _{\tau} \right ) = \dbar _{T^{1,0} _B} dp (\xi _{\tau}) =  \dbar _{T^{1,0} _B} {\tau} = 0,
\]
and therefore $\dbar _{T^{1,0}_X} \xi _{\tau}$ is a $(0,1)$-form with values in the vertical vector bundle $T^{1,0} _{X/B}$.  Now, $\dbar _{T^{1,0}_X} \xi _{\tau}$ is $\dbar_{T^{1,0}_X}$-exact, but in general it is not $\dbar _{T^{1,0}_{X/B}}$-exact.  However, $\dbar _{T^{1,0}_X} \xi _{\tau}$ is $\dbar _{T^{1,0}_{X/B}}$-closed.  This closedness is seen in local coordinates as follows.  We take a chart with coordinates $s = (s^1,..., s^m)$ on a neighborhood of $o$ in $B$, and in terms of this coordinate chart we write $p = (p^1,..., p^m)$.  By the inverse function theorem we can choose local coordinates $(z, t)$ in $\bbC^n\times \bbC^m$ with $t^{\mu} = p^{\mu} = p^* s^{\mu}$, $\mu =1,..., m$.  In terms of these coordinates we write $\tau =  \tau ^i (s) \frac{\di}{\di s^i}$ and $\xi _{\tau} = F^j (z,t) \frac{\di }{\di z ^j} + \tau ^i(t) \frac{\di}{\di t^i}$.  Since the functions $\tau ^i$ are holomorphic,
\[
\dbar _{T^{1,0}_{X}}\xi _{\tau} =(\dbar _{z,t} F^j) \tensor \frac{\di}{\di z^j} =  \left ( \frac{\di F^j}{d\bar z ^k} d \bar z ^k + \frac{\di F^j }{\di \bar t ^{\ell}}d\bar t ^{\ell}  \right ) \tensor  \frac{\di }{\di z ^j}.
\]
From the formula 
\[
\dbar _{T^{1,0} _{X/B}} H^j \frac{\di}{\di z^j} = (\dbar _{z,t} H^j) \tensor \frac{\di}{\di z^j}
\]
we see that 
\[
\dbar_{T^{1,0} _{X/B}} \dbar _{T^{1,0} _{X}}\xi _{\tau} = 0.
\]
To simplify notation, from here on we shall simply write $\dbar \xi _{\tau}$ for the $\dbar$-closed $T^{1,0} _{X/B}$-valued $(0,1)$-form obtained by applying $\dbar_{T^{1,0}_X}$ to the lift $\xi_{\tau}$ of the holomorphic $(1,0)$-vector field $\tau$.

We claim that the Dolbeault class of $\iota _{X_o} ^* \dbar \xi _{\tau}$  in $H^{0,1} _{\dbar} (X_o, T^{1,0} _{X_o})$ depends only on $\tau$, and not on the lift $\xi _{\tau}$.  Indeed, suppose $\tilde \xi _{\tau}$ is another lift.  Then the global section $\eta$ of $T^{1,0} _{X/B} \to X$ defined by $\eta = \xi _{\tau} - \tilde \xi _{\tau}$ satisfies 
\[
\dbar \xi _{\tau} = \dbar \tilde \xi _{\tau} + \dbar \eta,
\]
and restriction to $X_o$ shows that $[\iota _{X_o}^* \dbar \xi _{\tau}]= [\iota _{X_o}^* \dbar \tilde \xi _{\tau}]$ in $H^{0,1} _{\dbar} (X_o, T^{1,0} _{X_o})$.

In fact, one can go further, and show that $[\iota _{X_o}^* \dbar \xi _{\tau}]$ depends only on $\tau (o)$.  Indeed, while $\xi _{f\tau} \not = p^*f \cdot \xi _{\tau}$ for all $f \in \cO (B)$, the difference $\xi _{f\tau} - p^*f \cdot \xi _{\tau}$ is a section of $T^{1,0} _{X/B}$.  Similarly $\xi _{\tau _1 + \tau _2} - \xi _{\tau _1} - \xi _{\tau _2}$  is a section of $T^{1,0} _{X/B}$.  These facts imply that the class $[\iota _{X_o}^* \dbar \xi _{\tau}]$ is independent of the choice of holomorphic extension of $\tau$, and also that the dependence on $\tau$ is linear.

\begin{s-rmk}
If we choose our lift $\xi _{\tau}$ so that it comes from a horizontal distribution then the linearity follows more easily, since in that case  $\xi _{f\tau + \sigma} = p^*f \cdot \xi _{\tau} + \xi _{\sigma}$. In fact, in this case the linearity holds at the level of the forms $\dbar \xi _{\tau}$, even before restrictions to the fibers of $p$.
\red
\end{s-rmk}

\begin{s-rmk}
The choice of a \emph{holomorphic} extension of the vector $\tau$ is made for convenience, and is not required.  Indeed, one can see from the local formula that for the lift $\xi _{\tau}$ for each $t$ the section $\iota _{X_t} ^* \dbar \xi _{\tau}$ is a $\dbar$-closed $T^{1,0} _{X_t}$-valued $(0,1)$-form.  Again the class $[\iota _{X_o} ^* \dbar \xi _{\tau}]$ depends only on $\tau (o)$, but this time for slightly different reasons: the difference of two lifts $\zeta := \xi _{\tau} - \tilde \xi _{\tilde \tau}$ is vertical at $o$, and one checks that 
\[
\iota _{X_o} ^* \dbar \xi _{\tau} - \iota _{X_o} ^* \dbar \xi _{\tau} = \dbar _{T^{1,0} _{X_o}}\left ( \zeta |_{X_o}\right )
\]
by using a local computation as above.
\red
\end{s-rmk}

\begin{defn}
The linear map 
\begin{equation}\label{kodaira-spencer-map-defn}
K ^{p}_o : T^{1,0} _{B,o} \ni \tau \mapsto [\iota _{X_o}^* \dbar \xi _{\tau}] \in H^{0,1}_{\dbar} (X_o, T^{1,0} _{X_o})
\end{equation}
is called the Kodaira-Spencer map of the holomorphic family $p :X \to B$ at $o \in B$.
\end{defn}

In particular, if we choose any horizontal distribution $\theta \subset T_X$ then the maps 
\[
\kappa ^{\theta} _{t} : T^{1,0} _{B, t}  \ni \tau \mapsto \iota _{X_t} ^*\dbar \xi ^{\theta}_{\tau} \in \Gamma \left (X_t, \sC^{\infty} \left (K_{X_t}\right )\right ),
\]
(see page \pageref{horiz-lift-notation-page} for the notation $\xi ^{\theta}_{\tau}$) represent the maps 
\[
K ^p _t: T^{1,0} _{B,t} \to H^{0,1} _{\dbar} (X_t, T^{1,0} _{X_t})
\]
in the sense of Dolbeault cohomology.  In the proof of Theorem \ref{B2-thm} these representatives will make another appearance, and it will be particularly important to choose the right horizontal lift.  The next paragraph is concerned with the construction of this special horizontal lift.  

\subsection{Action on the fibers of $\sH$}

So far in this section we have been dealing with general holomorphic families $p :X \to B$.  In the present paragraph we shall assume that the family $p:X \to B$ is K\"ahler, i.e., is equipped with a \emph{relative K\"ahler form}: a closed $(1,1)$-form $\omega$ on $X$ such that for each $t \in B$ the restriction $\iota _{X_t} ^*\omega$ is a K\"ahler form on $X_t$. 

The contraction of $E$-valued holomorphic $(n,0)$-forms on the fiber $X_t$ of $p$ with any representative of the Kodaira-Spencer class $K^p _t (\tau)$ produces a $\dbar$-closed $E$-valued $(n-1,1)$-form on $X_t$ whose Dolbeault class is independent of the choice of representative of $K^p _t(\tau)$.  Moreover, one has the following remarkable proposition.

\begin{prop}\label{dolbeault-prim-prop}
For a point $t \in B$ and any holomorphic $E$-valued $(n,0)$-form $f$ on $X_t$ the Dolbeault cohomology class $K^p _t (\tau) \lrcorner f \in H^{n-1,1}_{\dbar} (X_t, E)$ is primitive with respect to the K\"ahler form $\omega _t := \iota _{X_t} ^* \omega$.
\end{prop}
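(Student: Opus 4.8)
The plan is to show that the $E$-valued $(n+1-q, n+1-p) = (n, n)$-form obtained by wedging a representative of $K^p_t(\tau) \lrcorner f$ with $\omega_t^{n+1-(p+q)} = \omega_t^0 = \omega_t$... wait — here $(p,q) = (n-1,1)$ so $p + q = n$ and $n + 1 - (p+q) = 1$, so primitivity of the Dolbeault class means that $(K^p_t(\tau)\lrcorner f)\wedge \omega_t$ should be $\dbar$-exact as an $E$-valued $(n,1)$-form on $X_t$. So the goal reduces to producing an explicit primitive of this form.

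First I would pick a convenient representative: choose a horizontal distribution $\theta$ so that $\kappa^\theta_\tau f := \iota_{X_t}^* \dbar\xi^\theta_\tau \lrcorner u$ represents $K^p_t(\tau)\lrcorner f$, where $u$ is an $E$-valued $(n,0)$-form on $X$ with $\iota_{X_t}^* u = f$ (such a $u$ exists by Proposition \ref{holo-infinitesimal}, and we may even arrange the stronger vanishing conditions there, though all that is really needed here is that $\iota_{X_t}^*u = f$ is holomorphic and $\iota_{X_t}^*\dbar u \equiv 0$). The key computation is to identify $\omega_t \wedge \kappa^\theta_\tau f$ with something manifestly $\dbar$-exact on $X_t$. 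Concretely, since $\xi^\theta_\tau \lrcorner u$ is an $E$-valued $(n-1,0)$-form on $X$, and since on $X_t$ we have $\dbar f = 0$, the twisted Cartan-type formula gives $\iota_{X_t}^*\dbar(\xi^\theta_\tau \lrcorner u) = \kappa^\theta_\tau f + (\text{a }\dbar f\text{ term that vanishes})$ up to the contribution of $\dbar u$; more precisely one uses $\iota_{X_t}^*\dbar(\xi^\theta_\tau\lrcorner u) = \iota_{X_t}^*\big((\dbar\xi^\theta_\tau)\lrcorner u - \xi^\theta_\tau\lrcorner \dbar u\big)$, and the second term restricts to zero on $X_t$ because $\iota_{X_t}^*\dbar u = 0$ (as $f$ is holomorphic on the fiber). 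Hence $\kappa^\theta_\tau f = \dbar\big(\iota_{X_t}^*(\xi^\theta_\tau\lrcorner u)\big)$ is \emph{already} $\dbar$-exact on $X_t$ — but that only shows its Dolbeault class is zero, which is too strong and actually false in general; the subtlety is that $\xi^\theta_\tau\lrcorner u$ need not be a genuine $(n-1,0)$-form on $X_t$ independent of choices, and the correct statement involves wedging with $\omega_t$.

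So the honest route is: wedge with $\omega_t$. Since $L_\omega$ (wedging by $\omega_t$) commutes with $\dbar$ on $X_t$, it suffices to exhibit, for the $(n,1)$-form $\omega_t \wedge \kappa^\theta_\tau f$, a primitive. Using the Lefschetz theory of Paragraph \ref{lefschetz-paragraph}: by the twisted Lefschetz decomposition (Proposition \ref{lefschetz-decomp-twisted}) write the $\dbar$-closed $(n-1,1)$-form $\kappa^\theta_\tau f = \gamma_0 + \omega_t \wedge \gamma_1$ with $\gamma_0$ primitive of degree $n$ (hence of bidegree... $(n-1,1)$, which is already top-degree among $(a,b)$ with $a+b = n$, so actually any $(n-1,1)$-form $\gamma$ with $a+b=n$ is primitive iff $\Lambda_{\omega_t}\gamma = 0$) and $\omega_t\wedge\gamma_1$ with $\gamma_1$ of degree $n-2$. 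The point is that $\omega_t \wedge \gamma_0 = 0$ by primitivity (since $L_{\omega_t}^{n+1-k}$ kills a primitive $k$-form and here $k = n$ gives $L_{\omega_t}$), so $\omega_t\wedge \kappa^\theta_\tau f = \omega_t^2 \wedge \gamma_1$. Then one shows $\gamma_1$ can be taken $\dbar$-exact up to harmonic ambiguity, or — cleaner — one directly invokes the mechanism in Remark \ref{primitive-harmonic-rmk}: given that $\omega_t \wedge \kappa^\theta_\tau f$ is $\dbar$-exact is precisely what we want to prove, so instead we reverse the logic: I would show there is a primitive $(n-1,1)$-form $u'$ in the Dolbeault class of $\kappa^\theta_\tau f$ (which exists by the argument in Remark \ref{primitive-harmonic-rmk} applied to bidegree $(n-1,1)$, solving $\omega_t\wedge u_o = \dbar\phi$, writing $\phi = \omega_t\wedge\psi$ via $\psi = -\Lambda_{\omega_t}\phi$, and setting $u = u_o - \dbar\psi$ so that $\omega_t \wedge u = 0$). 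Then $u'$ primitive means exactly that $\omega_t\wedge u' = 0$, hence $\omega_t\wedge\kappa^\theta_\tau f = \omega_t\wedge(u' + \dbar(\text{something})) = \dbar(\omega_t \wedge(\text{something}))$ is $\dbar$-exact, i.e. the class $K^p_t(\tau)\lrcorner f$ is primitive in the sense of Definition \ref{primitive-defn}.1.

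The main obstacle I expect is bookkeeping: making sure the twisted Cartan/Lie-derivative manipulations (Lemma \ref{dbar-of-sff}, Proposition \ref{int-ptwise-holo}) correctly identify $\kappa^\theta_\tau f$ as a well-defined $\dbar$-closed class independent of the representative $u$ and the lift $\theta$ — but this is already handled by the cited propositions — and then correctly tracking the bidegree constraints in the Lefschetz decomposition, since in bidegree $(n-1,1)$ the ``primitive'' condition is $\Lambda_{\omega_t}$-annihilation and the relevant power of $L_{\omega_t}$ that annihilates is just the first power. Everything else (existence of the representative $u$, holomorphicity of $f$ on fibers forcing $\iota_{X_t}^*\dbar u = 0$) is available from the excerpt. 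I would present the proof essentially as: (i) fix a representative via a horizontal lift; (ii) observe $\kappa^\theta_\tau f$ is $\dbar$-closed of bidegree $(n-1,1)$; (iii) invoke the existence of a primitive representative in its class via the Remark \ref{primitive-harmonic-rmk} construction; (iv) conclude primitivity of the class.
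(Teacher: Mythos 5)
Your reduction of the statement to ``$\omega_t \wedge (K^p_t(\tau)\lrcorner f)$ is $\dbar$-exact'' is the right reformulation, but the proposal never actually proves this, and the way you try to close the argument is circular. You appeal to the construction in Remark \ref{primitive-harmonic-rmk} to produce a primitive representative $u'$ in the class of $\kappa^\theta_\tau f$; but that construction begins by \emph{assuming} $\omega_t \wedge u_o = \dbar\phi$ for some $\phi$, i.e.\ it assumes the class is primitive --- which is exactly the assertion to be proved. ``Reversing the logic'' does not escape this: without an independent proof that $\omega_t\wedge\kappa^\theta_\tau f$ is $\dbar$-exact, you have no primitive representative to work with. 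The missing idea is the paper's key manipulation: contract the identically vanishing $(n+1,1)$-form $\omega_t\wedge f$ with the vertical $T^{1,0}_{X_t}$-valued $(0,1)$-form $\dbar\xi^\theta_\tau$ to get
\[
0 \;=\; (\dbar\xi^\theta_\tau)\lrcorner(\omega_t\wedge f)\;=\;\bigl((\dbar\xi^\theta_\tau)\lrcorner\omega_t\bigr)\wedge f \;+\;\omega_t\wedge\bigl((\dbar\xi^\theta_\tau)\lrcorner f\bigr),
\]
so that $\omega_t\wedge\kappa^\theta_\tau f = -\bigl((\dbar\xi^\theta_\tau)\lrcorner\omega_t\bigr)\wedge f$; then observe that $(\dbar\xi^\theta_\tau)\lrcorner\omega_t = \dbar\bigl(\iota_{X_t}^*(\xi^\theta_\tau\lrcorner\omega)\bigr)$ (using $\dbar\omega=0$ and verticality of $\dbar\xi^\theta_\tau$), whence the right-hand side equals $-\dbar\bigl(\iota_{X_t}^*(\xi^\theta_\tau\lrcorner\omega)\wedge f\bigr)$ because $f$ is holomorphic. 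This explicit primitive is what your argument lacks; your Lefschetz-decomposition detour ($\kappa = \gamma_0 + \omega_t\wedge\gamma_1$, ``one shows $\gamma_1$ can be taken $\dbar$-exact'') is also unsubstantiated, since the decomposition is pointwise and gives no control on $\dbar\gamma_1$.

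A secondary error worth flagging: you assert that $\iota_{X_t}^*(\xi^\theta_\tau\lrcorner\dbar u)=0$ because $\iota_{X_t}^*\dbar u = 0$. This is false: $\xi^\theta_\tau$ is horizontal, not tangent to $X_t$, so contraction does not commute with restriction; writing $\dbar u = \psi + dp^\mu\wedge\alpha_\mu + d\bar p^\nu\wedge\beta_{\bar\nu}$ as in Remark \ref{local-in-the-base-coordinates-rmk}, one gets $\iota_{X_t}^*(\xi^\theta_\tau\lrcorner\dbar u) = \iota_{X_t}^*(\xi^\theta_\tau\lrcorner\psi) + \tau^\mu\iota_{X_t}^*\alpha_\mu$, which is generally nonzero. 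The identity $\dbar(\xi^\theta_\tau\lrcorner u) = (\dbar\xi^\theta_\tau)\lrcorner u - \xi^\theta_\tau\lrcorner\dbar u$ shows only that $\kappa^\theta_\tau f$ and $\iota_{X_t}^*(\xi^\theta_\tau\lrcorner\dbar u)$ are cohomologous, not that $\kappa^\theta_\tau f$ is exact. You correctly sensed that your conclusion was too strong, but the diagnosis (``$\xi^\theta_\tau\lrcorner u$ need not be a genuine form independent of choices'') is not the real issue; the real issue is the non-vanishing of that contracted restriction.
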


Actually, one can do even better:  one can choose a horizontal lift $\theta$ such that 
\begin{equation}\label{fib-perp-lift-eq}
\xi ^{\theta}_{\tau} \lrcorner \omega = 0 \quad \text{for all $\tau \in T^{1,0}_B$}.
\end{equation}
Using this lift, for any $E$-valued $(n,0)$-form $u$ on the fiber $X_t$ one has 
\begin{eqnarray*}
0 &=& \dbar \xi ^{\theta}_{\tau} \lrcorner (\omega _t \wedge u) =  (\dbar \xi ^{\theta}_{\tau}\lrcorner \omega_t ) \wedge u + \omega _t \wedge ( \dbar \xi ^{\theta}_{\tau}\lrcorner u)=  (\dbar (\xi ^{\theta}_{\tau}\lrcorner \omega)_t ) \wedge u + \omega _t \wedge ( \dbar \xi ^{\theta}_{\tau}\lrcorner u)\\
&= &\omega _t \wedge ( \dbar \xi ^{\theta}_{\tau}\lrcorner u),
\end{eqnarray*}
which shows the following proposition.

\begin{prop}\label{good-horizontal-distribution}
The form $\iota _{X_t} ^*(\dbar \xi ^{\theta}_{\tau}\lrcorner u)$ representing $K^p_t ( \tau) \lrcorner f$ is primitive.
\end{prop}

If the relative K\"ahler form $\omega$ is actually a K\"ahler form, i.e., in addition to being closed, it is also a positive $(1,1)$-form, then the horizontal lift defined in \eqref{fib-perp-lift-eq} is just the lift that is perpendicular to the fibers.  It is easy to see that this lift is well-defined even if $\omega$ is not positive.  Indeed, the component of the lift $\xi ^{\theta}_{\tau}$ that is transverse to the fibers is already determined simply because $\xi ^{\theta}_{\tau}$ is a lift, so to determine the vertical component we need only $\omega _t$ to be K\"ahler for each $t$, which is of course the case.  More explicitly, using the implicit function coordinates $(z,t)$ defined by $p (z,t) = t$, we can write 
\[
\xi ^{\theta}_{\tau} = F^i \frac{\di}{\di z ^i} + \tau ^{\mu} \frac{\di}{\di t ^{\mu}}.
\]
Then $\xi ^{\theta}_{\tau} \lrcorner \omega = \left (F^i \omega _{i \bar j} + \tau ^{\mu} \omega _{\mu \bar j}\right ) d\bar z ^j$, so taking 
\[
F^i := - \tau ^{\mu} \omega _{\mu \bar j} \omega ^{i \bar j}
\]
yields the desired conclusion.

\section{The proof of Theorem \ref{B2-thm}}\label{B2-thm-pf-section}

From this point on in the article, we shall make use of an abuse of notation that will make formulas much more compact.  As discussed in Section \ref{lie-deriv-section}, there are two equivalent ways to realize the sections of $\sL \to B$.  One of them is as sections of $K_{X/B} \tensor E \to X$ and the other is as equivalence classes of sections of $\Lambda ^{n,0} _X \tensor E \to B$.  When applying the twisted Lie derivatives, we shall always use the second description.  But we will abuse notation and write 
\[
L^{1,0} _{\xi _{\tau} ^{\theta}} \fa (\ff)
\]
to mean the section represented by the $E$-valued $(n,0)$-form $L^{1,0} _{\xi _{\tau} ^{\theta}} u$ for any $u$ representing the section $\fa (\ff)$.  Note that by Proposition \ref{int-ptwise-holo}.a the resulting sections are independent of the choice of representatives.  

\subsection{Formula for the Chern connections of $\sL^{\theta}$}

\begin{prop}\label{Chern-connections-for-L^theta}
Let $E \to X$ be a holomorphic vector bundle with Hermitian metric $h$.  The BLS-Chern connection $\nabla ^{\sL ^{\theta}}$ for $\sL^{\theta}$ is given by the formula 
\[
\nabla ^{\sL ^{\theta}1,0}_{ \tau} \ff = \fri \left ( L^{\nabla 1,0}_{\xi ^{\theta}_{\tau}} \fa (\ff)\right ), \quad \tau \in T^{1,0} _{B,t},
\]
where $\nabla$ is the Chern connection for $(E,h)$ (c.f. Definition \ref{cplx-lie-deriv}).
\end{prop}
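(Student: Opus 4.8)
The plan is to verify the defining property \eqref{equation-for-nabla-10} of the BLS Chern connection, namely that
\[
\fh\left(\nabla^{\sL^{\theta}1,0}\ff_1,\ff_2\right) = \di\left(\fh(\ff_1,\ff_2)\right) - \fh\left(\ff_1,\dbar^{\theta}\ff_2\right)
\]
for all smooth sections $\ff_1,\ff_2$, since by the uniqueness statement proved alongside that formula, any connection satisfying this and having $(0,1)$-part equal to $\dbar^{\theta}$ must be the Chern connection. So the candidate formula $\nabla^{\sL^{\theta}1,0}_{\tau}\ff := \fri(L^{\nabla 1,0}_{\xi^{\theta}_{\tau}}\fa(\ff))$ needs to be checked to (i) genuinely define a connection (the Leibniz rule), (ii) be well-defined independent of representatives, and (iii) satisfy the displayed identity when contracted against an arbitrary $\tau \in T^{1,0}_{B,t}$.

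First I would note that well-definedness is immediate from Proposition \ref{int-ptwise-holo}.a, which says $\iota^*_{X_t}L^{1,0}_{\xi^{\theta}_{\tau}}u$ is independent of the choice of representative $u$ of $\fa(\ff)$; combined with the abuse-of-notation convention just introduced, the formula makes sense. The Leibniz rule $\nabla^{\sL^{\theta}1,0}_{\tau}(f\ff) = (\di_{\tau}f)\ff + f\nabla^{\sL^{\theta}1,0}_{\tau}\ff$ for $f \in \sC^{\infty}(B)$ follows from the corresponding Leibniz property of the twisted Lie derivative: $L^{\nabla 1,0}_{\xi^{\theta}_{\tau}}((p^*f)u) = (\xi^{\theta}_{\tau}(p^*f))u + (p^*f)L^{\nabla 1,0}_{\xi^{\theta}_{\tau}}u$ and $\xi^{\theta}_{\tau}(p^*f) = \di_{\tau}f$ since $dp\,\xi^{\theta}_{\tau}=\tau$ and $f$ is pulled back from the base. $\sC^{\infty}(B)$-linearity in $\tau$ is likewise clear because $\theta^{1,0}$ is $\sC^{\infty}_B$-linear.

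The heart of the matter is the metric identity. Here I would apply Lemma \ref{lie-deriv-lemma} with $u = \fa(\ff_1)$ and $u' = \fa(\ff_2)$ and the lift $\xi_{\tau} = \xi^{\theta}_{\tau}$: it gives
\[
\tau\int_{X_t}\left<u\wedge\bar u',h\right> = \int_{X_t}\left<L^{1,0}_{\xi^{\theta}_{\tau}}u\wedge\bar u',h\right> + \int_{X_t}\left<u\wedge\overline{L^{0,1}_{\bar\xi^{\theta}_{\tau}}u'},h\right>.
\]
The left side is $\di_{\tau}(\fh(\ff_1,\ff_2))$ (using Remark \ref{hol-ext-convention-rmk} to extend $\tau$ holomorphically, so that the holomorphic and antiholomorphic directional derivatives separate correctly, and noting the $(1,0)$ versus full derivative bookkeeping). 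The first term on the right is, by definition, $\fh(\nabla^{\sL^{\theta}1,0}_{\tau}\ff_1,\ff_2)$ with the candidate formula. The second term on the right is $\overline{\text{(pairing with }L^{0,1}_{\bar\xi^{\theta}_{\tau}}u'\text{)}}$; but $L^{\nabla 0,1}_{\bar\xi^{\theta}_{\tau}}u' = \dbar(\bar\xi^{\theta}_{\tau}\lrcorner u') + \bar\xi^{\theta}_{\tau}\lrcorner\dbar u'$ restricts on $X_t$, after recalling $\bar\xi^{\theta}_{\tau}\lrcorner u' = 0$ for an $(n,0)$-form and a $(1,0)$ vertical-plus-horizontal vector, to $\iota^*_{X_t}(\bar\xi^{\theta}_{\tau}\lrcorner\dbar u')$, which is precisely $\fa(\dbar^{\theta}_{\bar\tau}\ff_2)$ by Definition \ref{dbar-theta-defn}. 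Hence that term equals $\fh(\ff_1,\dbar^{\theta}\ff_2)$ contracted appropriately, and rearranging yields exactly \eqref{equation-for-nabla-10}.

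The main obstacle I anticipate is the careful bookkeeping of bidegrees and conjugations: making sure that when $\tau$ is extended holomorphically the full derivative $d$ along $\tau$ reduces to $\di$ along $\tau$ (so there is no stray $\dbar$ contribution), that the $\bar\xi^{\theta}_{\tau}\lrcorner u = 0$ identities are invoked correctly so that $L^{0,1}_{\bar\xi^{\theta}_{\tau}}u'$ collapses to the $\dbar^{\theta}$-term upon restriction to the fiber, and that the sign and conjugation conventions in Lemma \ref{lie-deriv-lemma} match those in \eqref{equation-for-nabla-10}. Once the identity \eqref{equation-for-nabla-10} is verified and $\nabla^{\sL^{\theta}0,1} = \dbar^{\theta}$ is noted (which holds by construction of the reference connection from a complex connection $D$ with $D^{0,1}=\dbar$, as observed just before the definition of $\sL^{\theta}$), uniqueness of the Chern connection finishes the proof.
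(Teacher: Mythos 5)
Your proposal is correct and follows essentially the same route as the paper: the paper deduces the proposition from Proposition \ref{induced-connections-smooth}, whose proof likewise invokes Lemma \ref{lie-deriv-lemma} to verify metric compatibility of the induced connection $\nabla^{D,\theta}$ with $D$ the Chern connection of $(E,h)$, notes that its $(0,1)$-part is $\dbar^{\theta}$ by construction, and concludes by uniqueness of the BLS Chern connection via \eqref{equation-for-nabla-10}. Your additional checks (well-definedness via Proposition \ref{int-ptwise-holo}.a, the Leibniz rule, and the collapse of $L^{0,1}_{\bar\xi^{\theta}_{\tau}}u'$ to $\bar\xi^{\theta}_{\tau}\lrcorner\dbar u'$ since $\bar\xi^{\theta}_{\tau}\lrcorner u'=0$ for an $(n,0)$-form) are exactly the details the paper leaves implicit.
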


\noi Proposition \ref{Chern-connections-for-L^theta} will be proved momentarily.

\medskip

There are many connections compatible, in the sense of Definition \ref{connection-defn}, with the smooth structure of $\sL \to B$, including connections compatible with the almost holomorphic structures defined in Paragraph \ref{structure-of-L}, i.e., connections $\nabla$ such that $\nabla ^{0,1} = \dbar ^{\theta}$.  We restrict ourselves to such $\dbar^{\theta}$-compatible connections, since it is among these that we shall find our Chern connections.   

For every $p$-horizontal distribution $\theta \subset T_X$ and every connection $D$ for $E \to X$ such that $D^{0,1} = \dbar$ there is a connection $\nabla^{D,\theta} : \Gamma (B, \sC^{\infty} (\sL)) \to \Gamma (B, \sC^{\infty} (\Lambda ^1_B \tensor \sL))$ defined as follows.  Given a section $\ff \in \Gamma (B, \sC^{\infty} (\sL))$, for each $\tau = \tau ^{1,0} + \tau ^{0,1} \in T_{B}\tensor \bbC = T^{1,0}_B \oplus T^{0,1} _B$ the connection 
\[
\nabla ^{D,\theta}_{\tau} \ff := \fri \left ( L^{D1,0}_{\xi ^{\theta}_{\tau^{1,0}}} \fa (\ff)\right ) + \dbar ^{\theta}_{\tau^{0,1}} \ff = \fri \left ( L^{D1,0}_{\xi ^{\theta}_{\tau^{1,0}}} \fa (\ff)+ L^{0,1}_{\bar \xi^{\theta}_{\tau^{0,1}}} \fa(\ff) \right )
\]
is compatible with the smooth structure of $\sL \to B$.

\begin{prop}\label{induced-connections-smooth}
Let $E \to X$ be a holomorphic vector bundle with Hermitian metric $h$.  If $D$ is a smooth connection for $E \to X$ and $\theta$ is a smooth horizontal lift then the induced connection $\nabla ^{D,\theta}$ is a connection for $\sL \to B$ in the sense of Definition \ref{connection-defn}.  Moreover, if $Dh = 0$ then $\nabla ^{D,\theta}\fh = 0$, where $\fh$ is the $L^2$ metric for $\sL \to B$.
\end{prop}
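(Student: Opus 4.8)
The plan is to verify the two assertions of Proposition \ref{induced-connections-smooth} separately. For the first assertion, that $\nabla^{D,\theta}$ is a connection in the sense of Definition \ref{connection-defn}, I would check the two defining properties. The $\bbC$-linearity in the vector field argument is immediate from the construction: the map $\tau \mapsto \xi^\theta_{\tau^{1,0}}$ is $\sC^\infty$-linear because $\theta$ is a distribution (a vector bundle morphism), and contraction and the twisted Lie derivative are linear in the vector field. For the Leibniz rule $\nabla^{D,\theta}_\tau(f\ff) = (\tau f)\ff + f\nabla^{D,\theta}_\tau \ff$, the key computational input is the behavior of the twisted Lie derivative under multiplication by a function pulled back from the base: if $u$ represents $\fa(\ff)$ and $\phi = p^*f$, then $\phi u$ represents $\fa(f\ff)$, and one has $L^{D1,0}_{\xi^\theta_{\tau^{1,0}}}(\phi u) = \phi\, L^{D1,0}_{\xi^\theta_{\tau^{1,0}}}u + (\xi^\theta_{\tau^{1,0}} \phi) u = \phi\,L^{D1,0}_{\xi^\theta_{\tau^{1,0}}}u + (\tau^{1,0} f)\circ p \cdot u$, using that $dp\,\xi^\theta_{\tau^{1,0}} = \tau^{1,0}$. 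The same identity holds for the $(0,1)$-part with $\dbar^\theta$ in place of $L^{D1,0}$, directly from Definition \ref{dbar-theta-defn} and the fact that $\dbar$ on $E$-valued forms obeys the Leibniz rule. Adding the two pieces and passing from $(n,0)$-forms to relative canonical sections via $\fri$ gives the Leibniz identity. One should also note that $f$ need not be pulled back from the base — but since sections of $\sL$ are $\sC^\infty_B$-modules, $f \in \sC^\infty(B)$ and the relevant $f$ appearing in the Leibniz rule is indeed a base function, so $p^*f$ is the right object.

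For the second assertion, that $Dh = 0$ implies $\nabla^{D,\theta}\fh = 0$, I would compute the covariant derivative of the $L^2$ metric directly. Fix smooth sections $\ff_1,\ff_2$ of $\sL$ with $E$-valued $(n,0)$-form representatives $u_1,u_2$, and fix $t \in B$ and $\tau \in T^{1,0}_{B,t}$, extended to a holomorphic vector field. By definition $\fh(\ff_1,\ff_2)(t) = (-1)^{n^2/2}\int_{X_t}\langle u_1 \wedge \bar u_2, h\rangle$. Differentiating along $\tau$ using Lemma \ref{lie-deriv-lemma} (with the lift $\xi^\theta_\tau$) gives
\[
\tau\,\fh(\ff_1,\ff_2) = (-1)^{n^2/2}\int_{X_t}\langle L^{D1,0}_{\xi^\theta_\tau}u_1 \wedge \bar u_2, h\rangle + (-1)^{n^2/2}\int_{X_t}\langle u_1 \wedge \overline{L^{D0,1}_{\bar\xi^\theta_\tau}u_2}, h\rangle.
\]
Here I have used the full twisted Lie derivative $L^D$ associated to the connection $D$, which is legitimate because $D^{0,1} = \dbar$ and Lemma \ref{lie-deriv-lemma} (via Proposition \ref{restricted-product-rule}) only sees the restriction to the fiber, where the extra $D$-terms in the product rule are accounted for precisely by the hypothesis $Dh = 0$ — more carefully, Proposition \ref{restricted-product-rule} is stated for the Chern connection, but its proof only uses that $\nabla$ is a metric connection, i.e. $d\langle s,s'\rangle_h = \langle Ds, s'\rangle_h + \langle s, Ds'\rangle_h$, which is exactly $Dh=0$. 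The first term on the right is $\fh(\nabla^{D,\theta}_\tau\ff_1,\ff_2)$ by Proposition \ref{Chern-connections-for-L^theta}'s formula applied with $D$ in place of the Chern connection (i.e. by the very definition of $\nabla^{D,\theta}$), and the second is $\fh(\ff_1,\nabla^{D,\theta}_\tau\ff_2)$ since $L^{D0,1}_{\bar\xi^\theta_\tau}\fa(\ff_2) = \dbar^\theta_{\bar\tau}\fa(\ff_2)$ wait — more precisely since $\nabla^{D,\theta}_{\bar\tau}\ff_2 = \dbar^\theta_{\bar\tau}\ff_2 = \fri(L^{0,1}_{\bar\xi^\theta_\tau}\fa(\ff_2))$. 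A parallel computation with $\bar\tau$ in place of $\tau$ (using the second identity in Lemma \ref{lie-deriv-lemma}, or rather its conjugate) handles the $(0,1)$-direction. Combining, $d\fh(\ff_1,\ff_2) = \fh(\nabla^{D,\theta}\ff_1,\ff_2) + \fh(\ff_1,\nabla^{D,\theta}\ff_2)$, i.e. $\nabla^{D,\theta}\fh = 0$.

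The main obstacle I anticipate is bookkeeping around the two descriptions of sections of $\sL$ — as relative canonical sections versus as equivalence classes of $E$-valued $(n,0)$-forms — and making sure that every formula is independent of the choice of representative $(n,0)$-form. This is exactly the content of Proposition \ref{int-ptwise-holo}.a together with Remark \ref{local-in-the-base-coordinates-rmk}: the restriction to each fiber $\iota^*_{X_t}L^{1,0}_{\xi^\theta_\tau}u$ depends only on $\fa(\ff)$ and not on $u$, so the integrands $\langle L^{D1,0}_{\xi^\theta_\tau}u_1 \wedge \bar u_2, h\rangle$ restricted to $X_t$ — being $(n,n)$-forms — are well-defined on $X_t$. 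Once this is granted, both verifications are routine applications of the product rules collected in Section \ref{lie-deriv-section}. A secondary subtlety worth a sentence: the hypothesis $Dh = 0$ is used precisely to replace the Chern connection in Lemma \ref{lie-deriv-lemma} / Proposition \ref{restricted-product-rule} by the given metric connection $D$, since those statements are really about arbitrary metric connections with $(0,1)$-part equal to $\dbar$.
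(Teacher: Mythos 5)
Your proposal is correct and follows the same route as the paper, which disposes of this proposition in a single line: the connection axioms are routine, and metric compatibility is exactly Lemma \ref{lie-deriv-lemma}. (Note that since the definition of $\nabla^{D,\theta}$ already presupposes $D^{0,1}=\dbar$, the extra hypothesis $Dh=0$ forces $D$ to be the Chern connection, so the lemma applies verbatim; your discussion of extending Proposition \ref{restricted-product-rule} to general metric connections is therefore not actually needed.) Your Leibniz computation and your remarks on independence of the representative $(n,0)$-form are accurate.

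One step is justified too quickly. You assert that linearity in the vector-field slot is immediate because ``contraction and the twisted Lie derivative are linear in the vector field.'' The twisted Lie derivative is $\bbC$-linear in $\xi$ but it is \emph{not} tensorial: for $g\in\sC^{\infty}(X)$ one has
\[
L^{D1,0}_{g\xi}u \;=\; g\,L^{D1,0}_{\xi}u \;+\; \di g\wedge(\xi\lrcorner u),
\]
so at the level of $E$-valued $(n,0)$-forms the identity $L^{D1,0}_{\xi^{\theta}_{f\tau}}u=(p^*f)\,L^{D1,0}_{\xi^{\theta}_{\tau}}u$ fails, and Definition \ref{connection-defn}.i requires $\nabla_{f\xi+\eta}=f\nabla_{\xi}+\nabla_{\eta}$ for $f\in\sC^{\infty}(B)$, not merely $\bbC$-linearity. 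What saves the argument is that for $g=p^*f$ the error term equals $p^*(\di f)\wedge(\xi^{\theta}_{\tau}\lrcorner u)$, which in the coordinates of Remark \ref{local-in-the-base-coordinates-rmk} is a combination of the $dp^{\mu}$, hence a section of $Z(p)^{n,0}\tensor E$, hence annihilated by $\fri$. (The $(0,1)$-part causes no such trouble, since $L^{0,1}_{\bar\xi}u=\bar\xi\lrcorner\dbar u$ is genuinely tensorial in $\bar\xi$ on $(n,0)$-forms.) With this one line added, your verification is complete.
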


\begin{proof}
The smoothness assertion is evident, and Lemma \ref{lie-deriv-lemma} shows that if $D$ is the Chern connection for $E$ then $\nabla^{D,\theta}$ is the BLS-Chern connection.
\end{proof}

\noi Proposition \ref{induced-connections-smooth} immediately imply Proposition \ref{Chern-connections-for-L^theta}.
\qed

\medskip

\begin{rmk}[Chern connection for $\sH$ when $\sH$ is locally trivial]\label{H-connection-paragraph}

We have not defined the Chern connection for $\sH \to B$ when the latter is not locally trivial and we do not intend to do so.  Of course, when $\sH \to B$ \emph{is} locally trivial, it has a well-defined Chern connection.  Though we do not need the Chern connection for $\sH$ in our approach, we can now easily compute it from the Chern connection of the ambient BLS bundles $\sL ^{\theta} \to B$.  Indeed,  by Proposition \ref{subbundle-chern-connection-prop}, Proposition \ref{H-is-always-subbundle} and Remark \ref{holo-subfield-is-regular} one has the following proposition.

\begin{prop}
If $\sH \to B$ is locally trivial then the Chern connection $\nabla ^{\sH}$ of $\sH$ is smooth, and is given by $\nabla ^{\sH}_{\tau} \ff = \nabla ^{\sH1,0}_{\tau} \ff = \fri \left (\cP\left (L^{1,0} _{\xi _{\tau}} \fa (\ff)\right )\right )$
for any $\ff \in \Gamma (B,\sC^{\infty}(\sH))$ and $\tau \in T^{1,0}_{B,t}$, where  $\cP$ is the fiberwise Bergman projection operator defined in Paragraph \ref{bk-paragraph} and $\xi _{\tau}$ is any lift of $\tau$ to a $(1,0)$-vector field on $X_t$.  
\end{prop}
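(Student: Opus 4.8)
The plan is to derive the formula for $\nabla^{\sH}$ directly from the three results cited in the remark, treating $\sH$ as a BLS subfield of $\sL^{\theta}$ for some fixed (arbitrary) horizontal lift $\theta$. First I would observe that the hypothesis that $\sH \to B$ is locally trivial means, by Proposition \ref{H-is-BLSS-iff-VB}, that $\sC^{\infty}(\sH)$ is an honest smooth structure, so $\sH$ is a smooth Hilbert field and in fact a holomorphic vector bundle with its Berndtsson $\dbar$-operator. By Remark \ref{holo-subfield-is-regular}, local triviality of $\sH$ is equivalent to $\sH \subset \sL$ being a regular subfield, i.e. the Bergman projection $P_{\sH} = \fri \circ \cP \circ \fa$ is a smooth operator; combined with Proposition \ref{H-is-always-subbundle}, which says $\dbar^{\theta}|_{\sH}$ agrees with Berndtsson's $\dbar$ for $\sH$, the density clause in the definition of BLS subfield holds (every fiber element is the value of a smooth section), so $(\sH, \dbar)$ is a BLS subfield of the BLS field $\sL^{\theta}$.

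Next I would invoke Proposition \ref{subbundle-chern-connection-prop}: for a BLS subfield the Chern connection is $\nabla^{\sH} = P_{\sH} \nabla^{\sL^{\theta}}|_{\sH}$. Since $\nabla^{\sH 0,1} = \dbar$ and $\dbar \ff = 0$ for holomorphic $\ff$, and more generally since on sections of $\sH$ the $(0,1)$-part contributes nothing to the $(1,0)$-component, one gets $\nabla^{\sH}_{\tau}\ff = \nabla^{\sH 1,0}_{\tau}\ff$ for $\tau \in T^{1,0}_{B,t}$. Then I substitute the explicit formula for the Chern connection of $\sL^{\theta}$ from Proposition \ref{Chern-connections-for-L^theta}, namely $\nabla^{\sL^{\theta}1,0}_{\tau}\ff = \fri(L^{1,0}_{\xi^{\theta}_{\tau}}\fa(\ff))$ (using the Chern connection $\nabla$ of $(E,h)$ in the twisted Lie derivative), and apply $P_{\sH} = \fri \circ \cP \circ \fa$. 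This yields
\[
\nabla^{\sH}_{\tau}\ff = P_{\sH}\,\fri\!\left(L^{1,0}_{\xi^{\theta}_{\tau}}\fa(\ff)\right) = \fri\!\left(\cP\!\left(L^{1,0}_{\xi^{\theta}_{\tau}}\fa(\ff)\right)\right).
\]

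Finally I would check the two remaining points: smoothness of $\nabla^{\sH}$, which follows because $\cP$ is a smooth operator in the locally trivial case (Remark \ref{holo-subfield-is-regular}) and the twisted Lie derivative of a smooth representative is smooth; and independence of the choice of lift $\xi_{\tau}$, which I would reduce to Proposition \ref{int-ptwise-holo}: for $\ff$ a section of $\sH$, $\iota^*_{X_t}u \in \sH_t$, and changing the lift alters $L^{1,0}_{\xi_{\tau}}u$ by $L^{1,0}_v u$ with $v$ vertical, whose restriction to $X_t$ is $L^{1,0}_{v_t}(\iota^*_{X_t}u)$ — and since $\cP$ projects onto $\sH_t$, one can absorb the difference; more directly, Proposition \ref{int-ptwise-holo}.a already gives that the relevant restricted object is representative-independent, and one notes that $\cP$ only sees $\iota^*_{X_t}$ of its argument. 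The main obstacle here is essentially bookkeeping — making sure the notational abuse identifying sections of $K_{X/B}\tensor E$ with equivalence classes of $(n,0)$-forms is applied consistently when composing $L^{1,0}_{\xi_{\tau}}$ with $\cP$ — but there is no genuine analytic difficulty, since all the hard work is contained in Propositions \ref{subbundle-chern-connection-prop}, \ref{H-is-always-subbundle}, \ref{Chern-connections-for-L^theta} and Remark \ref{holo-subfield-is-regular}.
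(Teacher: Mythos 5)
Your proposal is correct and follows essentially the same route as the paper, whose proof of this proposition is precisely the citation of Proposition \ref{subbundle-chern-connection-prop}, Proposition \ref{H-is-always-subbundle} and Remark \ref{holo-subfield-is-regular}, combined with the explicit Chern connection formula for $\sL^{\theta}$ from Proposition \ref{Chern-connections-for-L^theta}. Your additional verification of lift-independence (the difference $\iota_{X_t}^*L^{1,0}_{v}u = \nabla^{1,0}(v_t\lrcorner \iota_{X_t}^*u)$ for vertical $v$ is $\nabla^{1,0}$-exact, hence orthogonal to the holomorphic $(n,0)$-forms and killed by $\cP$) is a worthwhile detail the paper leaves implicit.
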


In \cite{bo-annals} the formula for $\nabla ^{\sH}$ was derived directly using the definition of Chern connection.  We note also that the formula for $\nabla ^{\sH}$ makes sense for smooth sections of $\sH$ even when $\sH$ is not locally trivial.
\red
\end{rmk}

\subsection{The $\mathbf{(1,1)}$-component of curvature of $\sL^{\theta}$}

By Proposition \ref{H-integrable=>L-11} and Remark \ref{H-is-holo-hh} the restriction to $\sH$ fo the curvature of $\sL^{\theta}$ has $(1,1)$-form coefficients.  Since our interest is mainly in $\Theta ^{\sH}$, we shall only compute the $(1,1)$-component of $\Theta ^{\sL ^{\theta}}$.

The $(1,1)$-component of  curvature of $\sL^{\theta}$ is by definition 
\begin{equation}\label{curvature-by-defn}
\Theta ^{\sL^{\theta}}_{\sigma \bar \tau}  = \nabla ^{\sL ^{\theta}}_{\sigma} \nabla ^{\sL ^{\theta}}_{\bar \tau} - \nabla ^{\sL ^{\theta}}_{\bar \tau} \nabla ^{\sL ^{\theta}}_{\sigma} - \nabla ^{\sL ^{\theta}}_{[\sigma , \bar \tau]}, \quad \sigma ,\tau \in T^{1,0}_{B,t},\ t \in B.
\end{equation}
In the latter formula, one may take any extension of $\sigma$ and $\tau$ to $(1,0)$-vector fields, and taking holomorphic extensions eliminates the last term on the right hand side of \eqref{curvature-by-defn}.  Therefore for smooth sections $\ff_1, \ff_2$ of $\sL^{\theta}$ we have $\fa \left ( \Theta ^{\sL^{\theta}}_{\sigma \bar \tau} \ff_1 \right ) = \left ( L^{1,0} _{\xi^{\theta}_{\sigma}} L^{0,1} _{\bar \xi^{\theta}_{\tau}} - L^{0,1} _{\bar \xi^{\theta}_{\tau}} L^{1,0}_{\xi^{\theta}_{\sigma}}\right ) \fa (\ff_1)$ and 
\begin{equation}\label{(1,1)-Lcurv}
\left (\Theta ^{\sL ^{\theta _i}}_{\sigma \bar \tau} \ff _1, \ff_2\right ) = \left ( L^{1,0} _{\xi ^{\theta ^i}_{\sigma}} L^{0,1} _{\bar \xi ^{\theta ^i}_{\tau}} u_1,u_2 \right ) - \left ( L^{0,1} _{\bar \xi ^{\theta ^i}_{\tau}} L^{1,0} _{\xi ^{\theta ^i}_{\sigma}} u_1,u_2 \right ),
\end{equation}
where $u_i$ are $E$-valued $(n,0)$-forms on $X$ representing $\ff _i$.
Using \eqref{(1,1)-Lcurv} we can finally give the proof of Theorem \ref{indep}.

\begin{proof}[Proof of Theorem \ref{indep}]
Fix $t \in B$ and smooth sections $\ff _1, \ff_2 \in \Sigma_t$, i.e., $\ff _1, \ff _2 \in \sC^{\infty}(\sL)_t$ such that $\ff_1(t), \ff_2(t) \in \sH_t$ and $\iota _{X_t} ^* L^{1,0} _{\xi ^{\theta} _{\sigma}} \dbar u_i= 0$, where $u _i \in H^0(X, \sC^{\infty}(\Lambda ^{n,0}_X \tensor E))$ represents $\ff _i$. 

Defining the vertical vector field $v _{\sigma} := \xi ^{\theta _1} _{\sigma} - \xi ^{\theta _2} _{\sigma}$, we find that 
\begin{eqnarray*}
&& \left (\Theta ^{\sL ^{\theta _1}}_{\sigma \bar \tau} \ff _1, \ff_2\right ) - \left (\Theta ^{\sL ^{\theta _2}}_{\sigma \bar \tau} \ff _1, \ff_2\right ) \\
&=& \left ( L^{1,0} _{\xi ^{\theta ^1}_{\sigma}} L^{0,1} _{\bar \xi ^{\theta ^1}_{\tau}} u_1,u_2 \right ) - \left ( L^{0,1} _{\bar \xi ^{\theta ^1}_{\tau}} L^{1,0} _{\xi ^{\theta ^1}_{\sigma}} u_1,u_2 \right ) -  \left ( \left ( L^{1,0} _{\xi ^{\theta ^2}_{\sigma}} L^{0,1} _{\bar \xi ^{\theta ^2}_{\tau}} u_1,u_2 \right ) -  \left ( L^{0,1} _{\bar \xi ^{\theta ^2}_{\tau}} L^{1,0} _{\xi ^{\theta ^2}_{\sigma}} u_1,u_2 \right ) \right ) \\
&=& \left ( L^{1,0} _{v_{\sigma}} L^{0,1} _{\bar \xi ^{\theta ^1}_{\tau}} u_1,u_2 \right ) + \left ( L^{1,0} _{\xi ^{\theta ^2}_{\sigma}} L^{0,1} _{\bar \xi ^{\theta ^1}_{\tau}} u_1,u_2 \right ) - \left ( L^{0,1} _{\bar \xi ^{\theta ^1}_{\tau}} L^{1,0} _{\xi ^{\theta ^1}_{\sigma}} u_1,u_2 \right ) \\
&& -  \left ( \left ( L^{1,0} _{\xi ^{\theta ^2}_{\sigma}} L^{0,1} _{\bar \xi ^{\theta ^2}_{\tau}} u_1,u_2 \right ) -  \left ( L^{0,1} _{\bar \xi ^{\theta ^2}_{\tau}} L^{1,0} _{\xi ^{\theta ^2}_{\sigma}} u_1,u_2 \right ) \right ) \\
&=& \left ( L^{1,0} _{v_{\sigma}} L^{0,1} _{\bar \xi ^{\theta ^1}_{\tau}} u_1,u_2 \right ) - \left ( L^{0,1} _{\bar \xi ^{\theta ^1}_{\tau}} L^{1,0} _{\xi ^{\theta ^1}_{\sigma}} u_1,u_2 \right ) \\
&& -  \left ( \left ( L^{1,0} _{\xi ^{\theta ^2}_{\sigma}} L^{0,1} _{\bar \xi ^{\theta ^2}_{\tau}} u_1,u_2 \right ) - \left ( L^{1,0} _{\xi ^{\theta ^2}_{\sigma}} L^{0,1} _{\bar \xi ^{\theta ^1}_{\tau}} u_1,u_2 \right )  -  \left ( L^{0,1} _{\bar \xi ^{\theta ^2}_{\tau}} L^{1,0} _{\xi ^{\theta ^2}_{\sigma}} u_1,u_2 \right ) \right ) \\
&=& \left ( L^{1,0} _{v_{\sigma}} L^{0,1} _{\bar \xi ^{\theta ^1}_{\tau}} u_1,u_2 \right ) - \left ( L^{0,1} _{\bar \xi ^{\theta ^1}_{\tau}} L^{1,0} _{\xi ^{\theta ^1}_{\sigma}} u_1,u_2 \right ) + \left ( L^{1,0} _{\xi ^{\theta ^2}_{\sigma}} L^{0,1} _{\bar v_{\tau}} u_1,u_2 \right ) +  \left ( L^{0,1} _{\bar \xi ^{\theta ^2}_{\tau}} L^{1,0} _{\xi ^{\theta ^2}_{\sigma}} u_1,u_2 \right ).
\end{eqnarray*}
Now, one computes that 
\[
L^{1,0} _{\xi ^{\theta ^i} _{\sigma}} L^{0,1} _{\bar v _{\tau}} u_1 = L^{1,0} _{\xi ^{\theta ^i} _{\sigma}} (\bar v _{\tau} \lrcorner \dbar u_1) = \bar v _{\tau} \lrcorner (  L^{1,0} _{\xi ^{\theta ^i} _{\sigma}}\dbar u_1) + \left (\overline{ \bar \xi ^{\theta ^i}_{\tau} \lrcorner \dbar v _{\tau}} \right )  \lrcorner \dbar u_1.
\]
The first term on the right hand side vanishes because $\ff _1 \in \Sigma _t$, and the second term vanishes because $\bar \xi ^{\theta ^i}_{\tau} \lrcorner \overline{\dbar v _{\tau}} $ is a vertical $(0,1)$-vector field.  Next, by Lemma \ref{lie-deriv-lemma} and the holomorphicity of $u_i$ along $X_t$ one has
\[
 \left ( L^{1,0} _{v_{\sigma}} L^{0,1} _{\bar \xi ^{\theta ^1}_{\tau}} u_1,u_2 \right ) =  - \left (L^{0,1} _{\bar \xi ^{\theta ^1}_{\tau}} u_1, L^{0,1} _{\bar v_{\sigma}}u_2 \right ) = 0.
\]
Therefore 
\[
 \left (\Theta ^{\sL ^{\theta _1}}_{\sigma \bar \tau} \ff _1, \ff_2\right ) - \left (\Theta ^{\sL ^{\theta _2}}_{\sigma \bar \tau} \ff _1, \ff_2\right ) =  \left ( L^{0,1} _{\bar \xi ^{\theta ^2}_{\tau}} L^{1,0} _{\xi ^{\theta ^2}_{\sigma}} u_1,u_2 \right ) - \left ( L^{0,1} _{\bar \xi ^{\theta ^1}_{\tau}} L^{1,0} _{\xi ^{\theta ^1}_{\sigma}} u_1,u_2 \right ) 
\]
Now,
\begin{eqnarray*}
&& \left ( L^{0,1} _{\bar \xi ^{\theta ^2}_{\tau}} L^{1,0} _{\xi ^{\theta ^2}_{\sigma}} u_1,u_2 \right ) - \left ( L^{0,1} _{\bar \xi ^{\theta ^1}_{\tau}} L^{1,0} _{\xi ^{\theta ^1}_{\sigma}} u_1,u_2 \right ) \\
&=& - \left ( L^{0,1} _{\bar \xi ^{\theta ^2}_{\tau}} L^{1,0} _{v_{\sigma}} u_1,u_2 \right )  + \left ( L^{0,1} _{\bar \xi ^{\theta ^2}_{\tau}} L^{1,0} _{\xi ^{\theta ^1}_{\sigma}} u_1,u_2 \right ) - \left ( L^{0,1} _{\bar \xi ^{\theta ^1}_{\tau}} L^{1,0} _{\xi ^{\theta ^1}_{\sigma}} u_1,u_2 \right ) \\
&=& - \left ( L^{0,1} _{\bar \xi ^{\theta ^2}_{\tau}} L^{1,0} _{v_{\sigma}} u_1,u_2 \right )  - \left ( L^{0,1} _{\bar v_{\tau}} L^{1,0} _{\xi ^{\theta ^1}_{\sigma}} u_1,u_2 \right ) \\
&=& - \bar \tau \left (L^{1,0} _{v_{\sigma}} u_1, u_2\right ) + (L^{1,0} _{v_{\sigma}} u_1, L^{1,0} _{\xi ^{\theta ^2} _{\tau} }u_2) + \left ( L^{1,0} _{\xi ^{\theta ^1}_{\sigma}} u_1, L^{1,0} _{v_{\tau}}u_2 \right )\\
&=&  \bar \tau \left (u_1, L^{0,1} _{\bar v _{\sigma}}u_2\right ) + (L^{1,0} _{v_{\sigma}} u_1, L^{1,0} _{\xi ^{\theta ^2} _{\tau} }u_2) +  \left ( L^{1,0} _{\xi ^{\theta ^1}_{\sigma}} u_1, L^{1,0} _{v_{\tau}}u_2 \right )\\
&=& (L^{1,0} _{v_{\sigma}} u_1, L^{1,0} _{\xi ^{\theta ^2} _{\tau} }u_2) +  \left ( L^{1,0} _{\xi ^{\theta ^1}_{\sigma}} u_1, L^{1,0} _{v_{\tau}}u_2 \right ).
\end{eqnarray*}
The last equality holds because $\bar \tau \left (u_1, L^{0,1} _{\bar v _{\sigma}}u_2\right ) =  \left (L^{0,1} _{\bar \xi ^{\theta ^i}_{\tau}} u_1, L^{0,1} _{\bar v _{\sigma}}u_2\right ) +  \left (u_1, L^{1,0} _{\xi ^{\theta ^i}_{\tau}} L^{0,1} _{\bar v _{\sigma}}u_2\right )  = 0$.  

Now, since $v_{\sigma}$ is vertical 
\[
\iota _{X_t} ^* L^{1,0} _{v_{\sigma}} u_1 = \iota _{X_t} ^* (\nabla^{1,0} (v_{\sigma} \lrcorner u_1) + v_{\sigma} \lrcorner (\nabla ^{1,0} u_1) )= \iota _{X_t} ^* (\nabla^{1,0} (v_{\sigma} \lrcorner u_1)),
\]
and hence $\iota _{X_t} ^* L^{1,0} _{v_{\sigma}} u_1  \perp {\rm Kernel} (\Box ^{1,0}) = {\rm Kernel} (\Box) = {\rm kernel} (\dbar)$,  with the last two equalities holding because $\nabla^{1,0} (v_{\sigma} \lrcorner u_1)$ is an $E$-valued $(n,0)$-form.  Therefore 
\[
(L^{1,0} _{v_{\sigma}} u_1, L^{1,0} _{\xi ^{\theta ^2} _{\tau} }u_2) = (P_t ^{\perp} L^{1,0} _{v_{\sigma}} u_1, L^{1,0} _{\xi ^{\theta ^2} _{\tau} }u_2) = (P_t ^{\perp}L^{1,0} _{v_{\sigma}} u_1, P_t ^{\perp} L^{1,0} _{\xi ^{\theta ^2} _{\tau} }u_2),
\]
and similarly $ \left ( L^{1,0} _{\xi ^{\theta ^1}_{\sigma}} u_1, L^{1,0} _{v_{\tau}}u_2 \right ) =  \left (P_t ^{\perp} L^{1,0} _{\xi ^{\theta ^1}_{\sigma}} u_1, P_t ^{\perp} L^{1,0} _{v_{\tau}}u_2 \right )$.  Hence 
\begin{eqnarray*}
\left (L^{1,0} _{v_{\sigma}} u_1, L^{1,0} _{\xi ^{\theta ^2} _{\tau} }u_2\right ) +  \left ( L^{1,0} _{\xi ^{\theta ^1}_{\sigma}} u_1, L^{1,0} _{v_{\tau}}u_2 \right ) &=& \left (P_t ^{\perp}L^{1,0} _{v_{\sigma}} u_1, P_t ^{\perp} L^{1,0} _{\xi ^{\theta ^2} _{\tau} }u_2\right ) + \left (P_t ^{\perp} L^{1,0} _{\xi ^{\theta ^1}_{\sigma}} u_1, P_t ^{\perp} L^{1,0} _{v_{\tau}}u_2 \right )\\
&=& \left (P_t ^{\perp}L^{1,0} _{\xi ^{\theta _1}_{\sigma}} u_1, P_t ^{\perp} L^{1,0} _{\xi ^{\theta ^2} _{\tau} }u_2\right ) - \left (P_t ^{\perp}L^{1,0} _{\xi ^{\theta _2}_{\sigma}} u_1, P_t ^{\perp} L^{1,0} _{\xi ^{\theta ^2} _{\tau} }u_2\right ) \\
&& + \left ( \left (P_t ^{\perp} L^{1,0} _{\xi ^{\theta ^1}_{\sigma}} u_1, P_t ^{\perp} L^{1,0} _{\xi ^{\theta _1}_{\tau}}u_2 \right ) -  \left (P_t ^{\perp} L^{1,0} _{\xi ^{\theta ^1}_{\sigma}} u_1, P_t ^{\perp} L^{1,0} _{\xi ^{\theta _2}_{\tau}}u_2 \right ) \right ) \\
&=&\left (P_t ^{\perp} L^{1,0} _{\xi ^{\theta ^1}_{\sigma}} u_1, P_t ^{\perp} L^{1,0} _{\xi ^{\theta _1}_{\tau}}u_2 \right ) -  \left (P_t ^{\perp}L^{1,0} _{\xi ^{\theta _2}_{\sigma}} u_1, P_t ^{\perp} L^{1,0} _{\xi ^{\theta ^2} _{\tau} }u_2\right )\\
&=& \left ( \two ^{\theta_1} \ff _1 , \two ^{\theta _1} \ff_2 \right ) - \left ( \two ^{\theta_2} \ff _1 , \two ^{\theta _2} \ff_2 \right ) .
\end{eqnarray*}
Thus $\left (\Theta ^{\sL ^{\theta _1}}_{\sigma \bar \tau} \ff _1, \ff_2\right ) - \left (\Theta ^{\sL ^{\theta _2}}_{\sigma \bar \tau} \ff _1, \ff_2\right ) = \left ( \two ^{\theta_1} \ff _1 , \two ^{\theta _1} \ff_2 \right ) - \left ( \two ^{\theta_2} \ff _1 , \two ^{\theta _2} \ff_2 \right )$, as required.
\end{proof}


\begin{thm}\label{L2-curvature-prop}
For any $t \in B$ and any smooth $f_1, f_2 \in \sL_t$ one has the formula 
\begin{eqnarray}\label{l2-theta-curvature-integrated}
 (\Theta ^{\sL^{\theta}}_{\sigma \bar \tau} f _1, f _2)_t &=& \int _{X_t} \left < \Theta (h)_{\xi ^{\theta} _{\sigma}\bar \xi^{\theta}_{\tau}} f_1 \wedge \bar f _2, h \right > - \int _{X_t} \left < (\dbar \xi ^{\theta}_{\sigma})\lrcorner f_1\wedge \overline{(\dbar \xi ^{\theta} _{\tau})\lrcorner f_2}, h \right >\\
 \nonumber && \qquad + \int _{X_t} \left <L^{0,1}_{[\xi ^{\theta}_{\sigma}, \bar \xi ^{\theta}_{\tau}]^{0,1}} f_1\wedge \bar f_2, h \right > + \int _{X_t} \left <f_1\wedge \overline{ L^{0,1}_{[\xi ^{\theta}_{\tau}, \bar \xi ^{\theta}_{\sigma}]^{0,1}}f_2}, h \right >.
\end{eqnarray}
for the $(1,1)$-component of $\Theta^{\sL ^{\theta}}$.  In particular, if $f _1,f_2 \in \sH_t$ then 
\begin{equation}\label{curv-on-holos}
(\Theta ^{\sL^{\theta}}_{\sigma \bar \tau} f_1, f_2)_t = \ \int _{X_t} \left < \Theta (h)_{\xi ^{\theta} _{\sigma}\bar \xi^{\theta}_{\tau}} f_1 \wedge \bar f _2, h \right > - \int _{X_t} \left < (\dbar \xi ^{\theta}_{\sigma})\lrcorner f_1\wedge \overline{(\dbar \xi ^{\theta} _{\tau})\lrcorner f_2}, h \right >.
\end{equation}
\end{thm}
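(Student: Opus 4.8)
The plan is to compute the $(1,1)$-component of $\Theta^{\sL^{\theta}}$ directly from its definition \eqref{curvature-by-defn}, feeding in the explicit Chern connection of $\sL^{\theta}$ furnished by Proposition \ref{Chern-connections-for-L^theta}. Since $\Theta^{\sL^{\theta}}$ is tensorial (Proposition \ref{metric=>tensorial}) and $\sL\to B$ is locally trivial (Remark \ref{ehresmann-rmk}), given smooth $f_1,f_2\in\sL_t$ I would first extend them to smooth sections of $\sL\to B$ and choose $E$-valued $(n,0)$-form representatives $u_1,u_2$ on $X$ with $\iota_{X_t}^{*}u_i=f_i$; the answer then depends only on $f_1,f_2$. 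With $\sigma,\tau$ extended holomorphically, formula \eqref{(1,1)-Lcurv} reads
\[
(\Theta^{\sL^{\theta}}_{\sigma\bar\tau}f_1,f_2)_t=\bigl(L^{1,0}_{\xi^{\theta}_{\sigma}}(L^{0,1}_{\bar\xi^{\theta}_{\tau}}u_1),u_2\bigr)_t-\bigl(L^{0,1}_{\bar\xi^{\theta}_{\tau}}(L^{1,0}_{\xi^{\theta}_{\sigma}}u_1),u_2\bigr)_t,
\]
all Lie derivatives taken with respect to the Chern connection of $(E,h)$, and with $L^{0,1}_{\bar\xi^{\theta}_{\tau}}\alpha=\bar\xi^{\theta}_{\tau}\lrcorner\dbar\alpha$ on $(n,0)$-forms. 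So the task is to identify the right-hand side with the four integrals in \eqref{l2-theta-curvature-integrated}.

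The core computation I would carry out is to rewrite these two inner products using the product-rule identities of Section \ref{lie-deriv-section} together with Stokes' Theorem on the compact fiber $X_t$. First I would use Lemma \ref{lie-deriv-lemma} (and its conjugate) to pull each outer Lie derivative outside its integral; this replaces the two terms by $\sigma$- and $\bar\tau$-derivatives of the functions $t\mapsto(\dbar^{\theta}_{\bar\tau}f_1,f_2)_t$ and $t\mapsto(\nabla^{1,0}_{\sigma}f_1,f_2)_t$, plus the ``inner-product'' terms $(\dbar^{\theta}_{\bar\tau}f_1,\dbar^{\theta}_{\bar\sigma}f_2)_t$ and $-(\nabla^{1,0}_{\sigma}f_1,\nabla^{1,0}_{\tau}f_2)_t$; no information about the curvature of $(E,h)$ has yet entered. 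Next I would bring in the curvature by evaluating $\dbar_{X_t}\iota_{X_t}^{*}(L^{1,0}_{\xi^{\theta}_{\sigma}}u_1)$ via Lemma \ref{dbar-of-sff}, which packages both the curvature term $-(\xi^{\theta}_{\sigma}\lrcorner\Theta(h))u_1$ and — because $\dbar\xi^{\theta}_{\sigma}$ is vertical — the term $-\nabla^{1,0}((\dbar\xi^{\theta}_{\sigma})\lrcorner u_1)$, together with a $\dbar u_1$-correction that for general $f_1$ combines with the auxiliary terms. Integrations by parts on $X_t$ (Proposition \ref{restricted-product-rule-2} and Lemma \ref{(n,1)v(n-1,0)-lemma}) trade these $\nabla^{1,0}$-derivatives of $u_1$ against $\dbar$-derivatives of contractions of $u_2$, and, using $(\dbar\xi^{\theta}_{\tau})\lrcorner f_2=\iota_{X_t}^{*}L^{0,1}_{\xi^{\theta}_{\tau}}u_2$, produce $-\int_{X_t}\langle(\dbar\xi^{\theta}_{\sigma})\lrcorner f_1\wedge\overline{(\dbar\xi^{\theta}_{\tau})\lrcorner f_2},h\rangle$; the last integration by parts stops there, since $(\dbar\xi^{\theta}_{\tau})\lrcorner f_2$ is only \emph{locally} $\dbar$-exact. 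Finally, the bracket terms coming from $[\xi^{\theta}_{\sigma},\bar\xi^{\theta}_{\tau}]\neq\xi^{\theta}_{[\sigma,\bar\tau]}=0$ (this bracket is vertical, hence splits into vertical $(1,0)$- and $(0,1)$-parts, the $(1,0)$-part being once more integrated by parts against $u_2$) assemble into the two bracket integrals of \eqref{l2-theta-curvature-integrated}, while all the $\sigma$- and $\bar\tau$-derivatives of functions and the two inner-product terms cancel.

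The specialization \eqref{curv-on-holos} is then immediate: if $f_1,f_2\in\sH_t$, then for any \emph{vertical} $(0,1)$-vector field $\bar v$ on $X_t$ one has $L^{0,1}_{\bar v}f_i=\bar v\lrcorner\dbar_{X_t}f_i=0$, because $f_i$ is holomorphic on $X_t$; taking $\bar v=[\xi^{\theta}_{\sigma},\bar\xi^{\theta}_{\tau}]^{0,1}$ and $\bar v=[\xi^{\theta}_{\tau},\bar\xi^{\theta}_{\sigma}]^{0,1}$ annihilates the last two integrals in \eqref{l2-theta-curvature-integrated}.

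I expect the main obstacle to be the bookkeeping in the second paragraph: one must run several identities that redistribute bidegrees and derivatives between $u_1$ and $u_2$, check that every boundary term vanishes by compactness of $X_t$, and verify that the numerous auxiliary contributions (the $\sigma$- and $\bar\tau$-derivatives of inner products, the $\dbar^{\theta}\dbar^{\theta}$-type terms, the $L^{1,0}_{\xi^{\theta}_{\sigma}}\dbar u_1$ terms, the intermediate $\nabla^{1,0}$-contractions of $u_2$) cancel in exactly the right way and with the right signs, so that precisely the four terms of \eqref{l2-theta-curvature-integrated} survive.
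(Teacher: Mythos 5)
Your framing (tensoriality, choice of representatives, starting from \eqref{(1,1)-Lcurv}) and your derivation of \eqref{curv-on-holos} from \eqref{l2-theta-curvature-integrated} are fine, but the core of the argument has a genuine gap. Your first move uses Lemma \ref{lie-deriv-lemma} to rewrite the two commutator terms as base-derivatives $\sigma(\dbar^{\theta}_{\bar\tau}\ff_1,\ff_2)$ and $\bar\tau(\nabla^{\sL^{\theta}1,0}_{\sigma}\ff_1,\ff_2)$ plus cross terms; but the only tool you then have for evaluating those base-derivatives is Lemma \ref{lie-deriv-lemma} again, which exactly undoes the first move. Your closing claim that ``all the $\sigma$- and $\bar\tau$-derivatives of functions and the two inner-product terms cancel'' is the symptom of this circularity: the curvature of $(E,h)$, the Kodaira--Spencer term and the two bracket integrals can only arise from actually commuting $L^{1,0}_{\xi^{\theta}_{\sigma}}$ past $L^{0,1}_{\bar\xi^{\theta}_{\tau}}$ on the total space, and your plan never performs that commutation. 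Moreover, the tool you name for producing the curvature, Lemma \ref{dbar-of-sff}, computes the \emph{fiberwise} operator $\dbar_{X_t}\iota_{X_t}^{*}(L^{1,0}_{\xi^{\theta}_{\sigma}}u_1)$, an $E$-valued $(n,1)$-form on $X_t$ whose contractions $\bar\zeta\lrcorner(\xi^{\theta}_{\sigma}\lrcorner\Theta(h))u_1=\Theta(h)_{\xi^{\theta}_{\sigma}\bar\zeta}u_1$ only involve \emph{vertical} $(0,1)$-vectors $\bar\zeta$; it therefore cannot produce the term $\Theta(h)_{\xi^{\theta}_{\sigma}\bar\xi^{\theta}_{\tau}}$ with the \emph{horizontal} vector $\bar\xi^{\theta}_{\tau}$. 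The object actually occurring in $\Theta^{\sL^{\theta}}_{\sigma\bar\tau}$ is $\iota_{X_t}^{*}\bigl(\bar\xi^{\theta}_{\tau}\lrcorner\dbar(L^{1,0}_{\xi^{\theta}_{\sigma}}u_1)\bigr)$, a derivative transverse to the fiber which is not determined by the fiberwise data that Lemma \ref{dbar-of-sff} supplies. (That lemma is the right tool for the second fundamental form $P_t^{\perp}\iota_{X_t}^{*}L^{1,0}_{\xi^{\theta}_{\sigma}}u$, a purely fiberwise object --- see Proposition \ref{sff-prop}; you appear to be importing it into a computation where it does not apply.)

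The missing step is a pointwise commutator identity on the total space. The paper obtains it by the coordinate computation of Lemma \ref{l10-computation}: writing $u=u_o\,d\vec z+dt^{\mu}\wedge\alpha_{\mu}$ and iterating, the commutator $[\nabla^{1,0}_{\xi^{\theta}_{\sigma}},\dbar_{\bar\xi^{\theta}_{\tau}}]$ acting on the coefficient $u_o$ yields $\Theta(h)_{\xi^{\theta}_{\sigma}\bar\xi^{\theta}_{\tau}}u_o+\nabla_{[\xi^{\theta}_{\sigma},\bar\xi^{\theta}_{\tau}]}u_o$, the derivatives of $d\vec z$ assemble into the mixed term $-[L^{0,1}_{\xi^{\theta}_{\sigma}},L^{1,0}_{\bar\xi^{\theta}_{\tau}}]\fa(\ff)$, and the vertical Lie derivative contributes the bracket terms; this is \eqref{l2-curvature-general-formula}. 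Only after that does a single integration by parts (Proposition \ref{restricted-product-rule-2}, using that an $(n-1,n+1)$-form integrates to zero on $X_t$) convert the mixed term into $-\int_{X_t}\langle(\dbar\xi^{\theta}_{\sigma})\lrcorner f_1\wedge\overline{(\dbar\xi^{\theta}_{\tau})\lrcorner f_2},h\rangle$. If you wish to avoid coordinates you must still supply a substitute for this commutator identity; as written, your proposal does not.
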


In the proof of Theorem \ref{L2-curvature-prop} the following lemma is convenient.

\begin{lem}\label{l10-computation}
Let $p:X \to B$ be a holomorphic submersion, let $v \in \Gamma (X, \sC^{\infty} (\Lambda ^{n,0} _X \tensor E))$ be a smooth $E$-valued $(n,0)$-form, let $\sigma$ be a  holomorphic vector fields on a coordinate neighborhood $U$ in $B$ with coordinates $s= (s^1,..., s^m)$, and let $p = (p^1,...,p^m)$ in these coordinates.  Choose local holomorphic coordinates $(z, t)$ on an open set in $p^{-1}(U)$ such that $t^i = p^i$ and, with $d\vec z := dz^1 \wedge ... \wedge dz ^n$, write 
\[
v = v_o d\vec z + dt ^{\mu} \wedge \alpha_{\mu} 
\]
for some local section $v_o$ of $E$ and local $E$-valued $(n-1,0)$-forms $\alpha_1,...,\alpha_m$.  Then 
\[
L^{1,0} _{\xi ^{\theta}_{\sigma}} v = (\nabla ^{1,0}_{\xi ^{\theta}_{\sigma}} v_o ) d\vec z + v _o \di \left (\xi ^{\theta}_{\sigma} \lrcorner  d\vec z\right ) + \di \sigma ^{\mu} \wedge \alpha_{\mu} + dt ^{\mu} \wedge L^{1,0}_{\xi ^{\theta}_{\sigma}} \alpha _{\mu}.
\]
\end{lem}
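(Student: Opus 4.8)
The plan is to expand the defining formula $L^{1,0}_{\xi^{\theta}_{\sigma}}v = \nabla^{1,0}(\xi^{\theta}_{\sigma}\lrcorner v) + \xi^{\theta}_{\sigma}\lrcorner\nabla^{1,0}v$ of Definition \ref{cplx-lie-deriv} term by term, using the decomposition $v = v_o\,d\vec z + dt^{\mu}\wedge\alpha_{\mu}$, the graded Leibniz rule $\nabla^{1,0}(\omega\wedge s) = \partial\omega\wedge s + (-1)^{\deg\omega}\,\omega\wedge\nabla^{1,0}s$ for a scalar form $\omega$ and an $E$-valued form $s$, and the antiderivation rule $\xi^{\theta}_{\sigma}\lrcorner(\omega\wedge s) = (\xi^{\theta}_{\sigma}\lrcorner\omega)\wedge s + (-1)^{\deg\omega}\,\omega\wedge(\xi^{\theta}_{\sigma}\lrcorner s)$. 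The two elementary inputs are: since $dp(\xi^{\theta}_{\sigma}) = \sigma$ and $t^{\mu} = p^{\mu}$, one has $\xi^{\theta}_{\sigma}\lrcorner dt^{\mu} = p^{*}\sigma^{\mu}$, a holomorphic function on $p^{-1}(U)$ that we abbreviate $\sigma^{\mu}$ in accordance with the conventions already used in the proof of Proposition \ref{int-ptwise-holo}; and the scalar holomorphic forms $d\vec z$ and $dt^{\mu} = dp^{\mu}$ are closed, so $\partial(d\vec z) = 0$ and $\partial(dt^{\mu}) = 0$, which kills every term in which $\nabla^{1,0}$ would differentiate one of these forms.

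Concretely, first I would record $\xi^{\theta}_{\sigma}\lrcorner v = v_o\,(\xi^{\theta}_{\sigma}\lrcorner d\vec z) + \sigma^{\mu}\alpha_{\mu} - dt^{\mu}\wedge(\xi^{\theta}_{\sigma}\lrcorner\alpha_{\mu})$ and apply $\nabla^{1,0}$; using $\partial(dt^{\mu}) = 0$ and the holomorphicity of $\sigma$, this gives
\[
\nabla^{1,0}(\xi^{\theta}_{\sigma}\lrcorner v) = (\nabla^{1,0}v_o)\wedge(\xi^{\theta}_{\sigma}\lrcorner d\vec z) + v_o\,\partial(\xi^{\theta}_{\sigma}\lrcorner d\vec z) + \partial\sigma^{\mu}\wedge\alpha_{\mu} + \sigma^{\mu}\nabla^{1,0}\alpha_{\mu} + dt^{\mu}\wedge\nabla^{1,0}(\xi^{\theta}_{\sigma}\lrcorner\alpha_{\mu}).
\]
Next I would compute $\nabla^{1,0}v = (\nabla^{1,0}v_o)\wedge d\vec z - dt^{\mu}\wedge\nabla^{1,0}\alpha_{\mu}$ (again using $\partial(d\vec z) = \partial(dt^{\mu}) = 0$) and contract with $\xi^{\theta}_{\sigma}$, using $\xi^{\theta}_{\sigma}\lrcorner\nabla^{1,0}v_o = \nabla^{1,0}_{\xi^{\theta}_{\sigma}}v_o$ and $\xi^{\theta}_{\sigma}\lrcorner dt^{\mu} = \sigma^{\mu}$, to obtain $\xi^{\theta}_{\sigma}\lrcorner\nabla^{1,0}v = (\nabla^{1,0}_{\xi^{\theta}_{\sigma}}v_o)\,d\vec z - (\nabla^{1,0}v_o)\wedge(\xi^{\theta}_{\sigma}\lrcorner d\vec z) - \sigma^{\mu}\nabla^{1,0}\alpha_{\mu} + dt^{\mu}\wedge(\xi^{\theta}_{\sigma}\lrcorner\nabla^{1,0}\alpha_{\mu})$.

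Adding the two expansions, the terms $(\nabla^{1,0}v_o)\wedge(\xi^{\theta}_{\sigma}\lrcorner d\vec z)$ and $\sigma^{\mu}\nabla^{1,0}\alpha_{\mu}$ each occur once with each sign and cancel, leaving $(\nabla^{1,0}_{\xi^{\theta}_{\sigma}}v_o)\,d\vec z + v_o\,\partial(\xi^{\theta}_{\sigma}\lrcorner d\vec z) + \partial\sigma^{\mu}\wedge\alpha_{\mu} + dt^{\mu}\wedge\big(\nabla^{1,0}(\xi^{\theta}_{\sigma}\lrcorner\alpha_{\mu}) + \xi^{\theta}_{\sigma}\lrcorner\nabla^{1,0}\alpha_{\mu}\big)$, and the last bracket is precisely $L^{1,0}_{\xi^{\theta}_{\sigma}}\alpha_{\mu}$ by Definition \ref{cplx-lie-deriv}; this is the asserted identity. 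The computation is entirely formal, so there is no real obstacle; the only thing requiring care is the sign bookkeeping in the graded Leibniz and antiderivation rules, and remembering that ``$\sigma^{\mu}$'' always means the pullback $p^{*}\sigma^{\mu}$, which is holomorphic so that $\partial(p^{*}\sigma^{\mu}) = p^{*}\partial\sigma^{\mu}$ and no $\dbar$-contributions intrude.
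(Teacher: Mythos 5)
Your computation is correct and is essentially identical to the paper's own proof: both expand $L^{1,0}_{\xi^{\theta}_{\sigma}}v = \nabla^{1,0}(\xi^{\theta}_{\sigma}\lrcorner v)+\xi^{\theta}_{\sigma}\lrcorner\nabla^{1,0}v$ against the decomposition $v=v_o\,d\vec z+dt^{\mu}\wedge\alpha_{\mu}$, use $\xi^{\theta}_{\sigma}\lrcorner dt^{\mu}=p^*\sigma^{\mu}$ and the closedness of $d\vec z$ and $dt^{\mu}$, and observe the same cancellation of $(\nabla^{1,0}v_o)\wedge(\xi^{\theta}_{\sigma}\lrcorner d\vec z)$ and $\sigma^{\mu}\nabla^{1,0}\alpha_{\mu}$. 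Nothing is missing.
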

\begin{proof}
We calculate that 
\begin{eqnarray*}
L^{1,0} _{\xi ^{\theta}_{\sigma}} v &=& \nabla ^{1,0} \left ( v_o \left ( \xi ^{\theta}_{\sigma}\lrcorner  d\vec z\right ) + \sigma ^{\mu} \alpha_{\mu}  - dt ^{\mu} \wedge (\xi ^{\theta}_{\sigma}\lrcorner \alpha _{\mu})\right ) + \xi ^{\theta}_{\sigma} \lrcorner \left ( \nabla ^{1,0} v_o \wedge d\vec z - dt ^{\mu} \wedge \nabla ^{1,0} \alpha_{\mu}\right ) \\ 
&=&( \nabla ^{1,0} v_o) \wedge \left ( \xi ^{\theta}_{\sigma} \lrcorner  d\vec z\right ) + v_o \di \left ( \xi ^{\theta}_{\sigma} \lrcorner  d\vec z\right ) + \di \sigma ^{\mu} \wedge \alpha_{\mu}  + \sigma ^{\mu} \nabla ^{1,0} \alpha _{\mu} + dt ^{\mu} \wedge \nabla ^{1,0}(\xi ^{\theta}_{\sigma}\lrcorner \alpha _{\mu})  \\
&& + (\nabla ^{1,0}_{\xi ^{\theta}_{\sigma}} v_o ) d\vec z  - \nabla ^{1,0} v_o \wedge ( \xi ^{\theta}_{\sigma}\lrcorner d\vec z)  - \sigma ^{\mu} \nabla ^{1,0} \alpha _{\mu}  + dt ^{\mu} \wedge ( \xi ^{\theta}_{\sigma}\lrcorner  \nabla ^{1,0} \alpha_{\mu}) \\ 
&=& (\nabla ^{1,0}_{\xi ^{\theta}_{\sigma}} v_o ) d\vec z + v _o \di \left ( \xi ^{\theta}_{\sigma}\lrcorner  d\vec z\right ) + \di \sigma ^{\mu} \wedge \alpha_{\mu} + dt ^{\mu} \wedge L^{1,0}_{\xi ^{\theta}_{\sigma}} \alpha _{\mu},
\end{eqnarray*}
as claimed.
\end{proof}

\begin{proof}[Proof of Theorem \ref{L2-curvature-prop}]
Fix a section $\ff \in \Gamma (B, \sC^{\infty}(\sL))$.  Fix an $E$-valued $(n,0)$-form $u$ representing $\fa (\ff)$, and locally write $u = u_o d\vec z+dt^{\mu} \wedge \alpha _{\mu}$ in the coordinates of Lemma \ref{l10-computation}.  If $\sigma$ and $\tau$ are holomorphic vector fields on an open subset of $B$ then an application of $L^{0,1} _{\bar \xi ^{\theta}_{\tau}}$ to the formula for $L^{1,0}_{\xi ^{\theta}_{\sigma}}u$ from Lemma \ref{l10-computation} gives 
\begin{eqnarray*}
L^{0,1} _{{\bar \xi ^{\theta}_{\tau}}} L^{1,0} _{\xi ^{\theta}_{\sigma}} u &=& \left (\dbar _{{\bar \xi ^{\theta}_{\tau}}}\nabla ^{1,0}_{\xi ^{\theta}_{\sigma}} u_o \right ) d\vec z + (\dbar u _o ({\bar \xi ^{\theta}_{\tau}}))\di \left ( \xi ^{\theta}_{\sigma} \lrcorner  d\vec z\right )- u _o {\bar \xi ^{\theta}_{\tau}} \lrcorner \left ( \di \left ( \dbar \xi ^{\theta}_{\sigma} \lrcorner  d\vec z\right )\right ) \\
&& + \di \sigma ^{\mu} \wedge \left ( {\bar \xi ^{\theta}_{\tau}} \lrcorner \dbar \alpha_{\mu}\right ) + dt ^{\mu} \wedge L^{0,1} _{{\bar \xi ^{\theta}_{\tau}}}L^{1,0}_{\xi ^{\theta}_{\sigma}} \alpha _{\mu},
\end{eqnarray*}
where in the third term on the right hand side of the equality we have used the identity $\dbar \di = - \di \dbar$.  Applying Lemma \ref{l10-computation} to $v = L^{0,1} _{{\bar \xi ^{\theta}_{\tau}}} u = (\dbar _{{\bar \xi ^{\theta}_{\tau}}} u_o)d\vec z + dt ^{\mu} \wedge L^{0,1}_{{\bar \xi ^{\theta}_{\tau}}} \alpha _{\mu}$ then yields  
\[
L^{1,0} _{\xi ^{\theta}_{\sigma}} L^{0,1} _{{\bar \xi ^{\theta}_{\tau}}} u  =  (\nabla ^{1,0}_{\xi ^{\theta}_{\sigma}} \dbar _{{\bar \xi ^{\theta}_{\tau}}} u_o)  d\vec z + (\dbar  u_o({\bar \xi ^{\theta}_{\tau}})) \di \left ( \xi ^{\theta}_{\sigma} \lrcorner  d\vec z\right ) + \di \sigma ^{\mu} \wedge L^{0,1}_{{\bar \xi ^{\theta}_{\tau}}} \alpha _{\mu} + dt ^{\mu} \wedge L^{1,0}_{\xi ^{\theta}_{\sigma}} L^{0,1}_{{\bar \xi ^{\theta}_{\tau}}} \alpha _{\mu}.
\]
Therefore 
\begin{eqnarray*}\label{smooth-theta-curvature}
\nonumber \fa\left ( \Theta ^{\sL^{\theta}}_{\sigma \bar \tau} \ff \right ) &=& \left ( \nabla ^{1,0} _{\xi ^{\theta}_{\sigma}} \dbar _{{\bar \xi ^{\theta}_{\tau}}} - \dbar _{{\bar \xi ^{\theta}_{\tau}}} \nabla ^{1,0}_{\xi ^{\theta}_{\sigma}} \right )u_o d\vec z + {\bar \xi ^{\theta}_{\tau}} \lrcorner ( u_o \di (\dbar \xi ^{\theta}_{\sigma} \lrcorner d\vec z))  +dt^{\mu} \wedge \left (  L^{1,0} _{\xi^{\theta}_{\sigma}} L^{0,1} _{\bar \xi^{\theta}_{\tau}} - L^{0,1} _{\bar \xi^{\theta}_{\tau}} L^{1,0}_{\xi^{\theta}_{\sigma}}\right ) \alpha _{\mu}\\
&=& \Theta (h)_{\xi ^{\theta}_{\sigma}{\bar \xi ^{\theta}_{\tau}}}u_o d\vec z + \nabla _{[\xi ^{\theta}_{\sigma}, {\bar \xi ^{\theta}_{\tau}}]}u_o d\vec z +  u_o ({\bar \xi ^{\theta}_{\tau}} \lrcorner \di (\dbar \xi ^{\theta}_{\sigma} \lrcorner d\vec z)) +dt^{\mu} \wedge \left (  L^{1,0} _{\xi^{\theta}_{\sigma}} L^{0,1} _{\bar \xi^{\theta}_{\tau}} - L^{0,1} _{\bar \xi^{\theta}_{\tau}} L^{1,0}_{\xi^{\theta}_{\sigma}} \right ) \alpha _{\mu}.
\end{eqnarray*}
Keeping in mind that $[\xi ^{\theta}_{\sigma}, \bar \xi ^{\theta}_{\tau}]$ is a vertical vector field, we compute that 
\begin{eqnarray*}
&& L_{[\xi^{\theta}_{\sigma}, {\bar \xi^{\theta}_{\tau}}]} (u_o d\vec z + dt ^{\mu} \wedge \alpha _{\mu}) \\
&=& [\xi^{\theta}_{\sigma}, {\bar \xi^{\theta}_{\tau}}] \lrcorner \nabla (u_o d\vec z + dt ^{\mu} \wedge \alpha _{\mu}) + \nabla([\xi^{\theta}_{\sigma}, {\bar \xi^{\theta}_{\tau}}] \lrcorner (u_o d \vec z+ dt ^{\mu} \wedge \alpha _{\mu})) \\
&=&[\xi^{\theta}_{\sigma}, {\bar \xi^{\theta}_{\tau}}] \lrcorner (\nabla u_o \wedge d\vec z- dt^{\mu} \wedge \nabla \alpha _{\mu}) + \nabla (u_o ([\xi^{\theta}_{\sigma}, {\bar \xi^{\theta}_{\tau}}]\lrcorner d\vec z) - dt ^{\mu} \wedge [\xi^{\theta}_{\sigma}, \bar \xi^{\theta} _{\tau}] \lrcorner \alpha _{\mu})\\
&=& \nabla _{[\xi^{\theta}_{\sigma}, {\bar \xi^{\theta}_{\tau}}]} u_o d\vec z  +u_o \di ([\xi^{\theta}_{\sigma}, {\bar \xi^{\theta}_{\tau}}] \lrcorner d\vec z)+ dt ^{\mu} \wedge L_{[\xi^{\theta}_{\sigma}, \bar \xi^{\theta} _{\tau}]} \alpha _{\mu}\\
&=& \nabla _{[\xi^{\theta}_{\sigma}, {\bar \xi^{\theta}_{\tau}}]} u_o d\vec z  - u_o \di ({\bar \xi^{\theta}_{\tau}}\lrcorner (\dbar\xi^{\theta} _{\sigma} \lrcorner d\vec z))+ dt ^{\mu} \wedge L_{[\xi^{\theta}_{\sigma}, \bar \xi^{\theta} _{\tau}]} \alpha _{\mu}.
\end{eqnarray*}
Therefore, with $\fa (\ff) = u_o d\vec z + dt ^{\mu} \wedge \alpha _{\mu}$, we have
\begin{eqnarray*}
\fa \left (\Theta ^{\sL^{\theta}}_{\sigma \bar \tau} \ff \right )&=& \Theta (h)_{\xi ^{\theta}_{\sigma}{\bar \xi ^{\theta}_{\tau}}} u_o d\vec z  + L_{[\xi ^{\theta} _{\sigma}, \bar \xi ^{\theta} _{\tau}]}\fa (\ff) + u_o \left ( {\bar \xi ^{\theta}_{\tau}} \lrcorner \di (\dbar \xi ^{\theta}_{\sigma} \lrcorner d\vec z) + \di ({\bar \xi^{\theta}_{\tau}}\lrcorner (\dbar\xi^{\theta} _{\sigma} \lrcorner d\vec z)) \right ) \\
&& \quad + dt^{\mu} \wedge \left (  L^{1,0} _{\xi^{\theta}_{\sigma}} L^{0,1} _{\bar \xi^{\theta}_{\tau}} - L^{0,1} _{\bar \xi^{\theta}_{\tau}} L^{1,0}_{\xi^{\theta}_{\sigma}} - L_{[\xi ^{\theta} _{\sigma} , \bar \xi ^{\theta}_{\tau}]} \right )\alpha _{\mu}.
\end{eqnarray*}
Now, 
\begin{eqnarray*}
L^{1,0} _{\bar \xi^{\theta} _{\tau}} \left (  u_o (\dbar \xi^{\theta} _{\sigma} \lrcorner d\vec z) \right ) &=& \nabla ^{1,0} \left ( u_o \bar \xi^{\theta}_{\tau} \lrcorner (\dbar \xi^{\theta}_{\sigma} \lrcorner d\vec z)\right ) + \bar \xi^{\theta}_{\tau} \lrcorner  \nabla ^{1,0} \left (u_o \dbar \xi^{\theta}_{\sigma} \lrcorner d\vec z\right )\\
&=& (\nabla ^{1,0} u_o) \wedge \left ( \bar \xi^{\theta}_{\tau} \lrcorner (\dbar \xi^{\theta}_{\sigma} \lrcorner d\vec z)\right ) + u_o \di \left (  \bar \xi^{\theta}_{\tau} \lrcorner (\dbar \xi^{\theta}_{\sigma} \lrcorner d\vec z)\right ) \\
&& +\bar \xi^{\theta}_{\tau} \lrcorner \left (  \nabla ^{1,0} u_o \wedge ( \dbar \xi^{\theta}_{\sigma} \lrcorner d\vec z+ u_o \di (\dbar \xi ^{\theta} _{\sigma}\lrcorner dz)\right )\\
&=& (\nabla ^{1,0}_{\bar \xi ^{\theta} _{\tau}} u_o )  \left ( \dbar \xi^{\theta}_{\sigma} \lrcorner d\vec z \right ) + u_o \di \left (  \bar \xi^{\theta}_{\tau} \lrcorner (\dbar \xi^{\theta}_{\sigma} \lrcorner d\vec z)\right ) + u_o \di (\dbar \xi ^{\theta} _{\sigma}\lrcorner dz)\\
&=& u_o \di \left (  \bar \xi^{\theta}_{\tau} \lrcorner (\dbar \xi^{\theta}_{\sigma} \lrcorner d\vec z)\right ) + u_o \di (\dbar \xi ^{\theta} _{\sigma}\lrcorner dz),
\end{eqnarray*}
\[
L^{1,0} _{\bar \xi^{\theta} _{\tau}} (dt ^{\mu} \wedge \alpha _{\mu} )= dt ^{\mu} \wedge L^{1,0} _{\bar \xi^{\theta} _{\tau}}  \alpha _{\mu}
\]
and
\[
L^{0,1} _{\xi^{\theta}_{\sigma}} \fa (\ff) = \dbar (\xi^{\theta}_{\sigma} \lrcorner \fa (\ff))+ \xi^{\theta}_{\sigma} \lrcorner \dbar \fa (\ff) = (\dbar \xi^{\theta} _{\sigma})\lrcorner \fa (\ff).
\]
Since $L^{1,0} _{\bar \xi ^{\theta} _{\tau}} \fa (\ff) = \nabla ^{1,0} (\bar \xi ^{\theta} _{\tau} \lrcorner  \fa (\ff)) + \bar \xi ^{\theta} _{\tau} \lrcorner \nabla ^{1,0} \fa (\ff) = 0$ because $\fa (\ff)$ and $\nabla ^{1,0} \fa (\ff)$ are of bi-degree $(n,0)$ and $(n+1,0)$, we have established the following formula.
\begin{equation}\label{l2-curvature-general-formula}
\iota _{X_t} ^* \Theta ^{\sL^{\theta}}_{\sigma \bar \tau} \ff = \iota _{X_t} ^* \left ( \Theta (h)_{\xi ^{\theta}_{\sigma}{\bar \xi ^{\theta}_{\tau}}}\fa (\ff)   -  [L^{0,1} _{\xi^{\theta} _{\sigma}}, L^{1,0}_{\bar \xi^{\theta} _{\tau}}] \fa (\ff) \right ) +\iota _{X_t} ^*  \left (L_{[\xi^{\theta} _{\sigma},\bar \xi^{\theta} _{\tau}]} \fa (\ff)\right )
\end{equation}
for sections $\ff \in \Gamma (B,\sC^{\infty} (\sL))$.  

Since $[\xi^{\theta}_{\sigma}, \bar \xi^{\theta}_{\tau}]$ is vertical, if $\ff_1$ and $\ff_2$ are holomorphic on fibers then $L^{0,1} _{[\xi^{\theta}_{\sigma}, \bar \xi^{\theta}_{\tau}]^{0,1}}\fa (\ff_1) = 0$, and similarly $L^{0,1}_{[\xi ^{\theta}_{\tau}, \bar \xi ^{\theta}_{\sigma}]^{0,1}}\fa (\ff_2)=0$.  Thus \eqref{curv-on-holos} follows from \eqref{l2-theta-curvature-integrated}, to which proof we now turn.

By Formula \eqref{l2-curvature-general-formula} we have 
\begin{eqnarray*}
(\Theta ^{\sL^{\theta}}_{\sigma \bar \tau} \ff _1, \ff _2)_t &=& \int _{X_t} \left <\left ( \Theta (h)_{\xi ^{\theta} _{\sigma}\bar \xi^{\theta}_{\tau}} \fa (\ff_1) \right )\wedge \overline{\fa (\ff _2)}, h \right > \\
&& \quad + \int _{X_t} \left <L^{1,0} _{\bar \xi ^{\theta}_{\tau}} L^{0,1}_{\xi ^{\theta}_{\sigma}}\fa (\ff_1)\wedge \overline{ \fa (\ff_2)}, h \right > + \int _{X_t} \left <L_{[\xi ^{\theta}_{\sigma}, \bar \xi ^{\theta}_{\tau}]} \fa (\ff_1)\wedge \overline{ \fa (\ff_2)}, h \right >.
\end{eqnarray*}
Observing that $L^{0,1}_{\xi ^{\theta}_{\sigma}}\fa (\ff_1)$ is an $E$-valued $(n-1,1)$-form,  by Proposition \ref{restricted-product-rule-2} we have 
\begin{eqnarray*}
&& \int _{X_t} \left <L^{1,0} _{\bar \xi ^{\theta}_{\tau}} L^{0,1}_{\xi ^{\theta}_{\sigma}}\fa (\ff_1)\wedge \overline{ \fa (\ff_2)}, h \right >\\
&=& \int _{X_t} L _{\bar \xi ^{\theta}_{\tau}} \left < L^{0,1}_{\xi ^{\theta}_{\sigma}}\fa (\ff_1)\wedge \overline{\fa (\ff_2)}, h \right > - \int _{X_t} \left < L^{0,1}_{\xi ^{\theta}_{\sigma}}\fa (\ff_1)\wedge \overline{L^{0,1}_{\xi ^{\theta} _{\tau}}\fa (\ff_2)}, h \right >\\
&=&\bar \tau \int _{X_t} \left < L^{0,1}_{\xi ^{\theta}_{\sigma}}\fa (\ff_1)\wedge \overline{\fa (\ff_2)}, h \right > - \int _{X_t} \left < L^{0,1}_{\xi ^{\theta}_{\sigma}}\fa (\ff_1)\wedge \overline{L^{0,1}_{\xi ^{\theta} _{\tau}}\fa (\ff_2)}, h \right >\\
&=& - \int _{X_t} \left < L^{0,1}_{\xi ^{\theta}_{\sigma}}\fa (\ff_1)\wedge \overline{L^{0,1}_{\xi ^{\theta} _{\tau}}\fa (\ff_2)}, h \right >,
\end{eqnarray*}
the last equality being a consequence of the fact that $\left < L^{0,1}_{\xi ^{\theta}_{\sigma}}\fa (\ff_1)\wedge \overline{\fa (\ff_2)}, h \right >$ is an $(n-1,n+1)$-form, and thus integrates to $0$ on $X_t$.  Finally, for reasons of bi-degree
\[
 \int _{X_t} \left <L_{[\xi ^{\theta}_{\sigma}, \bar \xi ^{\theta}_{\tau}]} \fa (\ff_1)\wedge \overline{ \fa (\ff_2)}, h \right > 
= \int _{X_t} \left <L^{0,1}_{[\xi ^{\theta}_{\sigma}, \bar \xi ^{\theta}_{\tau}]^{0,1}} \fa (\ff_1)\wedge \overline{ \fa (\ff_2)}, h \right > + \int _{X_t} \left <L^{1,0}_{[\xi ^{\theta}_{\sigma}, \bar \xi ^{\theta}_{\tau}]^{1,0}} \fa (\ff_1)\wedge \overline{ \fa (\ff_2)}, h \right >,
\]
and by Lemma \ref{lie-deriv-lemma} and the fact that, since $\sigma$ and $\tau$ are holomorphic, $[\xi^{\theta} _{\sigma} , \bar \xi ^{\theta} _{\tau}]$ is vertical 
\begin{eqnarray*}
\int _{X_t} \left <L^{1,0}_{[\xi ^{\theta}_{\sigma}, \bar \xi ^{\theta}_{\tau}]^{1,0}} \fa (\ff_1)\wedge \overline{ \fa (\ff_2)}, h \right > &=& - \int _{X_t} \left < \fa (\ff_1)\wedge \overline{ L^{0,1}_{\overline{[\xi ^{\theta}_{\sigma}, \bar \xi ^{\theta}_{\tau}]^{1,0}}}\fa (\ff_2)}, h \right >\\
&=& \int _{X_t} \left < \fa (\ff_1)\wedge \overline{ L^{0,1}_{[\xi ^{\theta}_{\tau}, \bar \xi ^{\theta}_{\sigma}]^{0,1}}\fa (\ff_2)}, h \right >.
\end{eqnarray*}
This completes the proof.
\end{proof}

\noi Theorem \ref{B2-thm} now follows from Theorem \ref{L2-curvature-prop}, Proposition \ref{Chern-connections-for-L^theta}, Definition \ref{sff-and-curv-iBLS-defn}.ii and Proposition \ref{H-integrable=>L-11}.
\qed

\section{The Second Fundamental Form of $\sH  \subset \sL^{\theta}$}\label{sff-section}

We now turn our attention to the proof of Theorem \ref{berndtsson-reps-thm}.

Let $\theta \subset T_X$ be a smooth horizontal distribution.  In the most general setting one cannot assume that $\sH \subset \sL ^{\theta}$ is regular.  Thus our second fundamental form, which by Definition \ref{iBLS-defn}.ii is 
\[
\two ^{\sL ^{\theta}/\sH} = P_{\sH} ^{\perp} \nabla ^{\sL^{\theta}1,0},
\]
does not depend smoothly on $t \in B$.  The only term that is not smooth in $t$ is the Bergman projection $P_t$, so while differentiation in the vertical directions is legitimate, we must be careful not to differentiate this projection with respect to $t$.  

\subsection{A first formula for the second fundamental form}

Our first result on the second fundamental form is useful in obtaining a more crude estimate for the curvature of $\sH \to B$ under the assumption that the metric $h$ for $E \to X$ is Nakano-positive along the fibers $X_t$.

\begin{prop}\label{sff-prop}
For every $t \in B$,  $\ff \in \Gamma (B, \sC^{\infty}(\sL))$ satisfying $\ff (t) \in \sH _t$, and $\tau \in \cO (B)_t$ one has 
\[
\iota _t \two ^{\sL ^{\theta}/\sH} (\tau) \ff = - G_t \dbar ^* _{X_t}  \left ( \iota^* _{X_t} \left ( (\xi^{\theta} _{\tau} \lrcorner \Theta (h)) \alpha \right ) +   \left (\nabla ^{1,0}\left ( \dbar \xi^{\theta} _{\tau}\lrcorner \alpha \right )\right ) + L^{1,0} _{\xi ^{\theta} _{\tau}} \dbar \alpha \right ),
\]
where $\alpha$ is any $E$-valued $(n,0)$-form representing $\fa (\ff)$.
\end{prop}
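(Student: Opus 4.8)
The plan is to unwind the definition $\two ^{\sL ^{\theta}/\sH} = P^{\perp}_{\sH}\nabla ^{\sL ^{\theta}1,0}$, substitute the Hodge-theoretic description of the Neumann operator on the compact fiber $X_t$, and then read off $\dbar _{X_t}$ of the resulting form from Lemma \ref{dbar-of-sff}.

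First I would invoke Proposition \ref{Chern-connections-for-L^theta}: if $\alpha$ is an $E$-valued $(n,0)$-form representing $\fa(\ff)$, then $\nabla ^{\sL ^{\theta}1,0}_{\tau}\ff$ is represented by $L^{\nabla 1,0}_{\xi ^{\theta}_{\tau}}\alpha$. Since $\ff(t)\in\sH _t$, applying the fiberwise projector $P^{\perp}_t$ onto $\sH _t^{\perp}$ at the single point $t$, and using the identification $\iota _t$ of Proposition \ref{relative-restriction-prop}, gives
\[
\iota _t\,\two ^{\sL ^{\theta}/\sH}(\tau)\ff = P^{\perp}_t\bigl(\iota ^*_{X_t}(L^{\nabla 1,0}_{\xi ^{\theta}_{\tau}}\alpha)\bigr).
\]
It is worth emphasizing, as in the remarks preceding the statement, that $\nabla ^{\sL ^{\theta}1,0}$ is applied \emph{before} $P^{\perp}_t$, and that $P^{\perp}_t$ is then evaluated only at $t$; thus only $\alpha$ ever gets differentiated, and the possible non-smoothness of $t\mapsto P_t$ plays no role.

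Next I would use the Hodge-theoretic formula \eqref{bk-gf-formula} for the Neumann operator on $X_t$, namely $P^{\perp}_t g = \dbar ^*_{X_t}G_t\dbar _{X_t}g$ for smooth $g\in\sL _t$, with $g = \iota ^*_{X_t}(L^{\nabla 1,0}_{\xi ^{\theta}_{\tau}}\alpha)$. The only non-formal input is the computation of $\dbar _{X_t}g$, which is precisely Lemma \ref{dbar-of-sff} with $u=\alpha$:
\[
\dbar _{X_t}\iota ^*_{X_t}(L^{1,0}_{\xi ^{\theta}_{\tau}}\alpha) = \iota ^*_{X_t}\Bigl(-(\xi ^{\theta}_{\tau}\lrcorner\Theta(h))\alpha - \nabla ^{1,0}\bigl((\dbar\xi ^{\theta}_{\tau})\lrcorner\alpha\bigr) + L^{1,0}_{\xi ^{\theta}_{\tau}}\dbar\alpha\Bigr).
\]
Finally I would pull $G_t$ past $\dbar ^*_{X_t}$ using the commutation $[G_t,\dbar ^*_{X_t}]=0$ from the Hodge Theorem (Theorem \ref{hodge-vb}), so that $P^{\perp}_t g = G_t\,\dbar ^*_{X_t}(\dbar _{X_t}g)$, and substitute the displayed expression for $\dbar _{X_t}g$; collecting the signs and the implicit pullbacks $\iota ^*_{X_t}$ on the three terms yields the asserted identity.

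I do not expect a genuine obstacle: the statement is an assembly of results already in place. The one point that needs care — and the only place real content enters — is the order of operations in the first step, which is exactly what keeps $\two ^{\sL ^{\theta}/\sH}(\tau)\ff$ meaningful in the non-regular case, where $P_t$ varies only measurably in $t$. A secondary bookkeeping matter is tracking the three signs and the implicit restrictions to $X_t$, but this is absorbed entirely into the single application of Lemma \ref{dbar-of-sff}.
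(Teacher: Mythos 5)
Your argument is exactly the paper's: identify $\nabla^{\sL^{\theta}1,0}_{\tau}\ff$ with $L^{1,0}_{\xi^{\theta}_{\tau}}\alpha$ via Proposition \ref{Chern-connections-for-L^theta}, observe that $P^{\perp}_t = \dbar^*_{X_t}G_t\dbar_{X_t} = G_t\dbar^*_{X_t}\dbar_{X_t}$ on smooth elements by the Hodge Theorem, and then substitute Lemma \ref{dbar-of-sff}. Your extra remark about applying $P^{\perp}_t$ only pointwise at $t$, so that the possible non-smoothness of the Bergman projection never enters, is a correct and worthwhile clarification, but the route is the same.
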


\begin{proof}
Since $\iota _t \two ^{\sL ^{\theta}/\sH}(\tau) \ff = P^{\perp}_t \iota _{X_t} ^* L^{1,0} _{\xi ^{\theta} _{\tau}}\alpha$ is orthogonal to $\sH_t = {\rm Harm}_{n,0}$,  the Hodge Theorem implies that 
\[
\iota _t \two ^{\sL ^{\theta}/\sH}(\tau) \ff = G_t \dbar ^* \dbar \left (\iota _{X_t} ^*\left ( L^{1,0} _{\xi ^{\theta} _{\tau}}\alpha\right ) \right ),
\]
and the result follows from Lemma \ref{dbar-of-sff}. 
\end{proof}

\subsection{Direct proof of Corollary \ref{xu-wang-cor}}\label{direct-cor-pf-par}

We are now in a position to give the promised proof of Corollary \ref{xu-wang-cor} that does not pass through Theorem \ref{bo-2nd-thm-cor}.

The hypotheses of Corollary \ref{xu-wang-cor} provide a metric $h$ for $E$ such that $[\ii \Theta (h), \Lambda _{\omega}]$ is positive definite for some relative K\"ahler form $\omega$.  Fix $f _1,..., f_k \in \sH _t$ and $\tau _1,..., \tau _k \in \cO(T^{1,0}_B)_t$ along which we will compute the curvature.  Also fix a smooth horizontal distribution $\theta$.

By Corollary \ref{nak-pos-fam} $\sH$ is locally trivial, and hence for every $f \in \sH _t$ one can choose smooth sections $\ff_1,...,\ff_k$ of $\sH$ such that $\ff_i(t) = f_i$.  Consequently if $u_i$ are representatives of $\fa (\ff_i)$ then  $L^{1,0} _{\xi ^{\theta}_{\tau}} \dbar u_i \equiv 0$.

Writing $\two _{\tau} \ff (t) := \two ^{\sL ^{\theta}/\sH}(\tau) \ff (t)$, we have by Lemma \ref{dbar-of-sff} and Proposition \ref{hormander-hodge} 
\begin{eqnarray*}
&& \sum _{i,j=1} ^k\left (\two _{\tau _i} \ff _i(t) , \two _{\tau _j} \ff _j(t) \right ) = \left | \left | \sum _{i=1} ^k \two _{\tau _i} \ff _i(t) \right | \right |^2\\
&& \qquad =  \sup _{\gamma} \frac{1}{(\Box \gamma, \gamma) }\left | \left  ( \sum _{i=1} ^k \iota _{X_t}^* \left ( (\xi ^{\theta} _{\tau _i} \lrcorner \Theta (h))u_i + \nabla ^{1,0}((\dbar \xi ^{\theta} _{\tau_i}) \lrcorner u_i) \right ), \gamma\right )\right|^2\\ 
&& \qquad =   \sup _{\gamma} \frac{1}{(\Box \gamma, \gamma)}\left | \left  ( \sum _{i=1} ^k \left (\iota _{X_t}^*  (\xi ^{\theta} _{\tau _i} \lrcorner \Theta (h) )u_i, \gamma \right ) \right )  + \left ( \sum _{i=1} ^k \iota _{X_t} ^*(\dbar \xi ^{\theta} _{\tau_i}) \lrcorner u_i, \nabla ^{1,0*} \gamma\right )
\right|^2,
\end{eqnarray*}
where the supremum is taken over all smooth $E$-valued $(n,1)$-forms $\gamma$.  Now, for any $E$-valued $(n,1)$-form $U$ and any $E$-valued $(n-1,1)$-form $V$ 
\begin{eqnarray*}
|(U,\gamma) + (V , \nabla ^{1,0*}\gamma)|^2 &\le & |(U,\gamma)|^2 + |(V , \nabla ^{1,0*}\gamma)|^2 + 2 |(U,\gamma)|\cdot |(V , \nabla ^{1,0*}\gamma)|\\
&\le & ([\Theta (h_t), \Lambda _{\omega}]^{-1}U,U) ([\Theta (h_t), \Lambda _{\omega}]\gamma, \gamma) + ||V||^2|| \nabla ^{1,0*}\gamma||^2\\
&& \qquad  + 2 ([\Theta (h_t), \Lambda _{\omega}]^{-1}U,U)^{1/2} ([\Theta (h_t), \Lambda _{\omega}]\gamma, \gamma)^{1/2} ||V||\cdot ||\nabla ^{1,0*}\gamma||\\
&\le& (([\Theta (h_t), \Lambda _{\omega}]^{-1}U,U) + ||V||^2)(([\Theta (h_t), \Lambda _{\omega}]\gamma, \gamma) + ||\nabla ^{1,0*}\gamma||^2)\\
&=& (([\Theta (h_t), \Lambda _{\omega}]^{-1}U,U) + ||V||^2)(([\Theta (h_t), \Lambda _{\omega}] + \Box ^{1,0})\gamma, \gamma)\\
&=& (([\Theta (h_t), \Lambda _{\omega}]^{-1}U,U) + ||V||^2)(\Box\gamma, \gamma),
\end{eqnarray*}
where the first equality follows because $\Box ^{1,0} = \nabla ^{1,0} \nabla ^{1,0*}$ for $(n,q)$-forms, and the second equality is the Bochner-Kodaira Formula \eqref{bochner-kodaira-formula}.  Thus we have the estimate 
\begin{eqnarray*}
\sum _{i,j=1} ^k\left (\two _{\tau _i} \ff _i(t) , \two _{\tau _j} \ff _j(t) \right ) &\le&  \sup _{\gamma} \frac{1}{(\Box \gamma, \gamma)} \left | \left  ( \sum _{i=1} ^k \iota _{X_t}^* \left ( (\xi ^{\theta} _{\tau _i} \lrcorner \Theta (h))u_i + \nabla ^{1,0}((\dbar \xi ^{\theta} _{\tau_i}) \lrcorner u_i) \right ), \gamma\right )\right|^2\\ 
&=&   \left  ( \sum _{i,j=1} ^k [\Theta (h_t), \Lambda _{\omega}]^{-1}\iota _{X_t}^*  (\xi ^{\theta} _{\tau _i} \lrcorner \Theta (h) )u_i, \iota _{X_t}^*(\xi ^{\theta} _{\tau _j} \lrcorner \Theta (h) )u_j \right )  \\
&& +  \sum _{i,j=1} ^k \left ( \iota _{X_t} ^*(\dbar \xi ^{\theta} _{\tau_i}) \lrcorner u_i, \iota _{X_t} ^*(\dbar \xi ^{\theta} _{\tau_j}) \lrcorner u_j\right )
\end{eqnarray*}
Therefore by Theorem \ref{B2-thm}
\begin{eqnarray*}
&& \sum _{i,j=1} ^k ( \Theta ^{\sH} _{\tau_i \bar \tau_j} f_i, f _j) \\
&\ge&  \sum _{i,j=1} ^k \left ( \Theta (h) _{\xi^{\theta} _{\tau_i} \bar \xi^{\theta} _{\tau_j}} f _i, f_j\right ) - \left ( [\Theta (h_t), \Lambda _{\omega}]^{-1}\iota _{X_t}^*  (\xi ^{\theta} _{\tau _i} \lrcorner \Theta (h) )u_i, \iota _{X_t}^*(\xi ^{\theta} _{\tau _j} \lrcorner \Theta (h) )u_j \right )  \\
&& - \sum _{i,j=1} ^k  \left ( \int _{X_t}  \ii^{n^2} \left < (\dbar \xi^{\theta} _{\tau_i}) \lrcorner u_i \wedge \overline{ (\dbar \xi^{\theta} _{\tau_j}) \lrcorner u_j}, h \right >   +  \left ( \iota _{X_t} ^*(\dbar \xi ^{\theta} _{\tau_i}) \lrcorner u_i, \iota _{X_t} ^*(\dbar \xi ^{\theta} _{\tau_j}) \lrcorner u_j\right ) \right ).
\end{eqnarray*}
The estimate holds for all smooth horizontal distributions, but if we choose the horizontal lift of Proposition \ref{good-horizontal-distribution} then by Proposition \ref{Hodge-Riemann} the last line vanishes, and we have the estimate 
\[
\sum _{i,j=1} ^k ( \Theta ^{\sH} _{\tau_i \bar \tau_j} f_i, f _j) \ge  \sum _{i,j=1} ^k \left ( \Theta (h) _{\xi^{\theta} _{\tau_i} \bar \xi^{\theta} _{\tau_j}} f _i, f_j \right ) - \left (  [\Theta (h_t), \Lambda _{\omega}]^{-1}\iota _{X_t}^*  (\xi ^{\theta} _{\tau _i} \lrcorner \Theta (h) )u_i, \iota _{X_t}^*(\xi ^{\theta} _{\tau _j} \lrcorner \Theta (h) )u_j \right ).
\]
By Proposition \ref{positivity-of-dets} the right hand side is positive (resp. non-negative) as soon as $(E,h)$ is $k$-positively (resp. $k$-non-negatively) curved.  Thus Corollary \ref{xu-wang-cor} is proved.
\qed

\subsection{Berndtsson's Method of Representatives}

In this section we present an adaptation of a construction of Berndtsson that is fundamental to the proof of Theorem \ref{berndtsson-reps-thm}.  In the case applicable to \cite[Theorem 1.2]{bo-annals} Berndtsson integrated his method into the proof, and the crux of the argument is contained in \cite[Lemmas 4.3 and 4.4]{bo-annals}.  The technique was distilled further in \cite[Lemma 2.1]{bo-sns}.  In the case of \cite[Theorem 1.2]{bo-sns} Berndtsson augments the method in \cite[Section 4]{bo-sns} in a way that is hard to state in one or two sentences.  The next result captures these two ideas of Berndtsson in a single result.

\begin{thm}[Berndtsson's Method]\label{bo-rep-method}
Let $p :X \to B$ be a holomorphic family, let $E \to X$ be a holomorphic vector bundle with Hermitian metric $h$, fix $t \in B$ and let $\omega_t$ be a K\"ahler form on $X_t$.  Fix a horizontal lift $\theta$.  For any smooth section $f \in H^0(X, \sC^{\infty} (K_{X/B} \tensor E))$ such that $\iota _t ^* f \in \sH _t$ and any $\ell \in H^0(X, \sC^{\infty}(\Lambda ^{n,0} _X \tensor E))$ such that $\iota _{X_t} ^*\ell  = 0$ and $\iota _{X_t} ^*(\xi ^{\theta} _{\tau} \lrcorner  \nabla ^{1,0} \ell )\perp \sH _t$  for all germs of holomorphic vector fields $\tau \in \cO(T^{1,0} _B)_t \cong T^{1,0} _{B,t}$ there there exists a representative $u \in H^0(X, \sC^{\infty}(\Lambda ^{n,0} _X\tensor E))$ of $f$ (in the sense of Paragraph \ref{n0-forms-description}) with the following properties.
\begin{enumerate}
\item[{\rm a.}]  The $E$-valued $(n-1,1)$-forms $\iota _{X_t} ^* (\xi ^{\theta} _{\tau} \lrcorner \dbar u)$ are primitive.
\item[{\rm b.}] $P_t ^{\perp} \iota _{X_t} ^* (\xi ^{\theta} _{\tau} \lrcorner \nabla ^{1,0} u) = \iota _{X_t} ^*(\xi ^{\theta} _{\tau} \lrcorner  \nabla ^{1,0} \ell)$.
\item[{\rm c.}] The $E$-valued $(n-1,1)$-forms $\iota _{X_t} ^* (\xi ^{\theta} _{\tau} \lrcorner \dbar u)$ and $\iota _{X_t} ^* ((\dbar \xi ^{\theta} _{\tau}) \lrcorner f)$ are in the same Dolbeault class.
\end{enumerate}
Moreover, if $\iota _{X_t} ^* L^{1,0} _{\xi ^{\theta} _{\tau}}\dbar f = 0$ then $\iota _{X_t} ^* L^{1,0} _{\xi ^{\theta} _{\tau}} \dbar u = 0$.
\end{thm}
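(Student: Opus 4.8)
\textbf{Proof proposal for Theorem \ref{bo-rep-method}.}

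The plan is to construct the representative $u$ by correcting a given initial representative $u_o$ of $f$ by a section of $Z(p)^{n,0}\tensor E$ — that is, by a form $dp^\mu\wedge v_\mu$ with $v_\mu$ of bidegree $(n-1,0)$ — so that the modified form has the stated primitivity and projection properties. First I would fix local base coordinates $s=(s^1,\dots,s^m)$, write $p=(p^1,\dots,p^m)$ as in Remark \ref{local-in-the-base-coordinates-rmk}, and expand $\dbar u_o$ in the form \eqref{dbar-of-L-section-decomposition}; since $\iota_{X_t}^* f\in\sH_t$ the ``$\psi$'' term restricts to zero on the fiber, so after an elementary adjustment I may as well assume $\dbar u_o = dp^\mu\wedge\alpha_\mu + d\bar p^\nu\wedge\beta_{\bar\nu}$ with the $\iota_{X_t}^*(\xi^\theta_{\tfrac{\di}{\di s^\mu}}\lrcorner\cdots)$-normalized data uniquely determined by $f$ and $\theta$. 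The Dolbeault class of $\iota_{X_t}^*(\xi^\theta_\tau\lrcorner\dbar u)$ is, by a computation parallel to the one in the proof of Theorem \ref{good-horizontal-distribution}, always equal to the class of $\iota_{X_t}^*((\dbar\xi^\theta_\tau)\lrcorner f)$ once we arrange property~(b); indeed (c) is essentially forced because $\xi^\theta_\tau\lrcorner\dbar u = \dbar(\iota_{X_t}^*(\xi^\theta_\tau\lrcorner u)) - (\dbar\xi^\theta_\tau)\lrcorner u + \xi^\theta_\tau\lrcorner(\text{vertical terms})$ on the fiber, and the first term is $\dbar$-exact. So the real content is (a) and (b).

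For (b) I would use the Hodge Theorem on $(X_t,\omega_t)$: the $\dbar$-exact $(n-1,1)$-forms are orthogonal to $\sH_t^\perp$ ... rather, the point is that $P_t^\perp\iota_{X_t}^*(\xi^\theta_\tau\lrcorner\nabla^{1,0}u)$ changes, when $u$ is replaced by $u + dp^\mu\wedge v_\mu$, by $\iota_{X_t}^*(\xi^\theta_\tau\lrcorner\nabla^{1,0}(dp^\mu\wedge v_\mu))$, which along $X_t$ equals $p^*(\partial_\mu\tau^\lambda)\,\iota_{X_t}^*v_\lambda + \iota_{X_t}^*(\xi^\theta_\tau\lrcorner(dp^\mu\wedge\nabla^{1,0}v_\mu))$ — the second piece restricts to zero. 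Thus modifying $u$ changes the relevant projected quantity by $\iota_{X_t}^* v_\tau$ where $v_\tau$ depends linearly on the $(n-1,0)$-forms $v_\mu$; choosing $v_\mu$ so that $\iota_{X_t}^* v_\mu$ realizes the prescribed shift from $P_t^\perp\iota_{X_t}^*(\xi^\theta_\tau\lrcorner\nabla^{1,0}u_o)$ to $\iota_{X_t}^*(\xi^\theta_\tau\lrcorner\nabla^{1,0}\ell)$ (using the hypothesis that the latter lies in $\sH_t^\perp$) gives (b), and then extending $v_\mu$ off $X_t$ by a partition of unity fixes the global representative. For (a) I would further modify $u$ by $\dbar$ of a fiberwise $(n-1,0)$-form: the Lefschetz/Hodge argument in Remark \ref{primitive-harmonic-rmk} shows that any $\dbar$-closed $(n-1,1)$-form can be made primitive within its Dolbeault class by subtracting $\dbar\psi$ with $\psi = -\Lambda_{\omega_t}$ of the relevant $(n,1)$-primitive obstruction; doing this linearly in $\tau$ (so that it comes from subtracting $\dbar$ of a genuine fiberwise $(n-1,0)$-form, not something $\tau$-dependent in a bad way) and then extending off $X_t$ preserves (b) and (c) because the correction is vertical and $\dbar$-exact on the fiber.

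For the final assertion, suppose $\iota_{X_t}^* L^{1,0}_{\xi^\theta_\tau}\dbar f = 0$. The corrections made in the previous two paragraphs replace $u_o$ by $u = u_o + (\text{section of }Z(p)^{n,0}\tensor E) + \dbar(\text{vertical } (n-1,0)\text{-stuff})$, i.e.\ $u - u_o$ is a sum of a form vanishing on all fibers and a fiberwise-$\dbar$-exact vertical correction. By Proposition \ref{int-ptwise-holo}.a, $\iota_{X_t}^* L^{1,0}_{\xi^\theta_\tau}\dbar(\cdot)$ vanishes on forms that restrict to zero on every fiber, so that part of the correction contributes nothing. For the remaining correction term — call it $\dbar w$ with $w$ fiberwise of type $(n-1,0)$ and vanishing in the vertical $\dbar$-sense at $t$ — one computes $\iota_{X_t}^* L^{1,0}_{\xi^\theta_\tau}\dbar\dbar w$; using $\dbar\dbar w = \Theta_E(h)\wedge w$ (Chern curvature) and that $w$ is vertical, together with Lemma \ref{dbar-of-sff} and the fact that $\dbar\xi^\theta_\tau$ is vertical, the contribution to $\iota_{X_t}^* L^{1,0}_{\xi^\theta_\tau}\dbar u$ along $X_t$ reduces to terms that are themselves $\dbar$ of vertical data restricted to the fiber, hence vanish. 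I expect the \textbf{main obstacle} to be exactly this last bookkeeping step: one must check carefully, using the explicit local expression $\dbar u = dp^\mu\wedge\alpha_\mu + d\bar p^\nu\wedge\beta_{\bar\nu}$ and the interplay between $L^{1,0}_{\xi^\theta_\tau}$, the contraction with $dp^\mu$, and the verticality of $\dbar\xi^\theta_\tau$, that applying $L^{1,0}_{\xi^\theta_\tau}$ to the correction and then restricting to $X_t$ genuinely kills everything; the computations in the proofs of Lemma \ref{dbar-of-sff} and Proposition \ref{int-ptwise-holo} are the right templates, and the argument should go through because every correction term is, in an appropriate sense, ``vertical to first order'' at $t$.
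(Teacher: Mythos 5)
Your overall strategy matches the paper's: correct an initial representative $u^o$ by a term $dp^\mu \wedge v_\mu$, use fiberwise Hodge theory on $(X_t,\omega_t)$ to choose the $v_\mu$, and get (c) for free from $\dbar(\xi^{\theta}_{\tau}\lrcorner u) = (\dbar \xi^{\theta}_{\tau})\lrcorner u - \xi^{\theta}_{\tau}\lrcorner \dbar u$. But there is a genuine gap at the heart of the construction: the two corrections (one for primitivity, one for the prescribed $P_t^{\perp}$-projection) must be shown not to interfere with each other, and your justification for this is wrong. Replacing $u$ by $u + dp^\mu\wedge v_\mu$ changes $\iota_{X_t}^*(\xi^{\theta}_{\tau}\lrcorner \dbar u)$ by $-\tau^\mu \dbar(\iota_{X_t}^* v_\mu)$ \emph{and} changes $\iota_{X_t}^*(\xi^{\theta}_{\tau}\lrcorner \nabla^{1,0} u)$ by $-\tau^\mu \nabla^{1,0}(\iota_{X_t}^* v_\mu)$; your claim that the primitivizing correction "preserves (b) because it is vertical and $\dbar$-exact on the fiber" conflates these. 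Being $\dbar$-exact on the fiber preserves the Dolbeault class (hence (c)), but says nothing about the $\nabla^{1,0}$-image entering (b): for a generic potential $\phi_\mu$ with $\dbar\phi_\mu = \omega_t\wedge(\cdots)$, the correction $-\Lambda_{\omega_t}\phi_\mu$ satisfies $\nabla^{1,0}(\Lambda_{\omega_t}\phi_\mu) = -\ii\,\dbar^*\phi_\mu \neq 0$, which shifts the quantity in (b) by something that is orthogonal to $\sH_t$ but nonzero, destroying (b). The fix is to take the \emph{Green-operator} solutions: $V^1_\mu = \Lambda_{\omega_t}\dbar^* G_t(\omega_t\wedge(\gamma_\mu + \iota_{X_t}^*\alpha^o_\mu))$, for which the Hodge identity $\ii[\nabla^{1,0},\Lambda_{\omega_t}] = \dbar^*$ together with $\dbar^*\dbar^* = 0$ gives $\nabla^{1,0}V^1_\mu = 0$; and $V^2_\mu = \nabla^{1,0*}G_t(\cdots)$, for which $[L_{\omega_t},\nabla^{1,0*}] = -\ii\dbar$ and $\dbar\dbar = 0$ give $\omega_t\wedge\dbar V^2_\mu = 0$. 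Without these two vanishing statements the sequential "fix (b), then fix (a)" scheme does not close up. You should also note explicitly why $\nabla^{1,0}V^2_\mu = P_t^{\perp}\iota_{X_t}^*\phi^o_\mu - \iota_{X_t}^*(\xi^{\theta}_{\tau}\lrcorner\nabla^{1,0}\ell)$ is solvable at all: the right-hand side must be orthogonal to ${\rm Kernel}(\Box^{1,0}) = \sH_t$ on $(n,0)$-forms, which is exactly where the hypothesis on $\ell$ is used.

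Two smaller points. Your formula for how (b) changes under the correction is not right (the change is $-\tau^\mu\nabla^{1,0}(\iota_{X_t}^*v_\mu)$, an $(n,0)$-form, not "$\iota_{X_t}^*v_\tau$", and there is no $p^*(\di_\mu\tau^\lambda)v_\lambda$ term in the plain contraction $\xi^{\theta}_{\tau}\lrcorner\nabla^{1,0}(dp^\mu\wedge v_\mu)$ — that term belongs to the Lie-derivative computation, not to this one). And the final assertion, which you flag as "the main obstacle," is actually immediate: every correction you make is a change of representative of the same relative canonical section $f$ (your "modify $u$ by $\dbar$ of a fiberwise $(n-1,0)$-form" is not literally a change of an $(n,0)$-form and must be implemented as adding $dp^\mu\wedge v_\mu$), so Proposition \ref{int-ptwise-holo}.a already says $\iota_{X_t}^*L^{1,0}_{\xi^{\theta}_{\tau}}\dbar u$ is independent of the representative; the entire curvature computation in your last paragraph is unnecessary.
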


\begin{proof}
By Proposition \ref{int-ptwise-holo} $\iota _{X_t} ^* L^{1,0} _{\xi ^{\theta} _{\tau}} \dbar u$ is independent of the choice of representative $u$ of $f$, so it suffices to find a representative $u$ with properties a -- c.

Let $u^o \in H^0(X, \sC^{\infty}(\Lambda ^{n,0} _X\tensor E))$ be some initial representative of $f$.  Using local coordinates $(t^1,...,t^m)$ near $t$, we can write 
\[
\dbar u^o = \psi + dt ^{\mu} \wedge \alpha^o _{\mu} + d\bar t^{\nu} \wedge \beta _{\bar \nu} \quad \text{and} \quad \nabla ^{1,0} u^o = dt ^{\mu} \wedge \phi ^o_{\mu},
\]
where $\alpha ^o _{\mu}$ are $E$-valued $(n-1,1)$-forms, $\beta _{\bar \nu}$ are $E$-valued $(n,0)$-forms, and $\psi$ is an $E$-valued $(n,1)$-form such that $\iota_{X_t} ^*\psi = 0$ at $t$.  (See Proposition \ref{rep-of-holo}.)  The latter condition implies that $\iota ^* _{X_t} (\xi ^{\theta} _{\tau} \lrcorner \psi)$ independent of the choice of lift $\theta$.  Since the horizontal lift $\xi ^{\theta} _{\tau}$ is linear in $\tau$, we may write 
\[
\iota_{X_t} ^*(\xi ^{\theta} _{\tau} \lrcorner \psi) =: \tau ^{\mu} \gamma _{\mu}
\]
for some $E$-valued $(n-1,1)$-forms $\gamma _1,..., \gamma _m$ on $X_t$.

Any other representative $u$ is of the form $u= u^o + dt ^{\mu} \wedge v_{\mu}$ for some $E$-valued $(n-1,0)$-forms $v_{\mu}$, and thus 
\[
\dbar u = \psi + dt ^{\mu} \wedge (\alpha^o _{\mu} - \dbar v_{\mu}) + d\bar t^{\nu} \wedge \beta _{\bar \nu} \quad \text{and} \quad \nabla ^{1,0} u = dt ^{\mu} \wedge ( \phi ^o_{\mu} - \nabla ^{1,0} v_{\mu}).
\]
Therefore we seek $v_{\mu}$ satisfying 
\[
\omega_t \wedge ( \gamma _{\mu} +  \iota ^* _{X_t} \alpha ^o _{\mu} - \iota _{X_t}^* \dbar v_{\mu} )= 0 \quad \text{ and } \quad \nabla ^{1,0} v_{\mu} = P_t ^{\perp} \iota _{X_t}^* \phi ^o _{\mu}- \iota _{X_t} ^*(\xi ^{\theta} _{\tau} \lrcorner  \nabla ^{1,0} \ell ).
\]
Note that 
\[
\dbar ( \omega \wedge u^o ) = \omega \wedge \dbar u^o = \omega \wedge \psi  + dt ^{\mu} \wedge \alpha ^o _{\mu} \wedge \omega + d\bar t ^{\nu} \wedge \beta _{\bar \nu} \wedge \omega,
\]
and hence 
\[
\iota _{X_t} ^*\left ( \xi ^{\theta} _{\tau} \lrcorner \dbar (\omega \wedge u^o)\right ) = \iota _{X_t} ^*( (\xi ^{\theta} _{\tau} \lrcorner \omega) \wedge \psi )  + \omega _t \wedge (\iota _{X_t} ^* (\xi^{\theta}_{\tau}\lrcorner \psi ) + \tau ^{\mu} \iota _{X_t} ^* \alpha ^o _{\mu}) =  \tau ^{\mu}  \omega _t \wedge (\gamma _{\mu} +\iota _{X_t} ^* \alpha ^o _{\mu}).
\]
On the other hand 
\[
\iota _{X_t} ^* \left (\xi ^{\theta} _{\tau} \lrcorner \dbar (\omega \wedge u^o) \right ) = \iota _{X_t} ^* \left ( (\dbar \xi ^{\theta} _{\tau}) \lrcorner (\omega \wedge u^o)  - \dbar ( \xi ^{\theta} _{\tau} \lrcorner (\omega \wedge u^o))\right ) =  - \iota _{X_t} ^* \left (\dbar ( \xi ^{\theta} _{\tau} \lrcorner (\omega \wedge u^o))\right ),
\]
where the second equality holds because $\dbar \xi ^{\theta} _{\tau}$ is vertical and $\omega \wedge u^o$, being of bidegree $(n+1,1)$, is of the form $\omega \wedge u^o = dt ^{\mu} \wedge c_{\mu}$, and hence vanishes along fibers.  Moreover $\xi ^{\theta} _{\tau} \lrcorner (\omega \wedge u^o) = \tau ^{\mu} c_{\mu} + dt^{\mu} \wedge (\xi ^{\theta} _{\tau} \lrcorner c_{\mu})$, so that $\dbar ( \xi ^{\theta} _{\tau} \lrcorner (\omega \wedge u^o)) = \tau ^{\mu} \dbar c_{\mu} + dt^{\mu} \wedge \dbar (\xi ^{\theta} _{\tau} \lrcorner c_{\mu})$.  Restricting to $X_t$, we see that $\omega _t \wedge ( \gamma _{\mu} + \iota _{X_t} ^* \alpha ^o _{\mu})$ is $\dbar$-exact on $X_t$.  Therefore the $E|_{X_t}$-valued $(n-1,0)$-form 
\[
V^1 _{\mu} := \Lambda _{\omega_t} \dbar ^* G_t \left (\omega _t \wedge ( \gamma _{\mu} + \iota _{X_t} ^* \alpha ^o _{\mu}) \right )
\]
satisfies 
\begin{eqnarray*}
\omega _t \wedge (\dbar V^1_{\mu} - (\gamma _{\mu}+  \iota _{X_t} ^* \alpha ^o _{\mu})) &=& \dbar (\omega _t \wedge  V^1_{\mu})  - \omega _t \wedge (\gamma _{\mu} +  \iota _{X_t} ^* \alpha ^o _{\mu}) \\
&=& \dbar \dbar ^* G_t (\omega _t \wedge (\gamma _{\mu} + \iota _{X_t} ^* \alpha ^o)) - \omega _t \wedge (\gamma _{\mu} + \iota _{X_t} ^* \alpha ^o )= 0.
\end{eqnarray*}
The second equality follows because $[L_{\omega_t}, \Lambda _{\omega_t}]= L_{\omega_t} \Lambda _{\omega_t} = {\rm Id}$ on ($E$-valued) $(n,1)$-forms, and in the last equality we have used that $\Box = \dbar \dbar ^*$ on $\dbar$-closed forms.  We note that, by the Hodge identity $\ii [\nabla ^{1,0} , \Lambda _{\omega_t}] = \dbar ^*$ , 
\[
\nabla ^{1,0} V^1 _{\mu} = \nabla ^{1,0} \Lambda _{\omega_t} \dbar ^* G_t (\omega _t \wedge \iota _{X_t} ^* \alpha ^o _{\mu})= [\nabla ^{1,0}, \Lambda _{\omega_t}] \dbar ^* G_t (\omega _t \wedge \iota _{X_t} ^* \alpha ^o _{\mu}) = 0.
\]

Next, $P_t ^{\perp} \iota _{X_t}^* \phi ^o _{\mu}$ is trivially orthogonal to the holomorphic $E$-valued $n$-forms, which are precisely the harmonic $E$-valued $(n,0)$-forms.  By assumption, for every $\mu \in \{1,..., m\}$ the twisted form $\iota _{X_t} ^*(\xi ^{\theta} _{\tfrac{\di}{\di t ^{\mu}}} \lrcorner  \nabla ^{1,0} \ell)$ is orthogonal to the holomorphic $E$-valued $n$-forms.  Since on $E$-valued $(n,0)$-forms $\Box  = \Box ' = \nabla ^{1,0} \nabla ^{1,0*}$ and hence $G'= G$, the $E$-valued $(n-1,0)$-form
\[
V^2_{\mu} := G_t ' \nabla ^{1,0*} \left ( P_t ^{\perp} \iota _{X_t}^* \phi ^o _{\mu} - \iota _{X_t} ^*(\xi ^{\theta} _{\tfrac{\di}{\di t ^{\mu}}} \lrcorner  \nabla ^{1,0} \ell)\right ) =  \nabla ^{1,0*}  G_t '\left (P_t ^{\perp} \iota _{X_t}^* \phi ^o _{\mu} - \iota _{X_t} ^*(\xi ^{\theta} _{\tfrac{\di}{\di t ^{\mu}}} \lrcorner  \nabla ^{1,0} \ell)\right )
\]
solves the equation 
\[
\nabla ^{1,0} V^2_{\mu} =  P_t ^{\perp} \iota _{X_t}^* \phi ^o _{\mu} -\iota _{X_t} ^*(\xi ^{\theta} _{\tfrac{\di}{\di t ^{\mu}}} \lrcorner  \nabla ^{1,0} \ell)
\]
and satisfies 
\begin{eqnarray*}
\omega _t \wedge \dbar V^2_{\mu} &=& \dbar L_{\omega_t} \nabla ^{1,0*} G_t \left (P_t ^{\perp} \iota _{X_t}^* \phi ^o _{\mu} - \iota _{X_t} ^*(\xi ^{\theta} _{\tfrac{\di}{\di t ^{\mu}}} \lrcorner  \nabla ^{1,0} \ell)\right )\\
&=& \dbar [L_{\omega_t} ,\nabla ^{1,0*}] G_t \left (P_t ^{\perp} \iota _{X_t}^* \phi ^o _{\mu} - \iota _{X_t} ^*(\xi ^{\theta} _{\tfrac{\di}{\di t ^{\mu}}} \lrcorner  \nabla ^{1,0} \ell)\right )\\
&=& - \ii \dbar \dbar G_t \left (P_t ^{\perp} \iota _{X_t}^* \phi ^o _{\mu} - \iota _{X_t}^*(\xi ^{\theta} _{\tfrac{\di}{\di t ^{\mu}}} \lrcorner  \nabla ^{1,0} \ell)\right )\\
&=& 0.
\end{eqnarray*}
Taking $v_{\mu}$ to be any smooth extension of $V_{\mu} := V^1_{\mu} + V^2_{\mu}$ to $X$, we find that the representative $u = u^o + dt ^{\mu} \wedge v_{\mu}$ satisfies Properties a and b.  Finally, for \emph{any} representative $u$ of $f$ one has 
\begin{equation}\label{coh-derivative}
\dbar (\xi^{\theta} _{\tau} \lrcorner u) = (\dbar \xi^{\theta} _{\tau}) \lrcorner u - \xi^{\theta} _{\tau} \lrcorner \dbar u,
\end{equation}
so $(\dbar \xi^{\theta} _{\tau}) \lrcorner u$ and $\xi^{\theta} _{\tau} \lrcorner \dbar u$ are $\dbar$-cohomologous (on fibers).  Thus Property c holds as well.  
\end{proof}

\subsection{The Proof of Theorem \ref{berndtsson-reps-thm}}

Let us fix representatives $u_i$ of $\fa (\ff_i)$, $i=1,2$, as in Theorem \ref{bo-rep-method} with $\ell = 0$.  That is to say, $u_1$ and $u_2$ have the following properties.
\begin{enumerate}
\item[a.]The $E|_{X_t}$-valued $(n-1,1)$-forms $\iota _{X_t} ^* (\xi ^{\theta} _{\tau} \lrcorner \dbar u_i)$ are primitive.
\item[b.] The $E|_{X_t}$-valued $(n,0)$-forms $\iota _{X_t} ^* (\xi ^{\theta} _{\tau} \lrcorner \nabla ^{1,0} u_i)$ are holomorphic.
\end{enumerate}
Moreover, by an application of Proposition \ref{holo-infinitesimal} we may assume that $\iota _{X_t} ^* L^{1,0} _{\xi ^{\theta} _{\tau}} \dbar u _i= 0$.  Then 
\begin{eqnarray*}
\two ^{\sL ^{\theta}/\sH} (\sigma) \ff _1 &=& P_t ^{\perp} \iota _{X_t} ^*  L^{1,0} _{\xi ^{\theta} _{\sigma}} u_1= P_t ^{\perp}  \iota _{X_t} ^* (\nabla ^{1,0}(\xi ^{\theta} _{\sigma}\lrcorner u_1) + \xi ^{\theta} _{\sigma}\lrcorner  (\nabla ^{1,0}u_1))\\
&=& P_t ^{\perp}  \nabla ^{1,0}( \iota _{X_t} ^* (\xi ^{\theta} _{\sigma}\lrcorner u_1)) = \nabla ^{1,0}( \iota _{X_t} ^* (\xi ^{\theta} _{\sigma}\lrcorner u_1)).
\end{eqnarray*}
The third equality follows from Property b, while the last equality holds because the image of $\nabla ^{1,0}$ in $\Gamma (X, \sC^{\infty} (\Lambda ^{n,0} _X \tensor E))$ is orthogonal to the kernel of $\Box^{1,0} = \Box$, and the latter kernel consists precisely of holomorphic forms.  Similarly $\two ^{\sL ^{\theta}/\sH} (\tau) \ff _2 = \nabla ^{1,0}( \iota _{X_t} ^* (\xi ^{\theta} _{\tau}\lrcorner u_2))$.  Therefore 
\begin{eqnarray*}
\left ( \two ^{\sL ^{\theta}/\sH} (\sigma) \ff _1, \two ^{\sL ^{\theta}/\sH} (\tau) \ff _2 \right )&=& \ii ^{n^2}\int _{X_t} \left < \nabla ^{1,0}( \iota _{X_t} ^* (\xi ^{\theta} _{\sigma}\lrcorner u_1)) \wedge \overline{\nabla ^{1,0}( \iota _{X_t} ^* (\xi ^{\theta} _{\tau}\lrcorner u_2))} , h \right >\\
&=& (\text{\rm -1})^{n+1} \ii ^{n^2} \int _{X_t} \left < \dbar \nabla ^{1,0} ( \iota _{X_t} ^* (\xi ^{\theta} _{\sigma}\lrcorner u_1)) \wedge \overline{\iota _{X_t} ^* (\xi ^{\theta} _{\tau}\lrcorner u_2)}, h \right >\\
&=& \ii ^{n^2-2n +1} \int _{X_t} \left < (\ii \Theta (h_t)  \iota _{X_t} ^* (\xi ^{\theta} _{\sigma}\lrcorner u_1)) \wedge \overline{\iota _{X_t} ^* (\xi ^{\theta} _{\tau}\lrcorner u_2)}, h \right >\\
&& \qquad  -  (\text{\rm -1})^{n+1} \ii ^{n^2} \int _{X_t} \left < \nabla ^{1,0} \dbar  ( \iota _{X_t} ^* (\xi ^{\theta} _{\sigma}\lrcorner u_1)) \wedge \overline{\iota _{X_t} ^* (\xi ^{\theta} _{\tau}\lrcorner u_2)}, h \right >\\
&=& \ii ^{(n-1)^2} \int _{X_t} \left < (\ii  \Theta (h_t) (\xi ^{\theta} _{\sigma}\lrcorner u_1)) \wedge \overline{(\xi ^{\theta} _{\tau}\lrcorner u_2)} , h \right >\\
&& \qquad  - \ii ^{n^2} \int _{X_t} \left <(\dbar (\xi ^{\theta} _{\sigma}\lrcorner u_1)) \wedge \overline{\dbar (\xi ^{\theta} _{\tau}\lrcorner u_2)}, h \right >.
\end{eqnarray*}
Using \eqref{l-lambda-comm-rel} we have 
\begin{eqnarray*}
\ii \Theta (h_t) \iota _{X_t} ^* (\xi ^{\theta} _{\sigma}\lrcorner u_1) &=& [L_{\omega_t} , \Lambda _{\omega_t}] \ii \Theta (h_t) \iota _{X_t} ^*(\xi ^{\theta} _{\sigma}\lrcorner u_1)=  L_{\omega_t}  \Lambda _{\omega_t} \ii \Theta (h_t) \iota _{X_t} ^*(\xi ^{\theta} _{\sigma}\lrcorner u_1))\\
&=& - L_{\omega_t} [\ii \Theta (h_t),  \Lambda _{\omega_t}] \iota _{X_t} ^* (\xi ^{\theta} _{\sigma}\lrcorner u_1),
\end{eqnarray*}
and thus 
\begin{eqnarray*}
\ii ^{(n-1)^2} \int _{X_t} \left < (\ii  \Theta (h_t) (\xi ^{\theta} _{\sigma}\lrcorner u_1)) \wedge \overline{(\xi ^{\theta} _{\tau}\lrcorner u_2)} , h \right > = - \left ( \left [\ii \Theta (h_t),  \Lambda _{\omega_t}\right ] \xi ^{\theta} _{\sigma}\lrcorner u_1, \xi ^{\theta} _{\tau}\lrcorner u_2 \right ).
\end{eqnarray*}

From the differential identity $\dbar (\xi \lrcorner u) = (\dbar \xi) \lrcorner u - \xi \lrcorner \dbar u$ and the algebraic identity
\[
(a-b , a-b) = (a,a) - (b,b) - (a-b, b) - (b, a-b)
\]
for a bilinear form $(\cdot, \cdot )$ we obtain 
\begin{eqnarray*}
&& \int _{X_t}  \left < (\dbar (\xi ^{\theta} _{\sigma}\lrcorner u_1)) \wedge \overline{\dbar (\xi ^{\theta} _{\tau}\lrcorner u_2)}, h \right > \\
&=&  \int _{X_t} \left < ((\dbar \xi ^{\theta} _{\sigma})\lrcorner u_1) \wedge \overline{((\dbar \xi ^{\theta}_{\tau})\lrcorner u_2) }, h \right > -  \int _{X_t} \left < \xi ^{\theta} _{\sigma}\lrcorner (\dbar u_1) ) \wedge \overline{\xi ^{\theta} _{\tau} \lrcorner (\dbar u_2))}, h \right >\\
&&  -  \int _{X_t}\left <  (\dbar( \xi ^{\theta} _{\sigma}\lrcorner u_1)) \wedge \overline{(\xi ^{\theta} _{\tau} \lrcorner (\dbar u_2))} , h \right > - \int _{X_t} \left < (\xi ^{\theta} _{\sigma}\lrcorner (\dbar u_1) ) \wedge \overline{(\dbar(\xi ^{\theta} _{\tau}\lrcorner u_2)}, h \right >.
\end{eqnarray*}
Now, 
\begin{eqnarray*}
&& \int _{X_t} \left < (\xi ^{\theta} _{\sigma}\lrcorner (\dbar u_1) ) \wedge \overline{(\dbar(\xi ^{\theta} _{\tau}\lrcorner u_2)}, h \right > = (-1) ^{n-1} \int _{X_t} \left < \nabla ^{1,0} (\xi ^{\theta} _{\sigma} \lrcorner (\dbar u_1)) \wedge \overline{\xi ^{\theta} _{\tau} \lrcorner u_2}, h \right > \\
&=&  (-1) ^{n} \int _{X_t} \left < (\xi ^{\theta} _{\sigma} \lrcorner (\nabla ^{1,0} \dbar u_1)) \wedge \overline{\xi ^{\theta} _{\tau} \lrcorner u_2}, h \right > - (-1) ^{n} \int _{X_t} \left < (L^{1,0} _{\xi ^{\theta} _{\sigma}} \dbar u_1) \wedge \overline{\xi ^{\theta} _{\tau} \lrcorner u_2}, h \right >\\
&=&  (-1) ^{n} \int _{X_t} \left < (\xi ^{\theta} _{\sigma} \lrcorner (\nabla ^{1,0} \dbar u_1)) \wedge \overline{\xi ^{\theta} _{\tau} \lrcorner u_2}, h \right >.
\end{eqnarray*}
We claim that $\iota _{X_t} ^* (\xi ^{\theta} _{\sigma} \lrcorner (\nabla ^{1,0} \dbar u_1)) = \iota _{X_t} ^* (\xi ^{\theta} _{\sigma} \lrcorner (\Theta(h) u_1))$.  Indeed, since $\nabla ^{1,0} u_1= dt ^{\mu} \wedge \phi _{\mu}$ with $\phi _{ \mu}$ holomorphic on $X_t$, 
\begin{eqnarray*}
\xi ^{\theta} _{\sigma} \lrcorner (\dbar \nabla ^{1,0} u_1) &=& (\dbar \xi ^{\theta} _{\sigma}) \lrcorner \nabla ^{1,0} u_1 - \dbar ( \xi ^{\theta} _{\sigma} \lrcorner \nabla ^{1,0} u_1)\\
&=& dt ^{\mu} \wedge ((\dbar \xi^{\theta} _{\sigma}) \lrcorner \phi _{\mu}) - \dbar (\sigma ^{\mu} \phi _{\mu} - dt ^{\mu} \wedge (\xi ^{\theta} _{\sigma} \lrcorner \phi _{\mu}))\\
&=& - \sigma ^{\mu} \dbar \phi _{\mu} + dt ^{\mu} \wedge \left ((\dbar \xi ^{\theta} _{\sigma}) \lrcorner \phi _{\mu} - \dbar (\xi ^{\theta} _{\sigma} \lrcorner \phi _{\mu})  \right ).
\end{eqnarray*}
Both of these terms vanish on $X_t$, which proves our claim.  Therefore 
\[
\int _{X_t} \left < (\xi ^{\theta} _{\sigma}\lrcorner (\dbar u_1) ) \wedge \overline{(\dbar(\xi ^{\theta} _{\tau}\lrcorner u_2)}, h \right > =  (-1) ^{n} \int _{X_t} \left < (\xi ^{\theta} _{\sigma} \lrcorner (\Theta (h)u_1)) \wedge \overline{\xi ^{\theta} _{\tau} \lrcorner u_2}, h \right > .
\]
By interchanging $\sigma \leftrightarrow \tau$ and $u_1 \leftrightarrow u_2$ and then taking complex conjugates, 
\[
\int _{X_t}\left <  (\dbar( \xi ^{\theta} _{\sigma}\lrcorner u_1)) \wedge \overline{(\xi ^{\theta} _{\tau} \lrcorner (\dbar u_2))} , h \right > =  (-1) ^{n} \int _{X_t} \left < (\xi ^{\theta} _{\sigma} \lrcorner u_1) \wedge \overline{\xi ^{\theta} _{\tau} \lrcorner(\Theta (h) u_2)}, h \right > 
\]
Using \eqref{l-lambda-comm-rel} again, we have 
\begin{eqnarray*}
&& (-1) ^n \int _{X_t} \ii ^{n^2} \left < \left (\xi ^{\theta} _{\sigma} \lrcorner (\Theta (h)u_1)\right ) \wedge \overline{\xi ^{\theta} _{\tau} \lrcorner u_2}, h \right > \\
&=& - \ii ^{n^2 - 2n +1} \int _{X_t}\left < \omega _t \wedge  \Lambda _{\omega _t} \iota _{X_t} ^* \left (\xi ^{\theta} _{\sigma} \lrcorner  (\ii \Theta (h)u_1)\right ) \wedge \overline{\xi ^{\theta} _{\tau} \lrcorner u_2}, h \right > \\
&=& - ( \Lambda_{\omega _t} (\xi ^{\theta} _{\sigma} \lrcorner ( \ii \Theta (h)u_1)) ,\xi ^{\theta} _{\tau} \lrcorner u_2)\\
&=& - \left ( \Lambda_{\omega _t} \iota _{X_t} ^* \left (\xi ^{\theta} _{\sigma} \lrcorner  \ii \Theta (h)\right ) u_1 ,\xi ^{\theta} _{\tau} \lrcorner u_2\right )- \left ( \Lambda_{\omega _t}  \ii \Theta (h_t) (\xi ^{\theta} _{\sigma} \lrcorner u_1) ,\xi ^{\theta} _{\tau} \lrcorner u_2\right )\\
&=& - \left ( \Lambda_{\omega _t}  \iota _{X_t} ^* \left (\xi ^{\theta} _{\sigma} \lrcorner  \ii \Theta (h)\right ) u_1 ,\xi ^{\theta} _{\tau} \lrcorner u_2\right ) +  \left ( \left [\ii \Theta (h_t),\Lambda_{\omega _t}\right ] (\xi ^{\theta} _{\sigma} \lrcorner u_1) ,\xi ^{\theta} _{\tau} \lrcorner u_2\right ),
\end{eqnarray*}
Since $[\ii \Theta (h_t),\Lambda_{\omega _t}] = \Box - \Box ^{1,0}$ is Hermitian symmetric for the $L^2$-norm, we find that 
\begin{eqnarray*}
&& \ii ^{n^2} \int _{X_t} \left < (\xi ^{\theta} _{\sigma}\lrcorner (\dbar u_1) ) \wedge \overline{(\dbar(\xi ^{\theta} _{\tau}\lrcorner u_2)}, h \right > + \ii ^{n^2}  \int _{X_t}\left <  (\dbar( \xi ^{\theta} _{\sigma}\lrcorner u_1)) \wedge \overline{(\xi ^{\theta} _{\tau} \lrcorner (\dbar u_2))} , h \right > \\
&& \qquad =  2 \left ( \left [\ii \Theta (h_t),\Lambda_{\omega _t}\right ] (\xi ^{\theta} _{\sigma} \lrcorner u_1) ,\xi ^{\theta} _{\tau} \lrcorner u_2\right )\\
&& \quad \qquad -  \left ( \Lambda_{\omega _t}  \iota _{X_t} ^* \left (\xi ^{\theta} _{\sigma} \lrcorner  \ii \Theta (h)\right ) u_1 ,\xi ^{\theta} _{\tau} \lrcorner u_2\right ) -  \left ( \xi ^{\theta} _{\sigma} \lrcorner u_1, \Lambda_{\omega _t}  \iota _{X_t} ^* \left (\xi ^{\theta} _{\tau} \lrcorner  \ii \Theta (h)\right ) u_2 \right ).
\end{eqnarray*}
Adding everything up yields 
\begin{eqnarray*}
&& \left ( \two ^{\sL ^{\theta}/\sH} (\sigma) \ff _1, \two ^{\sL ^{\theta}/\sH} (\tau) \ff _2 \right ) \\
&=& -  \int _{X_t} \ii ^{n^2} \left < \left ((\dbar \xi ^{\theta} _{\sigma})\lrcorner u_1\right ) \wedge \overline{\left ((\dbar \xi ^{\theta}_{\tau})\lrcorner u_2\right ) }, h \right > - \left (\xi ^{\theta} _{\sigma}\lrcorner (\dbar u_1) , \xi ^{\theta} _{\tau} \lrcorner (\dbar u_2)\right )\\
&& \qquad + ( [\ii \Theta (h_t),\Lambda_{\omega _t}] (\xi ^{\theta} _{\sigma} \lrcorner u_1) ,\xi ^{\theta} _{\tau} \lrcorner u_2) \\
&& \quad \qquad - ( [\Lambda_{\omega _t} , \iota _{X_t} ^* ((\xi ^{\theta} _{\sigma} \lrcorner  \ii \Theta (h))] u_1 ,\xi ^{\theta} _{\tau} \lrcorner u_2) - ( \xi ^{\theta} _{\sigma} \lrcorner u_1, [\Lambda_{\omega _t} , \iota _{X_t} ^* ((\xi ^{\theta} _{\tau} \lrcorner  \ii \Theta (h))] u_2 ).
\end{eqnarray*}
Thus Theorem \ref{berndtsson-reps-thm} is proved.
\qed

\subsection{The proof of Theorem \ref{bo-2nd-thm-cor}}

We begin with the following computation.

\begin{prop}\label{expand-the-current-prop}
Let $\ff_1, \ff_2 \in \Gamma (B,\sC^{\infty}(\sL))$ be represented by $E$-valued $(n,0)$-forms $u_1$ and $u_2$ respectively.  Then the differential $(1,1)$-form $\alpha_h(\ff_1,\ff_2)$ on $B$ defined by 
\[
\alpha _h(\ff _1, \ff_2) := p_* \left (\ii ^{n^2} \left < (\Theta (h) u_1) \wedge \bar u_2 , h \right >\right )
\]
is given by the formula 
\begin{eqnarray*}
\left < \alpha _h(\ff _1, \ff_2),  \sigma \wedge \bar \tau \right > &=& \left ( \Theta (h) _{\xi^{\theta} _{\sigma} \bar \xi^{\theta} _{\tau}} f _1, f_2\right ) - \left (\left [\ii \Theta (h_t), \Lambda_{\omega_t}\right ] (\xi ^{\theta} _{\sigma}\lrcorner  u_1), (\xi ^{\theta} _{\tau}\lrcorner u_2)\right )  \\
&& + \left (\Lambda _{\omega _t}\iota _{X_t}^*\left ( \xi ^{\theta}_{\sigma} \lrcorner \ii \Theta (h) \right ) f_1, \xi ^{\theta}_{\tau}\lrcorner u_2\right ) + \left ( \xi ^{\theta}_{\sigma}\lrcorner u_1, \Lambda _{\omega _t} \iota _{X_t}^*\left (\xi ^{\theta}_{\tau} \lrcorner \ii \Theta (h)   \right ) f_2 \right ) \end{eqnarray*}
for any choice of horizontal distribution $\theta$.
\end{prop}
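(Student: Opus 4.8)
The plan is to carry out a direct pointwise computation on the fibre $X_t$, in the same spirit as the proofs of Theorems~\ref{L2-curvature-prop} and~\ref{berndtsson-reps-thm}. Writing $f_i := \iota_{X_t}^* u_i$, I would first unwind the fibre integral defining $p_*$: for the $E$-valued $(n+1,n+1)$-form $\ii^{n^2}\langle(\Theta(h)\wedge u_1)\wedge\overline{u_2},h\rangle$ on $X$ one has, with the standard sign conventions for $p_*$ and for any horizontal distribution $\theta$,
\[
\left<\alpha_h(\ff_1,\ff_2),\sigma\wedge\bar\tau\right> \;=\; \int_{X_t}\iota_{X_t}^*\!\left(\xi^\theta_\sigma\lrcorner\,\bar\xi^\theta_\tau\lrcorner\,\ii^{n^2}\left<(\Theta(h)\wedge u_1)\wedge\overline{u_2},h\right>\right).
\]
The $\theta$-independence of the left-hand side is then automatic, while the four summands of the asserted formula will appear only after a choice of $\theta$ has been fixed.

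Second, I would distribute the two interior products using the Leibniz rule for contraction, a derivation of degree $-1$. Since $\xi^\theta_\sigma$ is a $(1,0)$-vector and $\overline{u_2}$ is anti-holomorphic, $\xi^\theta_\sigma\lrcorner\overline{u_2}=0$, and likewise $\bar\xi^\theta_\tau\lrcorner u_1=0$; consequently only four kinds of terms survive, according to where $\xi^\theta_\sigma$ and $\bar\xi^\theta_\tau$ land:
\begin{enumerate}
\item[(a)] both on the $\operatorname{End}(E)$-valued factor $\Theta(h)$: this produces $\ii^{n^2}\langle(\Theta(h)_{\xi^\theta_\sigma\bar\xi^\theta_\tau}u_1)\wedge\overline{u_2},h\rangle$, which integrates over $X_t$ to $(\Theta(h)_{\xi^\theta_\sigma\bar\xi^\theta_\tau}f_1,f_2)$ by the definition of the inner product \eqref{bls-inner-product};
\item[(b)] $\xi^\theta_\sigma$ on $\Theta(h)$ and $\bar\xi^\theta_\tau$ on $\overline{u_2}$: this produces, up to a fixed power of $\ii$, an $(n,n)$-form built from the $E$-valued $(n,1)$-form $\iota_{X_t}^*(\xi^\theta_\sigma\lrcorner\ii\Theta(h))f_1$ and the $E$-valued $(n-1,0)$-form $\iota_{X_t}^*(\xi^\theta_\tau\lrcorner u_2)$;
\item[(c)] the conjugate situation, $\bar\xi^\theta_\tau$ on $\Theta(h)$ and $\xi^\theta_\sigma$ on $u_1$;
\item[(d)] $\xi^\theta_\sigma$ on $u_1$ and $\bar\xi^\theta_\tau$ on $\overline{u_2}$: this produces, up to a fixed power of $\ii$, an $(n,n)$-form built from the $E$-valued $(n-1,0)$-forms $\iota_{X_t}^*(\xi^\theta_\sigma\lrcorner u_1)$, $\iota_{X_t}^*(\xi^\theta_\tau\lrcorner u_2)$ and the intrinsic restriction $\Theta(h_t)=\iota_{X_t}^*\Theta(h)$ of the curvature.
\end{enumerate}

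Third, I would convert each of (a)--(d) into the corresponding $L^2$ inner product on $E|_{X_t}$-valued forms for the metrics $h_t$ and $\omega_t$. Term (a) needs nothing. For (b) and (c) I would argue as near the end of the proof of Theorem~\ref{berndtsson-reps-thm}: since $L_{\omega_t}\Lambda_{\omega_t}=\operatorname{Id}$ on $E$-valued $(n,1)$-forms (a consequence of \eqref{l-lambda-comm-rel} together with the vanishing of $(n+1,2)$-forms on the $n$-dimensional $X_t$), one may write $\iota_{X_t}^*(\xi^\theta_\sigma\lrcorner\ii\Theta(h))f_1=\omega_t\wedge\Lambda_{\omega_t}\iota_{X_t}^*(\xi^\theta_\sigma\lrcorner\ii\Theta(h))f_1$, and the $\omega_t$-wedge recombines with the remaining $(n-1,0)$-form into the $L^2$ pairing, yielding $(\Lambda_{\omega_t}\iota_{X_t}^*(\xi^\theta_\sigma\lrcorner\ii\Theta(h))f_1,\xi^\theta_\tau\lrcorner u_2)$ and its conjugate partner. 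For (d) I would use the identity $[\ii\Theta(h_t),\Lambda_{\omega_t}](\xi^\theta_\sigma\lrcorner u_1)=-\Lambda_{\omega_t}\big(\ii\Theta(h_t)\wedge(\xi^\theta_\sigma\lrcorner u_1)\big)$ (again because $\Lambda_{\omega_t}$ kills $(n-1,0)$-forms) together with $L_{\omega_t}\Lambda_{\omega_t}=\operatorname{Id}$ on $(n,1)$-forms to replace $\ii\Theta(h_t)\wedge(\xi^\theta_\sigma\lrcorner u_1)$ by $-\omega_t\wedge[\ii\Theta(h_t),\Lambda_{\omega_t}](\xi^\theta_\sigma\lrcorner u_1)$; the $\omega_t$-wedge again recombines into the $L^2$ pairing, giving $-([\ii\Theta(h_t),\Lambda_{\omega_t}](\xi^\theta_\sigma\lrcorner u_1),\xi^\theta_\tau\lrcorner u_2)$. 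Summing (a)--(d) yields the asserted identity.

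I expect the main obstacle to be the sign-and-normalisation bookkeeping of this last step: keeping track of the powers of $\ii$ and the signs through the reorderings of $(n-1,0)$-, $(n,0)$- and $(n,1)$-forms (using the conventions of Proposition~\ref{Hodge-Riemann}), and carefully distinguishing the contraction $\iota_{X_t}^*(\xi^\theta_\sigma\lrcorner\ii\Theta(h))$ of the \emph{ambient} curvature, which appears in (b) and (c), from the intrinsic restriction $\Theta(h_t)$ occurring inside the commutator in (d) — the two genuinely differ because the horizontal lift $\xi^\theta_\sigma$ is not tangent to $X_t$. Once the conventions are pinned down, the rest is routine.
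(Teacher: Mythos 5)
Your proposal follows essentially the same route as the paper's proof: unwind $p_*$ as a fibre integral of the double contraction $\bar\xi^{\theta}_{\tau}\lrcorner(\xi^{\theta}_{\sigma}\lrcorner\,\cdot\,)$, distribute the contractions by the Leibniz rule into the same four terms (a)--(d), and then convert each to an $L^2$ pairing via $L_{\omega_t}\Lambda_{\omega_t}={\rm Id}$ on $(n,1)$-forms and the vanishing of $\Lambda_{\omega_t}$ on $(n-1,0)$-forms, exactly as in Proposition \ref{expand-the-current-prop}. The distinction you flag between the contracted ambient curvature $\iota_{X_t}^*(\xi^{\theta}_{\sigma}\lrcorner\ii\Theta(h))$ and the intrinsic $\Theta(h_t)$ is indeed the one point requiring care, and your handling of it is consistent with the paper's.
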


\begin{proof}
Using a partition of unity and Fubini's Theorem, one sees that 
\[
\left < \alpha _h(\ff _1, \ff_2),  \sigma \wedge \bar \tau \right > = \int _{X_t} \left < \ii ^2 \bar \xi ^{\theta} _{\tau} \lrcorner \left ( \xi ^{\theta} _{\sigma} \lrcorner ((\Theta (h) u_1)\wedge \bar u_2 ) \right ), h \right >
\]
for any horizontal distribution $\theta$.  We compute that 
\begin{eqnarray*}
\xi ^{\theta} _{\sigma} \lrcorner ((\Theta (h) u_1)\wedge \bar u_2 ) &=& (( \xi ^{\theta} _{\sigma} \lrcorner \Theta (h)) u_1\wedge \bar u_2 ) + (( \Theta (h) (\xi ^{\theta} _{\sigma} \lrcorner u_1))\wedge \bar u_2 )\\
&=& ((\xi ^{\theta} _{\sigma} \lrcorner \Theta (h)) u_1\wedge \bar u_2 ) - ((\xi ^{\theta} _{\sigma} \lrcorner u_1))\wedge\overline{ \Theta (h)  u_2}),
\end{eqnarray*}
and then 
\begin{eqnarray*}
&& \bar \xi ^{\theta} _{\tau} \lrcorner (\xi ^{\theta} _{\sigma} \lrcorner (( \Theta (h) u_1)\wedge \bar u_2 )) \\
&&= (( \Theta (h)_{\xi ^{\theta} _{\sigma} \bar \xi ^{\theta} _{\tau}} u_1) \wedge \bar u_2 ) +(-1) ^{n-1} ((\xi ^{\theta} _{\sigma} \lrcorner  \Theta (h)) u_1\wedge \overline{\xi ^{\theta} _{\tau} \lrcorner  u_2} )\\
&& + (-1) ^{n}((\xi ^{\theta} _{\sigma} \lrcorner u_1))\wedge\overline{(\xi ^{\theta} _{\tau} \lrcorner  \Theta (h))  u_2}) + (-1) ^{n} ((\xi ^{\theta} _{\sigma} \lrcorner u_1))\wedge\overline{ \Theta (h) (\xi^{\theta} _{\tau} \lrcorner u_2)})\\
&&= (( \Theta (h)_{\xi ^{\theta} _{\sigma} \bar \xi ^{\theta} _{\tau}} u_1) \wedge \bar u_2 ) +(-1) ^{n-1} ((\xi ^{\theta} _{\sigma} \lrcorner  \Theta (h)) u_1\wedge \overline{\xi ^{\theta} _{\tau} \lrcorner  u_2} )\\
&& + (-1) ^{n}(\xi ^{\theta} _{\sigma} \lrcorner u_1)\wedge\overline{(\xi ^{\theta} _{\tau} \lrcorner  \Theta (h))  u_2}) + (-1) ^{n-1} ( \Theta (h) (\xi ^{\theta} _{\sigma} \lrcorner u_1))\wedge\overline{(\xi^{\theta} _{\tau} \lrcorner u_2)}.
\end{eqnarray*} 
But by \eqref{l-lambda-comm-rel}
\[
\iota _{X_t} ^* ((\xi ^{\theta} _{\sigma} \lrcorner  \Theta (h)) u_1 )= [L_{\omega_t} , \Lambda _{\omega_t}]\iota _{X_t} ^* ((\xi ^{\theta} _{\sigma} \lrcorner  \Theta (h)) u_1 ) = L_{\omega_t}  \Lambda _{\omega_t} \iota _{X_t} ^* ((\xi ^{\theta} _{\sigma} \lrcorner  \Theta (h)) u_1 ),
\]
so 
\begin{eqnarray*}
&& \ii ^{n^2}\left <(-1) ^{n-1} ((\xi ^{\theta} _{\sigma} \lrcorner  \Theta (h)) u_1\wedge \overline{\xi ^{\theta} _{\tau} \lrcorner  u_2} ), h \right >\\
&&= \left < \ii ^{(n-1)^2}  ( \omega _t \wedge \Lambda _{\omega _t} (\xi ^{\theta} _{\sigma} \lrcorner  \ii \Theta (h)) u_1\wedge \overline{\xi ^{\theta} _{\tau} \lrcorner  u_2} ) , h \right > = \left \{ \Lambda _{\omega _t} (\xi ^{\theta} _{\sigma} \lrcorner  \ii \Theta (h)) u_1,\xi ^{\theta} _{\tau} \lrcorner  u_2\right \} \frac{\omega _t ^n}{n!},
\end{eqnarray*}
where $\{ \cdot , \cdot \}$ is the pointwise inner product, on $X_t$, of $E$-valued $(n-1,0)$-forms induced by $h_t$ and $\omega _t$. 
Similarly 
\[
(-1) ^n \left < \ii ^{n^2} (\xi ^{\theta} _{\sigma} \lrcorner u_1)\wedge\overline{(\xi ^{\theta} _{\tau} \lrcorner  \Theta (h))  u_2}), h\right > = \left \{ \xi ^{\theta} _{\sigma} \lrcorner u_1, \Lambda _{\omega _t}(\xi ^{\theta} _{\tau} \lrcorner  \ii \Theta (h))  u_2 \right \} \frac{\omega_t ^n}{n!}
\]
and 
\begin{eqnarray*}
(-1) ^{n-1} \left < \ii ^{n^2}( \Theta (h) (\xi ^{\theta} _{\sigma} \lrcorner u_1))\wedge\overline{(\xi^{\theta} _{\tau} \lrcorner u_2)}, h \right >&=& \left \{ \Lambda _{\omega _t} \ii \Theta (h) (\xi ^{\theta} _{\sigma} \lrcorner u_1), \xi^{\theta} _{\tau} \lrcorner u_2\right \}\frac{\omega ^n}{n!}\\
&=& -  \left \{\left  [\ii \Theta (h), \Lambda _{\omega _t} \right ] (\xi ^{\theta} _{\sigma} \lrcorner u_1), \xi^{\theta} _{\tau} \lrcorner u_2\right \}\frac{\omega ^n}{n!}.
\end{eqnarray*}
Integration over $X_t$ completes the proof.
\end{proof}

\begin{proof}[Conclusion of the proof of Theorem \ref{bo-2nd-thm-cor}]
Fix a smooth horizontal distribution $\theta$ and a point $t \in B$.  Let $\ff _1 , \ff _2 \in \Gamma (B, \sC^{\infty} (\sL))$ be smooth sections such that $\ff _i (t) \in \sH _t$, $i=1,2$, and take representative $E$-valued $(n,0)$-forms $u_1, u_2$ of $\ff _1 , \ff _2$ as in Theorem \ref{berndtsson-reps-thm}.  By Theorem \ref{B2-thm}, Theorem \ref{berndtsson-reps-thm} and Proposition \ref{expand-the-current-prop} we obtain 
\[
( \Theta ^{\sH} _{\sigma \bar \tau} \ff_1(t), \ff _2(t)) = \left ( \xi ^{\theta} _{\sigma} \lrcorner \dbar u_1, \xi ^{\theta} _{\tau} \lrcorner \dbar u_2 \right) + \left < p_* \left ( \ii ^{n^2} \left < (\Theta (h) u_1) \wedge \bar u_2 , h \right > \right )  , \sigma \wedge \bar \tau \right >.
\]
The first term on the right hand side of this formula defines a Nakano-non-negative form on $\sH_t$, and the second defines a $k$-positive (respectively $k$-non-negative) form if and only if $h$ has $k$-positive (respectively $k$-non-negative) curvature.  The proof is complete.
\end{proof}


\begin{thebibliography}{99}
\bibitem[B-2009]{bo-annals}  Berndtsson, B., {\it Curvature of vector bundles associated to holomorphic fibrations.}  Ann. of Math. (2) 169 (2009), no. 2, 531 -- 560.

\bibitem[B-2011]{bo-sns} Berndtsson, B., {\it Strict and nonstrict positivity of direct image bundles.}  Math. Z. 269  (2011), 1201 -- 1218.

\bibitem[BL-2016]{bl} Berndtsson, B.; Lempert, L., {\it A proof of the Ohsawa-Takegoshi theorem with sharp estimates.} J. Math. Soc. Japan Vol. 68, No. 4 (2016) pp. 1461 -- 1472

\bibitem[BP-2008]{bp} Berndtsson, B.; P\u{a}un, M., {\it Bergman kernels and the pseudoeffectivity of relative canonical bundles.} Duke Math. J. {\bf 145} (2008) no. 2, 341--378.

\bibitem[BPW-2022]{bpw} Berndtsson, B.; P\u{a}un, M.; Wang, X., {\it Algebraic fiber spaces and curvature of higher direct images.}  J. Inst. Math. Jussieu {\bf 21} (2022), no. 3, 973--1028.


\bibitem[D-1982]{dem-82} Demailly, J.-P., {\it Estimations $L^2$ pour l'op\' erateur $\dbar$ d'un fibr\' e vectoriel holomorphe semi-positif au-dessus d'une vari\'e t\' e k\"ahl\'erienne compl\`ete.} (French)  Ann. Sci. \'Ecole Norm. Sup. (4) 15 (1982), no. 3, 457--511.




\bibitem[GH-1978]{gh} Griffiths, P.; Harris, J., {\it Principles of Algebraic Geometry.} Wiley University Press, 1978.

\bibitem[H-2005]{huybrechts} Huybrechts, D., {\it Complex Geometry.  An Introduction.}    Universitext series,  Springer, 2005.

\bibitem[K-1986]{kodaira-def} Kodaira, K., {\it Complex Manifolds and Deformation of Complex Structures.}  Grundlehren der Mathematischen Wissenschaften, 283. Springer-Verlag, New York, 1986.

\bibitem[LS-2014]{ls}  Lempert, L.; Sz\H{o}ke, R., {\it  Direct images, fields of Hilbert spaces, and geometric quantization.} Comm. Math. Phys. 327 (2014), no. 1, 49 -- 99.

\bibitem[LY-2014]{ly} Liu, K.; Yang, X., {Curvature of direct image sheaves of vector bundles and applications.}  J. Diff. Geom., {\bf 98} (2014) 117 -- 145.

\bibitem[MT-2007]{mt-0} Mourougane, C.; Takayama, S., {\it Hodge metrics and positivity of direct images.}  J. Reine Angew. Math. {\bf 606} (2007), 167--178.

\bibitem[MT-2008]{mt} Mourougane, C.; Takayama, S., {\it Hodge metrics and the curvature of higher direct images.}  Ann. Sci. Éc. Norm. Supér. (4) {\bf 41} (2008), no. 6, 905--924.





\bibitem[R-2014]{raufi-OT} Raufi, H., {\it An extension theorem of Ohsawa-Takegoshi type for sections of a vector bundle.}   Preprint, 2014  \url{https://arxiv.org/abs/1405.1637v1}


\bibitem[Si-1982]{siu-jdg} Siu, Y.-T., {\it Complex-analyticity of harmonic maps, vanishing and Lefschetz theorems.}  J. Diff. Geom. {\bf 17} (1982) 55 -- 138.


\bibitem[U-2021]{pranav} Upadrashta, P., {\it .}  Preprint, 2021.


\bibitem[Va-2023]{v-bo-fest} Varolin, D., {\it Curvature of Smooth Proper Direct Images By Way of A Holomorphic Gauss Formula}.  Preprint, 2023

\bibitem[Vo-2007]{voisin} Voisin, C., {\it Hodge theory and complex algebraic geometry. I and II.}  Cambridge Studies in Advanced Mathematics, 76 and 77. Cambridge University Press, Cambridge, 2007.

\bibitem[Wang-2017]{wang} Wang, X., {\it A curvature formula associated to a family of pseudoconvex domains.}  Ann. Inst. Fourier (Grenoble) 67 (2017), no. 1, 269 -- 313.

\end{thebibliography}
\end{document}